\newtheorem{thm}[equation]{Theorem}
\let\c@subsection\c@equation
\newtheorem{prop}[equation]{Proposition}
\newtheorem{lem}[equation]{Lemma} 
\newtheorem{cor}[equation]{Corollary}
\theoremstyle{definition}
\newtheorem{defn}[equation]{Definition}
\theoremstyle{remark}
\newtheorem{remk}[equation]{Remark}
\newtheorem{remks}[equation]{Remarks}
\newtheorem{exm}[equation]{Example}
\newtheorem{exms}[equation]{Examples}
\newtheorem{notat}[equation]{Notation}
\numberwithin{equation}{section}
\newcommand{\thmref}{Theorem~\ref}
\newcommand{\sB}{{\mathcal B}}
\newcommand{\sC}{{\mathcal C}}
\newcommand{\sD}{{\mathcal D}}
\newcommand{\sF}{{\mathcal F}}
\newcommand{\sG}{{\mathcal G}}
\newcommand{\sH}{{\mathcal H}}
\newcommand{\sI}{{\mathcal I}}
\newcommand{\sK}{{\mathcal K}}
\newcommand{\sL}{{\mathcal L}}
\newcommand{\sM}{{\mathcal M}}
\newcommand{\sO}{{\mathcal O}}
\newcommand{\sR}{{\mathcal R}}
\newcommand{\sZ}{{\mathcal Z}}
\newcommand{\cO}{{\mathcal O}}
\newcommand{\A}{{\mathbb A}}
\newcommand{\C}{{\mathbb C}}
\renewcommand{\H}{{\mathbb H}}
\newcommand{\bL}{{\mathbb L}}
\renewcommand{\P}{{\mathbb P}}
\newcommand{\Q}{{\mathbb Q}}
\newcommand{\Z}{{\mathbb Z}}
\newcommand{\an}{{\rm an}}
\newcommand{\Alb}{{\rm Alb}}
\newcommand{\CH}{{\rm CH}}
\newcommand{\surj}{\twoheadrightarrow}
\newcommand{\inj}{\hookrightarrow}
\newcommand{\Pic}{{\rm Pic}}
\newcommand{\divf}{{\rm div}}
\newcommand{\Hom}{{\rm Hom}}
\newcommand{\Spec}{{\rm Spec \,}}
\newcommand{\Zar}{{\text{\rm Zar}}}
\newcommand{\Sch}{{\operatorname{\mathbf{Sch}}}}
\newcommand{\Sm}{{\mathbf{Sm}}}
\providecommand{\MGL}{\mathop{\rm MGL}\nolimits}
\providecommand{\KGL}{\mathop{\rm KGL}\nolimits}
\newcommand{\Nis}{{\operatorname{Nis}}}
\newcommand{\ds}{{/\kern-3pt/}}
\newcommand{\colim}{\mathop{\text{colim}}}
\newcommand{\ov}{\overline}
\renewcommand{\dim}{\text{\rm dim}}
\newcommand{\tuborg}{\left\{\begin{array}{ll}}
\newcommand{\sluttuborg}{\end{array}\right.}
\newcommand{\wt}{\widetilde}
\newcommand{\fnteff}{\mathrm{eff}}
\newcommand{\unsmot}{\mathcal M}
\newcommand{\unshomotopy}{\mathcal H}
\newcommand{\unsmotcdh}{\mathcal M _{cdh}}
\newcommand{\unshomotopycdh}{\mathcal H _{cdh}}
\newcommand{\Tspectra}{Spt(\unsmot)}
\newcommand{\TspectraX}{Spt(\unsmot_{X})}
\newcommand{\Tspectracdh}{Spt(\unsmotcdh)}
\newcommand{\susp}[1]{\Sigma ^{#1}}
\newcommand{\stablehomotopy}{\mathcal{SH}}
\newcommand{\stablehomotopycdh}{\mathcal{SH}_{cdh}}
\def\cO{\mathcal{O}}
\newcounter{elno}
\newcounter{elno-abc}   
\newcounter{elno-abc-prime}
\begin{document}
\title{The slice spectral sequence for singular schemes and applications}
\author{Amalendu Krishna and Pablo Pelaez}
\address{School of Mathematics, Tata Institute of Fundamental Research,  
1 Homi Bhabha Road, Colaba, Mumbai, India.}
\email{amal@math.tifr.res.in}
\address{Instituto de Matem\'aticas, Ciudad Universitaria, 
UNAM, DF 04510, M\'exico.}
\email{pablo.pelaez@im.unam.mx}

\keywords{Algebraic Cobordism, Milnor $K$-theory,
	Motivic Homotopy Theory, Motivic Spectral Sequence, $K$-theory, 
Slice Filtration, Singular Schemes}

\subjclass[2010]{Primary 14C25, 14C35, 14F42, 19E08, 19E15}

\begin{abstract}
We examine the slice spectral sequence for the cohomology of singular 
schemes with respect to various motivic $T$-spectra, especially the
motivic cobordism spectrum. When the base field $k$ admits resolution of
singularities and $X$ is a scheme of finite type over $k$, 
we show that Voevodsky's slice filtration leads to a spectral sequence for
$\MGL_X$ whose terms are the motivic cohomology groups of $X$ defined
using the $cdh$-hypercohomology. 
As a consequence, we establish an isomorphism between certain geometric
parts of the motivic cobordism and motivic cohomology of $X$.

A similar spectral sequence for the connective $K$-theory leads to
a cycle class map from the motivic cohomology to the homotopy invariant
$K$-theory of $X$. We show that this cycle class map is injective for
a large class of projective schemes. 
We also deduce applications to the torsion in the
motivic cohomology of singular schemes. 
\end{abstract} 
\setcounter{tocdepth}{1}
\maketitle
  
\tableofcontents

\section{Introduction}\label{sec:Intro}
The motivic homotopy theory of schemes was put on a firm foundation by
Voevodsky and his coauthors beginning with the work of Morel and
Voevodsky \cite{MV} and its stable counterpart \cite{Voev-0}.
It was observed by Voevodsky \cite{Voev-2} that the motivic $T$-spectra in 
the stable homotopy category $\stablehomotopy_X$ over a noetherian scheme $X$ 
of finite Krull dimension can be understood via their slice filtration.
This slice filtration leads to spectral sequences which then become a
very powerful tool in computing various cohomology theories
for smooth schemes over $X$.

The main problem in the study of the slice filtration for a given motivic $T$-spectrum
is twofold: the identification of its  slices and the analysis of the convergence
properties for the corresponding slice spectral sequence.
When $k$ is a field which admits resolution of singularities, 
the slices for many of these motivic $T$-spectra
in $\stablehomotopy_k$ are now known. 
In particular, we can compute these generalized cohomology groups
of smooth schemes over $k$ using the slice spectral sequence.

In this paper, we study some descent property of the motivic $T$-spectra
in $\stablehomotopy_X$ when $X$ is a possibly singular scheme of
finite type over $k$. This descent property tells us that the cohomology
groups of a scheme $Y \in \Sm_X$, associated to an absolute
motivic $T$-spectra in $\stablehomotopy_X$ \cite[\S 1.2]{Deglise},
can be computed using only $\stablehomotopy_k$.

Even though our methods apply to any of these absolute $T$-spectra, we
restrict our study to the motivic cobordism spectrum $\MGL_X$.
We show using the above descent property of motivic spectra that
$\MGL_X$ can be computed using the motivic cohomology groups of
$X$. Recall from \cite[Defs.~4.3, 9.2]{FV} that the motivic 
cohomology groups of $X$ are defined to be the $cdh$-hypercohomology groups
$H^p(X, \Z(q)) = \H^{p-2q}_{cdh}(X,  C_{\ast}z_{equi}(\A^q_k,0)_{cdh})$.
Using these motivic cohomology groups, we show:

\begin{thm}\label{thm:Main-1}
Let $k$ be a field which admits resolution of singularities
and let $X$ be a separated scheme of finite type over $k$.
Then for any integer $n \in \Z$,
there is a strongly convergent spectral sequence
\begin{equation}\label{eqn:Main-1-0}
E^{p,q}_2 = H^{p-q}(X, \Z(n-q)) \otimes_{\Z} \bL^{q}
\Rightarrow \MGL^{p+q, n} (X)
\end{equation}
and the differentials of this spectral sequence are given by
$d_r: E^{p,q}_r \to E^{p+r, q-r+1}_r$.
Furthermore, this spectral sequence degenerates with rational coefficients.
\end{thm}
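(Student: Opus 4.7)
The plan is to obtain the spectral sequence \eqref{eqn:Main-1-0} as the slice spectral sequence associated to $\MGL_X$ in $\stablehomotopy_X$. First I would set up the slice tower
\begin{equation*}
\cdots \to f_{q+1}\MGL_X \to f_q\MGL_X \to \cdots \to f_0\MGL_X = \MGL_X
\end{equation*}
and record the corresponding conditionally convergent spectral sequence whose terms are built from the cohomology of the slices $s_q\MGL_X$. Over the base field $k$, which admits resolution of singularities, the Hopkins--Morel--Hoyois theorem identifies the slices of the absolute cobordism spectrum as $s_q\MGL_k \simeq \Sigma^{2q,q}\HZ \otimes \bL^q$, where $\bL$ carries its usual grading as the Lazard ring. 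The key step is to transfer this identification to $\stablehomotopy_X$ via the descent property for absolute motivic $T$-spectra emphasized in the introduction, so that $s_q\MGL_X$-cohomology of the singular scheme $X$ can be expressed intrinsically in terms of the motivic Eilenberg--MacLane spectrum.

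Next I would compute the cohomology of the slices on $X$. The descent property tells us that the $(p+q,n)$-cohomology of $\Sigma^{2q,q}\HZ$ evaluated on the singular $X$ is exactly the Friedlander--Voevodsky $cdh$-hypercohomology group $H^{p-q}(X,\Z(n-q))$ recalled in the introduction. Combining this with the tensor factor $\bL^q$ produces the asserted $E_2$-page $H^{p-q}(X,\Z(n-q)) \otimes_{\Z} \bL^q$, and the shape $d_r\colon E_r^{p,q} \to E_r^{p+r,\,q-r+1}$ of the differentials is automatic from the slice filtration formalism.

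For strong convergence I would exploit the finite Krull dimension of $X$ together with the effectivity and connectivity estimates for $f_q\MGL_X$: as $q \to \infty$, the cohomology of $f_q\MGL_X$ on $X$ vanishes in any fixed bidegree, so the slice filtration is bounded below in each total degree and strong convergence follows. Rational degeneration then flows from the fact that $\MGL_{\Q}$ splits as $\bigoplus_{q}\Sigma^{2q,q}\HZ_{\Q}\otimes \bL^q$ (Riou's splitting via the $\MGL$-orientation and the formal group law), which forces the higher rational differentials to vanish.

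The chief obstacle I anticipate is the transfer step from smooth to singular: Hopkins--Morel lives naturally over smooth targets and identifies slices in $\stablehomotopy_k$, whereas the theorem requires an intrinsic computation on the possibly singular $X$. The descent property for absolute motivic $T$-spectra established earlier in the paper is precisely the tool that bridges this gap; once it is in place, the $E_2$-identification, the strong convergence estimate, and the rational degeneration all become essentially formal consequences of the general slice filtration machinery together with the known rational structure of $\MGL$.
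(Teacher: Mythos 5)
Your overall strategy — set up the slice tower for $\MGL_X$, identify the slices via Hopkins--Morel--Hoyois over $k$, transfer the computation to $X$ via the $cdh$-descent property of absolute $T$-spectra (so that $s_q\MGL_X$-cohomology of $X$ becomes the Friedlander--Voevodsky $cdh$-hypercohomology), and extract rational degeneration from the rational splitting of $\MGL$ — matches the paper's proof, and you correctly flag the smooth-to-singular transfer as the crux of the $E_2$ identification. The construction via the exact couple, the reindexing to an $E_2$-page, and the identification of differentials are all formal exactly as you say.

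However, your argument for strong convergence has a genuine gap. You claim that ``effectivity and connectivity estimates for $f_q\MGL_X$'' plus finite Krull dimension imply that the filtration is bounded on $X$, but this is not a consequence of a direct connectivity estimate once $X$ is singular. Connectivity arguments of this kind only produce the required vanishing of $[\Sigma^{m,n}\Sigma^\infty_T Y_+,\, f_{q+i}\MGL_X]$ for $Y\in\Sm_X$ when $Y$ lives over a smooth base; they do not apply directly in $\stablehomotopy_X$. The paper's actual argument (Propositions \ref{prop:bound-0} combined with Hoyois's theorem that $\MGL$ is bounded over $k$) first imports the boundedness of $\MGL$ over $\Spec k$, then uses $f_q(\mathbf{L}v^*\MGL)\cong\mathbf{L}v^*(f_q\MGL)$ (Pelaez), and then uses the very same $cdh$-descent theorem you invoked for the $E_2$-page to transport the boundedness from $\stablehomotopy$ to $\stablehomotopy_X$: given $Y\in\Sm_X$, one takes a $cdh$-cover of $Y$ using resolution of singularities, applies $cdh$-excision, and inducts on dimension. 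In other words, resolution of singularities and the descent theorem are needed for convergence too, not only for the $E_2$-computation, and without them the claim ``the cohomology of $f_q\MGL_X$ on $X$ vanishes in any fixed bidegree as $q\to\infty$'' is not established.
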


If $k$ is a perfect field of positive characteristic $p$, we obtain a
similar spectral sequence after inverting $p$ except that we can not guarantee
strong convergence unless $X$ is smooth over $k$ 
(see ~\ref{rmk.MGLnoconv}).

As a consequence of ~\ref{thm:Main-1} and its positive characteristic
version, we get the following relation between  the motivic 
cobordism and cohomology of singular schemes.

\begin{thm}\label{thm:Main-2}
Let $k$ be a field which admits resolution of singularities (resp. a perfect field 
of positive characteristic $p$). Then
for any separated (resp. smooth) scheme $X$ of finite type over $k$ and dimension $d$
and every $i \ge 0$, the edge map in the spectral sequence
\eqref{eqn:Main-1-0}:
\begin{align*}
\nu_X: \MGL^{2d+i,d+i}(X)  &
\to H^{2d+i}(X, \Z(d+i))  \\ resp. \; \;
\nu_X: \MGL^{2d+i,d+i}(X) {\underset{\Z}\otimes} \Z[\tfrac{1}{p}] &
\to H^{2d+i}(X, \Z(d+i)) {\underset{\Z}\otimes} \Z[\tfrac{1}{p}] 
\end{align*}
is an isomorphism.
\end{thm}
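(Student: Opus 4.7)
The plan is to apply the strongly convergent spectral sequence \eqref{eqn:Main-1-0} of \thmref{thm:Main-1} (or its positive-characteristic analogue for smooth $X$, valid after inverting $p$) with $n = d + i$, and show that on the total-degree line $p+q = 2d+i$ the $E_2$-page has exactly one nonzero entry, namely $E_2^{2d+i,0} = H^{2d+i}(X,\Z(d+i))$, and that this entry supports no nonzero incoming or outgoing differential on any page. Under these conditions the filtration on $\MGL^{2d+i,d+i}(X)$ collapses onto this single graded piece, and the edge map $\nu_X$ is forced to be the resulting isomorphism.

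The two vanishing ingredients I would invoke are: (a) $\bL^q = 0$ for $q > 0$, since the Lazard ring is generated in non-positive degrees; and (b) the dimension bound $H^a(X,\Z(b)) = 0$ whenever $a > b + d$. For smooth $X$, (b) is classical: the motivic complex $\Z(b)$ has Zariski amplitude in $[0,b]$ (Nesterenko--Suslin--Totaro for the top cohomology sheaf) and $X$ has Zariski cohomological dimension $\leq d$. For singular $X$ over a field admitting resolution of singularities, the same bound holds for the cdh-motivic cohomology used in the definition of $H^*(X,\Z(*))$, via the Friedlander--Voevodsky theory: the cdh motivic complex $C_{\ast}z_{equi}(\A^b_k,0)_{cdh}$ has cdh-cohomological amplitude $\leq b$, together with the fact that $X$ has cdh-cohomological dimension $\leq d$.

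With $n=d+i$ the $E_2$-term reads $E_2^{p,q} = H^{p-q}(X,\Z(d+i-q)) \otimes_{\Z} \bL^q$. Along $p+q=2d+i$, writing $p = 2d+i-q$, the inequality $(p-q) > (d+i-q) + d$ of (b) becomes $p > 2d+i$, i.e.\ $-q > 0$, so the motivic cohomology factor vanishes for all $q < 0$; while (a) eliminates all $q > 0$. Only $q=0$ survives, giving $E_2^{2d+i,0} = H^{2d+i}(X,\Z(d+i))$. For the differentials $d_r \colon E_r^{p,q} \to E_r^{p+r, q-r+1}$ with $r \geq 2$: an incoming differential at $(2d+i,0)$ originates at $(2d+i-r, r-1)$, which is killed by (a) since $r-1 \geq 1$; an outgoing differential lands at $(2d+i+r, 1-r)$, whose motivic cohomology factor $H^{2d+i+2r-1}(X,\Z(d+i+r-1))$ vanishes for every $r \geq 1$ by (b) since $(2d+i+2r-1) - (d+i+r-1) = d + r > d$.

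The principal obstacle is securing the cdh-dimension bound (b) for singular $X$, which is exactly where resolution of singularities enters: it is needed both to ensure the cdh motivic complex has the correct cohomological amplitude and to control the cdh cohomological dimension of $X$. In positive characteristic, only the smooth version of (b) is at our disposal, and only after inverting $p$; this is precisely what dictates the restriction to smooth $X$ and $\Z[\tfrac{1}{p}]$-coefficients in the second statement of the theorem.
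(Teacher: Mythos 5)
Your spectral-sequence arithmetic is right, and your overall strategy—collapse the $E_2$-page on the line $p+q=2d+i$ using $\bL^{>0}=0$ and the dimension vanishing, then note no differential can disturb the surviving corner—is exactly the computation the paper makes, but your two justifying ingredients diverge from the paper's in ways worth flagging. For the dimension vanishing (b), the paper does not invoke cohomological amplitude or a cdh-cohomological-dimension bound at all: for smooth $X$ it simply identifies $H^{2a-b}(X,\Z(a))\cong\CH^a(X,b)$ (Voevodsky) and uses that higher Chow groups vanish in codimension $> d+b$, and for singular $X$ in characteristic zero it runs an induction on $\dim X$ along the Mayer--Vietoris exact sequence coming from an abstract blow-up square, reducing to the smooth case and lower-dimensional pieces. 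Your route via ``cdh-amplitude $\leq b$ plus $cd_{cdh}(X)\leq d$'' reaches the same vanishing but offloads the work to external citations (Suslin--Voevodsky), and the amplitude statement as written is also slightly off: the complex $\underline C_\ast z_{equi}(\A^b_k,0)$ is concentrated in cohomological degrees $\leq -b$, so it is the shifted motivic complex $\Z(b)=\underline C_\ast z_{equi}(\A^b_k,0)[-2b]$ that has amplitude $\leq b$. The numerics you then apply to the differentials (vanishing whenever $a>b+d$) are the correct consequence, so this reads as a notational slip rather than a gap.

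The more substantive misattribution is your explanation of the positive-characteristic restriction. You write that in positive characteristic ``only the smooth version of (b) is at our disposal.'' That is not how the paper sees it: Theorem~\ref{thm:MGL-MC-iso} proves the vanishing $H^{2a-b}(X,\Z(a))\otimes\Z[\tfrac1p]=0$ for \emph{all} $X\in\Sch_k$ over a perfect field of positive characteristic (using Gabber--Temkin alterations, Raynaud--Gruson flattening, and Kelly's trace arguments). The reason the edge-map isomorphism is only stated for smooth $X$ in positive characteristic is that the strong convergence of the slice spectral sequence \eqref{eqn:Main-1-0} (Remark~\ref{rmk.MGLnoconv}) is established only for smooth $X$ there; without strong convergence your final step—``the filtration collapses onto this single graded piece''—is not available, since one cannot conclude that a single nonzero $E_\infty$-term computes the whole of $\MGL^{2d+i,d+i}(X)$. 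You do implicitly use strong convergence, so your conclusion is correct for the cases covered, but the diagnosis of where the hypothesis is actually needed is what the paper's proof makes precise.
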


We apply our descent result to obtain a similar spectral sequence
for the connective $KH$-theory, $\KGL^0$ (see \S \ref{sec:Appl}). We use this
spectral sequence and the canonical map $CKH(-) \to KH(-)$
from the connective $KH$-theory to obtain the following cycle 
class map from the motivic cohomology of a singular scheme to its homotopy 
invariant $K$-theory.

\begin{thm}\label{thm:Main-3}
Let $k$ be a field of exponential characteristic $p$ and let
$X$ be a separated scheme of dimension $d$ which is of finite type over $k$.
Then the map $\KGL^0_X \to s_0\KGL_X \cong H\Z$ induces for every integer 
$i \ge 0$, an isomorphism
\[
CKH^{2d+i, d+i}(X) {\underset{\Z}\otimes} \Z[\tfrac{1}{p}] 
\xrightarrow{\cong} H^{2d+i}(X, \Z(d+i)) 
{\underset{\Z}\otimes} \Z[\tfrac{1}{p}].
\]

In particular, there is a natural cycle class map
\[
cyc_i: H^{2d+i}(X, \Z(d+i)) {\underset{\Z}\otimes} \Z[\tfrac{1}{p}] \to 
KH_i(X) {\underset{\Z}\otimes} \Z[\tfrac{1}{p}].
\]
\end{thm}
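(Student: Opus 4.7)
The strategy is to mirror the proof of \thmref{thm:Main-2}, with the motivic cobordism spectrum $\MGL$ replaced by the connective $K$-theory spectrum $\KGL^0$. The first step is to establish the $\KGL^0$-analogue of \thmref{thm:Main-1}. Combining the descent property for absolute motivic $T$-spectra (used in the introduction for $\MGL$) with the known identification of the slices of $\KGL$ as shifted Eilenberg--MacLane spectra $s_q \KGL \cong \Sigma^{2q,q} H\Z$ for all $q \in \Z$ (Voevodsky--Levine), together with the fact that $\KGL^0$ is the effective cover of $\KGL$ (so $s_q \KGL^0 = s_q \KGL$ for $q \geq 0$ and vanishes for $q < 0$), I would deduce a strongly convergent spectral sequence
\[
E_2^{p,q} = H^{p-q}(X, \Z(n-q)) \Rightarrow CKH^{p+q, n}(X) \otimes_{\Z} \Z[\tfrac{1}{p}]
\]
with $E_2^{p,q} = 0$ for $q < 0$ and differentials $d_r : E_r^{p,q} \to E_r^{p+r, q-r+1}$. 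No Lazard-ring coefficients appear here, because every slice of $\KGL$ is a single copy of $H\Z$.

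Next, I would specialize this spectral sequence to the total bidegree $(a, n) = (2d+i, d+i)$ and identify the slice map $\KGL^0_X \to s_0 \KGL_X \cong H\Z$ with the edge homomorphism associated to the $q = 0$ filtration quotient,
\[
CKH^{2d+i, d+i}(X) \otimes \Z[\tfrac{1}{p}] \longrightarrow H^{2d+i}(X, \Z(d+i)) \otimes \Z[\tfrac{1}{p}].
\]
Outgoing differentials from $E_r^{2d+i, 0}$ vanish for free, since their targets $E_r^{2d+i+r, -r+1}$ lie in the region $q < 0$ where the spectral sequence is trivial. The delicate point is to control the contributions at $q \geq 1$ on the diagonal $p+q = 2d+i$ and to rule out incoming differentials into $E_r^{2d+i, 0}$. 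Here I would invoke the Friedlander--Voevodsky dimension vanishing $H^p(X, \Z(q)) = 0$ for $p > q + d$ (valid in characteristic zero, and after inverting $p$ in positive characteristic by Kelly) together with the same inductive differential analysis used in \thmref{thm:Main-2}. Because the top diagonal of our $E_2$-page matches verbatim the top diagonal of the $E_2$-page of \thmref{thm:Main-1} (the $\bL^q$-coefficients for $q \geq 1$ being irrelevant on this diagonal), the argument transposes directly.

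Once the edge map is shown to be an isomorphism, the cycle class map is the composition
\[
cyc_i : H^{2d+i}(X, \Z(d+i)) \otimes \Z[\tfrac{1}{p}] \xrightarrow{\cong} CKH^{2d+i, d+i}(X) \otimes \Z[\tfrac{1}{p}] \longrightarrow \KGL^{2d+i, d+i}(X) \otimes \Z[\tfrac{1}{p}] = KH_i(X) \otimes \Z[\tfrac{1}{p}],
\]
the last identification using Bott periodicity of $\KGL$. The main obstacle is the edge-map step: verifying that the edge map is an isomorphism requires careful control of the differentials on the top diagonal of the slice spectral sequence, which is the same technical core as in \thmref{thm:Main-2}. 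The novelty for $\KGL^0$ is therefore the construction of the appropriate slice spectral sequence and its convergence (including the mild loss of strong convergence in positive characteristic before inverting $p$), not the edge-map analysis itself.
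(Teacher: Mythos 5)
Your overall route is exactly the one the paper follows: establish a slice spectral sequence for $\KGL^0_X$ via the descent result and the identification $s_r\KGL^0 \cong \Sigma^r_T H\Z$ for $r \ge 0$ (and $s_r\KGL^0=0$ for $r<0$), then read the statement off as the edge homomorphism using the dimension-vanishing of singular motivic cohomology. The positive-characteristic adjustments via Kelly's trace structure on $H\Z[\tfrac{1}{p}]$, and the final composition with $\KGL^0_X\to\KGL_X$ and Bott periodicity, also match.

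There is, however, a sign error in your analysis of which region of the $E_2$-page is trivial, and it swaps which differentials are automatic and which require the hard input. In the indexing your formula $E_2^{p,q}=H^{p-q}(X,\Z(n-q))$ uses (identical to the paper's \eqref{eqn:Conn-KGL-1} with its factor $\Z_{b\le 0}$), the slice at slice-index $r$ contributes at $E_2$-index $q=-r$. Since $s_r\KGL^0=0$ for $r<0$, the trivial region is $q>0$, not $q<0$. Consequently the incoming differentials $E_r^{2d+i-r,\,r-1}\to E_r^{2d+i,\,0}$ have source at $q=r-1\ge 1$ and vanish for free by effectivity; no dimension argument is needed there, contrary to what you write. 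It is instead the outgoing differentials, landing at $q=-r+1<0$, and the diagonal terms with $p+q=2d+i$ and $q<0$, that lie in the a priori nontrivial region. Their vanishing is precisely where you must invoke the $cdh$-excision/induction argument giving $H^{2a-b}(X,\Z(a))=0$ for $a>d+b$ (the singular form of Friedlander--Voevodsky vanishing, ~\ref{thm:MGL-MC-iso}); e.g.\ the outgoing target has $p+q=2d+i+1$ and bidegree $(a,b)=(d+i+r-1,\,i-1)$, so $a>d+b$ holds for $r>0$. The final conclusion is the same, but as written your proof would spend its effort on the terms that vanish trivially and gloss over the ones that carry the actual content.
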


We use this cycle class map and the Chern class maps from the homotopy
invariant $K$-theory to the Deligne cohomology of schemes over $\C$ to
construct intermediate Jacobians and Abel-Jacobi maps for the motivic
cohomology of singular schemes over $\C$. More precisely, we prove the
following. This generalizes intermediate Jacobians and
Abel-Jacobi maps of Griffiths and the torsion theorem of Roitman for smooth 
schemes.

\begin{thm}\label{thm:Main-4}
Let $X$ be a projective scheme over $\C$ of dimension $d$. 
Assume that either $d \le 2$ or $X$ is regular in codimension one.
Then, there is 
a semi-abelian variety $J^d(X)$ and an Abel-Jacobi map 
${\rm AJ}^d_X: H^{2d}(X, \Z(d))_{\deg 0}  \to J^d(X)$ which is surjective 
and whose restriction to the torsion subgroups is an isomorphism.
\end{thm}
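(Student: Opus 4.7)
The plan is to factor the Abel--Jacobi map through the cycle class map $cyc_0$ of \thmref{thm:Main-3} and a $K$-theoretic Chern character into Deligne cohomology, and then import the generalized Roitman theorems that are already available for projective singular varieties in the stated range of hypotheses. For the target, I would take $J^d(X)$ to be the semi-abelian variety that appears as the algebraic part of $H^{2d}_{\mathcal{D}}(X,\Z(d))_{\deg 0}$ --- equivalently, in the two cases under consideration, the generalized Albanese constructed by Esnault--Srinivas--Viehweg (when $X$ is regular in codimension one) and by Biswas--Srinivas / Krishna--Srinivas (when $d\le 2$).

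Since $k=\C$ has exponential characteristic $p=1$, \thmref{thm:Main-3} supplies a natural cycle class map $cyc_0\colon H^{2d}(X,\Z(d))\to KH_0(X)$. Composing with the Chern character $c_d\colon KH_0(X)\to H^{2d}_{\mathcal{D}}(X,\Z(d))$ (available on singular complex varieties via $cdh$-descent and resolution of singularities) and the projection to $J^d(X)$, and restricting to degree zero, produces the required ${\rm AJ}^d_X$. Surjectivity would then follow by comparison with the Levine--Weibel cycle class: the classical Abel--Jacobi $CH_0(X)_{\deg 0}\twoheadrightarrow J^d(X)$ is already known to be surjective in both cases, and it factors through the natural map $CH_0(X)_{\deg 0}\to H^{2d}(X,\Z(d))_{\deg 0}$ coming from representability of $cdh$-motivic cohomology by Suslin complexes, so surjectivity of ${\rm AJ}^d_X$ is inherited.

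The torsion isomorphism is where the main work lies, and would be reduced to two statements on torsion: (a) the natural comparison $CH_0(X)_{\deg 0}\to H^{2d}(X,\Z(d))_{\deg 0}$ is an isomorphism on torsion; and (b) the classical Abel--Jacobi $CH_0(X)_{\deg 0}\to J^d(X)$ is an isomorphism on torsion. Part (b) is the generalized Roitman theorem of Krishna, Krishna--Srinivas and Mallick in these cases. The real obstacle is part (a): one must analyze the slice spectral sequence for $KH$ underlying \thmref{thm:Main-3} at the torsion level and show that the higher differentials landing in bidegree $(2d,d)$ do not meet torsion classes. The expected input is a rigidity theorem for $KH$ with finite coefficients on singular schemes (obtained by $cdh$-descending the Suslin--Voevodsky / Geisser--Levine rigidity for smooth varieties), together with the identification of the $E_2$-page of \eqref{eqn:Main-1-0} in terms of $cdh$-motivic cohomology. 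Once rigidity forces the relevant torsion subgroup of $H^{2d}(X,\Z(d))$ to map isomorphically onto the torsion subgroup of the corresponding piece of $KH_0(X)$, the Roitman theorem on the Chow side transports through $cyc_0$ and yields the desired bijectivity of ${\rm AJ}^d_X$ on torsion.
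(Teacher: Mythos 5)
Your construction of ${\rm AJ}^d_X$ agrees with the paper's: it is the composite of $cyc_0\colon H^{2d}(X,\Z(d))\to KH_0(X)$ from \thmref{thm:Main-3} with a Chern class $KH_0(X)\to H^{2d}_{\sD}(X,\Z(d))$ built via $cdh$-descent (\thmref{thm:Deligne-Chern-Main}), projected onto the semi-abelian part $J^d(X)$ of the degree-zero Deligne cohomology; the surjectivity argument via factoring through $\CH^d(X)_{{\rm deg}\,0}\twoheadrightarrow J^d(X)$ is also essentially the paper's. The divergence, and the gap, is in the torsion step. You reduce it to (a) ``$\gamma_X\colon \CH^d(X)_{{\rm deg}\,0}\to H^{2d}(X,\Z(d))_{{\rm deg}\,0}$ is an isomorphism on torsion,'' which you propose to deduce from rigidity for $KH$ with finite coefficients and an analysis of the slice spectral sequence. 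Neither half of this is correct as stated. First, $\gamma_X$ is a map from the Levine--Weibel Chow group into $cdh$-motivic cohomology built through lci push-forwards; it is not the fibre of a slice differential nor does it factor through any $KH$-group in a way a rigidity statement about $KH$ with finite coefficients could control. Second, and more seriously, (a) is essentially the conjecture of Barbieri-Viale and Kahn; the paper explicitly does \emph{not} prove it, and in fact works around it.

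What the paper actually does for the torsion isomorphism is quite different. It invokes \thmref{thm:Alb-J} together with the Barbieri-Viale--Kahn result (their Cor.~13.7.5) which supplies an \emph{abstract} isomorphism $J^d(X)_{\rm tor}\cong A^d(X)_{\rm tor}$, where $A^d(X)=H^{2d}(X,\Z(d))_{{\rm deg}\,0}$ --- not a priori realized by ${\rm AJ}^d_X$. Since $J^d(X)$ is semi-abelian, its $n$-torsion is finite, so ${}_nA^d(X)$ and ${}_nJ^d(X)$ are finite of equal order. The factorization through $\CH^d(X)_{{\rm deg}\,0}$ and the Roitman theorem of \cite{BPW} (for $d\le 2$) or \cite{BS} (regular in codimension one) show that ${\rm AJ}^d_X$ is surjective on each ${}_n(-)$, hence bijective by the cardinality argument. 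This counting step is what replaces your (a), and it is precisely what allows the proof to close without establishing the injectivity of $\gamma_X$ on torsion. In the $d>2$, regular-in-codimension-one case there is a further subtlety you omit: the Chern-class Abel--Jacobi $\CH^d(X)_{{\rm deg}\,0}\to J^d(X)$ coming from $K_0$ differs from the classical ${\rm AJ}^{LW}_X$ (for which the Roitman theorem is known) by $(d-1)!$, so the paper defines ${\rm AJ}^d_X$ in that case by passing through $J^d(\wt X)\cong J^d(X)$ and uses ${\rm AJ}^{LW}_X$ directly rather than a Chern-class composite; see ~\ref{remk:higher-dim}.
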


In a related work, Kohrita (see \cite[Corollary~6.5]{Kor}) has constructed an 
Abel-Jacobi map for the Lichtenbaum motivic cohomology 
$H^{2d}_L(X, \Z(d))$ of singular
schemes over $\C$ using a different technique. He has also
proven a version of Roitman torsion theorem for the 
Lichtenbaum motivic cohomology.
The natural map $H^{2d}(X, \Z(d)) \to
H^{2d}_L(X, \Z(d))$ is not an isomorphism in general if $d \ge 3$.
Note also that the Roitman torsion theorem for $H^{2d}(X, \Z(d))$
is a priori a finer statement than that for the analogous Lichtenbaum cohomology.

\vskip .3cm

Using \thmref{thm:Main-4}, 
we prove the following property of the cycle class map
of ~\ref{thm:Main-3} which is our final result. 
The analogous result for smooth projective schemes
was proven by Marc Levine \cite[Thm. 3.2]{Levine}.
More generally, Levine shows that a relative Chow group of 0-cycles on
a normal projective scheme over $\C$ injects inside $K_0(X)$.

\begin{thm}\label{thm:Main-5}
Let $X$ be a projective scheme of dimension $d$ over $\C$.
Assume that either $d \le 2$ or $X$ is regular in codimension one.
Then the cycle class map $cyc_0: H^{2d}(X, \Z(d)) \to KH_0(X)$ is injective.
\end{thm}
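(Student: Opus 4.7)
The plan is to show $\ker(cyc_0) = 0$ by combining the Roitman-type torsion isomorphism of \thmref{thm:Main-4} with a rational degeneration for the $KH$-slice spectral sequence, in the spirit of Levine's argument \cite[Thm.~3.2]{Levine} for the smooth case. Let $\alpha \in H^{2d}(X,\Z(d))$ satisfy $cyc_0(\alpha)=0$; the goal is to force $\alpha=0$.

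First I would reduce to the degree-zero subgroup. By functoriality of $cyc_0$ under proper pushforward along the structure map $\pi\colon X\to \Spec\C$, the composition $H^{2d}(X,\Z(d)) \xrightarrow{cyc_0} KH_0(X)\xrightarrow{\pi_*} KH_0(\Spec\C)=\Z$ coincides with the degree map, placing $\alpha$ in $H^{2d}(X,\Z(d))_{\deg 0}$. Next I would factor through the Abel-Jacobi map: the map $AJ^d_X$ of \thmref{thm:Main-4} is constructed from a Chern class $c_d\colon KH_0(X)\to H^{2d}_{\mathcal D}(X,\Z(d))$ into Deligne cohomology, of which $J^d(X)$ is a natural subgroup, and by construction $c_d\circ cyc_0 = j\circ AJ^d_X$ for the inclusion $j\colon J^d(X)\hookrightarrow H^{2d}_{\mathcal D}(X,\Z(d))$. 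Hence $cyc_0(\alpha)=0$ gives $AJ^d_X(\alpha)=0$. If $\alpha$ is torsion, the torsion-isomorphism half of \thmref{thm:Main-4} forces $\alpha=0$, so we may assume $\alpha$ is non-torsion.

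The final step rules out the non-torsion case by rational injectivity. The $KH$-analog of the slice spectral sequence of \thmref{thm:Main-1} has $E_2$-term $H^{p-q}(X,\Z(n-q))$ converging to $KH^{p+q,n}(X)$, and the same weight argument shows its rational degeneration. The resulting Chern-character decomposition splits $KH_0(X)\otimes\Q$ as a direct sum of motivic cohomology groups of $X$ and identifies $cyc_0\otimes\Q$ with the inclusion of the $H^{2d}(X,\Z(d))\otimes\Q$-summand; in particular $cyc_0\otimes\Q$ is injective. Therefore $cyc_0(\alpha)=0$ forces $\alpha\otimes 1=0$ in $H^{2d}(X,\Z(d))\otimes\Q$, contradicting non-torsion of $\alpha$.

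The main obstacle is the compatibility $c_d\circ cyc_0 = j\circ AJ^d_X$ used in the second step: one has to trace through the construction of $AJ^d_X$ in \thmref{thm:Main-4} and check that it agrees with the cycle class map $cyc_0$ coming from the slice-filtration machinery of \thmref{thm:Main-3}. The geometric hypothesis on $X$ (dimension $\le 2$ or regular in codimension one) enters only through \thmref{thm:Main-4}; once that compatibility is pinned down, the remaining steps are formal consequences of the spectral-sequence machinery already developed in the paper.
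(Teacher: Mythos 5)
Your argument is correct and, for the case $d\le 2$, it is exactly the paper's argument: the rational degeneration of the slice spectral sequences forces $\ker(cyc_0)$ to be torsion, while the diagram \eqref{eqn:IJ-3} identifies the composite $c_d\circ cyc_0$ (restricted to $A^d(X)$) with ${\rm AJ}^d_X$, and Theorem~\ref{thm:Main-4} then shows this composite is injective on torsion, so $\ker(cyc_0)=0$. Where you differ from the paper is the regular-in-codimension-one case: there, instead of applying the Roitman isomorphism for $X$ directly, the paper pulls the problem back along a resolution $f\colon\wt X\to X$, uses the fact (extracted from the proof of Theorem~\ref{thm:Main-4}) that $f^*\colon H^{2d}(X,\Z(d))\to H^{2d}(\wt X,\Z(d))$ is an isomorphism on torsion, and then cites Levine's injectivity theorem for the smooth scheme $\wt X$. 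Your version handles both cases uniformly and avoids the external invocation of Levine's result, which is arguably cleaner; the price is that you must check, exactly as you flag, that the ${\rm AJ}^d_X$ of Theorem~\ref{thm:Main-4} is literally the restriction of $c_d\circ cyc_0$ to $A^d(X)$ — this is \eqref{eqn:IJ-3}, and the point to verify is that in the regular-in-codimension-one case the Abel--Jacobi map constructed via the resolution in \eqref{eqn:Normal-Roit**} coincides with this Chern-class-defined map; this does hold because $cyc_0$ and the Deligne Chern classes are functorial under $f^*$, so the square in \eqref{eqn:Normal-Roit**} commutes for the map of \eqref{eqn:IJ-3} and its dashed arrow is uniquely determined. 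One cosmetic remark: your degree-zero reduction via the proper pushforward to $\Spec\C$ only controls the sum of the component-wise degrees when $X$ is reducible with several top-dimensional components, but this step is dispensable — a torsion class automatically lies in $A^d(X)$ because $H^{2d}_{\rm an}(X,\Z(d))\cong\Z^r$ is torsion-free, and the non-torsion case is handled rationally without any degree constraint.
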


We end this section with the comment that our motivation behind this work was 
to exploit powerful tools of the
motivic homotopy theory to study several questions about the
motivic cohomology and $K$-theory of singular schemes, which were
previously known only for smooth schemes. We hope that the methods and the
techniques of 
our proofs can be advanced further to answer many other 
cohomological questions about singular schemes. 
We refer to \cite{KP} for more results based on the techniques of this
text.


\section{A descent theorem for motivic spectra}\label{sec:Prelim}
In this section, we set up our notations, discuss various model
structures we use in our proofs and show the Quillen adjunction property
of many functors among these model structures.
The main objective of this section is to prove a $cdh$-descent property
of the motivic $T$-spectra, see  ~\ref{thm:basch2}.

\subsection{Notations and preliminary results}\label{sec:Notn}
Let $k$ be a perfect field of exponential characteristic $p$ (in
some instances we will require that the field $k$ admits resolution of singularities 
\cite[Def. 4.1]{Voev-5}).  We will
write $\Sch_k$ for the category of separated schemes of finite type over $k$ 
and $\Sm_k$ for the full subcategory of $\Sch_k$ consisting of smooth schemes 
over $k$.  If $X\in \Sch_k$, let $\Sm_X$ denote the full subcategory of 
$\Sch_k$ consisting of 
smooth schemes over $X$.  We will write $(\Sm_k)_{Nis}$ (resp. $(\Sm_X)_{Nis}$, 
$(\Sch_k)_{cdh}$, $(\Sch_k)_{Nis}$) for $\Sm_k$ equipped with the Nisnevich topology (resp. 
$\Sm_X$ equipped with the Nisnevich topology, $\Sch_k$ equipped with the
$cdh$-topology, $\Sch _k$ equipped with the Nisnevich topology).
The product $X \times _{\Spec{k}} Y$ will be denoted by $X \times Y$.

Let $\unsmot$ (resp. $\unsmot _{X}$, $\unsmotcdh$) be the category of pointed 
simplicial presheaves on $\Sm_k$ (resp. $\Sm_X$, $\Sch_k$) equipped with the  
motivic model structure described in \cite{Isak}
considering the Nisnevich topology on $\Sm_k$ (resp. Nisnevich topology on
$\Sm_X$, $cdh$-topology on $\Sch_k$) and the affine line $\A^1_{k}$ as an 
interval. A simplicial presheaf will often be called a {\sl motivic space}.

We define $T$ in $\unsmot$ (resp. $\unsmot _{X}$, $\unsmotcdh$) as the pointed 
simplicial presheaf
represented by $S^{1}_s \wedge S^1_t$, where $S^1_t$ is 
$\A^1_{k} \setminus  \{ 0 \}$ (resp. $\A^1_{X} \setminus \{ 0\}$, 
$\A^1_{k} \setminus \{ 0\}$) pointed by $1$, and $S^{1}_s$ denotes the 
simplicial circle.  
Given an arbitrary integer $r \geq1$, let $S^{r}_s$
(resp. $S^{r}_t$) denote the iterated smash product of $S^{1}_s$ (resp. $S^1_t$) 
with $r$-factors: $S^{1}_s\wedge \cdots \wedge S^{1}_s$ 
(resp. $S^1_t \wedge \cdots
\wedge S^1_t$); $S^{0}_{s} = S^{0}_{t}$ will be by definition equal to the 
pointed simplicial presheaf represented by the base scheme $\Spec{k}$ 
(resp. $X$, $\Spec{k}$). 
 
Since $T$ is cofibrant in $\unsmot$ (resp. $\unsmot _{X}$, $\unsmotcdh$)
we can apply freely the results in \cite[\S 8]{Hovey}.  Let
$\Tspectra$ (resp. $\TspectraX$, $\Tspectracdh$) denote the category of 
symmetric $T$-spectra on $\unsmot$ (resp. $\unsmot _{X}$, $\unsmotcdh$) 
equipped with 
the motivic model structure defined in \cite[8.7]{Hovey}.
We will write $\stablehomotopy$ (resp. 
$\stablehomotopy _{X}$,
$\stablehomotopycdh$) for the homotopy category of $\Tspectra$ (resp. 
$\TspectraX$, $\Tspectracdh$)
which is a tensor triangulated category.
 For any two integers $m$, $n \in \Z$, let $\Sigma^{m,n}$ denote the
automorphism $\Sigma^{m-n}_s \circ \Sigma^n_t: \stablehomotopy \to
\stablehomotopy$ (this also makes sense in $\stablehomotopy _{X}$ and $\stablehomotopycdh$).
We will write $\Sigma _{T}^{n}$ for $\Sigma ^{2n,n}$, and
$E\wedge F$ for the smash product of $E$, $F\in \stablehomotopy$ (resp. 
$\stablehomotopy _{X}$, $\stablehomotopycdh$).

Given a simplicial presheaf $A$, we will write $A_{+}$ for the pointed 
simplicial presheaf
obtained by adding a disjoint base point (isomorphic to the base scheme) to 
$A$.  For any $B \in \unsmot$, let $\Sigma^{\infty}_T(B)$ denote the
object $(B, T \wedge B, \cdots ) \in \Tspectra$.
This functor makes sense for objects in $\unsmotcdh$ and $\unsmot _{X}$
as well.

If $F:\mathcal A\rightarrow \mathcal B$ is a
functor with right adjoint $G:\mathcal B \rightarrow \mathcal A$, 
we shall say that $(F,G):
\mathcal A \rightarrow \mathcal B$ is an adjunction.  We shall freely use the 
language of
model and triangulated categories.  We will write $\susp{1}$ for the 
suspension functor
in a triangulated category, and for $n\geq 0$ (resp. $n<0$), $\susp{n}$ 
will be the suspension (resp. desuspension) functor iterated $n$ (resp. $-n$) 
times.

We will use the following notation in all the categories under 
consideration: $\ast$ 
will denote the terminal object, and $\cong$ will denote that a map 
(resp. functor) is an isomorphism (resp. equivalence of categories).

\subsection{Change of site}\label{sec:sitech}
Let $X\in \Sch_k$ and let $v: X \to \Spec k$ denote the structure map.
We will write $Pre _{X}$ (resp. $\underline{Pre} _{k}$) for 
the category of pointed simplicial
presheaves on $\Sm_X$ (resp.  $\Sch_k$). If $X=\Spec{k}$
where $k$ is the base field, we will write $Pre _{k}$ instead of $Pre _{X}$. 
These categories are equipped with the objectwise
flasque model structure \cite[\S 3]{Isak}. To recall this model structure,
we consider a finite set $I$ of   
monomorphisms $\{V_i \to U\}_{i \in I}$ for any $U \in \Sm_X$.
The categorical union 
${\underset{i \in I}\cup} V_i$ is the coequalizer of the diagram
\[
\xymatrix{
{\underset{i,j \in I}\coprod} \ V_i {\underset{U}\times} \ V_j
\ar@<+.7ex>[r] 
\ar@<-.7ex>[r] &
{\underset{i \in I}\coprod} \ V_i
}
\]
formed in $Pre _X$. We denote by $i_I$ the induced monomorphism
${\underset{i \in I}\cup} V_i \to U$. Note that $\emptyset \to U$ arises in this
way. The push-out product of maps of $i_I$ and a map between simplicial sets
exists in $Pre_X$. In particular, we are entitled to form the sets
\[
I^{\rm sch}_{\rm clo}(\Sm_X) = 
\{i_I  \ \square \ {(\partial \Delta^n \subset \Delta^n)}_{+}\}_{I, n \ge 0}
\]
and
\[
J^{\rm sch}_{\rm clo}(\Sm_X) = 
\{i_I \ \square \ (\Lambda^n_i \subset \Delta^n)_{+}\}_{I, n \ge 0, 0 \le i \le n},
\]
where $I$ is a finite set of monomorphisms $\{V_i \to U\}_{i \in I}$, $U\in \Sm _X$;
and $i_I : {\underset{i \in I}\cup} V_i \to U$ is the induced monomorphism
defined above.

A map between simplicial presheaves is called a closed objectwise fibration if
it has the right lifting property with respect to $J^{\rm sch}_{\rm clo}(\Sm_X)$.
A map $u: E \to F$ between simplicial presheaves is called a weak equivalence if
$E(U) \to F(U)$ is a weak equivalence of simplicial sets for each
$U \in \Sm_X$. A closed objectwise cofibration is a map having the left 
lifting property with respect to every trivial closed objectwise fibration.
Note that this notion of weak equivalence, cofibrations and 
fibrations makes sense for simplicial presheaves in=
any category with finite products (e.g., $\Sm_k$, $\Sch_k$).
It follows from \cite[Thm. 3.7]{Isak} that the above notion of
weak equivalence, cofibrations and fibration forms a proper, simplicial 
and cellular model category structure on $Pre_k$, $Pre_X$
and  $\underline{Pre} _{k}$. We call this the {\sl objectwise flasque model
structure}. Our reason for choosing this model structure is the 
following result.

\begin{lem}$($\cite[Lem. 6.2]{Isak}$)$\label{lem:cofibrant-fl}
If $V \to U$ is a monomorphism in $\Sm_k$ (resp. $\Sm_X$, $\Sch_k$),
then ${U_+}/{V_+}$ is cofibrant in the flasque model structure on 
$Pre _k$ (resp. $Pre _{X}$, $\underline{Pre} _k$). In particular,
$T^n \wedge U_{+}$ is cofibrant for any $n \ge 0$.
\end{lem}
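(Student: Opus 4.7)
The plan is to prove the first assertion by identifying $V_+ \to U_+$ itself as a generating flasque cofibration, and then derive the ``in particular'' statement by applying the pushout-product axiom to cofibrant models for the individual smash factors of $T^n \wedge U_+$.

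For the main claim I would take the singleton family $I = \{V \hookrightarrow U\}$ and $n = 0$ in the set $I^{\rm sch}_{\rm clo}$ of generating cofibrations. Because $\partial\Delta^0 = \emptyset$ and $\Delta^0_+ = S^0$ is the smash-product unit, the pushout product $i_I \,\square\, (\partial\Delta^0 \subset \Delta^0)_+$ reduces to the map $V_+ \to U_+$, so this map is a flasque cofibration. Forming the pushout of $V_+ \to U_+$ along the canonical map $V_+ \to \ast$ then presents $\ast \to U_+/V_+$ as a cofibration, and since $\ast$ is the initial object of the pointed model category this shows $U_+/V_+$ is cofibrant. The identical argument works verbatim on $Pre_X$ and $\underline{Pre}_k$ because the generating cofibrations are defined by the same formula in each setting; in sum this recovers \cite[Lem.~6.2]{Isak}.

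For the ``in particular'' part I would argue that each smash factor of $T^n \wedge U_+$ is cofibrant and then invoke the pushout-product axiom of the monoidal simplicial flasque structure. Taking $V = \emptyset$ in the first part yields cofibrancy of $U_+$. The Tate circle $S^1_t = (\A^1 \setminus \{0\}, 1)$ may be rewritten as $(\A^1 \setminus \{0\})_+ / \{1\}_+$, and since $\{1\} \hookrightarrow \A^1 \setminus \{0\}$ is a monomorphism in $\Sm_k$ (respectively $\Sm_X$, $\Sch_k$), the first part gives cofibrancy of $S^1_t$. The simplicial circle $S^1_s$ is cofibrant as a constant pointed simplicial presheaf, because the constant-presheaf functor from pointed simplicial sets is left Quillen for the flasque structure. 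Since the flasque motivic model structure of \cite{Isak} is a symmetric monoidal simplicial model category, the pushout-product axiom applied iteratively to the cofibrations $\ast \to S^1_s$, $\ast \to S^1_t$ and $\ast \to U_+$ shows that $\ast \to (S^1_s)^{\wedge n} \wedge (S^1_t)^{\wedge n} \wedge U_+ = T^n \wedge U_+$ is a cofibration.

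The main point requiring care is the combinatorial identification of the $n=0$ instance of the generating-cofibration formula with the map $V_+ \to U_+$; once one records that $S^0 = \Delta^0_+$ is the smash unit and that $i_I$ is read in pointed presheaves by formally adjoining a disjoint basepoint to $\cup_i V_i \to U$, the computation is immediate. I do not anticipate any serious obstacle beyond this bookkeeping.
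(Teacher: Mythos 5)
Your reconstruction is correct, and since the paper simply cites Isaksen's Lemma~6.2 without giving its own argument, there is no internal proof to compare; your route is the natural one and is, in essence, what Isaksen does. Two small points worth recording so the argument is self-contained: the cofibrancy of the constant presheaf $S^1_s$ follows because taking $I=\emptyset$ (with $U$ the base scheme) in the definition of $I^{\rm sch}_{\rm clo}$ shows that the constant-presheaf functor carries the generating cofibrations $(\partial\Delta^n\subset\Delta^n)_+$ of pointed simplicial sets to generating flasque cofibrations, hence is left Quillen; and the pushout-product axiom you invoke for the smash of cofibrant objects is part of what Isaksen establishes for the flasque structure, so that appeal is legitimate.
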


It is clear that $Pre_{X}$ (resp. $\underline{Pre} _{k}$) is a cofibrantly 
generated model category with generating cofibrations 
$I^{\rm sch}_{\rm clo}(\Sm_X)$ (resp. $I^{\rm sch}_{\rm clo}(\Sch_k)$)
and generating trivial cofibrations $J^{\rm sch}_{\rm clo}(\Sm_X)$
(resp. $J^{\rm sch}_{\rm clo}(\Sch_k)$).

Let $\pi: (\Sch_k)_{cdh}\rightarrow (\Sm_k)_{Nis}$ be the continuous map of sites
considered by Voevodsky in \cite[\S 4]{Voev-5}. 
We will write $(\pi ^{\ast}, \pi _{\ast}):Pre _{k} \rightarrow 
\underline{Pre} _{k}$,
$(v ^{\ast}, v _{\ast}):Pre _{k} \rightarrow Pre _{X}$ for the adjunctions 
induced by $\pi$, $v$ respectively.

We will also consider the morphism of sites 
$\pi _{X}: (\Sch_k)_{cdh}\rightarrow (\Sm_X)_{Nis}$
and the corresponding adjunction $(\pi_{X} ^{\ast}, \pi_{X \ast}):Pre _{X} 
\rightarrow \underline{Pre} _{k}$.
These adjunctions are related by the following.

\begin{lem}\label{lem:commdiag1}
The following diagram commutes:
\[  
\xymatrix@C1pc{
Pre_{k} \ar[r]^-{\pi ^{\ast}} \ar[dr]_-{v^{\ast}}& \underline{Pre}_{k} \ar[d]^-
{\pi _{X\ast}}\\
		& Pre_{X}.}
\]
\end{lem}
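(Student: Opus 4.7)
\medskip

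\noindent\textbf{Proof plan.} The strategy is to verify the identity on representable (pointed) simplicial presheaves and then extend by colimits, using that the three functors involved have very explicit descriptions.

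First, I would unpack the functors. Since $\pi$, $v$, and $\pi_X$ are continuous morphisms of sites arising from the functors of small categories $\iota\colon \Sm_k\hookrightarrow \Sch_k$, $v^{-1}\colon \Sm_k\to\Sm_X$ sending $U\mapsto U\times_k X$, and $\iota_X\colon\Sm_X\hookrightarrow\Sch_k$ respectively (all of which preserve the relevant topologies), the right adjoints $\pi_*,\ v_*,\ \pi_{X\ast}$ are simply precomposition: for $G\in\underline{Pre}_k$ and $W\in\Sm_X$, $(\pi_{X\ast}G)(W)=G(W)$, with $W$ viewed as an object of $\Sch_k$. In particular, $\pi_{X\ast}$, being an evaluation functor, preserves both limits and (pointed) colimits since these are computed objectwise in presheaf categories. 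The left adjoints $\pi^\ast$ and $v^\ast$ are colimit-preserving by general nonsense.

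Second, I would compute on representables. By the Yoneda lemma and adjunction, for any $U\in\Sm_k$ one has $\pi^\ast h_{U_+}=h_{U_+}$, now regarded as a pointed representable presheaf on $\Sch_k$, because $\Hom(\pi^\ast h_{U_+},G)=\Hom(h_{U_+},\pi_\ast G)=(\pi_\ast G)(U)=G(U)$ for every $G\in\underline{Pre}_k$. Therefore
\[
(\pi_{X\ast}\pi^\ast h_{U_+})(W)=\Hom_{\Sch_k}(W,U)_+
\]
for all $W\in\Sm_X$. On the other hand, the same argument applied to $v$ gives $v^\ast h_{U_+}=h_{(U\times_k X)_+}$, so $(v^\ast h_{U_+})(W)=\Hom_{\Sm_X}(W,U\times_k X)_+$. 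The universal property of the fibre product delivers a canonical, natural isomorphism $\Hom_{\Sm_X}(W,U\times_k X)\cong\Hom_{\Sch_k}(W,U)$ (a morphism $W\to U\times_k X$ over $X$ is the same as a morphism $W\to U$ over $k$, because the structure map $W\to X\to\Spec k$ is already fixed). This yields a natural isomorphism $v^\ast h_{U_+}\xrightarrow{\cong}\pi_{X\ast}\pi^\ast h_{U_+}$ of pointed simplicial presheaves on $\Sm_X$.

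Finally, I would promote this to all of $Pre_k$. Every pointed simplicial presheaf $F\in Pre_k$ is a (pointed) colimit of representables of the form $h_{U_+}$ ($U\in\Sm_k$), and the natural transformation $v^\ast \Rightarrow \pi_{X\ast}\circ\pi^\ast$ constructed on representables above extends canonically to all such colimits. Since both $v^\ast$ (as a left adjoint) and $\pi_{X\ast}\circ\pi^\ast$ (composition of a left adjoint with an objectwise-evaluation functor) preserve arbitrary pointed colimits, this extension is an isomorphism on all of $Pre_k$. This establishes the commutativity $\pi_{X\ast}\circ\pi^\ast\cong v^\ast$.

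The only subtle point is verifying that the morphism of sites $v$ really corresponds to the base-change functor $U\mapsto U\times_k X$ on the underlying categories (so that the identification of $v^\ast$ on representables is the one coming from the fibre product), but this is built into Voevodsky's formalism and is the sole reason the diagram commutes at all; once that is pinned down, the rest is formal Yoneda plus the universal property of $U\times_k X$.
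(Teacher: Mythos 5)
Your proof is correct and follows essentially the same route as the paper's: both arguments reduce to the generators, use that all three functors preserve colimits (with $\pi_{X\ast}$ colimit-preserving because it is restriction along a functor of small categories), and then perform the key identification $\Hom_{\Sch_k}(W,U)\cong\Hom_{\Sm_X}(W,U\times_k X)$ via the universal property of the fibre product. The paper states this computation on objects $K\otimes Y_{+}$, which subsumes your representable case $h_{U_+}$ (take $K=\Delta^0$) together with the simplicial tensoring, but the substance is identical.
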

\begin{proof}
We first notice that for every simplicial set $K$, 
$Y\in \Sm_k$ and $Z\in \Sm_X$, one has
\begin{equation}\label{eqn:commdiag1-0}
\pi ^{\ast}(K\otimes Y_{+})=K\otimes Y_{+} \in \underline{Pre}_{k};
\ \ \
v^{\ast}(K\otimes Y_{+})=K\otimes (Y\times X)_{+} \in Pre_{X} \
\mbox{and} 
\end{equation}
\[
\pi _{X}^{\ast}(K\otimes Z_{+})=K\otimes Z_{+} \in \underline{Pre}_{k}.
\]

We observe that $\pi^{\ast}$, $v^{\ast}$ commute with colimits since they are 
left adjoint; and that
$\pi _{X\ast}$ also commutes with colimits since it is a restriction functor.  
Hence, it suffices to show
that for every simplicial set $K$ and every 
$Y\in \Sm_k$, $\pi _{X\ast}(\pi ^{\ast}(K\otimes Y_{+}))=
v^{\ast}(K\otimes Y_{+})$.  Finally, a direct computation 
shows that $\pi _{X\ast}(K\otimes Y_{+})=K\otimes (Y\times X)_{+} \in 
Pre_{X}$ and we conclude by ~\eqref{eqn:commdiag1-0}.
\end{proof}

\begin{lem}\label{lem:prunsbasech}
The adjunctions $(\pi ^{\ast}, \pi _{\ast}):Pre _{k} \rightarrow 
\underline{Pre} _{k}$, $(v^{\ast}, v _{\ast}):Pre _{k} \rightarrow Pre_{X}$ and
$(\pi _{X} ^{\ast}, \pi _{X\ast}):Pre _{X} \rightarrow \underline{Pre} _{k}$ are 
all Quillen adjunctions. Moreover, $\pi _{X\ast}$ and $\pi_*$ preserve weak 
equivalences.
\end{lem}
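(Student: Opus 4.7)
The plan is to establish the Quillen adjunction assertion by checking that each left adjoint sends generating (trivial) cofibrations of the objectwise flasque model structure on the source to cofibrations (resp. trivial cofibrations) of the objectwise flasque model structure on the target. Since the generating sets are the explicit sets $I^{\rm sch}_{\rm clo}$ and $J^{\rm sch}_{\rm clo}$ recalled above, whose members have the form $i_I \square (\partial \Delta^n \subset \Delta^n)_+$ or $i_I \square (\Lambda^n_j \subset \Delta^n)_+$ with $i_I: \cup_{i \in I} V_i \hookrightarrow U$ built from a finite set of monomorphisms between representables in the source site, this reduces the problem to unravelling the left adjoints on such maps.

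First I would exploit that each of $\pi^{\ast}$, $v^{\ast}$, $\pi_X^{\ast}$, being a left adjoint, preserves colimits and in particular the coequalizer defining the categorical union $\cup V_i$, as well as the pushout product with a map of pointed simplicial sets. It therefore suffices to verify that each left adjoint sends a monomorphism $V \hookrightarrow U$ between representables in the source site to a monomorphism between representables in the target site that lies in the corresponding generating cofibration set. For $\pi^{\ast}$ this is immediate because the inclusion $\Sm_k \hookrightarrow \Sch_k$ preserves monomorphisms, and the same argument applies to $\pi_X^{\ast}$ via $\Sm_X \hookrightarrow \Sch_k$. For $v^{\ast}$, the computation $v^{\ast}(Y_+) = (Y \times X)_+$ from the proof of Lemma~\ref{lem:commdiag1} identifies the action on representables with base change by $X$, which preserves both monomorphisms and smoothness over $X$; compatibility with categorical unions then follows from the universal property of pullback. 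The trivial cofibration case requires no separate argument, since the simplicial factor is left unchanged and the pushout product of a monomorphism of simplicial presheaves with a trivial cofibration of simplicial sets is a trivial cofibration in the simplicial model structure.

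For the final claim, recall that weak equivalences in the objectwise flasque model structure are precisely the objectwise weak equivalences of simplicial sets at each representable. Both right adjoints $\pi_{\ast}$ and $\pi_{X\ast}$ act by restriction along the corresponding inclusion of sites ($\Sm_k \hookrightarrow \Sch_k$ and $\Sm_X \hookrightarrow \Sch_k$), so that they evaluate a presheaf on $\Sch_k$ at fewer representables. An objectwise weak equivalence therefore remains an objectwise weak equivalence after restriction, proving that $\pi_{\ast}$ and $\pi_{X\ast}$ preserve all weak equivalences.

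I do not anticipate any serious obstacle here. The main care is needed for $v^{\ast}$, where one must verify that base change by $X$ commutes with the formation of categorical unions of monomorphisms, so that the generators $i_I$ are carried to generators of the correct form. Once this bookkeeping is settled, all three Quillen adjunctions follow by a single unified verification.
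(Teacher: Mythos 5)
Your proof is correct and follows essentially the same strategy as the paper: check that each left adjoint carries the generating (trivial) cofibration sets $I^{\rm sch}_{\rm clo}$, $J^{\rm sch}_{\rm clo}$ into the corresponding sets of the target, using the explicit formulas of equation~\eqref{eqn:commdiag1-0}, and then invoke the standard criterion for Quillen adjunctions between cofibrantly generated model categories (which the paper cites as \cite[Lem.~2.1.20]{Hovey-1}); the last claim follows, as you observe, because $\pi_{\ast}$ and $\pi_{X\ast}$ are restriction functors and weak equivalences are schemewise. The only difference is that you spell out more explicitly that the left adjoints commute with categorical unions and pushout products, which the paper asserts more tersely.
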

\begin{proof}
We have seen above that all the three model categories (with the objectwise
flasque model structure) are cofibrantly generated. Moreover, it follows from 
~\eqref{eqn:commdiag1-0} that $\pi^{\ast}(I^{\rm sch}_{\rm clo}(\Sm_k))$ \\
$\subseteq I^{\rm sch}_{\rm clo}(\Sch_k)$, 
$\pi^{\ast}(J^{\rm sch}_{\rm clo}(\Sm_k))\subseteq J^{\rm sch}_{\rm clo}(\Sch_k)$;
$v^{\ast}(I^{\rm sch}_{\rm clo}(\Sm_k)) \subseteq J^{\rm sch}_{\rm clo}(\Sm_X)$, 
$v^{\ast}(J^{\rm sch}_{\rm clo}(\Sm_k)) \subseteq J^{\rm sch}_{\rm clo}(\Sm_X)$ and
$\pi _{X}^{\ast}(I^{\rm sch}_{\rm clo}(\Sm_X)) \subseteq 
I^{\rm sch}_{\rm clo}(\Sch_k)$, $\pi _{X}^{\ast}(J^{\rm sch}_{\rm clo}(\Sm_X))
\subseteq J^{\rm sch}_{\rm clo}(\Sch_k)$.  
Hence, it follows from \cite[Lem. 2.1.20]{Hovey-1} that
$(\pi ^{\ast}, \pi _{\ast})$, $(v^{\ast}, v_{\ast})$ and 
$(\pi _{X} ^{\ast}, \pi _{X\ast})$ are Quillen adjunctions.
The second part of the lemma is an immediate consequence of the 
observation that $\pi _{X\ast}$ and $\pi_*$ are restriction functors and the weak
equivalences in the objectwise flasque model structure are defined schemewise.
\end{proof}

To show that the Quillen adjunction of \ref{lem:prunsbasech} extends to the
level of motivic model structures, we consider a distinguished square 
$\alpha$ (see \cite[\S 2]{Voev-5}):
\begin{equation}\label{eqn:Dist-sq} 
  \begin{array}{c}
\xymatrix@C1pc{
Z' \ar[r] \ar[d]& Y' \ar[d]\\
		Z \ar[r]& Y}
  \end{array}
\end{equation}
in $(\Sm_k)_{Nis}$ (resp. $(\Sm_X)_{Nis}$, $(\Sch_k)_{cdh}$).  We will write
$P(\alpha)$ for the pushout of $Z \leftarrow Z'\rightarrow Y'$ in $Pre _{k}$ 
(resp. $Pre_{X}$, $\underline{Pre}_{k}$).

The motivic model category $\unsmot$ (resp. $\unsmot _{X}$, $\unsmotcdh$, 
$\unsmot _{ft}$) is 
the left Bousfield localization of $Pre _{k}$ 
(resp. $Pre_{X}$, $\underline{Pre}_{k}$, $\underline{Pre}_{k}$) with respect
to the following two sets of maps: 
\begin{itemize}
\item $P(\alpha)\rightarrow Y$ indexed by the distinguished 
squares in $(\Sm_k)_{Nis}$ (resp. $(\Sm_X)_{Nis}$, $(\Sch_k)_{cdh}$,
$(\Sch_k)_{Nis}$),
\item $p_{Y}:Y\times \A^1 _{k}\rightarrow Y$ for
$Y\in \Sm_k$ (resp. $Y\in \Sm_X$, $Y \in \Sch_k$, $Y \in \Sch_k$).
\end{itemize}
Notice that since we are working with the
flasque model structures, by \cite[Thms. 4.8-4.9]{Isak}
it is possible to consider maps from the ordinary pushout
$P(\alpha)$ instead of maps from the homotopy pushout of the diagram
$Z\leftarrow  Z' \rightarrow Y'$ \eqref{eqn:Dist-sq}.

\begin{remk}  \label{rmk.locmodstr}
We will also consider the Nisnevich (resp. $cdh$) local
model structure, i.e., the
left Bousfield localization of
$Pre _{k}$ (resp. $\underline{Pre}_{k}$) with respect
to the set of maps: 
$P(\alpha)\rightarrow Y$ indexed by the distinguished 
squares in $(\Sm_k)_{Nis}$ (resp. $(\Sch_k)_{cdh}$).
\end{remk}

We will abuse notation and write 
$(\pi ^{\ast}, \pi _{\ast}):\unsmot \rightarrow \unsmotcdh$,
$(v^{\ast}, v _{\ast}):\unsmot \rightarrow \unsmot _{X}$,
$(\pi _{X}^{\ast}, \pi _{X\ast}):\unsmot _{X} \rightarrow \unsmotcdh$ for the 
adjunctions
induced by $\pi$, $v$ and $\pi _{X}$, respectively.

\begin{prop}\label{prop:unsbasech}
The adjunctions $(\pi ^{\ast}, \pi _{\ast}):\unsmot \rightarrow \unsmotcdh$,
$(v^{\ast}, v _{\ast}):\unsmot \rightarrow \unsmot _{X}$,
$(\pi _{X}^{\ast}, \pi _{X\ast}):\unsmot _{X} \rightarrow \unsmotcdh$ are 
Quillen adjunctions.
\end{prop}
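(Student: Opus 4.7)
The plan is to apply the standard Bousfield localization criterion for Quillen adjunctions: given a Quillen adjunction $(F,G)$ between model categories and left Bousfield localizations $L_S$, $L_T$ of the source and target respectively, the pair $(F,G)$ descends to a Quillen adjunction $(L_S\mathcal{C}, L_T\mathcal{D})$ provided $F$ sends the elements of $S$ (or rather, cofibrant replacements of them) to weak equivalences in $L_T\mathcal{D}$. See, e.g., Hirschhorn's \emph{Model Categories and Their Localizations}. Since $\unsmot$, $\unsmot_X$, $\unsmotcdh$ are left Bousfield localizations of the objectwise flasque model structures on $Pre_k$, $Pre_X$, $\underline{Pre}_k$, and Lemma~\ref{lem:prunsbasech} already supplies the Quillen adjunctions at the unlocalized level, it remains only to verify that $\pi^\ast$, $v^\ast$, and $\pi_X^\ast$ each send the two distinguished classes of morphisms used to localize (namely the maps $P(\alpha) \to Y$ from distinguished squares, and the $\mathbb{A}^1$-projections $p_Y : Y \times \mathbb{A}^1 \to Y$) to weak equivalences in the motivic target category.

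First I would handle the $\mathbb{A}^1$-projections, which is straightforward: for each of the three functors, \eqref{eqn:commdiag1-0} gives $\pi^\ast(p_Y) = p_Y$ in $\underline{Pre}_k$, $v^\ast(p_Y) = p_{Y \times X}$ in $Pre_X$, and $\pi_X^\ast(p_Y) = p_Y$ in $\underline{Pre}_k$. In each case the image is the $\mathbb{A}^1$-projection of a smooth (or finite type) scheme, which is one of the localizing maps in the target motivic category and hence a weak equivalence.

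Next I would treat the distinguished-square pushouts. The key observation in each case is that $P(\alpha)$ is formed in the presheaf category, and all three left adjoints $\pi^\ast$, $v^\ast$, $\pi_X^\ast$ commute with colimits (they are left adjoints) and with the formation of the relevant schemes by \eqref{eqn:commdiag1-0}, so that $F(P(\alpha)) \to F(Y)$ is again of the form $P(F(\alpha)) \to F(Y)$. For $\pi^\ast$, a Nisnevich distinguished square in $(\Sm_k)_{\rm Nis}$ is automatically a $cdh$-distinguished square in $(\Sch_k)_{cdh}$, so the image is a localizing map for $\unsmotcdh$. For $v^\ast$, pullback of a Nisnevich distinguished square in $(\Sm_k)_{\rm Nis}$ along the smooth morphism $v: X \to \Spec k$ is a Nisnevich distinguished square in $(\Sm_X)_{\rm Nis}$ (étale maps, closed immersions, and the isomorphism-on-complement condition are all stable under arbitrary base change), so the image is a localizing map for $\unsmot_X$. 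For $\pi_X^\ast$, the same Nisnevich distinguished squares in $(\Sm_X)_{\rm Nis}$ remain $cdh$-distinguished in $(\Sch_k)_{cdh}$. Once these three claims are checked, the Bousfield localization criterion gives the result.

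The main point to watch is that we may use ordinary (rather than homotopy) pushouts $P(\alpha)$: this is precisely the role of working in the flasque model structure, as noted in the paragraph preceding the proposition, since by \cite[Thms.~4.8--4.9]{Isak} the ordinary pushout of the diagram $Z \leftarrow Z' \to Y'$ already models the homotopy pushout when the maps come from a distinguished square. Beyond this subtlety, the verification is essentially formal, and I do not expect any substantive obstacle.
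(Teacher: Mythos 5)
Your proposal is correct and takes essentially the same route as the paper's own proof: reduce to checking that each of $\pi^\ast$, $v^\ast$, $\pi_X^\ast$ sends the two classes of localizing maps (the $\mathbb{A}^1$-projections and the distinguished-square pushouts $P(\alpha)\to Y$) to weak equivalences in the motivic target, using \eqref{eqn:commdiag1-0} and the fact that these left adjoints preserve pushouts, then invoke the universal property of left Bousfield localization from Hirschhorn. The paper writes out only the $\pi^\ast$ case and declares the other two parallel; you actually spell out all three, which is harmless. Two minor slips are worth flagging, though neither affects the logic: the structure map $v:X\to\Spec k$ is \emph{not} smooth in general (only separated of finite type) — what you actually use is stability of Nisnevich distinguished squares under arbitrary base change, together with the fact that $Y\times X\to X$ is smooth whenever $Y\in\Sm_k$, so $v^\ast$ still lands in $\Sm_X$; and your parenthetical should say ``open immersions'' rather than ``closed immersions'' when listing the ingredients of a Nisnevich square that are base-change stable.
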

\begin{proof}
We will give the argument for $(\pi ^{\ast}, \pi _{\ast})$, since the other 
cases are parallel.
Consider the commutative diagram  
\[	
\xymatrix@C1pc{
Pre_{k} \ar[r]^-{\pi^{\ast}} \ar[d]_-{id}& \underline{Pre}_{k} \ar[d]^-{id}\\
		\unsmot \ar@{-->}[r]^-{\pi ^{\ast}}& \unsmotcdh,}
\]
where the solid arrows are left Quillen functors by 
\cite[Lem. 3.3.4(1)]{Hirsc} and \ref{lem:prunsbasech}.  
Thus, it follows from \cite[Def. 3.1.1(1)(b), Thm. 3.3.19]{Hirsc} 
that it suffices to check that $\pi^{\ast}(P(\alpha) \rightarrow Y)$ and
$\pi^{\ast}(Y\times \A^1 _{k} \rightarrow Y)$ are weak equivalences in 
$\unsmotcdh$.

On the one hand, it is immediate that
$\pi^{\ast}(Y\times \A^1 _{k} \rightarrow Y)= (Y\times \A^1_{k}\rightarrow Y) \in
\unsmotcdh$; hence a weak equivalence in $\unsmotcdh$.
On the other hand, $\pi^{\ast}$ commutes with pushouts since it is a left 
adjoint functor. It follows therefore from ~\eqref{eqn:commdiag1-0}
that $\pi^{\ast}(P(\alpha)\rightarrow Y)= (P(\alpha)\rightarrow Y) \in
\unsmotcdh$, hence a weak equivalence in $\unsmotcdh$.
\end{proof}
We will write $\unshomotopy$ (resp. $\unshomotopy _{X}$, $\unshomotopycdh$) for
the homotopy category of $\unsmot$ (resp. $\unsmot _{X}$, $\unsmotcdh$) and
$(\mathbf L \pi ^{\ast}, \mathbf R \pi _{\ast}):\unshomotopy \rightarrow 
\unshomotopycdh$,
$(\mathbf L v^{\ast}, \mathbf R v _{\ast}):\unshomotopy \rightarrow 
\unshomotopy _{X}$,
$(\mathbf L \pi _{X}^{\ast}, \mathbf R \pi _{X\ast}):\unshomotopy _{X} \rightarrow 
\unshomotopycdh$ for the derived adjunctions of the Quillen adjunctions in
~\ref{prop:unsbasech} (see \cite[Thm. 3.3.20]{Hirsc}).

\subsection{A $cdh$-descent for motivic spectra}\label{sec:M-Desc}
It follows from ~\eqref{eqn:commdiag1-0} that the adjunctions 
between the categories of motivic spaces induce levelwise
adjunctions $(\pi ^{\ast}, \pi _{\ast}):\Tspectra \rightarrow \Tspectracdh$,
$(v ^{\ast}, v _{\ast}):\Tspectra \rightarrow \TspectraX$,
$(\pi _{X}^{\ast}, \pi _{X\ast}):\TspectraX \rightarrow \Tspectracdh$ between the
corresponding categories of symmetric $T$-spectra such that the following
diagram commutes (see \eqref{lem:commdiag1}):
\begin{equation}\label{eqn:commdiag2} 
\begin{array}{c}
	\xymatrix{\Tspectra \ar[r]^-{\pi ^{\ast}} \ar[dr]_-{v^{\ast}}& 
\Tspectracdh 
	\ar[d]^-{\pi _{X\ast}}\\ & \TspectraX .}
\end{array}
\end{equation}

We further conclude from ~\ref{prop:unsbasech}
 and \cite[Thm. 9.3]{Hovey} the following:

\begin{prop}\label{prop:tablebasech}
The pairs 
\begin{enumerate}
\item
$(\pi ^{\ast}, \pi _{\ast}): \Tspectra \to \Tspectracdh$,
\item
$(v ^{\ast}, v _{\ast}):  \Tspectra \to \TspectraX$ and
\item
$(\pi _{X}^{\ast}, \pi _{X\ast}): \TspectraX \to \Tspectracdh$
\end{enumerate}
are Quillen adjunctions between stable model categories.
\end{prop}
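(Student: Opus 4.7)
The plan is to lift each of the three unstable Quillen adjunctions established in \ref{prop:unsbasech} to the level of symmetric $T$-spectra by invoking Hovey's general result \cite[Thm. 9.3]{Hovey}. That theorem says that a Quillen adjunction $(F,G)$ between pointed symmetric monoidal model categories whose left adjoint $F$ is (strong) symmetric monoidal and sends the chosen suspension object $T$ of the source to (a cofibrant model of) the suspension object $T$ of the target automatically prolongs to a Quillen adjunction between the corresponding categories of symmetric $T$-spectra, and moreover produces stable model structures.

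First I would note that the levelwise adjunctions of the three pairs are already constructed: this is exactly \eqref{eqn:commdiag2}, obtained by applying each unstable adjunction degreewise. So no new functor needs to be built; one only needs to check Hovey's hypotheses.

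Next I would verify those hypotheses. Strong monoidality of $\pi^*$, $v^*$, $\pi_X^*$ is immediate: each is a left adjoint, so commutes with colimits, and the smash product in each of $\unsmot$, $\unsmot_X$, $\unsmotcdh$ is determined by its values on representables together with colimits, so it suffices to check behavior on representables, which is given by \eqref{eqn:commdiag1-0}. The compatibility with the suspension object $T$ is also a direct computation from \eqref{eqn:commdiag1-0}: for $\pi^*$, $T = S^1_s \wedge (\A^1_k \setminus \{0\})$ is sent identically to the same object in $\unsmotcdh$; for $v^*$, one uses $v^*(\A^1_k \setminus \{0\}) = (\A^1_k \setminus \{0\}) \times X = \A^1_X \setminus \{0\}$, which is precisely $T$ in $\unsmot_X$; for $\pi_X^*$, the object $\A^1_X \setminus \{0\}$ is sent to itself, and this is weakly equivalent in $\unsmotcdh$ to $T = S^1_s \wedge (\A^1_k \setminus \{0\})$ via the projection (so Hovey's criterion applies, up to replacing $T$ by a cofibrant weakly equivalent model, as already permitted by \lemref{lem:cofibrant-fl}).

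With these two ingredients in hand, applying \cite[Thm. 9.3]{Hovey} to each of the three unstable Quillen adjunctions of \ref{prop:unsbasech} yields the Quillen adjunctions asserted in (1)--(3); the stability of the resulting model categories is automatic from Hovey's construction of symmetric $T$-spectra. The only delicate point in the whole argument is the compatibility check for $T$ under $\pi_X^*$, since literal equality of suspension objects fails; this is handled by noting that both sides represent the same object of the stable motivic homotopy category, so Hovey's theorem still applies.
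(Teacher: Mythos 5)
Your overall strategy—lifting the unstable Quillen adjunctions from \ref{prop:unsbasech} via Hovey's Theorem 9.3—is exactly what the paper does, and your verifications for $\pi^*$ and $v^*$ are correct: in those two cases the underlying functor of sites commutes with binary products (it is $\Sm_k\hookrightarrow\Sch_k$, resp. base change $Y\mapsto Y\times X$), so the left adjoint is strong monoidal and does carry $T$ to $T$ on the nose.

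The third case contains two genuine errors. First, $\pi_X^*$ is \emph{not} strong symmetric monoidal: the functor of sites underlying $\pi_X$ is the forgetful functor $u\colon\Sm_X\to\Sch_k$, which does not preserve products since $u(Y\times_X Z)=Y\times_X Z\neq Y\times_k Z=u(Y)\times_k u(Z)$; equivalently, $\pi_X^*(Y_+\wedge Z_+)=(Y\times_X Z)_+$ while $\pi_X^*(Y_+)\wedge\pi_X^*(Z_+)=(Y\times_k Z)_+$ in $\underline{Pre}_k$. Being a left adjoint only guarantees compatibility with colimits, not with products of representables. Second, and more seriously for your invocation of Hovey's criterion, $\pi_X^*(T_X)$ is not weakly equivalent to $T$ in $\unsmotcdh$. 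Since $\pi_X^*$ commutes with colimits and the terminal pointed presheaf of $Pre_X$ is $X$, a direct computation gives $\pi_X^*\bigl((\A^1_X\setminus\{0\},1_X)\bigr)\cong(\A^1_k\setminus\{0\},1)\wedge X_+$, i.e.\ $\pi_X^*(T_X)\cong T\wedge X_+$. The projection you invoke is the map $T\wedge X_+\to T$ induced by collapsing $X_+\to S^0$, which is an $\A^1$-local weak equivalence only when $X\to\Spec k$ is one; for a general $X$ (say $X=\mathbb{G}_m$) it is not, and the two objects likewise do not coincide in $\stablehomotopycdh$ (smashing with $H\Z$ already distinguishes them). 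What does hold—and what actually permits the levelwise construction—is a projection-type formula $\pi_X^*(T_X\wedge E)\cong T\wedge\pi_X^*(E)$ natural in $E$, which is \emph{not} the same statement as $\pi_X^*(T_X)\cong T$ (the two differ precisely because $\pi_X^*$ is not strong monoidal). Equivalently, one can note that the right adjoint $\pi_{X*}$ is strong monoidal for the sectionwise smash and satisfies $\pi_{X*}(T)=T_X$, so $\pi_{X*}$ prolongs levelwise without incident and $\pi_X^*$ is its left adjoint. Your claim that ``both sides represent the same object of the stable motivic homotopy category'' is therefore false, and the argument as written does not establish case~(3).
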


We deduce from ~\ref{prop:tablebasech} that there are pairs of
adjoint functors $(\mathbf L \pi ^{\ast}, \mathbf R \pi _{\ast}):\stablehomotopy 
\rightarrow \stablehomotopycdh$,
$(\mathbf L v^{\ast}, \mathbf R v _{\ast}):\stablehomotopy \rightarrow 
\stablehomotopy _{X}$ and
$(\mathbf L \pi _{X}^{\ast}, \mathbf R \pi _{X\ast}):\stablehomotopy _{X} 
\rightarrow \stablehomotopycdh$ between the various stable homotopy
categories of motivic $T$-spectra.  
We observe that for $a\geq b\geq 0$, the suspension functor $\Sigma^{a,b}$
in $\stablehomotopy$ (resp. $\stablehomotopy _X$, $\stablehomotopycdh$)
is the derived functor of the left Quillen functor $E\mapsto S^{a-b}_s\wedge S^b_t\wedge E$
in $\Tspectra$ (resp. $\TspectraX$, $\Tspectracdh$).
Since the functors $\pi ^{\ast}$, $v^{\ast}$, $\pi _{X}^{\ast}$
are simplicial and symmetric monoidal, we deduce that they commute 
with the suspension functors $\Sigma^{m,n}$, i.e.,
for every $m$, $n\in \mathbb Z$:
$ \mathbf L \pi ^{\ast} \circ \Sigma^{m,n} (-) \cong
\Sigma^{m,n} \circ \mathbf L \pi ^{\ast} (-)$,
$\mathbf L v ^{\ast} \circ \Sigma^{m,n} (-) \cong
\Sigma^{m,n} \circ \mathbf L v ^{\ast} (-)$ and
$ \mathbf L \pi ^{\ast}_{X} \circ \Sigma^{m,n} (-) \cong
\Sigma^{m,n} \circ \mathbf L \pi ^{\ast}_{X} (-)$.

Recall that $\unsmot _{ft}$ is the motivic category for the Nisnevich topology
in $\Sch _k$.  We will write $Spt(\unsmot _{ft})$ for the category of symmetric
$T$-spectra on $\unsmot _{ft}$ equipped with the stable model structure
considered in \cite[8.7]{Hovey}.

It is well known \cite[p. 198]{Jardine-2} that $Spt(\unsmot _{ft})$ (resp. $\TspectraX$, $X\in \Sch_k$) 
is a simplicial model category
\cite[9.1.6]{Hirsc}.  For $E$, $E' \in Spt(\unsmot _{ft})$ (resp. $\TspectraX$), we will write $Map(E,E')$ 
(resp. $Map_X (E,E')$)
for the simplicial set of maps from $E$ to $E'$, i.e., the simplicial set with
$n$-simplices of the form $\Hom _{Spt(\unsmot _{ft})}(E\otimes \Delta ^n, E')$
(resp. $\Hom _{\TspectraX}(E\otimes \Delta ^n, E')$).

Given $f:X\rightarrow X'$, we observe that the Quillen adjunction $(f^\ast, f_\ast):Spt(\unsmot _{X'})\rightarrow
\TspectraX$ \cite[Thm. 4.5.14]{Ay-2} is enriched on simplicial sets, i.e.
$Map_{X}(f^\ast E', E)\cong Map_{X'}(E', f_\ast E)$ for $E\in \TspectraX$, $E' \in Spt(\unsmot _{X'})$.

The following result is a direct consequence of the proper base change theorem in
motivic homotopy theory \cite[Cor. 1.7.18]{Ay}, \cite[2.3.11(2)]{CisDegtr-1},
\cite[Prop. 3.7]{Cisinski}.

\begin{prop}\label{prop:basechfor}
The functor $\mathbf L v ^{\ast}$
is naturally equivalent to the composition 
$\mathbf R \pi _{X\ast}\circ \mathbf L \pi ^{\ast}$.
\end{prop}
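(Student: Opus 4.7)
The plan is to upgrade the strict equality $v^* = \pi_{X*} \circ \pi^*$ --- obtained by promoting \lemref{lem:commdiag1} levelwise to symmetric $T$-spectra via the commutative diagram \eqref{eqn:commdiag2} --- to a natural isomorphism of derived functors. For a cofibrant $E \in \Tspectra$, fix a motivic-fibrant replacement $\pi^*(E) \to F$ in $\Tspectracdh$. Since $v^*$ and $\pi^*$ are both left Quillen by \propref{prop:tablebasech}, we obtain the comparison
\[
\eta_E \colon \mathbf L v^*(E) \;=\; v^*(E) \;=\; \pi_{X*}(\pi^*(E)) \;\longrightarrow\; \pi_{X*}(F) \;=\; \mathbf R \pi_{X*}(\mathbf L \pi^*(E)),
\]
the middle identity being the strict equality promoted to the spectrum level. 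The proposition amounts to checking that the natural transformation $\eta$ is an isomorphism in $\stablehomotopy _{X}$.

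Next I would reduce to a set of compact generators. Both $\mathbf L v^*$ and $\mathbf R \pi_{X*} \circ \mathbf L \pi^*$ are exact functors between compactly generated triangulated categories that commute with coproducts --- the right-hand side does because $\mathbf L \pi_X^*$ carries compact generators to compact generators by \cite[8.11]{Hovey}. Since the compact generators of $\stablehomotopy$ have the form $\Sigma^{p,q}\Sigma^\infty_T(Y_+)$ with $Y \in \Sm_k$ and $p, q \in \Z$, and all functors commute with the shifts $\Sigma^{p,q}$, it suffices to verify $\eta_E$ for $E = \Sigma^\infty_T(Y_+)$. The left side then equals $\Sigma^\infty_T((Y \times_k X)_+) \in \stablehomotopy _{X}$ by the computation in the proof of \lemref{lem:commdiag1}, whereas the right side is $\mathbf R \pi_{X*}(\Sigma^\infty_T(Y_+))$ with $Y$ viewed inside $\Sch_k$.

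Identifying these two expressions is the content of the proper base change theorem in motivic homotopy theory recorded by Ayoub \cite[Cor. 1.7.18]{Ay} and Cisinski--D\'eglise \cite[2.3.11(2)]{CisDegtr-1} (see also \cite[Prop. 3.7]{Cisinski}), applied to the structure morphism $v \colon X \to \Spec k$ and the cdh-versus-Nisnevich change of sites. The main obstacle is precisely invoking the correct form of this theorem: $\mathbf R \pi_{X*}$ carries an a priori nontrivial cdh-fibrant-replacement correction, and proper base change is exactly the statement that this correction vanishes on the essential image of $\mathbf L \pi^*$, reflecting a nontrivial interaction between the change of topology $(\Sch_k)_{cdh} \to (\Sm_k)_{Nis}$ and the base change along $v$.
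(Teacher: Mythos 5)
Your high-level picture is correct: the statement reduces to showing that the fibrant-replacement correction in $\mathbf R \pi_{X\ast}$ disappears after composing with $\mathbf L \pi^\ast$, and proper base change is the right ingredient. But the proposal stops precisely where the work begins, and two things in the write-up go wrong.

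First, the reduction to compact generators buys essentially nothing. The set of $E$'s for which $\eta_E$ is an isomorphism is indeed a localizing subcategory, so it does suffice to check on $\Sigma^{\infty}_T Y_{+}$. But the paper's argument applies verbatim to every cofibrant $E$, and the hard part -- understanding $\mathbf R \pi_{X\ast}(\Sigma_T^\infty Y_+)$ -- is exactly as hard as understanding the general case, because you still must understand the cdh-fibrant replacement of $\Sigma_T^\infty Y_+$ in $\Tspectracdh$. Nothing is simplified; the reduction merely defers the problem.

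Second, and this is the genuine gap: ``proper base change identifies these two expressions'' is not, as written, a step that one can carry out. None of the results you cite applies directly to the functor $\pi_{X\ast}$; they concern base-change isomorphisms $g^\ast \mathbf R f_\ast \cong \mathbf R f'_\ast g'^{\ast}$ inside the six-functor formalism, whereas $\mathbf R \pi_{X\ast}$ mixes a change of underlying category ($\Sch_k$ versus $\Sm_X$) with a change of topology (cdh versus Nisnevich), and its underived version $\pi_{X\ast}$ out of $\Tspectracdh$ does not in general preserve weak equivalences -- cdh-local equivalences need not restrict to Nisnevich-local equivalences on $\Sm_X$. What is actually needed, and what the paper proves, is that the Nisnevich-fibrant replacement $E'$ of $\pi^\ast_{ft}E$ in $Spt(\unsmot_{ft})$ is \emph{already} cdh-fibrant. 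To get there the paper (i) factors $\pi$ through the intermediate site $(\Sch_k)_{\Nis}$, using Jardine's argument that the restriction $\pi_{X\ast}$ out of $Spt(\unsmot_{ft})$ \emph{does} preserve weak equivalences, and (ii) verifies cdh-locality of $E'$ by testing every abstract blow-up square via Cisinski's criterion \cite[Prop.~3.7]{Cisinski}, where proper base change is used one square at a time to show that the relevant $4$-term diagram of $\pi_{Y\ast}E'$, $f_\ast \pi_{Y'\ast}E'$, $i_\ast\pi_{Z\ast}E'$, $j_\ast\pi_{Z'\ast}E'$ is a homotopy (co)fiber square, then converts back and forth between $\stablehomotopy_Y$ and mapping spaces in $Spt(\unsmot_{ft})$ using the simplicial enrichment of the $(f^\ast, f_\ast)$ adjunctions. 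Your proposal names proper base change but does not set up the intermediate Nisnevich model structure, does not invoke the preservation of weak equivalences by $\pi_{X\ast}$ on $Spt(\unsmot_{ft})$, and does not run the abstract blow-up square/Cisinski reduction; without these the final isomorphism $\mathbf L v^\ast \Sigma^\infty_T Y_+ \cong \mathbf R \pi_{X\ast}\Sigma^\infty_T Y_+$ is asserted, not proved.
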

\begin{proof}
We observe that the following diagram of left Quillen functors commutes:
\[  \xymatrix{\Tspectra \ar[r]^-{\pi ^{\ast}} \ar[dr]_-{\pi_{ft}^{\ast}}& 
\Tspectracdh 
	\\ & Spt(\unsmot _{ft}) . \ar[u]_-{id}}
\]
Let $E$ be a motivic $T$-spectrum in $\Tspectra$.  Without any loss of 
generality, we can assume that $E$ is cofibrant in $\Tspectra$.  
Let $\nu: \pi_{ft} ^{\ast}E\rightarrow E'$ be a functorial fibrant replacement of 
$\pi _{ft}^{\ast}E$ in $Spt(\unsmot _{ft})$.

The argument of Jardine in \cite[pp. 198-199]{Jardine-2} shows that the 
restriction functor $\pi _{X\ast}$ maps weak equivalences in $Spt (\unsmot _{ft})$ into
weak equivalences in $\TspectraX$. Combining this with ~\eqref{eqn:commdiag2},
we  deduce that $\pi _{X\ast}(\nu):\pi _{X\ast}(\pi _{ft}^{\ast}E)= \pi _{X\ast}(\pi ^{\ast}E)=
v^{\ast}E\rightarrow \pi _{X\ast}E'$ is a weak equivalence in $\TspectraX$.  
Since $E$ is cofibrant in $\Tspectra$, $\mathbf L v ^{\ast} E\cong v^\ast E$.
Hence, in order to conclude it suffices to show that $E'$ is fibrant in $\Tspectracdh$.

We shall use the following notation in the rest of the proof: for 
$Y\in \Sch _k$, we will
write $v_Y:Y\rightarrow \Spec (k)$ for the structure map.  Notice that we have proved
that $\mathbf L v_Y ^\ast E \cong v_Y^\ast E \cong \pi _{Y\ast}E'$ in $\stablehomotopy _Y$.
Consider a distinguished abstract
blow-up square in $\Sch _k$, i.e., a distinguished square in the lower cd-structure
defined in \cite[\S 2]{Voev-5}:
\begin{align*}
  \begin{array}{c}
\xymatrix@C1pc{
Z' \ar[r]^-{i'} \ar[d]_-{f'}& Y' \ar[d]^-f\\
		Z \ar[r]_-i& Y}
  \end{array}
\end{align*}
Let $j=i\circ f'$.  Then $\mathbf R f_\ast \mathbf L f ^\ast(\mathbf L v_Y ^\ast E)\cong
\mathbf R f_\ast \mathbf L (v_Y \circ f) ^\ast E\cong \mathbf R f_\ast \pi _{Y' \ast}E'
\cong f_\ast \pi _{Y' \ast}E'$ in $\stablehomotopy _Y$; where the last isomorphism
follows from the fact that $\pi _{Y' \ast}E'$ is fibrant in $Spt (\unsmot _{Y'})$,
since $E'$ is fibrant in $Spt(\unsmot _{ft})$ and the restriction functor
$\pi _{Y'}:Spt(\unsmot _{ft}) \rightarrow Spt (\unsmot _{Y'})$ is a right Quillen functor
(using the same argument as in \ref{prop:tablebasech}).  Similarly, we conclude that
$\mathbf R i_\ast \mathbf L i^\ast(\mathbf L v_Y ^\ast E)\cong i_\ast \pi _{Z \ast}E'$
and $\mathbf R j_\ast \mathbf L j^\ast(\mathbf L v_Y ^\ast E)\cong j_\ast \pi _{Z' \ast}E'$
in $\stablehomotopy _Y$.

Thus, by \cite[Prop. 3.7]{Cisinski} we conclude that the commutative diagram:
\begin{align*}
  \begin{array}{c}
\xymatrix@C1pc{
\pi _{Y\ast}E' \ar[r] \ar[d]&  f_\ast \pi _{Y' \ast}E' \ar[d]\\
		i_\ast \pi _{Z \ast}E' \ar[r]& j_\ast \pi _{Z' \ast}E'}
  \end{array}
\end{align*}
is a homotopy cofiber square in $Spt (\unsmot _Y)$ \cite[13.5.8]{Hirsc}, thus also
a homotopy fibre square since $Spt (\unsmot _Y)$ is a stable model category, i.e.
its homotopy category is triangulated.  Since $\Sigma _T ^\infty Y_+$ is cofibrant
in $Spt (\unsmot _Y)$ and $\pi _{Y\ast}E'$, $f_\ast \pi _{Y' \ast}E'$, $i_\ast \pi _{Z \ast}E'$,
$ j_\ast \pi _{Z' \ast}E'$ are fibrant; 
combining \cite[9.1.6 M7]{Hirsc} and \cite[9.7.5(1)]{Hirsc} we conclude that the induced
commutative diagram is a homotopy fibre square of simplicial sets:
\begin{align*}
  \begin{array}{c}
\xymatrix@C1pc{
Map_Y (\Sigma _T ^\infty Y_+, \pi _{Y\ast}E') \ar[r] \ar[d]&  
Map_Y (\Sigma _T ^\infty Y_+,f_\ast \pi _{Y' \ast}E') \ar[d]\\
		Map_Y (\Sigma _T ^\infty Y_+, i_\ast \pi _{Z \ast}E') \ar[r]& 
		Map_Y (\Sigma _T ^\infty Y_+, j_\ast \pi _{Z' \ast}E')}
  \end{array}
\end{align*}
Since the adjunction $(f^\ast, f_\ast)$ is enriched in simplicial sets, we conclude that:
\[Map_Y (\Sigma _T ^\infty Y_+,f_\ast \pi _{Y' \ast}E')\cong
Map_{Y'}(f^\ast \Sigma _T ^\infty Y_+,\pi _{Y' \ast}E')\cong
Map_{Y'}(\Sigma _T ^\infty Y'_+,\pi _{Y' \ast}E')
\]
and by definition $Map_{Y'}(\Sigma _T ^\infty Y'_+,\pi _{Y' \ast}E')\cong 
Map (\Sigma _T ^\infty Y'_+,E')$.  Similarly, we conclude that 
$Map_Y (\Sigma _T ^\infty Y_+, \pi _{Y\ast}E')\cong Map (\Sigma _T ^\infty Y_+, E')$,
$Map_Y (\Sigma _T ^\infty Y_+, i_\ast \pi _{Z \ast}E')\cong 
Map (\Sigma _T ^\infty Z_+, E')$ and $Map_Y (\Sigma _T ^\infty Y_+, j_\ast \pi _{Z' \ast}E')
\cong Map (\Sigma _T ^\infty Z'_+, E')$.  Therefore, the following is
a homotopy fibre square of simplicial sets:
\begin{align*}
  \begin{array}{c}
\xymatrix@C1pc{
Map(\Sigma _T ^\infty Y_+, E') \ar[r] \ar[d]&  
Map(\Sigma _T ^\infty Y'_+,E') \ar[d]\\
		Map(\Sigma _T ^\infty Z_+, E') \ar[r]& 
		Map(\Sigma _T ^\infty Z'_+, E')}
  \end{array}
\end{align*}
Since $\Sigma _T ^\infty Z'_+ \rightarrow \Sigma _T ^\infty Y'_+$ is a cofibration
in $Spt (\unsmot _{ft})$ and $E'$ is fibrant in $Spt (\unsmot _{ft})$,
we deduce that $Map(\Sigma _T ^\infty Y'_+, E') \rightarrow  Map(\Sigma _T ^\infty Z'_+,E')$
is a fibration of simplicial sets \cite[9.1.6 M7]{Hirsc}.
We observe that the functor $Map(-, E')$ maps pushout squares in $Spt (\unsmot _{ft})$
into pullback squares of simplicial sets \cite[9.1.8]{Hirsc}; thus by
\cite[13.3.8]{Hirsc} we conclude that the map:
\[ Map(\Sigma _T ^\infty Y_+, E')\rightarrow
Map(\Sigma _T ^\infty P(\alpha), E')
\]
induced by $P(\alpha)\rightarrow Y$
is a weak equivalence of simplicial sets, where
$P(\alpha)$ is the pushout of $Z\leftarrow Z'\rightarrow Y'$ in $\underline{Pre}_{k}$.
Finally, by \cite[4.1.1(2)]{Hirsc} we conclude that $E'$ is fibrant in
$\Tspectracdh$ since by construction $\Tspectracdh$ is the left Bousfield localization
of $Spt(\unsmot _{ft})$ with respect to the maps of the form
$\Sigma _T ^{\infty}(P(\alpha)\rightarrow Y_+)$ indexed by the abstract
blow-up squares in $\Sch _k$.
\end{proof}

The following result should be compared with \cite[Prop. 3.7]{Cisinski}.

\begin{thm}\label{thm:basch2}
Let $v: X \to \Spec(k)$ be in $\Sch_k$.
Given a motivic $T$-spectrum $E \in \stablehomotopy$, $Y\in \Sm_X$ 
and integers $m$, $n \in \Z$, there is a natural isomorphism
\[
\Hom_{\stablehomotopy _{X}}(\Sigma^{\infty}_TY_{+},
\Sigma^{m,n} \mathbf L v^{\ast}E)  \cong
\Hom _{\stablehomotopycdh}(\Sigma^{\infty}_TY_{+},\Sigma^{m,n}  
\mathbf L \pi ^{\ast} E).
\]
\end{thm}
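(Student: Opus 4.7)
The statement should follow from a short chain of formal manipulations built on \ref{prop:basechfor}. The plan is to rewrite the left-hand side using the proper base change $\mathbf{L}v^{\ast}\cong \mathbf{R}\pi_{X\ast}\circ \mathbf{L}\pi^{\ast}$, then exchange the suspension past $\mathbf{R}\pi_{X\ast}$, and finally invoke the derived $(\mathbf{L}\pi_X^{\ast},\mathbf{R}\pi_{X\ast})$-adjunction together with the fact that $\pi_X^{\ast}$ carries $\Sigma_T^{\infty}Y_+$ to $\Sigma_T^{\infty}Y_+$.

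First, I would apply \ref{prop:basechfor} to replace $\mathbf{L}v^{\ast}E$ by $\mathbf{R}\pi_{X\ast}\mathbf{L}\pi^{\ast}E$, so the left-hand side becomes $\Hom_{\stablehomotopy_X}\bigl(\Sigma_T^{\infty}Y_+,\Sigma^{m,n}\mathbf{R}\pi_{X\ast}\mathbf{L}\pi^{\ast}E\bigr)$. Next, since $\pi_X^{\ast}$ is a simplicial symmetric monoidal left Quillen functor (as noted in the paragraph following \ref{prop:tablebasech}), it commutes with $\Sigma^{m,n}$; because $\Sigma^{m,n}$ is an invertible triangulated automorphism, the derived right adjoint $\mathbf{R}\pi_{X\ast}$ likewise commutes with $\Sigma^{m,n}$ (the natural isomorphism $\mathbf{L}\pi_X^{\ast}\Sigma^{m,n}\cong \Sigma^{m,n}\mathbf{L}\pi_X^{\ast}$ transposes under adjunction). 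Hence
\[
\Sigma^{m,n}\mathbf{R}\pi_{X\ast}\mathbf{L}\pi^{\ast}E\;\cong\;\mathbf{R}\pi_{X\ast}\bigl(\Sigma^{m,n}\mathbf{L}\pi^{\ast}E\bigr).
\]

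Applying the derived adjunction $(\mathbf{L}\pi_X^{\ast},\mathbf{R}\pi_{X\ast})$ from \ref{prop:tablebasech} then gives
\[
\Hom_{\stablehomotopy_X}\bigl(\Sigma_T^{\infty}Y_+,\mathbf{R}\pi_{X\ast}\Sigma^{m,n}\mathbf{L}\pi^{\ast}E\bigr)
\;\cong\;
\Hom_{\stablehomotopycdh}\bigl(\mathbf{L}\pi_X^{\ast}\Sigma_T^{\infty}Y_+,\Sigma^{m,n}\mathbf{L}\pi^{\ast}E\bigr).
\]
Finally, for $Y\in\Sm_X$ the spectrum $\Sigma_T^{\infty}Y_+$ is built by smashing with copies of $T$ the cofibrant object $Y_+\in \unsmot_X$ (cofibrancy by \ref{lem:cofibrant-fl} applied to $\emptyset\to Y$), so it is cofibrant in $\TspectraX$; therefore $\mathbf{L}\pi_X^{\ast}\Sigma_T^{\infty}Y_+\cong \pi_X^{\ast}\Sigma_T^{\infty}Y_+$, and by the levelwise description \eqref{eqn:commdiag1-0} this equals $\Sigma_T^{\infty}Y_+$ in $\Tspectracdh$.

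The only mildly delicate step is the commutation of $\Sigma^{m,n}$ with $\mathbf{R}\pi_{X\ast}$; everything else is direct bookkeeping. This is handled by passing through the stable/triangulated adjunction and using that $\Sigma^{m,n}$ is an equivalence of categories both upstairs and downstairs, so any natural isomorphism on left adjoints of the form $\mathbf{L}\pi_X^{\ast}\Sigma^{m,n}\cong\Sigma^{m,n}\mathbf{L}\pi_X^{\ast}$ automatically transposes to one between the right adjoints. With this in place, assembling the three displayed isomorphisms yields the desired identification.
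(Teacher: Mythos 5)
Your proof is correct and follows essentially the same route as the paper: apply the base-change isomorphism $\mathbf{L}v^{\ast}\cong\mathbf{R}\pi_{X\ast}\circ\mathbf{L}\pi^{\ast}$ from \ref{prop:basechfor}, pass through the $(\mathbf{L}\pi_X^{\ast},\mathbf{R}\pi_{X\ast})$-adjunction, and identify $\mathbf{L}\pi_X^{\ast}\Sigma_T^{\infty}Y_+$ with $\Sigma_T^{\infty}Y_+$ by the cofibrancy argument via \ref{lem:cofibrant-fl}. The only small difference is bookkeeping: the paper first writes $\Sigma^{m,n}\mathbf{L}v^{\ast}E\cong\mathbf{L}v^{\ast}\Sigma^{m,n}E$ and later $\mathbf{L}\pi^{\ast}\Sigma^{m,n}E\cong\Sigma^{m,n}\mathbf{L}\pi^{\ast}E$, so it only ever commutes $\Sigma^{m,n}$ past the \emph{left} adjoints (which the text states explicitly), whereas you commute $\Sigma^{m,n}$ past the \emph{right} adjoint $\mathbf{R}\pi_{X\ast}$, which requires the extra (correct, but not entirely free) observation that the natural isomorphism transposes to right adjoints because $\Sigma^{m,n}$ is invertible. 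Both are valid; the paper's variant is marginally more self-contained given what is already recorded after \ref{prop:tablebasech}.
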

\begin{proof}
By ~\ref{prop:basechfor}, 
$\mathbf L v^{\ast}(-) \cong \mathbf (R \pi _{X\ast} \circ \mathbf L \pi
 ^{\ast})(-)$ in $\stablehomotopy _{X}$.  Thus, by adjointness:
\[
\begin{array}{lll}
\Hom_{\stablehomotopy _{X}}(\Sigma^{\infty}_TY_{+},
\Sigma^{m,n} \mathbf L v^{\ast}E)  & \cong &
\Hom_{\stablehomotopy _{X}}(\Sigma^{\infty}_TY_{+},
\mathbf L v^{\ast} (\Sigma^{m,n}E)) \\
& \cong & 
\Hom _{\stablehomotopycdh}(\mathbf L \pi _{X}^{\ast} \Sigma^{\infty}_TY_{+}, 
\mathbf L \pi ^{\ast} (\Sigma^{m,n}E)) \\
& \cong & 
\Hom _{\stablehomotopycdh}(\mathbf L \pi _{X}^{\ast} \Sigma^{\infty}_TY_{+}, 
\Sigma^{m,n} \mathbf L \pi ^{\ast} E).
\end{array}
\]

Finally, it follows from ~\ref{lem:cofibrant-fl} that 
$\Sigma^{\infty}_TY_{+}$ is cofibrant in the levelwise flasque
model structure and hence in any of its localizations.
In particular, it is cofibrant in the stable model structure of motivic
$T$-spectra. We conclude that 
$\mathbf L \pi _{X}^{\ast} \Sigma^{\infty}_TY_{+} \cong
\pi _{X}^{\ast} \Sigma^{\infty}_TY_{+} \cong \Sigma^{\infty}_TY_{+}$.
The corollary now follows. 
\end{proof}

\begin{remk}\label{remk:desc-abs}
The above result could be called a $cdh$-descent theorem because it
implies $cdh$-descent for many motivic spectra
(see \cite[Prop. 3.7]{Cisinski}). In particular, it
implies $cdh$-descent for absolute motivic spectra (e.g., $\KGL$ and $\MGL$). 
Recall from \cite[\S 1.2]{Deglise} that an absolute motivic spectrum $E$ is
a section of a 2-functor from $\Sch_k$ to triangulated categories
such that for any $f: X' \to X$ in $\Sch_k$, the canonical map
$f^*E_X \to E_{X'}$ is an isomorphism.
\end{remk}

\begin{lem}\label{lem:desc-ext-0}
Let $f: Y \to X$ be a smooth morphism in $\Sch_k$. Let
$v: X \to \Spec(k)$ be the structure map and $u = v \circ f$.
Given any $E \in \stablehomotopy$,
the map $\Hom_{\stablehomotopy _{X}}(\Sigma^{\infty}_TY_{+}, \mathbf L v^{\ast}E)
\to \Hom_{\stablehomotopy _{Y}}(\Sigma^{\infty}_TY_{+}, \mathbf L u^{\ast}E)$
is an isomorphism.
\end{lem}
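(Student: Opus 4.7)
The plan is to apply Theorem~\ref{thm:basch2} to both sides in order to reduce the claim to a tautological identification with a common Hom-group in $\stablehomotopycdh$. Since $f: Y \to X$ is smooth, $Y$ lies in $\Sm_X$, so Theorem~\ref{thm:basch2} (with base $X$ and the smooth $X$-scheme $f: Y \to X$) gives
\[
\Hom_{\stablehomotopy _{X}}(\Sigma^{\infty}_T Y_{+}, \mathbf{L} v^{\ast} E) \;\cong\; \Hom_{\stablehomotopycdh}(\Sigma^{\infty}_T Y_{+}, \mathbf{L}\pi^{\ast} E).
\]
On the other hand, $Y$ is trivially smooth over itself, so applying Theorem~\ref{thm:basch2} now with $X$ replaced by $Y$ and the smooth $Y$-scheme taken to be $Y$ itself yields
\[
\Hom_{\stablehomotopy _{Y}}(\Sigma^{\infty}_T Y_{+}, \mathbf{L} u^{\ast} E) \;\cong\; \Hom_{\stablehomotopycdh}(\Sigma^{\infty}_T Y_{+}, \mathbf{L}\pi^{\ast} E).
\]
Both sides of the proposed isomorphism are thereby canonically identified with the same Hom-group in $\stablehomotopycdh$.

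To finish, I would unwind each identification using Proposition~\ref{prop:basechfor}. The left-hand isomorphism decomposes as
\begin{align*}
\Hom_{\stablehomotopy _{X}}(\Sigma^{\infty}_T Y_{+}, \mathbf{L} v^{\ast} E)
&\cong \Hom_{\stablehomotopy _{X}}(\Sigma^{\infty}_T Y_{+}, \mathbf{R} \pi_{X\ast}\mathbf{L}\pi^{\ast} E) \\
&\cong \Hom_{\stablehomotopycdh}(\mathbf{L}\pi_{X}^{\ast}\Sigma^{\infty}_T Y_{+}, \mathbf{L}\pi^{\ast} E) \\
&\cong \Hom_{\stablehomotopycdh}(\Sigma^{\infty}_T Y_{+}, \mathbf{L}\pi^{\ast} E),
\end{align*}
where the final isomorphism uses Lemma~\ref{lem:cofibrant-fl} (which makes $\Sigma^{\infty}_T Y_{+}$ cofibrant in the level flasque model structure, hence in its stabilization) together with the fact that $\pi_{X}^{\ast}$ acts as the identity on representable smooth $X$-schemes by \eqref{eqn:commdiag1-0}. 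A strictly parallel chain gives the analogous decomposition of the right-hand identification, with $Y$ in place of $X$ and $\pi_Y$ in place of $\pi_X$. What then remains is to verify that the canonical comparison map of the lemma corresponds, under these two decompositions, to the identity map on $\Hom_{\stablehomotopycdh}(\Sigma^{\infty}_T Y_{+}, \mathbf{L}\pi^{\ast} E)$.

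The main obstacle is precisely this compatibility. It amounts to the naturality, in the base, of the base-change isomorphism $\mathbf{L}v^{\ast} E \cong \mathbf{R}\pi_{X\ast}\mathbf{L}\pi^{\ast} E$ of Proposition~\ref{prop:basechfor}: one needs a commutative square connecting the chains for $v$ and for $u$ via the morphism $f$. This follows from the commutativity of the underlying diagram of sites (Lemma~\ref{lem:commdiag1} applied to both $X$ and $Y$) together with the proper base-change properties \cite[Cor.~1.7.18]{Ay}, \cite[Prop.~3.7]{Cisinski} already invoked in the proof of Proposition~\ref{prop:basechfor}.
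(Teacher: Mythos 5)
Your route is genuinely different from the paper's. The paper's proof exploits the fact that for a \emph{smooth} morphism $f$ the inverse image $\mathbf{L}f^{\ast}:\stablehomotopy_X\to\stablehomotopy_Y$ has a \emph{left} adjoint $\mathbf{L}f_{\sharp}$ satisfying $\mathbf{L}f_{\sharp}(\Sigma^{\infty}_TY_+)\cong\Sigma^{\infty}_TY_+$ (by \cite[Prop.~3.1.23(1)]{MV}); the lemma is then a two-line adjunction calculation
\[
\Hom_{\stablehomotopy_X}(\Sigma^{\infty}_TY_+,\mathbf{L}v^{\ast}E)\cong
\Hom_{\stablehomotopy_X}(\mathbf{L}f_{\sharp}\Sigma^{\infty}_TY_+,\mathbf{L}v^{\ast}E)\cong
\Hom_{\stablehomotopy_Y}(\Sigma^{\infty}_TY_+,\mathbf{L}f^{\ast}\mathbf{L}v^{\ast}E)\cong
\Hom_{\stablehomotopy_Y}(\Sigma^{\infty}_TY_+,\mathbf{L}u^{\ast}E),
\]
and the isomorphism produced \emph{is} the comparison map by the very definition of that map, so no further compatibility must be checked. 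You instead apply Theorem~\ref{thm:basch2} twice (once over $X$, once over $Y$) to land both Hom-groups in $\stablehomotopycdh$. That is a valid reduction and your two identifications are indeed correct. But the remaining step you flag as ``the main obstacle'' --- that the lemma's comparison map corresponds to the identity of $\Hom_{\stablehomotopycdh}(\Sigma^{\infty}_TY_+,\mathbf{L}\pi^{\ast}E)$ under your two identifications --- is not a side remark; it is essentially the whole content of the lemma, repackaged. You sketch it via naturality of Proposition~\ref{prop:basechfor} and Lemma~\ref{lem:commdiag1}, which is plausible (the key point is that $\pi_X^{\ast}$ and $\pi_Y^{\ast}$ both send $\Sigma^{\infty}_TY_+$ to $\Sigma^{\infty}_TY_+$ and that $\pi_{Y\ast}$ factors as $f^{\ast}\circ\pi_{X\ast}$ by commutativity of the site diagram), but you would need to actually chase the unit/counit squares to convert this sketch into a proof. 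The net effect: your argument works but is circuitous, trading a one-step adjunction identity for a $cdh$-descent theorem plus a nontrivial compatibility verification. The $f_{\sharp}$-adjunction proof the paper uses is both shorter and avoids having to unwind Proposition~\ref{prop:basechfor} at all.
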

\begin{proof}
The functor
$\mathbf L f^{\ast}:\stablehomotopy_X \rightarrow \stablehomotopy _Y$
admits a left adjoint $\mathbf L f_{\sharp}:\stablehomotopy _Y \rightarrow 
\stablehomotopy_X$ by \cite[Prop. 4.5.19]{Ay-2} (see also \cite[Scholium 1.4.2]{Ay}).
Since $f: Y \to X$ is smooth, we have 
$\mathbf L f_{\sharp}(\Sigma^{\infty}_TY_{+}) =
\Sigma^{\infty}_TY_{+}$ by \cite[Prop.~3.1.23(1)]{MV} and we get
\[
\begin{array}{lll}
\Hom_{\stablehomotopy _{X}}(\Sigma^{\infty}_TY_{+}, \mathbf L v^{\ast}E)
& \cong & \Hom_{\stablehomotopy _{X}}(\mathbf L f_{\sharp}
(\Sigma^{\infty}_TY_{+}), \mathbf L v^{\ast}E) \\
& \cong & 
\Hom_{\stablehomotopy _{Y}}(\Sigma^{\infty}_TY_{+}, \mathbf L f^{\ast} \circ 
\mathbf L v^{\ast}E) \\
& \cong & 
\Hom_{\stablehomotopy _{Y}}(\Sigma^{\infty}_TY_{+}, \mathbf L u^{\ast}E)
\end{array}
\]
and the lemma follows.
\end{proof}

A combination of ~\ref{lem:desc-ext-0} and ~\ref{thm:basch2} yields:

\begin{cor}\label{cor:smbasch2}
Under the same hypotheses and notation of \ref{thm:basch2}, assume in addition that 
$X\in \Sm _k$.  Then 
there are natural isomorphisms:
\[
\Hom_{\stablehomotopy}(\Sigma^{\infty}_TY_{+},
\Sigma^{m,n} E) \cong
\Hom_{\stablehomotopy _{X}}(\Sigma^{\infty}_TY_{+},
\Sigma^{m,n} \mathbf L v^{\ast}E)  \cong
\Hom _{\stablehomotopycdh}(\Sigma^{\infty}_TY_{+},\Sigma^{m,n}  
\mathbf L \pi ^{\ast} E).
\]
\end{cor}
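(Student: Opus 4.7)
The isomorphism between the middle and the right-hand terms is precisely the content of \ref{thm:basch2} applied to $X$. So all that remains is to produce the isomorphism between the leftmost and middle terms, and this should come entirely from two applications of \ref{lem:desc-ext-0}.

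The key observation is that the hypothesis $X \in \Sm_k$ allows us to use \ref{lem:desc-ext-0} in two different ways. First, since $Y \in \Sm_X$ means $f: Y \to X$ is smooth, and $v: X \to \Spec(k)$ is smooth (because $X \in \Sm_k$), the composition $u = v \circ f: Y \to \Spec(k)$ is smooth, so $Y \in \Sm_k$ and it makes sense to consider $\Sigma^\infty_T Y_+$ as an object of $\stablehomotopy$. I plan to apply \ref{lem:desc-ext-0} once to the smooth map $f: Y \to X$, which produces
\[
\Hom_{\stablehomotopy_{X}}(\Sigma^{\infty}_TY_{+},\Sigma^{m,n} \mathbf L v^{\ast}E) \;\cong\; \Hom_{\stablehomotopy_{Y}}(\Sigma^{\infty}_TY_{+}, \mathbf L u^{\ast}(\Sigma^{m,n}E)),
\]
after using the fact already recorded in the text that $\Sigma^{m,n}$ commutes with the derived inverse image functors.

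Next, I would apply \ref{lem:desc-ext-0} a second time, now to the smooth morphism $u: Y \to \Spec(k)$ itself; taking the role of the target $X$ in that lemma to be $\Spec(k)$ with structure map the identity, this yields
\[
\Hom_{\stablehomotopy}(\Sigma^{\infty}_TY_{+},\Sigma^{m,n} E) \;\cong\; \Hom_{\stablehomotopy_{Y}}(\Sigma^{\infty}_TY_{+}, \mathbf L u^{\ast}(\Sigma^{m,n}E)).
\]
Combining the two displayed isomorphisms identifies the leftmost term of the corollary with the middle term, and stringing this together with \ref{thm:basch2} completes the proof.

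There is no real obstacle here: the content is entirely bookkeeping about smoothness (to ensure $Y \in \Sm_k$ and that both $f$ and $u$ are smooth) plus two instances of the adjunction $(\mathbf L f_{\sharp}, \mathbf L f^{\ast})$ already exploited in \ref{lem:desc-ext-0}. The only thing to double-check is that the isomorphisms are the canonical ones so that they compose to give a natural isomorphism, but this is immediate from the naturality of the adjunction isomorphisms and of the unit/counit maps involved.
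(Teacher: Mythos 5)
Your proposal is correct and realizes exactly the ``combination of \ref{lem:desc-ext-0} and \ref{thm:basch2}'' that the paper records as the proof: two applications of \ref{lem:desc-ext-0}, first to $f\colon Y\to X$ and then to $u\colon Y\to\Spec(k)$ (the latter being smooth precisely because $X\in\Sm_k$, so $Y\in\Sm_k$), identify the left-hand and middle terms by bridging through $\stablehomotopy_Y$, and \ref{thm:basch2} supplies the remaining isomorphism. The observation that $\Sigma^{m,n}$ commutes with the derived pullbacks is already recorded in the text, so no further verification is needed.
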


\section{Motivic cohomology of singular schemes}\label{sec:MCS}
We continue to assume that $k$ is a perfect field of exponential 
characteristic $p$. In this section, we show that the motivic cohomology
of a scheme $X \in \Sch_k$, defined in terms of a $cdh$-hypercohomology
(see ~\ref{defn:MC-sing}), is representable in the stable homotopy category 
$\stablehomotopycdh$.

Recall from \cite[Lecture~16]{MVW} that given $T \in \Sch_k$ and an integer
$r \ge 0$, the presheaf $z_{equi}(T,r)$ on $\Sm_k$ is defined by
letting $z_{equi}(T,r)(U)$ be the free abelian group generated by the closed
and irreducible subschemes $Z \subsetneq U \times T$ which are dominant and
equidimensional of relative dimension $r$ (any fiber is either empty or all
its components have dimension $r$) over a component of $U$. 
It is known that $z_{equi}(T,r)$ is a sheaf on the big {\'e}tale site of $\Sm_k$.

Let $\underline{C}_{\ast}z_{equi}(T,r)$ denote the chain complex of
presheaves of abelian groups associated via the Dold-Kan correspondence to
the simplicial presheaf on $\Sm_k$ given by
$\underline{C}_nz_{equi}(T,r)(U) = z_{equi}(T,r)(U \times \Delta^n_k)$.
The simplicial structure on $\underline{C}_{\ast}z_{equi}(T,r)$ 
is induced by the cosimplicial scheme  $\Delta^{\bullet}_k$. 
Recall the following definition of motivic cohomology of singular schemes
from \cite[Def. 9.2]{FV}.

\begin{defn}\label{defn:MC-sing}
The motivic cohomology groups of $X \in \Sch_k$ are defined as the
hypercohomology 
\[
\begin{array}{lll}
H^m(X, \Z(n))  =   \H^{m-2n}_{cdh}(X,  \underline{C}_{\ast}z_{equi}(\A^n_k,0)_{cdh})= A_{0,2n-m}(X,\mathbb A^n).
\end{array}
\]
\end{defn}

We will also need to consider $\mathbb Z [\tfrac{1}{p}]$-coefficients.
In this case, we will write:
\[  H^m(X, \Z [\tfrac{1}{p}](n))=
\H^{m-2n}_{cdh}(X,  \underline{C}_{\ast}z_{equi}(\A^n_k,0)[\tfrac{1}{p}]).
\]

For $n < 0$, we set $H^m(X, \Z(n)) =  H^m(X, \Z [\tfrac{1}{p}](n)) = 0$.

\subsection{The motivic cohomology spectrum}\label{sec:EMSpec}
In order to represent the motivic cohomology of a singular scheme $X$ in 
$\stablehomotopy_X$, let us recall the Eilenberg-MacLane spectrum 
\[
H\Z = (K(0,0), K(1, 2), \cdots , K(n,2n), \cdots )
\]
in $\Tspectra$, where
$K(n,2n)$ is the presheaf of simplicial abelian groups on $\Sm_k$ associated to
the presheaf of chain complexes 
$\underline{C}_{\ast}z_{equi}(\A ^n_k,0)$ via the Dold-Kan
correspondence. The external product of cycles induces product maps
$K(m,2m)\wedge K(n,2n)\rightarrow K(m+n,2(m+n))$.  Notice that
$K(1,2)\cong \underline{C}_{\ast}(z_{equi}(\P^1_k, 0)/z_{equi}(\P^0_k, 0))$
\cite[16.8]{MVW}, so
composing the product maps with the canonical map
$g: T\cong \P^1 _k /\P ^0_k \to 
\underline{C}_{\ast}(z_{equi}(\P^1_k, 0)/z_{equi}(\P^0_k, 0))\cong K(1,2)$
(where the first map assigns to any morphism $U \to \P^1_k$ its graph
in $U \times \P^1_k$),
we obtain the bonding maps. $H\Z$ is a symmetric spectrum
whose symmetric structure is obtained by permuting the coordinates in 
$\A ^n _k$.
We shall not distinguish between a simplicial abelian group and the associated
chain complex of abelian groups from now on in this text and will use them
interchangeably.

\subsection{Motivic cohomology via $\stablehomotopycdh$}
\label{sec:Rep-MCoh}

Let $\mathbf{1}=\Sigma _T ^\infty (S^0_s)$ be the sphere spectrum in $\stablehomotopy$,
and let $\mathbf{1} [\tfrac{1}{p}] \in \stablehomotopy$  be the
homotopy colimit \cite[1.6.4]{Neeman-2} of the filtering diagram in $\stablehomotopy$:
\[  \xymatrix{\mathbf{1} \ar[r]^-{p}& \mathbf{1} \ar[r]^-{p}& \mathbf{1} \ar[r]^-{p}&\cdots}
\]
where $\mathbf{1} \stackrel{r}{\rightarrow} \mathbf{1}$ is the composition
of the sum map with the diagonal: $\mathbf{1} \stackrel{\Delta}{\rightarrow}
\oplus _{i=1}^r \mathbf{1} \stackrel{\Sigma}{\rightarrow} \mathbf{1}$.  
For $E\in \stablehomotopy$, we define
$E [\tfrac{1}{p}]\in \stablehomotopy$ to be $E \wedge \mathbf{1} [\tfrac{1}{p}]$.
This also makes sense in $\stablehomotopy _{X}$ and $\stablehomotopy _{cdh}$.

The following is a reformulation of the main result in \cite{FV} when $k$ admits
resolution of singularities, and the main result in \cite{Kelly} when $k$ has
positive characteristic.

\begin{thm}[\cite{CisDegtr}]\label{thm:cdhdescent}
Let $k$ be a perfect field of exponential characteristic $p$,
and let $v: X \to \Spec(k)$ be a separated scheme of finite type. 
Then for any $m$, $n \in \Z$, there is
a natural isomorphism:
\begin{equation}\label{eqn:cdhdescent-0}
\theta_X: H^m(X, \Z [\tfrac{1}{p}](n)) \xrightarrow{\cong} 
\Hom_{\stablehomotopy_X}(\Sigma^{\infty}_T X_+, 
\Sigma^{m,n} \mathbf L v^* H\Z [\tfrac{1}{p}]).
\end{equation}
\end{thm}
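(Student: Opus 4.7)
The plan is to combine the $cdh$-descent result of \thmref{thm:basch2} with the Friedlander--Voevodsky (resp.\ Kelly) identification of the motivic Eilenberg--MacLane spectrum with presheaves of equidimensional cycles.

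First, I would apply \thmref{thm:basch2} with $E = H\Z[\tfrac{1}{p}]$ and $Y = X$, viewing $X$ as an object of $\Sm_X$ via the identity morphism. This produces a natural isomorphism
\[
\Hom_{\stablehomotopy_X}\bigl(\Sigma^{\infty}_T X_{+}, \Sigma^{m,n}\mathbf{L} v^{\ast} H\Z[\tfrac{1}{p}]\bigr) \cong \Hom_{\stablehomotopycdh}\bigl(\Sigma^{\infty}_T X_{+}, \Sigma^{m,n}\mathbf{L}\pi^{\ast} H\Z[\tfrac{1}{p}]\bigr),
\]
reducing the problem to computing mapping groups into $\mathbf{L}\pi^{\ast} H\Z[\tfrac{1}{p}]$ in $\stablehomotopycdh$. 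Note that \lemref{lem:cofibrant-fl} guarantees $\Sigma^{\infty}_T X_{+}$ is cofibrant on both sides, so there is no ambiguity in the derived Hom.

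Second, I would identify the right-hand side with the $cdh$-hypercohomology group defining $H^m(X, \Z[\tfrac{1}{p}](n))$. The levels of $\pi^{\ast} H\Z$ are the presheaves $K(n,2n) = \underline{C}_{\ast} z_{equi}(\A^n_k,0)$ now regarded on $\Sch_k$. By the main theorem of \cite{FV} when $k$ admits resolution of singularities, and its extension by \cite{Kelly} for general perfect $k$ after inverting $p$, these complexes are sufficiently well behaved after $cdh$-sheafification that a stably fibrant replacement of $\pi^{\ast} H\Z[\tfrac{1}{p}]$ in $\Tspectracdh$ computes $cdh$-hypercohomology with coefficients in $\underline{C}_{\ast} z_{equi}(\A^n_k,0)[\tfrac{1}{p}]$. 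The standard unwinding of stable $\Hom$-groups from $\Sigma^{\infty}_T X_{+}$ into a $T$-$\Omega$-spectrum then yields exactly $\H^{m-2n}_{cdh}\bigl(X, \underline{C}_{\ast} z_{equi}(\A^n_k,0)[\tfrac{1}{p}]\bigr)$, which is $H^m(X, \Z[\tfrac{1}{p}](n))$ by \defref{defn:MC-sing}.

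The hard part is the second step: one needs that $\underline{C}_{\ast} z_{equi}(\A^n_k,0)[\tfrac{1}{p}]$ has the correct $cdh$-local and $\A^1$-homotopical behavior so that $\mathbf{L}\pi^{\ast} H\Z[\tfrac{1}{p}]$ is a $T$-$\Omega$-spectrum in $\stablehomotopycdh$ representing motivic cohomology. This is precisely the content of \cite{FV} in characteristic zero and what \cite{Kelly} establishes in positive characteristic via Gabber's refined alterations together with $\ell$dh-descent; my argument will simply cite these inputs and then perform the formal reduction via \thmref{thm:basch2}. Granted the Cisinski--D\'eglise framework and these two identifications, the remainder is bookkeeping with adjunctions and $\Omega$-spectrum mapping computations.
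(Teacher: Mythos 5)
Your plan is correct, but it takes a genuinely different route from the published proof. The paper proves \eqref{eqn:cdhdescent-0} directly in $\stablehomotopy_X$: it identifies $H^m(X, \Z[\tfrac{1}{p}](n))$ with the bivariant group $A_{0,2n-m}(X,\A^n)$, recognizes $\underline{C}_{\ast}z_{equi}(\A^n_k,0)$ as the motive with compact supports $M^c(\A^n_k)$, and then invokes the Cisinski--D\'eglise machinery (\cite[4.2, Prop.~4.3, Thm.~5.1, Cor.~8.6]{CisDegtr}) together with \cite[Cor.~4.1.8]{Voev-trmot} to obtain the isomorphism with $\Hom_{\stablehomotopy_X}$ in one step; the $\stablehomotopycdh$-form is then deduced afterward as \corref{thm:cdhdescent-2} by applying \thmref{thm:basch2}. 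You go the opposite way: apply \thmref{thm:basch2} first to shift the problem into $\stablehomotopycdh$, then identify the target with $cdh$-hypercohomology by unwinding the stable mapping space, using the Friedlander--Voevodsky theorems (in characteristic zero) and Kelly's $\ell dh$-descent theorems (after inverting $p$) to show that the levels of $\pi^*H\Z[\tfrac{1}{p}]$ are already $cdh$-locally $\A^1$-fibrant and form a $T$-$\Omega$-spectrum. This is a viable argument, and you correctly flag that the nontrivial input is precisely the $\Omega$-spectrum property of $\pi^*H\Z[\tfrac{1}{p}]$ in the $cdh$-motivic model structure (\cite[Thm.~4.2.2]{Voev-trmot}, \cite[Thm.~5.4.21]{Kelly}); be careful to also handle the case $n<0$, where the claimed group $H^m(X,\Z[\tfrac{1}{p}](n))$ is declared zero by definition and the corresponding vanishing of the stable Hom-groups requires a separate check. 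What your route buys is a self-contained, explicit link to the hypercohomology definition; what the published route buys is brevity by leaning entirely on the Cisinski--D\'eglise six-functor framework, at the cost of a heavier black box.
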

\begin{proof}
Recall that $H^m(X, \Z [\tfrac{1}{p}](n))=A_{0,2n-m}(X,\mathbb A^n)$ \eqref{defn:MC-sing}.
We observe that $\underline{C}_{\ast}z_{equi}(\A^n_k,0)$ is the motive with compact supports $M^c(\A ^n _k)$
of $\A ^n _k$ \cite[\S 4.1]{Voev-trmot}, \cite[16.13]{MVW}.  Combining \cite[Cor. 4.1.8]{Voev-trmot} (or
\cite[16.7, 16.14]{MVW})
with \cite[4.2, Prop. 4.3, Thm. 5.1 and Cor. 8.6]{CisDegtr}, we conclude that
\[H^m(X, \Z [\tfrac{1}{p}](n))\cong \Hom_{\stablehomotopy_X}(\Sigma ^{2n-m,0}(\Sigma^{\infty}_T X_+),  
\Sigma^{2n,n} \mathbf L v^* H\Z [\frac{1}{p}])
\]
which finishes the proof.
\end{proof}

As a combination of ~\ref{thm:basch2} and ~\ref{thm:cdhdescent},
we get

\begin{cor}\label{thm:cdhdescent-2}
Under the hypothesis and with the notation of \ref{thm:cdhdescent},
there are natural isomorphisms
\[
\begin{array}{lll}
H^m(X, \Z [\frac{1}{p}](n)) & \cong &
\Hom_{\stablehomotopycdh}(\Sigma^{\infty}_T X_+, \Sigma^{m,n} 
\mathbf L \pi^* H\Z [\frac{1}{p}])  \\
& \cong & \Hom_{\stablehomotopy_X}(\Sigma^{\infty}_T X_+,  
\Sigma^{m,n} \mathbf L v^* H\Z [\frac{1}{p}]).
\end{array}
\]
\end{cor}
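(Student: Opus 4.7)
The proof is essentially a one-line deduction: the statement is the formal combination of Theorem \ref{thm:cdhdescent} with Theorem \ref{thm:basch2}. My plan is simply to chain these together, verifying that the hypotheses are satisfied in our setting.

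First, I would apply Theorem \ref{thm:cdhdescent} directly to the structure map $v: X \to \Spec(k)$ of the separated finite-type scheme $X$, which produces the natural isomorphism
\[
H^m(X, \Z[\tfrac{1}{p}](n)) \;\cong\; \Hom_{\stablehomotopy_X}(\Sigma^{\infty}_T X_+, \Sigma^{m,n} \mathbf L v^* H\Z[\tfrac{1}{p}]).
\]
This is the second of the two isomorphisms claimed in the corollary and requires nothing more than restating the previous theorem.

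Next, I would invoke Theorem \ref{thm:basch2} with $E = H\Z[\tfrac{1}{p}] \in \stablehomotopy$ and the choice $Y = X$, noting that $X$ lies in $\Sm_X$ tautologically via the identity morphism $\mathrm{id}_X$. This yields
\[
\Hom_{\stablehomotopy_X}(\Sigma^{\infty}_T X_+, \Sigma^{m,n} \mathbf L v^* H\Z[\tfrac{1}{p}]) \;\cong\; \Hom_{\stablehomotopycdh}(\Sigma^{\infty}_T X_+, \Sigma^{m,n} \mathbf L \pi^* H\Z[\tfrac{1}{p}]),
\]
giving the remaining isomorphism. Composing the two produces all three terms of the corollary, and naturality is preserved at each step because both input theorems provide natural isomorphisms.

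There is essentially no obstacle to overcome here, since the work was done in Theorem \ref{thm:basch2} (the $cdh$-descent of motivic $T$-spectra, which in turn relied on the proper base change result Proposition \ref{prop:basechfor}) and Theorem \ref{thm:cdhdescent} (the representability of $cdh$-motivic cohomology by $\mathbf L v^* H\Z[\tfrac{1}{p}]$, which follows from the results of \cite{FV}, \cite{Kelly}, and \cite{CisDegtr}). The only subtle point to flag is that the use of $\Z[\tfrac{1}{p}]$-coefficients is essential in positive characteristic for Theorem \ref{thm:cdhdescent}, so the same restriction appears in this corollary; no additional hypothesis beyond those in Theorem \ref{thm:cdhdescent} is needed.
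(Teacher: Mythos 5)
Your proof is correct and follows exactly the route the paper takes: the corollary is stated in the paper as an immediate combination of Theorem~\ref{thm:basch2} (applied with $E = H\Z[\tfrac{1}{p}]$ and $Y = X \in \Sm_X$ via the identity) and Theorem~\ref{thm:cdhdescent}, with no additional argument. Your observation about the necessity of $\Z[\tfrac{1}{p}]$-coefficients in positive characteristic is accurate and consistent with the hypotheses carried over from Theorem~\ref{thm:cdhdescent}.
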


\section{Slice spectral sequence for singular schemes}
\label{sec:SSS}
Let $k$ be a perfect field of exponential characteristic $p$.
Given $X \in \Sch_k$, recall that Voevodsky's slice filtration of
$\stablehomotopy _{X}$ is given as follows.
For an integer $q \in \Z$, let $\Sigma^{q}_{T}\stablehomotopy _{X}^{\fnteff}$ 
denote the smallest full triangulated subcategory of $\stablehomotopy _{X}$ 
which contains $C^{q}_{\fnteff}$ and is closed under arbitrary coproducts, where
\begin{equation}\label{eq:effcat}
C^{q}_{\fnteff}=\{ F_{n}(S^{r}_s\wedge S^{s}_t \wedge Y_{+}): n, r, s\geq 0, 
s-n\geq q, Y\in \Sm_X\}.  
\end{equation}

In particular, $\stablehomotopy _{X}^{\fnteff}$ is the smallest 
full triangulated subcategory of $\stablehomotopy _{X}$ which is closed
under infinite direct sums and contains all spectra of the type
$\Sigma^{\infty}_TY_+$ with $Y \in \Sm_X$.
The slice filtration of $\stablehomotopy _{X}$ (see \cite{Voev-2}) is the
sequence of full triangulated subcategories
\[ 
\cdots \subseteq \susp{q+1}_{T}\stablehomotopy _{X}^{\fnteff} \subseteq
\susp{q}_{T}\stablehomotopy _{X}^{\fnteff} \subseteq \susp{q-1}_{T}
\stablehomotopy _{X}^{\fnteff} \subseteq \cdots
\]

It follows from the work of Neeman \cite{Neeman-1}, \cite{Neeman-2} that the 
inclusion $i_{q}:\susp{q}_{T}\stablehomotopy _{X}^{\fnteff}\rightarrow 
\stablehomotopy_X$ admits a right adjoint 
$r_{q}:\stablehomotopy_X \rightarrow \susp{q}_{T}\stablehomotopy _{X}^{\fnteff}$
and that the functors 
$f_{q}, s_{<q}, s_{q}:\stablehomotopy_X \rightarrow \stablehomotopy_X$
are triangulated; where $r_q \circ i_q$ is the identity,
$f_{q}=i_{q}\circ r_{q}$ and $s_{<q}$, $s_{q}$ are 
characterized by the existence of the following distinguished triangles in 
$\stablehomotopy _{X}$:
\begin{equation}\label{eqn:Slice-triangle}
\xymatrix@R=0.8pc{f_{q}E\ar[r] & E \ar[r]& s_{<q}E \\
			f_{q+1}E\ar[r]& f_{q}E \ar[r]& s_{q}E}
\end{equation}
for every $E\in \stablehomotopy _{X}$.

\begin{defn}  \label{def.indfil}
Let $a$, $b$, $n \in \mathbb Z$ and $Y\in Sm_X$.
Let $F^{n}E^{a,b}(Y)$ be the image of the map induced by $f_{n}E\rightarrow E$
\eqref{eqn:Slice-triangle}:
$\Hom _{\stablehomotopy _X}(\Sigma _T ^\infty Y_+, \Sigma ^{a,b}f_n E)\rightarrow
\Hom _{\stablehomotopy _X}(\Sigma _T ^\infty Y_+, \Sigma ^{a,b}E)$.  This 
determines a
decreasing filtration $F^{\bullet}$ on $E^{a,b}(Y)=\Hom _{\stablehomotopy _X}
(\Sigma _T ^\infty Y_+, \Sigma ^{a,b}E)$, and we will write $gr^{n}F^\bullet$ for 
the associated graded
$F^{n}E^{a,b}(Y)/F^{n+1}E^{a,b}(Y)$.
\end{defn}

The following result is well known \cite[\S 2]{Voev-2}.

\begin{prop}  \label{pr.filexhs}
The filtration $F^\bullet$ on $E^{a,b}(Y)$ is exhaustive 
in the sense of \cite[Def. 2.1]{Boardman}.
\end{prop}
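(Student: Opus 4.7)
My plan is to establish the stronger statement that, for every $a,b \in \Z$ and every $Y \in \Sm_X$, there exists an integer $n_0 = n_0(a,b,Y)$ such that $F^{n}E^{a,b}(Y) = E^{a,b}(Y)$ for all $n \le n_0$; equivalently, for such $n$ the canonical map
\[
\Hom_{\stablehomotopy_X}(\Sigma^\infty_T Y_+,\, \Sigma^{a,b} f_n E) \longrightarrow \Hom_{\stablehomotopy_X}(\Sigma^\infty_T Y_+,\, \Sigma^{a,b} E)
\]
is an isomorphism. Since $F^{n+1} \subseteq F^n$, this forces $\bigcup_{n} F^n E^{a,b}(Y) = E^{a,b}(Y)$, which is exactly exhaustion in the sense of Boardman.

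First, I would reduce to the unshifted spectrum. Since $\Sigma^{a,b}$ is an autoequivalence of $\stablehomotopy_X$ with inverse $\Sigma^{-a,-b}$, the displayed map is identified with
\[
\Hom_{\stablehomotopy_X}(\Sigma^{-a,-b}\Sigma^\infty_T Y_+,\, f_n E) \longrightarrow \Hom_{\stablehomotopy_X}(\Sigma^{-a,-b}\Sigma^\infty_T Y_+,\, E),
\]
so it is enough to locate $\Sigma^{-a,-b}\Sigma^\infty_T Y_+$ in the slice tower, i.e., to exhibit $q_0$ with $\Sigma^{-a,-b}\Sigma^\infty_T Y_+ \in \Sigma^{q_0}_T \stablehomotopy_X^{\fnteff}$.

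Next comes the effectivity computation. From $\Sigma^{m,n} = \Sigma^{m-n}_s \circ \Sigma^n_t$ together with $\Sigma^n_t = \Sigma^{-n}_s \circ \Sigma^n_T$ (coming from $T \cong S^1_s \wedge S^1_t$), one rewrites
\[
\Sigma^{-a,-b}\Sigma^\infty_T Y_+ \;\cong\; S^{2b-a}_s \wedge \Sigma^{-b}_T \Sigma^\infty_T Y_+,
\]
which equals $S^{2b-a}_s \wedge F_b(Y_+)$ when $b \ge 0$ and $S^{2b-a}_s \wedge \Sigma^\infty_T(T^{-b} \wedge Y_+)$ when $b < 0$. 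Comparing with the generators in \eqref{eq:effcat}, taking $(n,r,s)=(b,0,0)$ in the first case and $(n,r,s)=(0,-b,-b)$ in the second, and using that $\Sigma^{q}_T \stablehomotopy_X^{\fnteff}$ is a triangulated subcategory (hence stable under $\Sigma^{\pm 1}_s$), one concludes that $\Sigma^{-a,-b}\Sigma^\infty_T Y_+$ lies in $\Sigma^{-b}_T \stablehomotopy_X^{\fnteff}$; set $q_0 := -b$.

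Finally, I would invoke the adjunction $(i_q, r_q)$. Since $i_q$ is fully faithful and $f_q = i_q \circ r_q$, the counit $f_q E \to E$ induces an isomorphism after applying $\Hom_{\stablehomotopy_X}(X,-)$ for any $X \in \Sigma^{q}_T \stablehomotopy_X^{\fnteff}$: applying $r_q$ to the triangle \eqref{eqn:Slice-triangle} and using $r_q i_q \cong \id$ yields $r_q s_{<q}(-) = 0$, so the long exact sequence obtained from \eqref{eqn:Slice-triangle} by $\Hom_{\stablehomotopy_X}(X,-)$ collapses to the desired isomorphism. Setting $n_0 := q_0 = -b$, every $n \le n_0$ satisfies $\Sigma^{-a,-b}\Sigma^\infty_T Y_+ \in \Sigma^{n}_T \stablehomotopy_X^{\fnteff}$, and the conclusion of the reduction step follows. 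The only non-formal step is the effectivity calculation, i.e., the translation of the bigraded suspension $\Sigma^{-a,-b}$ into explicit elements of \eqref{eq:effcat}; once that bookkeeping is in place, the reduction and the adjointness argument are routine, so this is the main (and only minor) technical point.
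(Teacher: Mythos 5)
Your proof is correct, but it is packaged differently from the paper's. The paper first shows $E \cong \hocolim_q f_q E$ in $\stablehomotopy_X$, on the ground that $\stablehomotopy_X$ is compactly generated by $\bigcup_q C^q_{\fnteff}$ and isomorphisms are therefore detected by $\Hom(\Sigma^{m,n}Y_+,-)$; it then uses that $\Sigma^\infty_T Y_+$ is compact to pull the filtered colimit through $\Hom(\Sigma^\infty_T Y_+,\Sigma^{a,b}(-))$, identifying $\colim_{n\to-\infty} F^n E^{a,b}(Y)$ with $E^{a,b}(Y)$. You skip the hocolim formalism and locate the test object in the slice tower directly: the computation $\Sigma^{-a,-b}\Sigma^\infty_T Y_+ \cong S^{2b-a}_s \wedge \Sigma^{-b}_T \Sigma^\infty_T Y_+ \in \Sigma^{-b}_T\stablehomotopy_X^{\fnteff}$, combined with $r_n s_{<n}=0$ and adjunction, makes every individual inclusion $F^n E^{a,b}(Y)\subseteq E^{a,b}(Y)$ with $n\le -b$ already an equality. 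Both arguments rest on the same underlying fact — $\Sigma^{m,n}Y_+$ is eventually $q$-effective — but in the paper this is folded implicitly into the assertion $\hocolim f_q E\cong E$, whereas you make it explicit. Your route is slightly more informative: it yields a uniform degeneration bound $n_0=-b$ depending only on the bidegree, rather than mere exhaustion (and in fact the paper returns to precisely this kind of quantitative statement later, in \ref{rm.filcomp}). One small notational caution: in your final step you overload the symbol $X$, letting it denote both the fixed base scheme and a running object of $\Sigma^{q}_T\stablehomotopy_X^{\fnteff}$; use a different letter for the latter.
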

\begin{proof}
Recall that $\stablehomotopy _X$ is a compactly generated triangulated category 
in the sense of Neeman  \cite[Def. 1.7]{Neeman-1}, with set of compact generators 
\cite[Thm. 4.5.67]{Ay-2}: $\cup _{q\in \mathbb Z} C^q _{\fnteff}$
\eqref{eq:effcat}.  Therefore a map $f:E_1\rightarrow E_2$ in
$\stablehomotopy _X$ is an isomorphism if and only if for every $Y\in \Sm _X$
and every $m$, $n\in \mathbb Z$ the induced map of abelian groups
$\Hom_{\stablehomotopy _X}(\Sigma ^{m,n}Y_+, E_1) \rightarrow
\Hom_{\stablehomotopy _X}(\Sigma ^{m,n}Y_+, E_2)$
is an isomorphism.  Thus, we conclude that
$E\cong \mathrm{hocolim} f_qE$  in $\stablehomotopy _X$.

Therefore, we deduce that for every
$a$, $b\in \mathbb Z$ and every $Y\in \Sm _X$; 
there exist the following isomorphisms \cite[Lem. 2.8]{Neeman-1},
\cite[Thm. 6.8]{Isak}:
\begin{align*} 
\colim _{n\rightarrow -\infty} F^n E^{a,b}(Y) & \cong \colim _{n\rightarrow -\infty} 
\Hom _{\stablehomotopy _X} (\Sigma ^{\infty} _T Y_+, \Sigma ^{a,b}f_nE)\\ & \cong 
\Hom _{\stablehomotopy _X} (\Sigma ^{\infty} _T Y_+, \Sigma ^{a,b} 
\mathrm{hocolim}f_qE) \cong E^{a,b}(Y),
\nonumber
\end{align*}
so the filtration $F^\bullet$ is exhaustive.
\end{proof}

\subsection{The slice spectral sequence}\label{sec:SS-Conv}
Let $Y\in \Sm_X$ be a smooth $X$-scheme and $G \in \stablehomotopy _{X}$.  
Since $\stablehomotopy _{X}$ is a triangulated category, the collection of 
distinguished triangles
$\{ f_{q+1}G\rightarrow f_{q}G\rightarrow s_{q}G\}_{q\in \mathbb Z}$ determines a 
(slice) spectral sequence: 
\[E_1^{p,q}=\Hom _{\stablehomotopy _{X}}(\Sigma _T ^{\infty}
Y_+,\Sigma _s ^{p+q}s_p G)
\]
with $G^{\ast, \ast}(Y)$ as its abutment and differentials 
$d_r: E_r^{p,q}\rightarrow E_r^{p+r, q-r+1}$.

In order to study the convergence of this spectral sequence,
recall from \cite[p.~22]{Voev-2} that $G\in \stablehomotopy _{X}$ is called 
{\sl bounded} with respect to the slice filtration if for every 
$m$, $n\in \mathbb Z$ and  every $Y\in \Sm_X$, there exists 
$q\in \mathbb Z$ such that
\begin{equation}  \label{eqn:bound}
\Hom _{\stablehomotopy _{X}}(\susp{m,n}\Sigma^{\infty}_TY_{+}, f_{q+i}G)=0
\end{equation}
for every $i>0$.  Clearly the slice spectral sequence 
is strongly convergent when $G$ is bounded.

\begin{prop}\label{prop:bound-0}
Let $k$ be a field with resolution of singularities.
Let $F \in \stablehomotopy$ be bounded with respect
to the slice filtration and let 
$G=\mathbf L v^{\ast}F\in \stablehomotopy _{X}$, where
$v:X\rightarrow \Spec k$.  Then
$G$ is bounded with respect to the slice filtration.
\end{prop}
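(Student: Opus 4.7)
The plan is to prove that $\mathbf L v^{\ast}$ commutes with Voevodsky's slice truncation functors, in the sense that the canonical maps yield isomorphisms $\mathbf L v^{\ast} f_q F \cong f_q G$ and $\mathbf L v^{\ast} s_{<q} F \cong s_{<q} G$ in $\stablehomotopy _{X}$ for every $q \in \Z$. Once this commutation is in hand, the boundedness of $G$ will follow from that of $F$ by transporting $\Hom$-groups across the descent isomorphisms of \thmref{thm:basch2} and \corref{cor:smbasch2}. Applying the triangulated left adjoint $\mathbf L v^{\ast}$ to the triangle \eqref{eqn:Slice-triangle} for $F$ yields a distinguished triangle $\mathbf L v^{\ast} f_q F \to G \to \mathbf L v^{\ast} s_{<q} F$ in $\stablehomotopy _{X}$. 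Since $\mathbf L v^{\ast}$ preserves arbitrary coproducts (being a left Quillen functor; see \propref{prop:tablebasech}) and sends each generator $F_n(S^r_s \wedge S^s_t \wedge Y_{+})$ of $\Sigma^{q}_T \stablehomotopy ^{\fnteff}$, with $Y \in \Sm_k$ and $s - n \geq q$, to $F_n(S^r_s \wedge S^s_t \wedge (Y \times X)_{+})$, which lies in $\Sigma^{q}_T \stablehomotopy _{X} ^{\fnteff}$ by \eqref{eq:effcat}, the middle term $\mathbf L v^{\ast} f_q F$ automatically has the required effectivity property.

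To identify this triangle with the slice triangle of $G$, it remains to show that $\mathbf L v^{\ast} s_{<q} F$ lies in the right-orthogonal of $\Sigma^{q}_T \stablehomotopy _{X} ^{\fnteff}$. By the compact generation used in \propref{pr.filexhs}, this reduces to proving that
\[
\Hom_{\stablehomotopy _{X}}(\Sigma^{a,b}\Sigma^{\infty}_T Z_{+}, \mathbf L v^{\ast} s_{<q} F) = 0
\]
for every $Z \in \Sm_X$, every $a \in \Z$ and every $b \geq q$. By \thmref{thm:basch2} this group is isomorphic to $\Hom_{\stablehomotopycdh}(\Sigma^{a,b}\Sigma^{\infty}_T Z_{+}, \mathbf L \pi^{\ast} s_{<q} F)$, and I would establish the vanishing by Noetherian induction on $\dim Z$, viewing $Z$ as a $k$-scheme. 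The base case $Z \in \Sm_k$ is handled by \corref{cor:smbasch2}, which identifies the group with $\Hom_{\stablehomotopy}(\Sigma^{a,b}\Sigma^{\infty}_T Z_{+}, s_{<q} F)$; this vanishes because $\Sigma^{a,b}\Sigma^{\infty}_T Z_{+}$ lies in $\Sigma^{q}_T \stablehomotopy ^{\fnteff}$ whenever $b \geq q$ and $s_{<q} F$ is, by construction, in the corresponding right-orthogonal. For singular $Z$, resolution of singularities produces a $cdh$-distinguished abstract blow-up square with smooth upper-right vertex $\tilde Z$ and remaining vertices $W \subsetneq Z$ and $W' \subset \tilde Z$ of strictly smaller dimension; the induced Mayer--Vietoris distinguished triangle in $\stablehomotopycdh$ (arising from the Bousfield localization recalled in \remref{rmk.locmodstr}) gives a long exact sequence in which the induction hypothesis disposes of the terms for $W$, $W'$ and $\tilde Z$.

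With the commutation $f_q G \cong \mathbf L v^{\ast} f_q F$ established, fix $Y \in \Sm_X$ and $(m,n) \in \Z^2$. For each $i > 0$, \thmref{thm:basch2} yields
\[
\Hom_{\stablehomotopy _{X}}(\Sigma^{m,n}\Sigma^{\infty}_T Y_{+}, f_{q+i} G) \cong \Hom_{\stablehomotopycdh}(\Sigma^{m,n}\Sigma^{\infty}_T Y_{+}, \mathbf L \pi^{\ast} f_{q+i} F),
\]
and the same Noetherian induction via abstract blow-up squares controls this right-hand side in terms of a finite collection of groups $\Hom_{\stablehomotopy}(\Sigma^{m_j,n_j}\Sigma^{\infty}_T (Y_j)_{+}, f_{q+i} F)$ indexed by smooth $k$-schemes $Y_j$ and bidegrees $(m_j, n_j)$ produced by the finite resolution tree of $Y$. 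Boundedness of $F$ supplies a threshold $q_0(Y_j, m_j, n_j)$ for each such group, and their maximum furnishes the desired $q_0 = q_0(Y, m, n)$ showing that $G$ is bounded. I expect the main technical obstacle to lie in the Noetherian induction of the second paragraph: one must verify carefully that a $cdh$-distinguished abstract blow-up square actually induces a genuine Mayer--Vietoris distinguished triangle in $\stablehomotopycdh$, and that the dimensions of $W$ and $W'$ are strictly smaller than that of $Z$ so that the induction terminates after finitely many steps.
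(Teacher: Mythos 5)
Your proof is correct and reaches the same conclusion as the paper's, but the two arguments differ in one substantive way. The paper obtains the identification $f_q G \cong \mathbf L v^{\ast} f_q F$ in a single line by invoking Pelaez's functoriality theorem for the slice filtration, \cite[Thm.~3.7]{Pelaez-3}; with that in hand, it runs one dimension induction in $\stablehomotopycdh$ via abstract blow-up squares to establish the vanishing condition \eqref{eqn:bound}. You instead re-derive the commutation from scratch: you check that $\mathbf L v^{\ast}$ carries the generators of $\Sigma^{q}_{T}\stablehomotopy^{\fnteff}$ into $\Sigma^{q}_{T}\stablehomotopy_{X}^{\fnteff}$, and then --- by a separate run of essentially the same cdh-descent/resolution induction --- show that $\mathbf L v^{\ast} s_{<q} F$ lies in the right orthogonal, so that the image of the slice triangle of $F$ under $\mathbf L v^{\ast}$ is recognized as the slice triangle of $G$ by uniqueness of Bousfield localization triangles. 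What this buys you is a self-contained argument that does not lean on \cite{Pelaez-3}; what it costs is that you run the resolution-tree induction twice (once for the commutation, once for boundedness). Two small points worth making explicit when you write this out: the vanishing statement you induct on in the orthogonality step must be quantified over all $Z \in \Sch_k$, not only $Z \in \Sm_X$, since the lower-dimensional vertices produced by a blow-up square are arbitrary finite-type $k$-schemes; and the dimension drop you flag at the end does hold because $\Sigma^{\infty}_T Z_+ \cong \Sigma^{\infty}_T (Z_{\rm red})_+$ in $\stablehomotopycdh$, so one may take the center of the blow-up to be $Z_{\rm sing}$, which is a proper closed subset of the reduced scheme $Z_{\rm red}$, and the exceptional locus $W'$ is a proper closed subset of the (equidimensional) resolution $\tilde Z$.
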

\begin{proof}
Since the base field $k$ admits resolution of singularities, we
deduce by \cite[Thm.~3.7]{Pelaez-3} that 
$f_{q}G\cong \mathbf L v ^{\ast}f_{q}F$ in $\stablehomotopy _{X}$ for every
$q\in \mathbb Z$.  
It follows from ~\ref{thm:basch2} that for every 
$m$, $n\in \mathbb Z$; and  every $Y\in \Sm_X$, we have
\[  
\Hom _{\stablehomotopy _{X}}(\susp{m,n}\Sigma^{\infty}_TY_{+}, f_{q+i}G)\cong 
\Hom _{\stablehomotopycdh}(\susp{m,n}\Sigma^{\infty}_TY_{+}, 
\mathbf L \pi ^{\ast}(f_{q+i}F))
\]
for every $i>0$.  
If $X\in \Sm_k$, then $Y\in \Sm_k$ and we have
\[  \Hom _{\stablehomotopycdh}(\susp{m,n}\Sigma^{\infty}_TY_{+}, 
\mathbf L \pi ^{\ast}(f_{q+i}F))  \cong
\Hom _{\stablehomotopy}(\susp{m,n}\Sigma^{\infty}_TY_{+}, f_{q+i}F)
\]
for every $i>0$ by  \ref{cor:smbasch2}.
Since $F$ is bounded with respect to the slice filtration, 
we deduce from ~\eqref{eqn:bound} that $G$ is also bounded in 
$\stablehomotopy _{X}$ in this case.

Finally, we proceed by induction on the dimension of $Y$, and assume that for 
every
$m$, $n\in \mathbb Z$ and every $Y'\in \Sch_k$ with $\dim(Y') < \dim(Y)$; 
there exists
$q\in \mathbb Z$ such that
\[  \Hom _{\stablehomotopycdh}(\susp{m,n}\Sigma^{\infty}_TY'_{+}, 
\mathbf L \pi ^{\ast}(f_{q+i}F))=0
\]
for every $i>0$.  Since the base field $k$ admits resolution of singularities, 
there exists a $cdh$-cover 
$\{X' \amalg Z \rightarrow Y\}$ of $Y$ such that
$X' \in \Sm_k$, $\dim(Z) < \dim(Y)$ and $\dim(W) < \dim(Y)$, where  
we set $W = X' \times _{Y} Z$.

Let $q_{1}$ (resp. $q_{2}$, $q_{3}$) be the integers such
that the vanishing condition \eqref{eqn:bound}  holds for $(X',m,n)$ 
(resp. $(Z,m,n)$, $(W,m+1,n)$).
Let $q$ be the maximum of $q_{1}$, $q_{2}$ and $q_{3}$.  Then by $cdh$-excision,
for every $i>0$, the following diagram is exact:
\begin{align*}
\Hom _{\stablehomotopycdh}(\susp{m+1,n}\Sigma^{\infty}_TW, 
\mathbf L \pi ^{\ast}(f_{q+i}F))\rightarrow 
\Hom _{\stablehomotopycdh}(\susp{m,n}\Sigma^{\infty}_TY_{+}, 
\mathbf L \pi ^{\ast}(f_{q+i}F)) \rightarrow \\
\Hom _{\stablehomotopycdh}(\susp{m,n}\Sigma^{\infty}_TX'_{+}, 
\mathbf L \pi ^{\ast}(f_{q+i}F))\oplus 
\Hom _{\stablehomotopycdh}(\susp{m,n}\Sigma^{\infty}_TZ_{+}, 
\mathbf L \pi ^{\ast}(f_{q+i}F)).
\end{align*}

By the choice of $q$, both ends in the diagram vanish, hence the group in the 
middle also vanishes as we wanted.
\end{proof}

In order to get convergence results in positive characteristic, we need to restrict to
spectra $E\in \stablehomotopy$ which admit a structure of traces 
\cite[4.2.27 and 4.3.1]{Kelly}.

\begin{lem}  \label{lem.compcoef1}
With the notation of \ref{eqn:commdiag2},
let $X\in \Sch _k$.  Then:
\begin{enumerate}
\item \label{a.coef1} For every $E\in \stablehomotopy$, 
$\mathbf L \pi ^{\ast}(E[\tfrac{1}{p}])\cong 
(\mathbf L \pi ^{\ast}E)[\tfrac{1}{p}]$ and $\mathbf L v ^{\ast}(E[\tfrac{1}{p}])\cong 
(\mathbf L v ^{\ast}E)[\tfrac{1}{p}]$.
\item \label{b.coef1} For every $E\in \stablehomotopy _{cdh}$ and every $a$, $b\in \mathbb Z$:
\begin{align*} 
\Hom_{\stablehomotopy_{cdh}}(\Sigma ^{a,b}\Sigma _T^\infty (X_+),E[\tfrac{1}{p}])
& \cong \Hom_{\stablehomotopy_{cdh}}(\Sigma ^{a,b}\Sigma _T^\infty (X_+),E)\otimes 
\mathbb Z [\tfrac{1}{p}].
\end{align*}
\end{enumerate}
\end{lem}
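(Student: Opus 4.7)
The plan is to prove both parts by reducing the statements to properties of sequential homotopy colimits, exploiting the compactness results already invoked in the paper (e.g.\ in \ref{pr.filexhs}) and the fact that the Quillen adjunctions of \ref{prop:tablebasech} are symmetric monoidal.

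For part \eqref{a.coef1}, the starting observation is that, by definition,
\[ E[\tfrac{1}{p}] = E \wedge \mathbf{1}[\tfrac{1}{p}]
= E \wedge \hocolim\bigl(\mathbf{1} \xrightarrow{p} \mathbf{1}
\xrightarrow{p} \mathbf{1} \to \cdots\bigr), \]
so that in the stable model category $\stablehomotopy$ we have $E[\tfrac{1}{p}] \cong \hocolim_p E$. Now $\mathbf{L}\pi^{\ast}$ and $\mathbf{L}v^{\ast}$ are left derived functors of left Quillen functors and hence commute with homotopy colimits; they are moreover symmetric monoidal (cf.\ the discussion preceding \ref{prop:basechfor}), so they preserve the smash product up to natural isomorphism and send the sphere spectrum to the sphere spectrum. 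Applying these properties gives
\[ \mathbf{L}\pi^{\ast}(E[\tfrac{1}{p}])
\cong \mathbf{L}\pi^{\ast}(E) \wedge \mathbf{L}\pi^{\ast}(\mathbf{1}[\tfrac{1}{p}])
\cong \mathbf{L}\pi^{\ast}(E) \wedge \mathbf{1}_{cdh}[\tfrac{1}{p}]
\cong (\mathbf{L}\pi^{\ast}E)[\tfrac{1}{p}], \]
and the argument for $\mathbf{L}v^{\ast}$ is identical.

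For part \eqref{b.coef1}, I would combine the same rewriting $E[\tfrac{1}{p}] \cong \hocolim_p E$ with compactness. By \cite[Thm.~4.5.67]{Ay-2}, which was already invoked in the proof of \ref{pr.filexhs}, the spectrum $\Sigma^{a,b}\Sigma_T^{\infty}(X_+)$ is compact in $\stablehomotopy_{cdh}$. Consequently, Hom out of it commutes with sequential homotopy colimits, yielding
\[ \Hom_{\stablehomotopy_{cdh}}\bigl(\Sigma^{a,b}\Sigma_T^{\infty}(X_+),\, E[\tfrac{1}{p}]\bigr)
\cong \colim_p \Hom_{\stablehomotopy_{cdh}}\bigl(\Sigma^{a,b}\Sigma_T^{\infty}(X_+),\, E\bigr), \]
where the transition maps in the colimit are multiplication by $p$ on the abelian group of morphisms. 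This last colimit is, by definition of $\Z[\tfrac{1}{p}]$ as a filtered colimit of copies of $\Z$ under multiplication by $p$, canonically isomorphic to $\Hom_{\stablehomotopy_{cdh}}(\Sigma^{a,b}\Sigma_T^{\infty}(X_+),E) \otimes \Z[\tfrac{1}{p}]$.

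The only nontrivial point is that smashing with the homotopy colimit $\mathbf{1}[\tfrac{1}{p}]$ commutes with $\mathbf{L}\pi^{\ast}$ (resp.\ $\mathbf{L}v^{\ast}$) and with $\Hom$ out of a compact object; both facts are standard in a compactly generated stable monoidal model category, so the proof should be a short formal verification rather than a computation.
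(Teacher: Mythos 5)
Your proof is correct and follows essentially the same approach as the paper's: part (1) reduces to the fact that the left derived functors $\mathbf L\pi^\ast$ and $\mathbf L v^\ast$ commute with homotopy colimits and $E[\tfrac{1}{p}]$ is a homotopy colimit, while part (2) combines the compactness of $\Sigma^{a,b}\Sigma_T^\infty(X_+)$ (\cite[Thm.~4.5.67]{Ay-2}) with the commutation of $\Hom$ out of a compact object with sequential homotopy colimits (\cite[Lem.~2.8]{Neeman-1}). The only cosmetic difference is that you go through the symmetric monoidal structure to unwind $E\wedge\mathbf 1[\tfrac{1}{p}]$, whereas the paper directly treats $E[\tfrac{1}{p}]$ as a homotopy colimit of copies of $E$; both framings amount to the same thing.
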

\begin{proof}
\eqref{a.coef1}:  It follows from the definition of homotopy colimit \cite[1.6.4]{Neeman-2} that
$\mathbf L \pi ^{\ast}$ and $\mathbf L v^{\ast}$ commute with homotopy colimits since
they are left adjoint.  This implies the result since $E[\tfrac{1}{p}]$ is
given in terms of homotopy colimits.

\eqref{b.coef1}:  Since $\Sigma ^{a,b}\Sigma _T ^{\infty}(X_+)$ is compact in $\stablehomotopy _{cdh}$
\cite[Thm. 4.5.67]{Ay-2}, the result follows from \cite[Lem. 2.8]{Neeman-1}.
\end{proof}

\begin{lem}  \label{lem.compcoef2}
Let $X \in \Sch _k$ and $E\in \stablehomotopy _X$.
Then for every $r\in \mathbb Z$, $f_r(E[\tfrac{1}{p}])\cong (f_r E)[\tfrac{1}{p}]$
and $s_r(E[\tfrac{1}{p}])\cong (s_r E)[\tfrac{1}{p}]$.
\end{lem}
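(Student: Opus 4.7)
The plan is to first prove the $f_r$ isomorphism, and then deduce the $s_r$ statement from it. I would apply the triangulated functor $-\wedge \mathbf{1}[\tfrac{1}{p}]$ to the defining triangle $f_r E \to E \to s_{<r}E$ from \eqref{eqn:Slice-triangle}, producing a distinguished triangle
\[
(f_r E)[\tfrac{1}{p}] \to E[\tfrac{1}{p}] \to (s_{<r}E)[\tfrac{1}{p}]
\]
in $\stablehomotopy_X$. Since $f_r(E[\tfrac{1}{p}])$ is characterized by an analogous triangle with first term in $\susp{r}_T\stablehomotopy_X^{\fnteff}$ and last term in its right orthogonal, it suffices to verify these two properties for the triangle above.

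For the first property, I would write $(f_r E)[\tfrac{1}{p}]$ as the homotopy colimit $\hocolim(f_r E \xrightarrow{p} f_r E \xrightarrow{p} \cdots)$ and use the standard presentation of such a homotopy colimit as the cofiber of a map between countable coproducts of $f_r E$. Since $\susp{r}_T\stablehomotopy_X^{\fnteff}$ is closed under arbitrary coproducts and triangles by construction, the claim follows from $f_r E \in \susp{r}_T\stablehomotopy_X^{\fnteff}$.

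For the second property, I would use that for any fixed $G$, the class $\{F : \Hom_{\stablehomotopy_X}(F, G) = 0\}$ is closed under shifts, triangles, and arbitrary coproducts. Since $\susp{r}_T\stablehomotopy_X^{\fnteff}$ is generated as a localizing subcategory by the compact set $C^r_{\fnteff}$ of \eqref{eq:effcat}, it suffices to verify orthogonality against each $F \in C^r_{\fnteff}$. Such $F$ is compact in $\stablehomotopy_X$ by \cite[Thm. 4.5.67]{Ay-2}, so \cite[Lem. 2.8]{Neeman-1} yields
\[
\Hom_{\stablehomotopy_X}(F, (s_{<r}E)[\tfrac{1}{p}]) \cong \Hom_{\stablehomotopy_X}(F, s_{<r}E) \otimes_{\Z} \Z[\tfrac{1}{p}] = 0,
\]
where the vanishing uses the defining right-orthogonality of $s_{<r}E$. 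Combining these two properties gives $(f_r E)[\tfrac{1}{p}] \cong f_r(E[\tfrac{1}{p}])$.

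The slice statement then follows by applying $-\wedge \mathbf{1}[\tfrac{1}{p}]$ to the triangle $f_{r+1}E \to f_r E \to s_r E$ of \eqref{eqn:Slice-triangle} and using the isomorphism just established at both $q=r$ and $q=r+1$: the cofiber of the resulting map $f_{r+1}(E[\tfrac{1}{p}]) \to f_r(E[\tfrac{1}{p}])$ is canonically identified with both $s_r(E[\tfrac{1}{p}])$ and $(s_r E)[\tfrac{1}{p}]$. The main subtlety is the third step, which requires that right-orthogonality to $\susp{r}_T\stablehomotopy_X^{\fnteff}$ is preserved by inverting $p$; this is handled cleanly by reducing to the compact generators, where $\Hom$ commutes with the filtered homotopy colimit defining $-[\tfrac{1}{p}]$.
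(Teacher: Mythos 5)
Your proof is correct and follows essentially the same route as the paper: the paper simply states that $f_r$ and $s_r$ commute with homotopy colimits (because the effective subcategories $\Sigma^q_T\stablehomotopy_X^{\fnteff}$ are closed under coproducts) and then applies this to the sequential colimit defining $-[\tfrac{1}{p}]$. Your argument unwinds this into a direct verification of the universal property of the triangle $f_r \to \id \to s_{<r}$, and is in fact slightly more careful than the paper's one-line justification, since the right-orthogonality step genuinely needs the compact generation of $\Sigma^r_T\stablehomotopy_X^{\fnteff}$ (via \cite[Thm.~4.5.67]{Ay-2} and \cite[Lem.~2.8]{Neeman-1}), not merely closure under coproducts.
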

\begin{proof}
Since the effective categories $\Sigma^{q}_{T}\stablehomotopy _{X}^{\fnteff}$
are closed under infinite direct sums, we conclude that the functors $f_r$, $s_r$
commute with homotopy colimits.
\end{proof}

\begin{prop}\label{prop:bound-charp}
Let $F \in \stablehomotopy$ and $G=\mathbf L v^{\ast}F\in \stablehomotopy _{X}$,
where $v:X\rightarrow \Spec k$. 
Assume that for every $r\in \mathbb Z$, $s_r(F[\frac{1}{p}])$
has a weak structure of smooth traces
in the sense of \cite[4.2.27]{Kelly}; and that $F[\frac{1}{p}]$
has a structure of traces in the
sense of \cite[4.3.1]{Kelly}.  If $F[\frac{1}{p}]$
is bounded with respect to the slice filtration, then  
$G[\frac{1}{p}]$ is bounded as well.
\end{prop}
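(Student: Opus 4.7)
The plan is to carry over the strategy of Proposition~\ref{prop:bound-0}, replacing each appeal to resolution of singularities with the corresponding result of Kelly for the $ldh$-topology. By Lemma~\ref{lem.compcoef1}, $\mathbf{L}v^{\ast}(F[\tfrac{1}{p}]) \cong G[\tfrac{1}{p}]$ in $\stablehomotopy_X$; by Lemma~\ref{lem.compcoef2}, $f_{q+i}(G[\tfrac{1}{p}]) \cong (f_{q+i}G)[\tfrac{1}{p}]$ and inverting $p$ commutes with slices. So the task reduces to showing the vanishing of
\[
\Hom_{\stablehomotopy_X}\bigl(\Sigma^{m,n}\Sigma^{\infty}_T Y_+, f_{q+i}(\mathbf{L}v^{\ast}(F[\tfrac{1}{p}]))\bigr)
\]
for every $m$, $n \in \mathbb{Z}$, every $Y \in \Sm_X$, and $i > 0$, provided $q$ is chosen large enough depending on $(Y, m, n)$.

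The first key step is to promote the slice--base-change isomorphism $f_{q+i} \circ \mathbf{L}v^{\ast} \cong \mathbf{L}v^{\ast} \circ f_{q+i}$ from the resolution-of-singularities setting (Pelaez \cite[Thm.~3.7]{Pelaez-3}) to positive characteristic after inverting $p$. Kelly's machinery of (weak smooth) traces produces exactly this kind of $ldh$-analog: the hypothesis that $F[\tfrac{1}{p}]$ has a structure of traces and that each $s_r(F[\tfrac{1}{p}])$ has a weak structure of smooth traces is precisely what is needed to run Pelaez's argument with $ldh$-descent in place of cdh-descent with smooth generators. This step, carried out verbatim from \cite{Kelly} and \cite{Pelaez-3}, gives the commutation we need for $F[\tfrac{1}{p}]$. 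Combined with Theorem~\ref{thm:basch2}, we then have
\[
\Hom_{\stablehomotopy_X}\bigl(\Sigma^{m,n}\Sigma^{\infty}_T Y_+, f_{q+i}(G[\tfrac{1}{p}])\bigr) \cong \Hom_{\stablehomotopy_{cdh}}\bigl(\Sigma^{m,n}\Sigma^{\infty}_T Y_+, \mathbf{L}\pi^{\ast}(f_{q+i}(F[\tfrac{1}{p}]))\bigr),
\]
so the problem moves to $\stablehomotopycdh$.

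The second key step is an induction on $\dim Y$. The base case $Y \in \Sm_k$ is handled by Corollary~\ref{cor:smbasch2}, which identifies the Hom on the right with $\Hom_{\stablehomotopy}(\Sigma^{m,n}\Sigma^{\infty}_T Y_+, f_{q+i}(F[\tfrac{1}{p}]))$; this vanishes for $q \gg 0$ because $F[\tfrac{1}{p}]$ is bounded by hypothesis. For the induction step, in place of the cdh-cover used in Proposition~\ref{prop:bound-0}, one uses an $ldh$-cover of $Y$ of the form $X' \sqcup Z \to Y$ where $X'$ is smooth (obtained from a de Jong alteration), $\dim Z < \dim Y$ and $\dim(X'\times_Y Z) < \dim Y$. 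The trace structure on $F[\tfrac{1}{p}]$ guarantees that $\mathbf{L}\pi^{\ast}(f_{q+i}(F[\tfrac{1}{p}]))$ satisfies $ldh$-descent (via Kelly's results), yielding a Mayer--Vietoris long exact sequence. Taking $q$ to be the maximum of the bounds obtained from the inductive hypothesis applied to $X'$, $Z$, and $X' \times_Y Z$ (with an appropriate shift), both outer terms vanish and hence so does the middle one.

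The main obstacle is the first step: establishing the commutation of $f_{q+i}$ with $\mathbf{L}v^{\ast}$ after inverting $p$. In characteristic zero this is a clean citation of~\cite{Pelaez-3}, but in positive characteristic the argument requires precisely Kelly's hypothesis of (weak smooth) traces on the slices of $F[\tfrac{1}{p}]$ together with a trace structure on $F[\tfrac{1}{p}]$ itself, which is why these hypotheses appear in the statement. Once this commutation is in hand, the dimension induction via alterations and $ldh$-descent proceeds by the same bookkeeping as in Proposition~\ref{prop:bound-0}.
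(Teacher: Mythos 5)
Your high-level outline is the same as the paper's (exchange $f_q$ with $\mathbf L v^\ast$ via Kelly's results, descend to $\stablehomotopycdh$ via \ref{thm:basch2}, settle the smooth case by \ref{cor:smbasch2}, then induct on $\dim Y$), and your identification of where traces enter is correct in spirit. But the induction step as written contains a genuine gap: you posit an ``$ldh$-cover of the form $X' \sqcup Z \to Y$ with $X'$ smooth, obtained from a de~Jong alteration,'' and run a Mayer--Vietoris argument with both outer terms vanishing. This does not work. An alteration $W \to Y$ is proper and generically finite, not a blow-up, so $\{W \amalg Z \to Y\}$ is not an abstract blow-up square, and there is no $cdh$- (or readily usable $ldh$-) excision sequence of the kind you invoke. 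In positive characteristic there is in general \emph{no} $cdh$-cover of $Y$ whose large-dimension part is smooth, which is precisely the difficulty that resolution of singularities resolves in \ref{prop:bound-0}. Moreover, invoking ``$ldh$-descent for $\mathbf L\pi^\ast(f_{q+i}F[\tfrac 1p])$'' is sideways: the object already lives in $\stablehomotopycdh$ where $cdh$-descent is built in, and $ldh$-descent alone would not finish your Mayer--Vietoris argument because the ``smooth'' term of your purported cover is not what you actually have.

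The paper's argument at this step is more delicate and uses the trace hypothesis in a different, essential way. One takes an alteration $h\colon W \to Y$ with $W$ smooth, generically \'etale of degree $p^r$ (Gabber/Temkin), then flatifies via Raynaud--Gruson: a blow-up $g\colon Y' \to Y$ with center $Z$ so that $h'\colon W' \to Y'$ becomes finite flat surjective of degree $p^r$, where $W'=\mathrm{Bl}_{h^{-1}(Z)}W$. The $cdh$-excision is applied to the abstract blow-up square $(Y', Z, E=Y'\times_Y Z) \to Y$ — but $Y'$ is not smooth, so the induction does not apply to it directly. Instead, by the choice of $q$ the two outer terms vanish, reducing one to showing $g^\ast = 0$. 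This is where traces actually enter: one checks $h'^\ast \circ g^\ast = g'^\ast \circ h^\ast = 0$ (because $W$ is smooth, so the target group vanishes for $q$ large), and then deduces $g^\ast = 0$ from the \emph{injectivity} of $h'^\ast$, which follows from the degree formula (Deg) for the trace structure on $f_{q+i}(F[\tfrac 1p])$ (obtained via \cite[4.3.7]{Kelly} from the hypotheses on $F[\tfrac 1p]$ and $s_r F[\tfrac 1p]$) together with the fact that $h'$ is finite flat surjective of degree $p^r$. Your proposal omits the Raynaud--Gruson flatification and the indirect vanishing-by-injectivity argument, which are the substance of the positive-characteristic case.
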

\begin{proof}
Since the base field $k$ is perfect and $F[\frac{1}{p}]$ is clearly 
$\mathbb Z [\tfrac{1}{p}]$-local, 
combining \cite[4.2.29]{Kelly} and \ref{lem.compcoef2}, we conclude that 
$f_{q}G[\frac{1}{p}]\cong \mathbf L v ^{\ast}f_{q}F[\frac{1}{p}]$ in $\stablehomotopy _{X}$ 
for every
$q\in \mathbb Z$.

It follows from \ref{thm:basch2} that for every 
$m$, $n\in \mathbb Z$; and  every $Y\in \Sm_X$, we have
\[  
\Hom _{\stablehomotopy _{X}}(\susp{m,n}\Sigma^{\infty}_T(Y_{+}), 
f_{q+i}G[\tfrac{1}{p}])\cong 
\Hom _{\stablehomotopycdh}(\susp{m,n}\Sigma^{\infty}_T(Y_{+}), 
\mathbf L \pi ^{\ast}(f_{q+i}F[\tfrac{1}{p}])
\]
for every $i>0$.  
If $X\in \Sm_k$, then $Y\in \Sm_k$ and we have
\[  \Hom _{\stablehomotopycdh}(\susp{m,n}\Sigma^{\infty}_T(Y_{+}), 
\mathbf L \pi ^{\ast}(f_{q+i}F[\tfrac{1}{p}]))  \cong
\Hom _{\stablehomotopy}(\susp{m,n}\Sigma^{\infty}_T(Y_{+}), f_{q+i}F[\tfrac{1}{p}])
\]
for every $i>0$ by  \ref{cor:smbasch2}.
Since $F[\frac{1}{p}]$ is bounded with respect to the slice filtration, 
we deduce from ~\eqref{eqn:bound} that $G[\frac{1}{p}]$ is also bounded 
with respect to the slice filtration in $\stablehomotopy _{X}$ in this case.

Finally, we proceed by induction on the dimension of $Y$, and assume that for 
every
$m$, $n\in \mathbb Z$ and every $Z\in \Sch_k$ with $\dim_k(Z) < \dim_k(Y)$; 
there exists
$q\in \mathbb Z$ such that
\[  \Hom _{\stablehomotopycdh}(\susp{m,n}\Sigma^{\infty}_T(Z_{+}), 
\mathbf L \pi ^{\ast}(f_{q+i}F[\tfrac{1}{p}]))=0
\]
for every $i>0$.

Since $k$ is perfect,
by a theorem of Gabber \cite[Introduction Thm. 3(1)]{Gabber} and
Temkin's strengthening \cite[Thm. 1.2.9]{Temkin} of Gabber's result,
there exists $W\in \Sm _k$
and a surjective proper map $h:W\rightarrow Y$, which is generically \'etale of degree
$p^r$, $r\geq 1$.  In particular $h$ is generically flat, thus by a theorem of Raynaud-Gruson
\cite[Thm. 5.2.2]{Ray}, there exists a blow-up $g:Y'\rightarrow Y$ with center $Z$
such that the following diagram commutes, where $h'$ is finite flat surjective 
of degree $p^r$
and $g':W'\rightarrow W$ is the blow-up of W with center $h^{-1}(Z)$:
\begin{align}  \label{bwupsqr}
\begin{array}{c}
\xymatrix{W' \ar[d]_-{g'} \ar[r]^-{h'}& Y' \ar[d]^-{g} \\
	W \ar[r]_-{h} & Y.}
\end{array}
\end{align}

Thus we have a $cdh$-cover 
$\{Y' \amalg Z \rightarrow Y\}$ of $Y$, such that $\dim_k(Z) < \dim_k(Y)$ and 
$\dim_k(E) < \dim_k(Y)$, where we set $E = Y' \times _{Y} Z$.

Let $q_{1}$ (resp. $q_{2}$, $q_{3}$) be the integers such
that the vanishing condition \eqref{eqn:bound}  holds for $(W,m,n)$ 
(resp. $(Z,m,n)$, $(E,m+1,n)$).
Let $q$ be the maximum of $q_{1}$, $q_{2}$ and $q_{3}$.  Then by $cdh$-excision,
for every $i>0$, the following diagram is exact:
\begin{align*}
\Hom _{\stablehomotopycdh}(\susp{m+1,n}\Sigma^{\infty}_T(E _+), 
\mathbf L \pi ^{\ast}(f_{q+i}F[\tfrac{1}{p}]))\rightarrow 
\Hom _{\stablehomotopycdh}(\susp{m,n}\Sigma^{\infty}_T(Y_{+}), 
\mathbf L \pi ^{\ast}(f_{q+i}F[\tfrac{1}{p}])) \rightarrow \\
\Hom _{\stablehomotopycdh}(\susp{m,n}\Sigma^{\infty}_T(Y'_{+}), 
\mathbf L \pi ^{\ast}(f_{q+i}F[\tfrac{1}{p}]))\oplus 
\Hom _{\stablehomotopycdh}(\susp{m,n}\Sigma^{\infty}_T(Z_{+}), 
\mathbf L \pi ^{\ast}(f_{q+i}F[\tfrac{1}{p}])).
\end{align*}

By the choice of $q$, this  reduces to the following exact diagram:
\[ \xymatrix{0 \ar[r]& \Hom _{\stablehomotopycdh}(\susp{m,n}\Sigma^{\infty}_T(Y_{+}), 
\mathbf L \pi ^{\ast}(f_{q+i}F[\tfrac{1}{p}])) \ar[r]^-{g^\ast} &  
\Hom _{\stablehomotopycdh}(\susp{m,n}\Sigma^{\infty}_T(Y'_{+}), 
\mathbf L \pi ^{\ast}(f_{q+i}F[\tfrac{1}{p}])).}
\]
So it suffices to show that $g^\ast =0$.  In order to prove this, we observe
that the diagram \ref{bwupsqr} commutes, therefore by the choice of $q$:
$\Hom _{\stablehomotopycdh}(\susp{m,n}\Sigma^{\infty}_T(W_{+}), 
\mathbf L \pi ^{\ast}(f_{q+i}F[\frac{1}{p}]))=0$; and we conclude that 
$h^{\prime \ast} \circ g^\ast = g^{\prime \ast }\circ h^\ast=0$.
Thus, it is enough to see that
\[ h^{\prime \ast}: \Hom _{\stablehomotopycdh}(\susp{m,n}\Sigma^{\infty}_T(Y'_{+}), 
\mathbf L \pi ^{\ast}(f_{q+i}F[\tfrac{1}{p}]))\rightarrow
\Hom _{\stablehomotopycdh}(\susp{m,n}\Sigma^{\infty}_T(W'_{+}), 
\mathbf L \pi ^{\ast}(f_{q+i}F[\tfrac{1}{p}]))
\]
is injective.  Let $v':Y'\rightarrow \Spec k$, and $\epsilon: \mathbf L v^{\prime \ast}(f_{q+i}F[\frac{1}{p}])
\rightarrow \mathbf R h'_{\ast} \mathbf L h^{\prime \ast} \mathbf L v^{\prime \ast}(f_{q+i}F[\frac{1}{p}])$
be the map given by the unit of the adjunction $(\mathbf L h^{\prime \ast}, \mathbf R h'_{\ast})$.
By the naturality of the isomorphism in 2.13 we deduce that $h^{\prime \ast}$ gets identified
with the map induced by $\epsilon$:
\[ \epsilon _{\ast}: \Hom _{\stablehomotopy _{Y'}}(\susp{m,n}\Sigma^{\infty}_T Y'_{+}, 
\mathbf L v^{\prime \ast}f_{q+i}F[\tfrac{1}{p}]) \rightarrow
\Hom _{\stablehomotopy _{Y'}}(\susp{m,n}\Sigma^{\infty}_T Y'_{+}, 
\mathbf R h'_{\ast} \mathbf L h^{\prime \ast} \mathbf L v^{\prime \ast}f_{q+i}F[\tfrac{1}{p}])
\]
Since $F[\frac{1}{p}]$ has a structure of traces and $s_{r}(F[\frac{1}{p}])$ has a weak
structure of smooth traces for every $r\in \mathbb Z$, it follows from 
\cite[4.3.7]{Kelly} that $f_{q+i}(F[\frac{1}{p}])$ has a structure of traces in the sense of
\cite[4.3.1]{Kelly}.  Thus, we deduce from \cite[4.3.1(Deg) p. 101]{Kelly} that $\epsilon _{\ast}$
is injective since $h'$ is finite flat surjective of degree $p^r$.
This finishes the proof.
\end{proof}

If we only assume that the slices $s_r E$ have a structure of traces, then we get
the weaker conditions \ref{pr:findiff.cp}.

\begin{cor}\label{cor:boundsli}
Let $F \in \stablehomotopy$ and $G=\mathbf L v^{\ast}F\in \stablehomotopy _{X}$,
where $v:X\rightarrow \Spec k$ is the structure map. 
Assume that the following hold.
\begin{enumerate}
\item
For every $r\in \mathbb Z$, $s_r(F[\frac{1}{p}])$
has a structure of traces in the sense of \cite[4.3.1]{Kelly}.  
\item
$F[\frac{1}{p}]$
is bounded with respect to the slice filtration.
\end{enumerate}
Then for every $m$, $n$
in $\mathbb Z$, there exists $q\in \mathbb Z$ such that 
$\Hom_{\stablehomotopy _{X}}(\Sigma ^{m,n}
\Sigma _T ^{\infty}X_+,s_{q+i} G[\frac{1}{p}])=0$ for every $i>0$ (see \ref{eqn:bound}).
\end{cor}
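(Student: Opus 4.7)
The plan is to adapt the argument of Proposition~\ref{prop:bound-charp}, working directly with the individual slices $s_{q+i}F[\tfrac{1}{p}]$ so that the weaker trace hypothesis (traces on slices only, not on $F[\tfrac{1}{p}]$ or on the truncations $f_{q+i}F[\tfrac{1}{p}]$) suffices. Since each slice $s_r F[\tfrac{1}{p}]$ has a structure of traces (in particular a weak structure of smooth traces) and $k$ is perfect, \cite[4.2.29]{Kelly} combined with Lemma~\ref{lem.compcoef1} and Lemma~\ref{lem.compcoef2} gives the compatibility $s_r G[\tfrac{1}{p}] \cong \mathbf{L}v^{\ast}(s_r F[\tfrac{1}{p}])$ in $\stablehomotopy_X$ for every $r \in \mathbb{Z}$. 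Theorem~\ref{thm:basch2} then identifies
\[
\Hom_{\stablehomotopy_X}(\Sigma^{m,n}\Sigma^{\infty}_T X_+, s_{q+i}G[\tfrac{1}{p}])
\cong
\Hom_{\stablehomotopycdh}(\Sigma^{m,n}\Sigma^{\infty}_T X_+, \mathbf{L}\pi^{\ast}(s_{q+i}F[\tfrac{1}{p}])),
\]
so it is enough to prove the stronger claim: for every $Y \in \Sch_k$ and every pair $(m,n)$, there exists $q \in \mathbb{Z}$ for which the analogous cdh-Hom with $X$ replaced by $Y$ vanishes for all $i>0$. We proceed by induction on $\dim Y$.

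For the base case $Y \in \Sm_k$, Corollary~\ref{cor:smbasch2} identifies the cdh-Hom with $\Hom_{\stablehomotopy}(\Sigma^{m,n}\Sigma^{\infty}_T Y_+, s_{q+i}F[\tfrac{1}{p}])$. The slice triangle~\eqref{eqn:Slice-triangle}, namely $f_{q+i+1}F[\tfrac{1}{p}] \to f_{q+i}F[\tfrac{1}{p}] \to s_{q+i}F[\tfrac{1}{p}]$, yields a long exact sequence whose two neighbours of the middle group are $\Hom(\Sigma^{m,n}\Sigma^{\infty}_T Y_+, f_{q+i}F[\tfrac{1}{p}])$ and $\Hom(\Sigma^{m-1,n}\Sigma^{\infty}_T Y_+, f_{q+i+1}F[\tfrac{1}{p}])$; boundedness of $F[\tfrac{1}{p}]$ applied at bidegrees $(m,n)$ and $(m-1,n)$ produces $q_0$ making both vanish for every $i>0$, whence the slice Hom vanishes. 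For $\dim Y = d > 0$ we repeat the Gabber--Temkin alteration plus Raynaud--Gruson flattening from the proof of~\ref{prop:bound-charp}: a smooth generically \'etale morphism $h: W \to Y$ of degree $p^r$ and a blow-up $g: Y' \to Y$ with centre $Z$ of dimension $<d$ produce the commutative square~\eqref{bwupsqr} with $W' \in \Sm_k$ and $h':W'\to Y'$ finite flat surjective of degree $p^r$. Setting $E = Y'\times_Y Z$, cdh-excision for $\mathbf{L}\pi^{\ast}(s_{q+i}F[\tfrac{1}{p}])$ on the abstract blow-up cover $\{Y' \amalg Z \to Y\}$ (available by Remark~\ref{remk:desc-abs}) produces an exact sequence whose $Z$- and $E$-terms vanish by the inductive hypothesis applied to $(Z,m,n)$ and $(E,m+1,n)$ once $q$ is taken larger than the three relevant bounds, reducing the claim at $Y$ to showing that $g^{\ast}$ is zero on the cdh-Hom into $\mathbf{L}\pi^{\ast}(s_{q+i}F[\tfrac{1}{p}])$.

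The last step is the only place the trace hypothesis is invoked, and it is the main obstacle. From~\eqref{bwupsqr} we have $h^{\prime\ast}\circ g^{\ast} = g^{\prime\ast}\circ h^{\ast}$, and the base case applied to $W\in \Sm_k$ (with $q$ enlarged once more) forces $h^{\ast}=0$, so it suffices to show that $h^{\prime\ast}$ is injective on the cdh-Hom into $\mathbf{L}\pi^{\ast}(s_{q+i}F[\tfrac{1}{p}])$. By the naturality of the identification in Theorem~\ref{thm:basch2}, $h^{\prime\ast}$ corresponds to the map induced by the unit $\mathbf{L}v^{\prime\ast}(s_{q+i}F[\tfrac{1}{p}]) \to \mathbf{R}h'_{\ast}\mathbf{L}h^{\prime\ast}\mathbf{L}v^{\prime\ast}(s_{q+i}F[\tfrac{1}{p}])$, and the degeneracy axiom of traces~\cite[4.3.1(Deg), p.~101]{Kelly}, applied \emph{directly} to $s_{q+i}F[\tfrac{1}{p}]$ with its assumed trace structure and to the finite flat surjective morphism $h'$ of degree $p^r$, makes this unit inject on the relevant Hom groups. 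The crucial difference from Proposition~\ref{prop:bound-charp} is that one cannot bootstrap traces on slices to a trace structure on $f_{q+i}F[\tfrac{1}{p}]$ via \cite[4.3.7]{Kelly} in the absence of a trace structure on $F[\tfrac{1}{p}]$; this is exactly why the conclusion here must be phrased for the slices $s_{q+i}G[\tfrac{1}{p}]$ rather than for the truncations $f_{q+i}G[\tfrac{1}{p}]$.
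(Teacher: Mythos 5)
Your proposal is correct and follows exactly the same route as the paper: the base-change compatibility for slices, the smooth base case via the slice triangle and boundedness at $(m,n)$ and $(m-1,n)$, and the induction on dimension by running the Gabber--Temkin/Raynaud--Gruson argument of Proposition~\ref{prop:bound-charp} with $f_{q+i}$ replaced by $s_{q+i}$, using the trace structure on slices directly in the degeneracy step. The paper compresses this into a one-line ``mutatis mutandis'' remark, and your fleshed-out version -- including the observation that one cannot invoke \cite[4.3.7]{Kelly} to get traces on $f_{q+i}F[\tfrac{1}{p}]$, which is precisely why the conclusion is stated for slices only -- is exactly the intended argument.
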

\begin{proof}
Since $s_r(F[\tfrac{1}{p}])$ has a structure of traces \cite[4.3.1]{Kelly},  we
observe that in particular $s_r(F[\tfrac{1}{p}])$ has a weak
structure of smooth traces \cite[4.2.27]{Kelly}.  Thus combining
\ref{lem.compcoef1}, \ref{lem.compcoef2} and \cite[4.2.29]{Kelly}
we conclude that for every $r\in \mathbb Z$: $s_r G[\tfrac{1}{p}]\cong \mathbf L v^{\ast}
s_r F[\tfrac{1}{p}]$ and $f_r G[\tfrac{1}{p}]\cong \mathbf L v^{\ast}
f_r F[\tfrac{1}{p}]$.

If $X\in \Sm_k$, we have
\[  \Hom _{\stablehomotopy_{X}}(\susp{m,n}\Sigma^{\infty}_T X_{+}, 
\mathbf L v ^{\ast}(s_{q+i}F [\tfrac{1}{p}]))  \cong
\Hom _{\stablehomotopy}(\susp{m,n}\Sigma^{\infty}_T X_{+}, s_{q+i}F [\tfrac{1}{p}])
\]
for every $i>0$ by  \ref{cor:smbasch2}.  Since $F[\tfrac{1}{p}]$ is bounded with respect
to the slice filtration, there exist $q_1$, $q_2\in \mathbb Z$ such that the vanishing
condition \eqref{eqn:bound} holds for $(X,m,n)$, $(X,m-1,n)$ respectively.
Let $q$ be the maximum of $q_1$ and $q_2$, then using the distinguished triangle
$f_{q+i}F [\tfrac{1}{p}]\rightarrow s_{q+i}F[\tfrac{1}{p}]\rightarrow \Sigma _s ^1 f_{q+i+1}
F[\tfrac{1}{p}]$ in $\stablehomotopy$ we conclude that 
$\Hom _{\stablehomotopy}(\susp{m,n}\Sigma^{\infty}_T(X_{+}), s_{q+i}F[\tfrac{1}{p}])=0$
for every $i>0$ as we wanted.

When $X\in \Sch _k$, the argument in the proof of \ref{prop:bound-charp} works
mutatis-mutandis replacing $f_{q+i} F[\tfrac{1}{p}]$ with $s_{q+i} F[\tfrac{1}{p}]$; since for
every $j\in \mathbb Z$, $s_j F[\tfrac{1}{p}]$ has a structure of traces \cite[4.3.1]{Kelly}.
\end{proof}

\begin{cor}\label{cor:findiff.cp}
Assume that the conditions (1) and (2) of \ref{cor:boundsli} hold.
Then for every $m$, $n$
in $\mathbb Z$, there exists $q\in \mathbb Z$ such that the map
$f_{q+i+1}G[\frac{1}{p}]\rightarrow f_{q+i}G[\frac{1}{p}]$ induces an isomorphism
\begin{align*}
\Hom_{\stablehomotopy _{X}}(\Sigma ^{m,n}\Sigma _T ^{\infty}X_+,
f_{q+i+1} G[\tfrac{1}{p}]) & \cong \Hom_{\stablehomotopy _{X}}
(\Sigma ^{m,n}\Sigma _T ^{\infty}X_+,f_{q+i} G[\tfrac{1}{p}])
\end{align*}
for every $i>0$.
\end{cor}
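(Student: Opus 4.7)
The plan is to reduce this to the slice vanishing supplied by \ref{cor:boundsli} via a long exact sequence coming from the basic slice triangle \eqref{eqn:Slice-triangle}. For each $i > 0$ we have in $\stablehomotopy_X$ the distinguished triangle
\[
f_{q+i+1}G[\tfrac{1}{p}] \longrightarrow f_{q+i}G[\tfrac{1}{p}] \longrightarrow s_{q+i}G[\tfrac{1}{p}] \longrightarrow \Sigma_s^1 f_{q+i+1}G[\tfrac{1}{p}].
\]
Applying the cohomological functor $\Hom_{\stablehomotopy_X}(\Sigma^{m,n}\Sigma_T^{\infty}X_+, -)$ yields a long exact sequence, and the map $f_{q+i+1}G[\tfrac{1}{p}] \to f_{q+i}G[\tfrac{1}{p}]$ induces an isomorphism on $\Hom$-groups exactly when the two flanking groups vanish, namely
\[
\Hom_{\stablehomotopy_X}(\Sigma^{m,n}\Sigma_T^{\infty}X_+, s_{q+i}G[\tfrac{1}{p}]) = 0
\]
and (using $\Hom(X,\Sigma^{-1}C) \cong \Hom(\Sigma X, C)$)
\[
\Hom_{\stablehomotopy_X}(\Sigma^{m+1,n}\Sigma_T^{\infty}X_+, s_{q+i}G[\tfrac{1}{p}]) = 0.
\]

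Next I invoke \ref{cor:boundsli} twice: once for the bidegree $(m,n)$, producing an integer $q_1 \in \mathbb Z$ with the first vanishing for all $i>0$, and once for the bidegree $(m+1,n)$, producing an integer $q_2 \in \mathbb Z$ with the second vanishing for all $i>0$. Setting $q := \max(q_1, q_2)$ ensures both vanishings hold simultaneously for every $i>0$ (since enlarging $q$ only narrows the relevant range of slices and the hypothesis of \ref{cor:boundsli} is monotone in $q$).

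With both flanks of the long exact sequence equal to zero, the map in the middle is forced to be an isomorphism, which is precisely the conclusion. All of the substantial content — trace structures on $F[\tfrac{1}{p}]$ and its slices, the commutation $f_q G[\tfrac{1}{p}] \cong \mathbf L v^{\ast} f_q F[\tfrac{1}{p}]$, and the $cdh$-descent comparison of \ref{thm:basch2} — has already been absorbed in the proof of \ref{cor:boundsli}, so there is no real obstacle here; the argument is a purely formal long-exact-sequence chase in the triangulated category $\stablehomotopy_X$ and amounts to packaging \ref{cor:boundsli} for two adjacent simplicial shifts simultaneously.
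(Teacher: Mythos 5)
Your proof is correct and follows the same argument as the paper: pick $q_1$, $q_2$ from Corollary \ref{cor:boundsli} for bidegrees $(m,n)$ and $(m+1,n)$, let $q = \max(q_1,q_2)$, and chase the long exact sequence of the slice triangle $f_{q+i+1}G[\tfrac{1}{p}] \to f_{q+i}G[\tfrac{1}{p}] \to s_{q+i}G[\tfrac{1}{p}]$ using the two vanishings. One minor quibble: the induced map is an isomorphism \emph{if} both flanking groups vanish, not ``exactly when''; you only use the correct direction of the implication, so nothing is affected.
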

\begin{proof}
Let $q_1$, $q_2\in \mathbb Z$ be the integers corresponding to $(m,n)$, $(m+1,n)$
in \ref{cor:boundsli}, respectively.  Let $q$ be the maximum of $q_1$ and $q_2$.
Then the result follows by combining the vanishing in \ref{cor:boundsli} with the
distinguished triangle $\Sigma _s ^{-1}s_{q+i}[\tfrac{1}{p}] \rightarrow f_{q+i+1}G[\tfrac{1}{p}]
\rightarrow f_{q+i}G[\tfrac{1}{p}] \rightarrow s_{q+i}G[\tfrac{1}{p}]$ in $\stablehomotopy _X$.
\end{proof}

\begin{remk}  \label{rm.filcomp}
Combining \ref{def.indfil} and \ref{cor:findiff.cp}, we deduce that
for every $a$, $b\in \mathbb Z$, there exists $m\in \mathbb Z$
such that $F^nG[\tfrac{1}{p}]^{a,b}(X)=F^mG[\tfrac{1}{p}]^{a,b}(X)$ for every $n\geq m$.
\end{remk}

\begin{prop}\label{pr:findiff.cp}
Assume that the conditions (1) and (2) of \ref{cor:boundsli} hold.
Then for every $n\in \mathbb Z$,
the slice spectral sequence \eqref{sec:SS-Conv}:
\begin{align*} 
E_1^{a,b}(X,n) & =\Hom_{\stablehomotopy _{X}}(\Sigma _T ^{\infty}X_+,
\Sigma ^{a+b+n,n} s_a G[\tfrac{1}{p}]) \Rightarrow G[\tfrac{1}{p}]^{a+b+n,n}(X)
\end{align*}
satisfies the following.
\begin{enumerate}
\item \label{pr:findiff.cp-1} For every $a$, $b\in \mathbb Z$,
there exists $N>0$ such that $E^{a,b}_r=E^{a,b}_\infty$ for $r\geq N$,
where $E^{a,b}_\infty$
is the associated graded $gr^{a}F^{\bullet}$ with respect
to the descending filtration $F^{\bullet}$ on  
$G[\frac{1}{p}]^{a+b+n,n}(X)$, see \ref{def.indfil}.
\item \label{pr:findiff.cp-2}  For every $m$, $n\in \mathbb Z$,
the descending  filtration $F^{\bullet}$ on 
$G[\frac{1}{p}]^{m,n}(X)$ \eqref{def.indfil}
is exhaustive and complete \cite[Def. 2.1]{Boardman}.
\end{enumerate}
\end{prop}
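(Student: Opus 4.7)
My plan is to combine the eventual vanishing of $\Hom$ into slices from Corollary \ref{cor:boundsli} with the eventual stabilization of $\Hom$ into the truncations $f_r G[\tfrac{1}{p}]$ provided by Corollary \ref{cor:findiff.cp} and Remark \ref{rm.filcomp}. The bookkeeping will be done in the standard exact couple $(D_1, E_1)$ attached to the slice tower,
\[
D_1^{a,b} = \Hom_{\stablehomotopy_X}(\Sigma^{\infty}_T X_+, \Sigma^{a+b+n, n} f_a G[\tfrac{1}{p}]), \quad
E_1^{a,b} = \Hom_{\stablehomotopy_X}(\Sigma^{\infty}_T X_+, \Sigma^{a+b+n, n} s_a G[\tfrac{1}{p}]),
\]
with structure maps $i$, $j$, $k$ induced by the distinguished triangles $f_{p+1}G[\tfrac{1}{p}] \to f_p G[\tfrac{1}{p}] \to s_p G[\tfrac{1}{p}]$.

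For part (1), I will fix $(a,b)$ and apply Corollary \ref{cor:boundsli} with $(m, n') = (a+b+n+1, n)$ to obtain a threshold $q$ such that $E_1^{a+r, b-r+1} = 0$ whenever $a+r \geq q+1$; since $E_r^{a+r, b-r+1}$ is a subquotient of $E_1^{a+r, b-r+1}$, the outgoing differential $d_r \colon E_r^{a,b} \to E_r^{a+r, b-r+1}$ vanishes for $r \geq q - a + 1$. For the incoming differential $d_r \colon E_r^{a-r, b+r-1} \to E_r^{a,b}$, I plan to factor $d_r = j \circ i^{-(r-1)} \circ k$ and exploit Corollary \ref{cor:findiff.cp} applied to $(m, n') = (a+b+n, n)$: this makes $i \colon D_1^{p+1, b-1} \to D_1^{p, b}$ an isomorphism once $p$ is past a threshold, so $i^{-(r-1)}$ is well-defined on $k(E_r^{a-r, b+r-1})$ and lifts it into the stable range of $D$, where $j$ sends it into an $E_1$-term that by the above lies in the slice-vanishing region. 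This forces the incoming $d_r$ to be zero for $r$ large; then $E_r^{a,b}$ stabilizes and the standard derived-couple identification gives the stable value as $gr^a F^\bullet = E_\infty^{a,b}$.

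For part (2), exhaustiveness will be obtained by transcribing the argument of Proposition \ref{pr.filexhs} to $\Z[\tfrac{1}{p}]$-coefficients: Lemma \ref{lem.compcoef2} identifies $f_q G[\tfrac{1}{p}] \cong (f_q G)[\tfrac{1}{p}]$, and the same compact-generator argument yields $G[\tfrac{1}{p}] \cong \hocolim_q f_q G[\tfrac{1}{p}]$, so $F^\bullet$ on $G[\tfrac{1}{p}]^{m,n}(X)$ is exhaustive. For completeness, I will invoke Remark \ref{rm.filcomp}, which guarantees that the tower $\{F^n G[\tfrac{1}{p}]^{m,n}(X)\}$ is eventually constant; eventual constancy forces $\lim^1_n F^n = 0$ and reduces $\lim_n F^n$ to its stable term, which together yield the completeness condition of \cite[Def.~2.1]{Boardman}.

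The hard part will be the incoming differentials in part (1): Corollary \ref{cor:boundsli} controls slices only at high positive index, whereas the source $E_r^{a-r, b+r-1}$ sits at the very negative slice index $a - r$ and need not vanish by itself. Handling it seems to require the full exact-couple structure, namely the factorization $d_r = j \circ i^{-(r-1)} \circ k$ combined with the stabilization of the $D$-tower coming from Corollary \ref{cor:findiff.cp}, rather than any direct $E_1$-vanishing argument on the source.
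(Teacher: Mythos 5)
Your treatment of the outgoing differentials (Corollary~\ref{cor:boundsli} forces the target $E_r^{a+r,b-r+1}$ to vanish for $r$ large, since only the slice index of the target grows) and your treatment of part~(2) (exhaustiveness by transplanting \ref{pr.filexhs} via \ref{lem.compcoef2}, completeness by combining \ref{rm.filcomp} with Boardman) are both sound and match the paper, which for~(1) cites only Corollary~\ref{cor:boundsli} and for~(2) cites \ref{pr.filexhs}, \ref{rm.filcomp} and Boardman. You are also right that the incoming differentials are the genuinely delicate point: a differential entering $(a,b)$ at page $r$ is an outgoing differential from $(a-r,\,b+r-1)$, and the Corollary~\ref{cor:boundsli} threshold there is $q_0(a+b)-(a-r)$, which grows with $r$; so ``only finitely many outgoing differentials at each spot'' does not by itself control the incoming ones uniformly.

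The gap is in your proposed fix for the incoming differentials, and it is a real one. For $d_r\colon E_r^{a-r,b+r-1}\to E_r^{a,b}$ the factorization $d_r=j\circ i^{-(r-1)}\circ k$ has $k$ landing in $D_1^{a-r+1,\,\ast}$ and the $(r-1)$-fold lift $i^{-(r-1)}$ landing in $D_1^{a,\,\ast}$ at the \emph{fixed} slice index $a$; the intermediate lifts pass through indices $a-r+1,\dots,a-1$, which go to $-\infty$ as $r\to\infty$. This is not ``the stable range of $D$'': Corollary~\ref{cor:findiff.cp} makes $i\colon D_1^{p+1,\ast}\to D_1^{p,\ast}$ an isomorphism only once $p$ exceeds the threshold $q$, and $a$ need not exceed $q$. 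Moreover $j$ then sends the lift into $E_1^{a,b}$ itself — the target you are trying to understand — not into a slice-vanishing $E_1$-term; $E_1^{a,b}$ vanishes only when $a$ is already past the Corollary~\ref{cor:boundsli} threshold, in which case the whole column is zero and there is nothing to prove. So in every case where your deduction would be needed it gives no information. What the factorization $d_r=j\circ i^{-(r-1)}\circ k$ together with the stabilization of the $D$-tower from Corollary~\ref{cor:findiff.cp} actually controls is the other half of the exact-couple bookkeeping at $(a,b)$: the cycles $Z_r=k^{-1}\bigl(\im(i^{r-1}\colon D_1^{a+r,\ast}\to D_1^{a+1,\ast})\bigr)$ stabilize because $\im(i^{r-1})$ becomes constant once $a+r$ passes the \ref{cor:findiff.cp} threshold. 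The boundaries $B_r=j\bigl(\ker(i^{r-1}\colon D_1^{a,\ast}\to D_1^{a-r+1,\ast})\bigr)$, which is what the incoming differentials feed into, are governed by $\ker(i)$ at the \emph{negative} slice indices $a-1,a-2,\dots$, and neither \ref{cor:boundsli} nor \ref{cor:findiff.cp} gives vanishing or stabilization there; you would need a separate argument that $B_r$ stabilizes. You should be aware that the paper's own one-line proof of~(1) also does not spell out this point, so this is a spot where the published argument is extremely terse; but the specific route you sketch for closing it does not go through.
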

\begin{proof}
\eqref{pr:findiff.cp-1}: It suffices to show that for every 
$a$, $b\in \mathbb Z$ only
finitely many of the differentials $d_r:E_r^{a,b}\rightarrow E_r^{a+r,b-r+1}$ are nonzero.
But this follows from \ref{cor:boundsli}.

\eqref{pr:findiff.cp-2}: By \ref{pr.filexhs} the filtration $F^\bullet$ on 
$G[\tfrac{1}{p}]^{m,n}(X)$ is exhaustive.
Finally, the completeness of $F^\bullet$ follows by combining \ref{rm.filcomp}
with \cite[Prop. 1.8 and Prop. 2.2(c)]{Boardman}.
\end{proof}

\subsection{The slice spectral sequence for $\MGL(X)$}\label{sec:slicess}
Our aim here is to apply the results of the previous sections to
obtain a Hopkins-Morel type spectral sequence for $\MGL^{\ast, \ast}(X)$
when $X$ is a singular scheme.  For smooth schemes, the Hopkins-Morel
spectral sequence has been studied in \cite{Levine-c1}, \cite{Hoyois},  and over
Dedekind domains in \cite{Sp-2}.

Recall from \cite[\S~6.3]{Voev-0} that for any noetherian scheme $S$ of
finite Krull dimension,
the scheme ${\rm Gr}_S(N, n)$ parametrizes $n$-dimensional linear
subspaces of $\A^N_S$ and one writes ${\rm BGL}_{S,n} = 
\colim_N {\rm Gr}_S(N, n)$. There is a universal rank $n$ bundle $U_{S,n} \to
{\rm BGL}_{S,n}$ and one denotes the Thom space $Th(U_{S,n})$ of this bundle by
$\MGL_{S,n}$. Using the fact that the Thom space of
a direct sum is the smash product of the corresponding Thom spaces
and $T = Th(\sO_S)$, one gets
a $T$-spectrum $\MGL_S = (\MGL_{S,0}, \MGL_{S,1}, \cdots ) \in 
Spt({\sM_S})$.  There is a structure of symmetric spectrum on $\MGL_S$ for 
which we refer to \cite[\S 2.1]{PPR}.

We now let $k$ be a field of characteristic zero
and let $X \in \Sch_k$. We shall use $\MGL$ as a short hand for
$\MGL_k$ throughout this text.
It follows from the above definition of $\MGL_X$ 
(which shows that $\MGL_X$ is
constructed from presheaves represented by smooth schemes)
and \ref{prop:tablebasech}  that the canonical map
$\mathbf L v^*(\MGL) \to \MGL_X$ is an isomorphism.

\begin{defn}\label{defn:pull-back}
We define $\MGL^{\ast, \ast}(X)$ to be the
generalized cohomology groups 
\[
\begin{array}{lll}
\MGL^{p,q}(X) & := & \Hom_{\stablehomotopy_X}(\Sigma^{\infty}_TX_+, 
\Sigma^{p,q} \MGL_X) \\
& \cong & \Hom_{\stablehomotopy_X}(\Sigma^{\infty}_TX_+, 
\Sigma^{p,q} \mathbf L v^*\MGL).
\end{array}
\]
\end{defn}

It follows from ~\ref{thm:basch2} that:
\begin{equation}\label{eqn:pull-back-0}
\MGL^{p, q} (X) \cong 
\Hom_{\stablehomotopycdh}(\Sigma^{\infty}_TX_+, 
\Sigma^{p,q} \mathbf L \pi^*\MGL).
\end{equation}

We now construct the spectral sequence for $\MGL^{*,*}(X)$ 
using the exact couple
technique as follows. 
For $p, q, n \in \mathbb{Z}$, define
\[
A^{p,q} (X, n):= [\Sigma^{\infty}_TX_+, \Sigma_{s} ^{p+q-n} \Sigma_{t} ^n 
(f_p \MGL_X)].
\] 
\[
E^{p,q} (X, n):= [\Sigma^{\infty}_TX_+, 
\Sigma_{s} ^{p+q-n} \Sigma_{t} ^n s_p \MGL_X].
\]
Here, $[-,-]$ denotes the morphisms in $\stablehomotopy_X$.
It follows from ~\eqref{eqn:Slice-triangle} that there is an exact
sequence
\begin{equation}\label{eqn:SS-0}
A^{p+1,q-1} (X, n) \xrightarrow{a^{p,q}_n} A^{p,q}(X,n) 
\xrightarrow{b^{p,q}_n} E^{p,q}(X,n) \xrightarrow{c^{p,q}_n}
A^{p+1,q}(X,n).
\end{equation}
 
Set $D_1(X, n):= \oplus_{p,q} A^{p,q} (X, n)$ and  
$E_1 (X, n):= \oplus_{p,q} E^{p,q} (X, n)$. Write 
$a^1_n:= \oplus a^{p,q}_n$, $b^1_n:= \oplus b^{p,q}_n$ and $c^1_n:= 
\oplus c^{p,q}_n$. This gives an exact couple
$\{D^1_n, E^1_n, a^1_n, b^1_n, c^1_n\}$ and
the map $d^1_n = b^1_n \circ c^1_n: E^1_n \to E^1_n$ shows that $(E_1, d_1)$ is 
a complex. Thus, by repeatedly taking the homology functors, we obtain a 
spectral sequence.

For the target of the spectral sequence, 
let $A^m(X, n):= \colim_{q \to \infty} A^{m-q, q} (X, n)$. Since $X$ is a compact 
object of $\stablehomotopy _{X}$ (see \cite[Prop.~5.5]{Voev-0}, 
\cite[Thm.~4.5.67]{Ay-2}), 
the colimit enters into $[-,-]$ so that 
$A^m (X, n)= [\Sigma^{\infty}_TX_+, \Sigma_{s} ^{m-n} \Sigma_{t} ^n \MGL_X]= 
\MGL_X^{m, n} (X)$. The formalism of 
exact couples then yields us a spectral sequence 
\begin{equation}\label{eqn:pre-spectral-seq}
E_1 ^{p,q} (X, n) = E^{p,q} _1 \Rightarrow \MGL_X^{m, n} (X).
\end{equation}

We now have 
\begin{equation}\label{eqn:SS-1}
\begin{array}{lll}
E_1 ^{p,q} (X, n) & = & [\Sigma^{\infty}_TX_+, 
\Sigma_{s} ^{p+q-n} \Sigma_{t} ^n s_p \MGL_X] \\
& {\cong}^{1} &
[\Sigma^{\infty}_TX_+, 
\Sigma_{s} ^{p+q-n} \Sigma_{t} ^n s_p \mathbf L v^*\MGL] \\
& {\cong}^{2} &
[\Sigma^{\infty}_TX_+, 
\Sigma_{s} ^{p+q-n} \Sigma_{t} ^n \mathbf L v^*(s_p \MGL)] \\
& {\cong}^{3} &
[\Sigma^{\infty}_TX_+, 
\Sigma_{s} ^{p+q-n} \Sigma_{t} ^n \mathbf L v^*(\Sigma^p_T H(\bL^{-p}))] \\
& {\cong} &
[\Sigma^{\infty}_TX_+, 
\Sigma_{s} ^{p+q-n} \Sigma_{t} ^n \Sigma^p_T \mathbf L v^*(H(\bL^{-p}))].
\end{array}
\end{equation}
In this sequence of isomorphisms, ${\cong}^{1}$ is shown above,
${\cong}^{2}$ follows from \cite[Thm.~3.7]{Pelaez-3} and
${\cong}^{3}$ follows from the isomorphism
$s_p \MGL \xrightarrow{\cong} \Sigma^p_T H(\bL^{-p})$, as shown, for example,
in \cite[8.6, p.~46]{Hoyois}, where 
$\bL = \oplus_{i \le 0} \bL^i \cong \oplus_{i \ge 0} MU_{2i}$ is the Lazard ring.

Since $\bL$ is a torsion-free abelian group, it follows from 
~\ref{thm:cdhdescent-2} that the last term of
~\eqref{eqn:SS-1} is same as $H^{3p+q}(X, \Z(n+p)) \otimes_{\Z} \bL^{-p}$.

The spectral sequence ~\eqref{eqn:pre-spectral-seq} is actually 
identical to an $E_2$-spectral
sequence after reindexing. Indeed, letting $\wt{E}^{p',q'}_2 = 
H^{p'-q'}(X, \Z(n-q')) \otimes_{\Z} \bL^{q'}$ and
using ~\eqref{eqn:SS-1}, an elementary calculation shows that the invertible 
transformation $(3p+q, n+p) \mapsto (p'-q', n-q')$ 
yields 
\begin{equation}\label{eqn:SS-2}
E^{p+1, q}_1 \cong [\Sigma^{\infty}_TX_+, 
\Sigma_{s} ^{p+q+1-n} \Sigma_{t} ^n s_{p+1} \MGL_X] 
\end{equation}
\[
\hspace*{4.5cm}
\cong H^{(p'+2)-(q'-1)}(X, \Z(n- (q'-1)) \otimes_{\Z} \bL^{q'-1}
= \wt{E}^{p'+2, q'-1}_2.
\]

It is clear from ~\eqref{eqn:SS-0} that the $E_1$-differential of 
the above spectral sequence is $d^{p.q}_1: E^{p,q}_1 \to E^{p+1,q}_1$
and ~\eqref{eqn:SS-2} shows that this differential is identified with the
differential $d^{p',q'}_2 = d^{p,q}_1: \wt{E}^{p',q'}_2 \to \wt{E}^{p'+2, q'-1}_2$.
Inductively, it follows that the chain complex
$\{E^{p,q}_r \xrightarrow{d_r} E^{p+r,q-r+1}_r\}$ is transformed to the
chain complex $\{\wt{E}^{p',q'}_{r+1} \xrightarrow{d_{r}} 
\wt{E}^{p'+r+1, q'-r}_{r+1}\}$.
Combining this with ~\eqref{eqn:pull-back-0},
we conclude the following.

\begin{thm}\label{thm:Slice-SS-sing}
Let $k$ be a field which has characteristic zero
and let $X \in \Sch_k$. Then for any integer $n \in \Z$,
there is a strongly convergent spectral sequence:
\begin{equation}  \label{eqn:SS-MGL-0}
E^{p,q}_2 = H^{p-q}(X, \Z(n-q)) \otimes_{\Z} \bL^{q}
\Rightarrow \MGL^{p+q, n} (X).
\end{equation}
The differentials of this spectral sequence are given by
$d_r: E^{p,q}_r \to E^{p+r, q-r+1}_r$; and for every $p$, $q\in \mathbb Z$,
there exists $N>0$ such that $E^{p,q}_r=E^{p,q}_\infty$ for $r\geq N$,
where $E^{p,q}_\infty$
is the associated graded $gr^{-q}F^{\bullet}$ with respect
to the descending filtration on $\MGL^{p+q,n}(X)$ (see \ref{def.indfil}).
Furthermore, this spectral sequence degenerates with rational coefficients.
\end{thm}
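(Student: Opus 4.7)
The plan is to observe that the spectral sequence, together with its differentials, has already been constructed in \S\ref{sec:slicess} as the exact couple associated to the slice tower of $\MGL_X$. So the three remaining tasks are to verify the identification of the $E_2$-page, to establish strong convergence and the stability of $E_r^{p,q}$ at each position, and to prove rational degeneration.

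For the identification of the $E_2$-page, I would chain together three ingredients that are already available: the natural isomorphism $\mathbf L v^*\MGL\cong \MGL_X$ coming from the fact that $\MGL$ is built from smooth models and $\mathbf L v^*$ is symmetric monoidal; the commutation $\mathbf L v^*\circ s_p\cong s_p\circ \mathbf L v^*$, which holds in characteristic zero by \cite[Thm.~3.7]{Pelaez-3}; and the Hopkins--Morel--Hoyois identification $s_p\MGL\simeq \Sigma_T^p H(\bL^{-p})$, valid over fields admitting resolution of singularities. Inserting these into $E_1^{p,q}(X,n)$, and using \corref{thm:cdhdescent-2} to rewrite the resulting stable homotopy groups as motivic cohomology groups of $X$ (the torsion-freeness of $\bL^{-p}$ lets us factor it past the coefficients), produces the computation displayed in \eqref{eqn:SS-1}. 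The reindexing in \eqref{eqn:SS-2} then exhibits this as an $E_2$-spectral sequence with exactly the terms and differential pattern stated.

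For strong convergence and finite length of differentials at each bidegree, the plan is to apply \propref{prop:bound-0} with $F=\MGL$. Its hypothesis, that $\MGL\in \stablehomotopy$ is bounded with respect to the slice filtration over $\Spec k$, is classical in characteristic zero (see \cite{Hoyois} or \cite{Levine-c1}); \propref{prop:bound-0} then delivers the boundedness of $\MGL_X=\mathbf L v^*\MGL$ in $\stablehomotopy _X$. Once that is in hand, the argument used in \propref{pr:findiff.cp} (tracing through the triangles \eqref{eqn:Slice-triangle}) shows that at each $(p,q)$ only finitely many differentials $d_r$ can be nonzero, so $E^{p,q}_r=E^{p,q}_\infty$ for $r\gg 0$, and that the induced descending filtration on $\MGL^{p+q,n}(X)$ is exhaustive and complete in the sense of \cite[Prop.~1.8, 2.2(c)]{Boardman}; strong convergence is then automatic.

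For rational degeneration, I would invoke the classical splitting of $\MGL_\Q$ in $\stablehomotopy_\Q$ as a direct sum of $T$-suspensions of $H\Q$ weighted by the rationalized Lazard ring (via the Quillen idempotent; see \cite{Levine-c1}). In this decomposition each summand has its slice tower concentrated in a single weight, leaving no room for nonzero higher differentials. Applying $\mathbf L v^*$, which preserves direct sums being a left adjoint, transports the degeneration from $\stablehomotopy$ to $\stablehomotopy_X$ after passing through \corref{thm:cdhdescent-2}. The main obstacle throughout is the control of the slice filtration over a possibly singular $X$; that control is precisely what \thmref{thm:basch2} together with \cite[Thm.~3.7]{Pelaez-3} supplies, and once those are in place the rest of the argument is essentially formal.
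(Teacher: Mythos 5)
Your proposal follows essentially the same approach as the paper's proof: the same exact-couple construction from \S\ref{sec:slicess}, the same chain of identifications ($\mathbf L v^*\MGL\cong\MGL_X$, \cite[Thm.~3.7]{Pelaez-3}, Hopkins--Morel--Hoyois, \corref{thm:cdhdescent-2}) for the $E_2$-page, and the same route through boundedness of $\MGL$ and \propref{prop:bound-0} for strong convergence. The only cosmetic difference is the citation for the rational splitting of $\MGL_\Q$ (the paper invokes Landweber exactness \cite{Landwexact} where you invoke the Quillen idempotent via \cite{Levine-c1}), but the underlying decomposition $\MGL_\Q\cong\oplus_{p\ge0}\Sigma_T^pH(\bL_\Q^{-p})$ and its use are identical.
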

\begin{proof}
The construction of the spectral sequence is shown above.
Since $\MGL$ is bounded by \cite[Thm.~8.12]{Hoyois}, it
follows from ~\ref{prop:bound-0} that
the spectral sequence ~\eqref{eqn:SS-MGL-0} is strongly convergent.
Thus, we deduce the existence of $N>0$ such that $E_r^{p,q}=E_\infty ^{p,q}$
for $r\geq N$.

For the degeneration with rational coefficients, we note that
the maps $f_{p}\MGL \to s_{p}\MGL  \cong \Sigma^p_T H(\bL^{-p})$ 
rationally split to yield an isomorphism of spectra
$\MGL_{\Q} \xrightarrow{\cong} \oplus_{p\ge 0} \Sigma _{T}^{p}H
(\bL^{-p}_{\Q})$
in $\stablehomotopy$ \cite[Thm. 10.5 and Cor. 10.6(i)]{Landwexact}.
The desired degeneration of the spectral sequence now follows
immediately from its construction above. 
\end{proof}


\begin{remk}  \label{rmk.MGLnoconv}
If $k$ is a perfect field of positive characteristic $p$, we observe that
$s_r(\MGL[\tfrac{1}{p}])\cong \Sigma^r_T H(\bL^{-r})[\tfrac{1}{p}]$ for
every $r\in \mathbb Z$ \cite[8.6, p.~46]{Hoyois}, so $s_r(\MGL[\tfrac{1}{p}])$
has a weak structure of smooth traces \cite[5.2.4]{Kelly}.  Thus, we can apply
\cite[4.2.29]{Kelly} to conclude that $\mathbf L v^{\ast} s_r(\MGL[\tfrac{1}{p}])
\cong s_r(\mathbf L v^\ast \MGL[\tfrac{1}{p}])$. Except for this 
identification, the proof of \thmref{thm:Slice-SS-sing} does not depend
on the characteristic of $k$. We thus obtain a spectral sequence as in
\eqref{eqn:SS-MGL-0}:
\[ E^{a,b}_2 = H^{a-b}(X, \Z(n-b)) \otimes_{\Z} \bL^{b}[\tfrac{1}{p}]
\Rightarrow \MGL^{a+b, n} (X)[\tfrac{1}{p}].
\]
But we can only guarantee strong convergence when $X\in \Sm _k$ 
\cite[Thm.~8.12]{Hoyois}.  In general, for $X\in \Sch _k$, the spectral
sequence satisfies the weaker convergence of
\ref{pr:findiff.cp}\eqref{pr:findiff.cp-1}-\eqref{pr:findiff.cp-2}.
In this case, the strong convergence will follow if one knew that $\MGL$
has a structure of traces.
\end{remk}

\subsection{The slice spectral sequence for $\KGL$}\label{sec:KH-theory}
For any noetherian scheme $X$ of finite Krull dimension, the motivic 
$T$-spectrum $\KGL_X \in Spt({\sM_X})$ was defined by Voevodsky
(see \cite[\S~6.2]{Voev-0}) which has the property that it represents
algebraic $K$-theory of objects in $\Sm_X$ if $X$ is regular.
It was later shown by Cisinski \cite{Cisinski} that for $X$ not necessarily 
regular, $\KGL_X$ represents Weibel's homotopy invariant $K$-theory 
$KH_*(Y)$ for $Y \in \Sm_X$.  Like $\MGL_X$, there is a structure of
symmetric spectrum on $\KGL_X$ for which we refer 
to \cite[p. 157, p. 176]{Jardine-ksym}.

Let $k$ be a field of exponential characteristic $p$.
The map $\mathbf L v^*(\KGL_k) \to \KGL_X$ is an isomorphism
by \cite[Prop.~3.8]{Cisinski}. It is known that 
$s_r \KGL_k \cong \Sigma^r_T H\Z$, for $r \in \Z$ 
(see \cite[Thm.~6.4.2]{Lev} if $k$ is perfect and \cite[\S~1, p.~1158]{RO}
in general). 
It follows from \cite[Thm.~3.7]{Pelaez-3} (in positive characteristic
we use instead \cite[4.2.29]{Kelly})
that $\mathbf L v^*(s_r \KGL [\tfrac{1}{p}]_k) \cong s_r (\mathbf L v^* \KGL [\tfrac{1}{p}]_k)
\cong  s_r \KGL [\tfrac{1}{p}]_X$.
One also knows that $(\KGL_k)_{\Q} \cong \oplus_{p\in \Z} \Sigma _T^p H\Q$
in $\stablehomotopy$ \cite[5.3.17 and 5.3.10]{Riou}.
We can thus use the Bott periodicity of $\KGL_X$ 
and repeat the construction of \S~\ref{sec:slicess}
mutatis mutandis (with $n = 0$) to conclude the following.

\begin{thm}\label{thm:KH-SS}
Let $k$ be a field that admits resolution of singularities (resp. a field of 
exponential characteristic $p>1$);
and let $X \in \Sch_k$. Then there is a strongly convergent spectral sequence
\begin{align}\label{eqn:KH-S-0}
E^{a,b}_2 = H^{a-b}(X, \Z (-b))
& \Rightarrow KH_{-a-b}(X) \\ \label{eqn:KH-S-p}
resp. \; \; E^{a,b}_2 = H^{a-b}(X, \Z (-b))\otimes _{\mathbb Z}\Z [\tfrac{1}{p}]
& \Rightarrow KH_{-a-b}(X) \otimes _{\mathbb Z}\Z [\tfrac{1}{p}].
\end{align}
The differentials of this spectral sequence are given by
$d_r: E^{a,b}_r \to E^{a+r, b-r+1}_r$; and for every $a$, $b\in \mathbb Z$,
there exists $N>0$ such that $E^{a,b}_r=E^{a,b}_\infty$ for $r\geq N$,
where $E^{a,b}_\infty$
is the associated graded $gr^{-b}F^{\bullet}$ with respect
to the descending filtration on $KH_{-a-b}(X)$ (resp. $KH[\frac{1}{p}]_{-a-b}(X)$),
see \ref{def.indfil}.
Furthermore, this spectral sequence degenerates with rational coefficients.
\end{thm}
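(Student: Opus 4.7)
The plan is to follow the template of Theorem \ref{thm:Slice-SS-sing} almost verbatim, substituting $\KGL$ for $\MGL$. First I would build the exact couple from the slice distinguished triangles $f_{p+1}\KGL_X \to f_p \KGL_X \to s_p\KGL_X$ in $\stablehomotopy_X$ (after inverting the exponential characteristic $p$ when $\Char(k) > 0$). Setting
\[
A^{p,q}(X) = [\Sigma^{\infty}_T X_+, \Sigma^{p+q}_s \,f_p \KGL_X], \qquad
E^{p,q}_1(X) = [\Sigma^{\infty}_T X_+, \Sigma^{p+q}_s \,s_p \KGL_X]
\]
and applying $\colim_q$ on the $A$-column, Bott periodicity of $\KGL_X$ together with the compactness of $\Sigma^\infty_T X_+$ in $\stablehomotopy_X$ identifies the abutment with $KH_{-p-q}(X)$ (resp. with $[\tfrac{1}{p}]$-coefficients).

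Next I would identify the $E_1$-terms with motivic cohomology. Using $\mathbf L v^{\ast}(\KGL_k) \cong \KGL_X$ from \cite[Prop.~3.8]{Cisinski}, the compatibility $\mathbf L v^{\ast}(s_p \KGL_k) \cong s_p \KGL_X$ (by \cite[Thm.~3.7]{Pelaez-3} in characteristic zero, or \cite[4.2.29]{Kelly} after inverting $p$, noting that $s_p \KGL_k \cong \Sigma^p_T H\Z$ admits a weak structure of smooth traces), and the slice computation $s_p \KGL_k \cong \Sigma^p_T H\Z$, one gets
\[
E^{p,q}_1(X) \cong [\Sigma^{\infty}_T X_+, \Sigma^{p+q}_s \Sigma^p_T \mathbf L v^{\ast} H\Z],
\]
which by Corollary \ref{thm:cdhdescent-2} becomes $H^{3p+q}(X,\Z(p))$ (resp. with $\Z[\tfrac{1}{p}]$-coefficients). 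Exactly as in the passage from \eqref{eqn:pre-spectral-seq} to \eqref{eqn:SS-MGL-0}, the change of variables $(a,b) = (3p+q-\!\text{shift}, -p)$ converts the $E_1$-sheet and its differential $d_1 \colon E^{p,q}_1 \to E^{p+1,q}_1$ to the $E_2$-sheet of the theorem, with differential $d_r \colon E^{a,b}_r \to E^{a+r, b-r+1}_r$.

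For strong convergence, I would invoke Proposition \ref{prop:bound-0} when $k$ admits resolution of singularities, and Corollary \ref{cor:boundsli} together with Proposition \ref{pr:findiff.cp} in positive characteristic. The hypotheses demand that $\KGL$ (resp.\ $\KGL[\tfrac{1}{p}]$) be bounded with respect to the slice filtration, and that its slices carry a structure of traces; both conditions follow at once from $s_p \KGL_k \cong \Sigma^p_T H\Z$ together with the boundedness and trace structure of the Eilenberg--MacLane spectrum (cf.\ \cite[5.2.4]{Kelly}). This yields the asserted finite stabilization $E^{a,b}_r = E^{a,b}_\infty$ for $r \gg 0$ and identifies $E^{a,b}_\infty$ with the associated graded of the descending filtration $F^{\bullet}$ from Definition \ref{def.indfil}. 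The rational degeneration is immediate from the splitting $(\KGL_k)_{\Q} \cong \bigoplus_{p \in \Z} \Sigma^p_T H\Q$ of \cite[5.3.17, 5.3.10]{Riou}: rationally the slice tower splits, so every differential $d_r$ vanishes.

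The main technical obstacle is the strong convergence in positive characteristic, because the resolution-of-singularities argument underlying Proposition \ref{prop:bound-0} is unavailable. The remedy is exactly the trace-structure machinery of Kelly encoded in Corollary \ref{cor:boundsli} and Proposition \ref{pr:findiff.cp}; once one checks that $s_p \KGL[\tfrac{1}{p}] \cong \Sigma^p_T H\Z[\tfrac{1}{p}]$ satisfies the trace hypothesis, the same $cdh$-excision induction on $\dim(Y)$ used in Proposition \ref{prop:bound-charp} applies verbatim. Everything else — the exact couple, the identification of $E_1$, the reindexing, and the rational degeneration — is a direct transcription of the $\MGL$ argument.
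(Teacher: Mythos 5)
Your argument for the strong convergence in positive characteristic has a genuine gap. You invoke Corollary \ref{cor:boundsli} and Proposition \ref{pr:findiff.cp} as ``the remedy,'' but these require only a trace structure on the \emph{slices} of the spectrum, and they deliver only the \emph{weaker} convergence statement (Boardman-complete-exhaustive filtration plus eventual stabilization of each entry), not strong convergence. This is exactly the situation the paper explains in Remark \ref{rmk.MGLnoconv} for $\MGL$: with slice traces alone you get only \ref{pr:findiff.cp}(1)--(2), and strong convergence would follow ``if one knew that $\MGL$ has a structure of traces.'' To obtain strong convergence for $\KGL[\tfrac{1}{p}]$ over a singular $X$, the paper applies Proposition \ref{prop:bound-charp}, whose hypotheses require that the spectrum $\KGL[\tfrac{1}{p}]_k$ \emph{itself} carries a structure of traces in the sense of \cite[4.3.1]{Kelly}. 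That fact comes from \cite[5.2.3]{Kelly} and is distinct from --- and not implied by --- the trace structure on the slices. You never invoke it, and your claim that ``the same $cdh$-excision induction on $\dim(Y)$ used in Proposition \ref{prop:bound-charp} applies verbatim'' once the \emph{slice} trace hypothesis is checked is wrong: in that induction, injectivity of $h'^{\ast}$ comes from a trace structure on $f_{q+i}\KGL[\tfrac{1}{p}]$, which is produced by \cite[4.3.7]{Kelly} and requires the trace structure on the full spectrum, not just on $s_r$. Without \cite[5.2.3]{Kelly} your route proves a strictly weaker convergence than the theorem asserts.

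A secondary issue: you assert that boundedness of $\KGL_k$ ``follows at once'' from $s_r\KGL_k \cong \Sigma^r_T H\Z$ together with boundedness of $H\Z$. Boundedness with respect to the slice filtration is a statement about the tower $\{f_q E\}$, not a formal consequence of identifying its graded pieces; the paper proves it by exhibiting $\KGL_k$ as the Landweber exact spectrum for $\Z[\beta,\beta^{-1}]$ \cite[Thm.~1.2]{SpOst} and then citing \cite[Thm.~8.12]{Hoyois}. That step should not be elided.
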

\begin{proof}
If $k$ admits resolution of singularities, we just need to show that the spectral
sequence is convergent.  For this, we observe that $\KGL _k$ is the spectrum
associated to the Landweber exact $\mathbb L$-algebra $\mathbb Z [\beta, \beta ^{-1}]$
that classifies the multiplicative formal group law \cite[Thm.~1.2]{SpOst}.
Thus \cite[8.12]{Hoyois} implies that $\KGL _k$ is bounded with respect to
the slice filtration (this argument also applies in positive characteristic).  
Hence, the convergence follows from \ref{prop:bound-0}.

In the case of positive characteristic, the existence of the spectral sequence follows 
by combining the argument of \S~\ref{sec:slicess} with \ref{lem.compcoef1} and \ref{lem.compcoef2}.  
To establish the convergence, it suffices to check that $\KGL[\frac{1}{p}]_k$ satisfies 
the conditions in \ref{prop:bound-charp}.

We have already seen that $\KGL _k$ is bounded with respect to the slice filtration.
Thus, by \ref{lem.compcoef1}\eqref{b.coef1} we conclude that $\KGL[\frac{1}{p}]_k$ is bounded 
with respect to the slice filtration as well.   On the other hand, it follows from
\cite[5.2.3]{Kelly} that $\KGL [\frac{1}{p}]_k$ has a structure of traces in 
the sense of \cite[4.3.1]{Kelly}.  Finally, since $s_r \KGL_k \cong \Sigma^r_T H\Z$ for $r \in \Z$,
combining \cite[5.2.4]{Kelly} and \ref{lem.compcoef2}, we deduce that 
$s_r (\KGL [\frac{1}{p}])_k$
has a  weak structure of smooth traces in the sense of \cite[4.2.27]{Kelly}.  This
finishes the proof.
\end{proof}

\begin{remk}\label{remk:Haesemeyer}
When $k$ has characteristic zero,
the spectral sequence of \thmref{thm:KH-SS} is not new and was constructed
by Haesemeyer (see \cite[Thm.~7.3]{Haes}) using a different approach.
However, the expected degeneration (rationally) 
of this spectral sequence and its 
positive characteristic analogue are new.
\end{remk}

As a combination of ~\ref{thm:KH-SS} and \cite[Thm.~9.5, 9.6]{TT}, we obtain
the following spectral sequence for the algebraic $K$-theory $K^B(-)$ of 
singular schemes \cite{TT}.

\begin{cor}\label{cor:Alg-K-SS}
Let $k$ be a field of exponential characteristic $p > 1$. Let
$\ell \neq p$ be a prime and $m \ge 0$ any integer. 
Given any $X \in \Sch_k$, there exist strongly convergent spectral sequences
\begin{align}\label{eqn:K-SS-0}
E^{a,b}_2 = H^{a-b}(X, \Z (-b))\otimes _{\mathbb Z}\Z[\tfrac{1}{p}]
& \Rightarrow K^B_{-a-b}(X) \otimes _{\mathbb Z}\Z[\tfrac{1}{p}] ;\\
\label{eqn:K-SS-1}
E^{a,b}_2 = H^{a-b}(X, \Z/{\ell^m} (-b))
& \Rightarrow K^B/{\ell^m}_{-a-b}(X).
\end{align}
\end{cor}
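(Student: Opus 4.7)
The plan is to derive both spectral sequences by combining the $\KGL$-slice spectral sequence of \ref{thm:KH-SS} with the Thomason-Trobaugh comparison between $K^B(-)$ and $KH(-)$ after the indicated coefficient changes. No new spectral sequence needs to be constructed; the entire task is to identify the correct abutment.

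For the mod $\ell^m$ spectral sequence \eqref{eqn:K-SS-1}, I would first produce the $\Z/\ell^m$-coefficient analogue of \ref{thm:KH-SS}: since $\ell \neq p$, $\Z/\ell^m$ is a $\Z[\tfrac{1}{p}]$-module, and smashing the slice tower of $\mathbf L v^{*}\KGL[\tfrac{1}{p}]$ with the mod $\ell^m$ Moore spectrum yields a filtration on $\KGL_X/\ell^m$. Since $s_r\KGL \cong \Sigma^r_T H\Z$, the mod $\ell^m$ slices are $\Sigma^r_T H(\Z/\ell^m)$, and by \ref{thm:cdhdescent} the $E_2$-terms become $H^{a-b}(X,\Z/\ell^m(-b))$. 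Strong convergence is inherited from \ref{thm:KH-SS}: the boundedness and traces hypotheses of \ref{prop:bound-charp} and \ref{cor:boundsli} persist under smashing $\KGL[\tfrac{1}{p}]$ with a finite Moore spectrum, since a traces structure on $H\Z$ is provided by \cite[5.2.3]{Kelly}. Then \cite[Thm.~9.5]{TT} identifies $K^B(-)/\ell^m$ with $KH(-)/\ell^m$ on $\Sch_k$, so the abutment can be rewritten as $K^B/\ell^m_{-a-b}(X)$. For the $\Z[\tfrac{1}{p}]$-coefficient spectral sequence \eqref{eqn:K-SS-0}, the strategy is parallel: the positive-characteristic case of \ref{thm:KH-SS} gives a strongly convergent spectral sequence abutting to $KH_{-a-b}(X)\otimes_{\Z} \Z[\tfrac{1}{p}]$, and \cite[Thm.~9.6]{TT} replaces this by $K^B_{-a-b}(X) \otimes_{\Z} \Z[\tfrac{1}{p}]$.

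The main obstacle I anticipate is not the construction of the spectral sequence, which is essentially formal from \ref{thm:KH-SS}, but verifying that the two Thomason-Trobaugh comparisons $K^B \to KH$ hold in the functorial form needed over all of $\Sch_k$ with these coefficients; once this is in hand, strong convergence and the identification of the $E_2$-page transfer verbatim from the $KH$ case.
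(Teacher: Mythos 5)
Your proposal is correct and follows the same route as the paper: the paper obtains both spectral sequences directly from the $KH$-slice spectral sequence of Theorem~\ref{thm:KH-SS} combined with the Thomason--Trobaugh comparison $K^B \simeq KH$ with $\Z[\tfrac{1}{p}]$- or $\Z/\ell^m$-coefficients, and your Moore-spectrum argument is exactly the implicit step needed to produce the mod-$\ell^m$ analogue of \eqref{eqn:KH-S-p}. The only quibble is a reference mix-up: it is \cite[5.2.3]{Kelly} that gives the traces structure on $\KGL[\tfrac{1}{p}]$ and \cite[5.2.4]{Kelly} that handles the slices $\Sigma^r_T H\Z$, but this does not affect the argument.
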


\section{Applications I: Comparing cobordism, $K$-theory
and cohomology}\label{sec:Appl}
In this section, we deduce some geometric applications of the slice
spectral sequences for singular schemes. More applications will appear
in the subsequent sections.

Consider the edge map $\MGL = f_0\MGL \to s_0\MGL \cong H\Z$ in the spectral
sequence \eqref{eqn:SS-MGL-0}.  This induces a natural
map $\nu _{X}: \MGL^{i,j}(X) \to H^{i}(X, \Z(j))$ for every $X \in \Sch_k$ and
$i,j \in \Z$.

The following result shows that there is no distinction between
algebraic cycles and cobordism cycles at the level of 0-cycles.

\begin{thm}  \label{thm:MGL-MC-iso}
Let $k$ be a field which admits resolution of singularities (resp.
a perfect field of positive characteristic $p$). Then
for any $X \in \Sch_k$ of dimension $d$, we have
$H^{2a-b}(X, \Z(a)) = 0$ (resp. $H^{2a-b}(X, \Z(a))\otimes _{\mathbb Z}\mathbb 
Z[\tfrac{1}{p}] = 0$) whenever $a > d+b$. In particular, for every
$X\in \Sch _k$ (resp. $X\in \Sm _k$), the map 
\begin{align}  \label{eqn:MGL-MC-iso-0}
\nu_X: \MGL^{2d+i,d+i}(X) \to H^{2d+i}(X, \Z(d+i))\\ \label{eqn:MGL-MC-iso-1}
resp. \; \;  \nu_X: \MGL^{2d+i,d+i}(X) \otimes _{\mathbb Z}\mathbb Z [\tfrac{1}{p}] 
\to H^{2d+i}(X, \Z(d+i))\otimes _{\mathbb Z} \mathbb Z [\tfrac{1}{p}]
\end{align}
is an isomorphism for all $i \ge 0$.
\end{thm}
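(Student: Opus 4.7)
The plan has two steps: first prove the vanishing statement, and then read off the edge-map isomorphism directly from the slice spectral sequence of ~\ref{thm:Slice-SS-sing} (together with its positive-characteristic counterpart in ~\ref{rmk.MGLnoconv}).

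For the vanishing I would induct on $d=\dim X$. The base case $X\in \Sm_k$ uses the Voevodsky--Suslin comparison (extended to positive characteristic with $\Z[\tfrac{1}{p}]$-coefficients by Kelly) to identify $H^{2a-b}(X,\Z(a))\cong \CH^a(X,b)$; since a codimension-$a$ cycle in $X\times \Delta^n$ has dimension $d+n-a$, the cycle complex $z^a(X,\bullet)$ is zero in degrees $<a-d$, forcing $\CH^a(X,b)=0$ for $a>d+b$. In characteristic zero, resolution of singularities supplies a $cdh$-distinguished abstract blow-up square with $\tilde X\in \Sm_k$ of dimension $\le d$, a closed subset $Z\subset X$ of dimension $<d$ containing the singular locus, and exceptional fiber $E=\pi^{-1}(Z)$ of dimension $<d$. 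Since $H^*(X,\Z(*))$ is defined as a $cdh$-hypercohomology (see \ref{defn:MC-sing}), the square induces a Mayer--Vietoris long exact sequence
\[
\cdots\to H^{n-1}(E,\Z(a))\to H^{n}(X,\Z(a))\to H^n(\tilde X,\Z(a))\oplus H^n(Z,\Z(a))\to H^n(E,\Z(a))\to\cdots,
\]
and setting $n=2a-b$ with $a>d+b$, the smooth case kills the $\tilde X$-term while the induction hypothesis kills the remaining three (note that $a>d+b$ is stronger than both $a>\dim Z+b$ and $a>\dim E+(b+1)$ because $\dim Z,\dim E\le d-1$). In positive characteristic the same induction runs with $\Z[\tfrac{1}{p}]$-coefficients, replacing resolution by Gabber--Temkin alterations exactly as in ~\ref{prop:bound-charp}: the square \eqref{bwupsqr} plays the role of the abstract blow-up, and the finite flat degree-$p^r$ map $h'$ becomes injective on cohomology after inverting $p$ via the trace formalism of \cite{Kelly}.

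For the edge-map assertion, plug the vanishing into the spectral sequence \eqref{eqn:SS-MGL-0} with $n=d+i$. Effectivity of $\MGL$ forces $s_p\MGL=0$ for $p<0$, so $E_2^{p,q}=0$ whenever $q>0$ (equivalently, the Lazard factor $\bL^q$ is zero there). On the total-degree diagonal $p+q=2d+i$ the only potentially non-zero term is the edge $E_2^{2d+i,0}=H^{2d+i}(X,\Z(d+i))$: every other term has $q<0$ and equals $H^{2a-i}(X,\Z(a))\otimes \bL^q$ with $a=d+i-q>d+i$, killed by the first part. Incoming differentials $d_r\colon E_r^{2d+i-r,r-1}\to E_r^{2d+i,0}$ vanish because their sources have $q=r-1>0$ already at $E_2$, and outgoing differentials $d_r\colon E_r^{2d+i,0}\to E_r^{2d+i+r,1-r}$ land in $H^{2a-(i-1)}(X,\Z(a))\otimes \bL^{1-r}$ with $a=d+i+r-1>d+(i-1)$, again killed. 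Strong convergence -- ~\ref{thm:Slice-SS-sing} in characteristic zero and ~\ref{rmk.MGLnoconv} for smooth $X$ in positive characteristic -- then identifies $\nu_X$ with an isomorphism onto $H^{2d+i}(X,\Z(d+i))$.

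The main obstacle is the inductive vanishing step in positive characteristic, where without embedded resolution one must combine Gabber--Temkin alterations with the trace argument of \cite{Kelly} to propagate vanishing across the square \eqref{bwupsqr}. In characteristic zero, once the smooth case is in hand via higher Chow groups, the inductive step is straightforward $cdh$-descent, and the spectral-sequence bookkeeping in the second half of the argument is purely formal.
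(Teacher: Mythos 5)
Your proposal is correct and follows essentially the same route as the paper: smooth base case via the identification with Bloch's higher Chow groups (Voevodsky/Kelly), followed by an induction on dimension using $cdh$-excision for an abstract blow-up square in characteristic zero, and the Gabber--Temkin alteration together with the trace/degree argument from \cite{Kelly} in positive characteristic, then reading off the edge isomorphism from the slice spectral sequence since $\bL^{>0}=0$. The only supplementary content you add is a more explicit check of the vanishing of incoming and outgoing differentials at the edge, which the paper leaves implicit.
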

\begin{proof}
Using the spectral sequence ~\eqref{eqn:SS-MGL-0} (resp. \ref{rmk.MGLnoconv})
and the fact that $\bL^{> 0} = 0$, the isomorphism of ~\eqref{eqn:MGL-MC-iso-0}
(resp. \ref{eqn:MGL-MC-iso-1})
follows immediately from the vanishing assertion for the motivic cohomology.

To prove the vanishing result, we note that for $X \in \Sm_k$, 
there is an isomorphism 
\linebreak
$H^{2a-b}(X, \Z(a)) \cong \CH^a(X, b)$
by \cite{Voev-1}, and the latter group is clearly zero if $a > d+b$
by definition of Bloch's higher Chow groups.

If $X$ is not smooth and $k$ admits resolution of singularities, 
our assumption on $k$ implies that 
there exists a $cdh$-cover 
$\{X' \amalg Z \rightarrow X\}$ of $X$, such that
$X' \in \Sm_k$, $\dim(Z) < \dim(X)$ and $\dim(W) < \dim(X)$, where  
we set $W = X' \times _{X} Z$. The $cdh$-descent for the motivic cohomology
yields an exact sequence
\[
H^{2a-b-1}(W, \Z(a)) \xrightarrow{\partial}  H^{2a-b}(X, \Z(a)) 
\to H^{2a-b}(X', \Z(a)) \oplus H^{2a-b}(Z, \Z(a)).
\]
The smooth case of our vanishing result shown above and an induction on the
dimension together imply that the two end terms of this exact sequence
vanish. Hence, the middle term vanishes too.

If $X$ is not smooth and $k$ is perfect of positive characteristic, 
we argue as in
\ref{prop:bound-charp}.  Namely,
by a theorem of Gabber \cite[Introduction Thm. 3(1)]{Gabber}
and Temkin's strengthening \cite[Thm. 1.2.9]{Temkin} of Gabber's result,
there exists $W\in \Sm _k$
and a surjective proper map $h:W\rightarrow X$, which is generically \'etale of degree
$p^r$, $r\geq 1$.  Then by a theorem of Raynaud-Gruson
\cite[Thm. 5.2.2]{Ray}, there exists a blow-up $g:X'\rightarrow X$ with center $Z$
such that the following diagram commutes, where $h'$ is finite flat surjective 
of degree $p^r$
and $g':W'\rightarrow W$ is the blow-up of W with center $h^{-1}(Z)$:
\begin{align}  \label{bwupsqr.1}
\begin{array}{c}
\xymatrix{W' \ar[d]_-{g'} \ar[r]^-{h'}& X' \ar[d]^-{g} \\
	W \ar[r]_-{h} & X.}
\end{array}
\end{align}

Thus we have a $cdh$-cover 
$\{X' \amalg Z \rightarrow X\}$ of $X$, such that $\dim_k(Z) < \dim_k(X)$ and 
$\dim_k(E) < \dim_k(X)$, where we set $E = X' \times _{X} Z$.
Then by $cdh$-excision, the following diagram is exact:
\begin{align*}
H^{2a-b-1}(E,\mathbb Z (a))\otimes _{\mathbb Z}\mathbb Z [\tfrac{1}{p}] \rightarrow 
H^{2a-b}(X,\mathbb Z (a))\otimes _{\mathbb Z}\mathbb Z [\tfrac{1}{p}] \rightarrow \\
H^{2a-b}(X',\mathbb Z (a))\otimes _{\mathbb Z}\mathbb Z [\tfrac{1}{p}] \oplus 
H^{2a-b}(Z,\mathbb Z (a))\otimes _{\mathbb Z}\mathbb Z [\tfrac{1}{p}].
\end{align*}

By induction on the dimension, this reduces to the following exact sequence:
\[ \xymatrix{0 \ar[r]& H^{2a-b}(X,\mathbb Z (a))\otimes _{\mathbb Z}\mathbb Z [\tfrac{1}{p}]
\ar[r]^-{g^\ast} &  
H^{2a-b}(X',\mathbb Z (a))\otimes _{\mathbb Z}\mathbb Z [\tfrac{1}{p}]}.
\]
So it suffices to show that $g^\ast =0$.  In order to prove this, we observe
that \eqref{bwupsqr.1} commutes, therefore since $W\in \Sm _k$:
$H^{2a-b}(W,\mathbb Z (a))\otimes _{\mathbb Z}\mathbb Z [\tfrac{1}{p}]=0$; and we conclude that 
$h^{\prime \ast} \circ g^\ast = g^{\prime \ast }\circ h^\ast=0$.
Thus, it is enough to see that
\[ h^{\prime \ast}: H^{2a-b}(X',\mathbb Z (a))\otimes _{\mathbb Z}\mathbb Z [\tfrac{1}{p}] \rightarrow
H^{2a-b}(W',\mathbb Z (a))\otimes _{\mathbb Z}\mathbb Z [\tfrac{1}{p}]
\]
is injective.  Let $v':X'\rightarrow \Spec k$, and $\epsilon: \mathbf L v^{\prime \ast}H\mathbb Z [\tfrac{1}{p}]
\rightarrow \mathbf R h'_{\ast} \mathbf L h^{\prime \ast} \mathbf L v^{\prime \ast}H\mathbb Z [\tfrac{1}{p}]$
be the map given by the unit of the adjunction $(\mathbf L h^{\prime \ast}, \mathbf R h'_{\ast})$.
By the naturality of the isomorphism in 2.13, we deduce that $h^{\prime \ast}$ gets identified
with the map induced by $\epsilon$ (see \ref{thm:cdhdescent-2}):
\[ \epsilon _{\ast}: \Hom _{\stablehomotopy _{X'}}(\susp{m,n}\Sigma^{\infty}_T(X'_{+}), 
\mathbf L v^{\prime \ast}H\mathbb Z [\tfrac{1}{p}]) \rightarrow
\Hom _{\stablehomotopy _{X'}}(\susp{m,n}\Sigma^{\infty}_T(X'_{+}), 
\mathbf R h'_{\ast} \mathbf L h^{\prime \ast} \mathbf L v^{\prime \ast}H\mathbb Z 
[\tfrac{1}{p}]).
\]
By \cite[5.2.4]{Kelly}, $H\mathbb Z [\tfrac{1}{p}]$ has a structure of traces  in the sense of
\cite[4.3.1]{Kelly}.  Thus, we deduce from \cite[4.3.1(Deg) p. 101]{Kelly} that $\epsilon _{\ast}$
is injective since $h'$ is finite flat surjective of degree $p^r$.
This finishes the proof.
\end{proof}

\begin{remk}  \label{degcit}
For $X \in \Sm_k$ and $i=0$, 
the isomorphism of ~\eqref{eqn:MGL-MC-iso-0} was proved by 
D\'eglise \cite[Cor.~4.3.4]{Deglise1}.
\end{remk}

When $A$ is a field, the following result was proven by Morel using
methods of unstable motivic homotopy theory \cite[p. 9 Cor. 1.25]{Morel}.  
Taking for granted the result for fields, D\'eglise \cite{Deglise1} proved
~\ref{thm:MGL-Milnor} using homotopy modules.  Spitzweck proved
~\ref{thm:MGL-Milnor}  for localizations of a Dedekind domain
in \cite[Cor. 7.3]{Sp-2}.

\begin{thm}\label{thm:MGL-Milnor}
Let $k$ be a perfect field of exponential characteristic $p$. Then
for any regular semi-local ring $A$ which is essentially of finite type over 
$k$, and for any integer $n \ge 0$, the map 
\begin{equation}\label{eqn:MGL-Milnor-0}
\MGL^{n,n}(A)\otimes _{\mathbb Z}\mathbb Z[\tfrac{1}{p}] \to H^{n}(A, \Z(n))
\otimes _{\mathbb Z} \mathbb Z [\tfrac{1}{p}]. 
\end{equation}
is an isomorphism. In particular, there is a natural isomorphism
$\MGL^{n,n}(A)\otimes _{\mathbb Z}\mathbb Z [\tfrac{1}{p}] \cong K^M_n(A)
\otimes _{\mathbb Z} \mathbb Z [\tfrac{1}{p}]$ if $k$ is also infinite.
\end{thm}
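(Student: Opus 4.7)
The plan is to run the slice spectral sequence from \thmref{thm:Slice-SS-sing} (respectively its positive-characteristic variant in \ref{rmk.MGLnoconv}) at weight $n$ and show that, after inverting $p$, only the edge term on the diagonal $a+b=n$ of the $E_2$-page survives.

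First, since $A$ is essentially of finite type and regular over $k$, I would write $\Spec A = \varprojlim_\alpha X_\alpha$ as a cofiltered limit of smooth affine $k$-schemes of finite type with open-immersion transition maps, obtained by shrinking open neighborhoods of the finite set of maximal ideals of $A$ in a smooth envelope. Both $H^{\ast}(-,\Z(\ast))[\tfrac{1}{p}]$ and $\MGL^{\ast,\ast}(-)[\tfrac{1}{p}]$ commute with such cofiltered limits, so passing to the limit in the slice spectral sequences of the $X_\alpha$ produces
\[
E^{a,b}_2 \;=\; H^{a-b}(A,\Z(n-b))\otimes_\Z \bL^b\,[\tfrac{1}{p}] \;\Longrightarrow\; \MGL^{a+b,\,n}(A)[\tfrac{1}{p}].
\]

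Next I would inspect the diagonal $a+b=n$, where
\[
E^{n-b,\,b}_2 \;=\; H^{n-2b}(A,\Z(n-b))\otimes_\Z \bL^b\,[\tfrac{1}{p}].
\]
For $b>0$ this vanishes because the Lazard ring sits in nonpositive degrees. For $b<0$ the motivic cohomology group lies strictly above the diagonal ($n-2b>n-b$), so I would invoke the fact that $H^m(A,\Z(r))=0$ for $m>r$ when $A$ is a regular semi-local ring essentially of finite type over $k$; this reduces via the Gersten resolution (valid for such $A$) to the corresponding vanishing for fields, which in turn follows from the emptiness of codimension-$r$ cycles on $\Delta^{<r}$. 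Hence only $E^{n,0}_2 = H^n(A,\Z(n))[\tfrac{1}{p}]$ survives on this diagonal.

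Then I would verify that no nonzero differentials touch $E^{n,0}_r$ on later pages. For each $r\ge 2$ the outgoing differential $d_r\colon E^{n,0}_r\to E^{n+r,\,1-r}_r$ has target with $b=1-r<0$, whose $E_2$-value $H^{n+2r-1}(A,\Z(n+r-1))[\tfrac{1}{p}]\otimes \bL^{1-r}$ vanishes by the same above-diagonal vanishing; the incoming differential $d_r\colon E^{n-r,\,r-1}_r\to E^{n,0}_r$ originates from a term containing the factor $\bL^{r-1}=0$. Combined with strong convergence in characteristic zero and the exhaustive/complete convergence of \ref{pr:findiff.cp} in positive characteristic (more than enough to recover the unique nonzero graded piece of the abutment from the single surviving column), this yields the edge isomorphism $\MGL^{n,n}(A)[\tfrac{1}{p}]\cong H^n(A,\Z(n))[\tfrac{1}{p}]$, which is \eqref{eqn:MGL-Milnor-0}. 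The final identification with Milnor $K$-theory is then Kerz's theorem $H^n(A,\Z(n))\cong K^M_n(A)$ for regular semi-local rings essentially of finite type over an infinite field. The main obstacle will be the passage to the semi-local limit, requiring continuity of both $\MGL^{\ast,\ast}[\tfrac{1}{p}]$ and of the slice filtration under cofiltered limits of smooth schemes along open immersions so that the limit spectral sequence still converges as needed, together with the above-diagonal motivic cohomology vanishing in the semi-local setting; the remaining bookkeeping on differentials and edge maps is essentially formal.
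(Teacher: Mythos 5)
Your proposal is correct and follows essentially the same route as the paper: run the slice spectral sequence at weight $n$, kill the $b>0$ terms via $\bL^{>0}=0$, kill the $b<0$ terms via Gersten injection into higher Chow groups of the fraction field (where they vanish for codimension reasons), and close with Kerz's theorem. The only cosmetic difference is that you pass to a cofiltered limit of smooth affines to reach the semi-local ring, whereas the paper directly invokes Hoyois's convergence result (\cite[8.9--8.10]{Hoyois}) for $A$ essentially of finite type.
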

\begin{proof}
Using the spectral sequence ~\eqref{eqn:SS-MGL-0} and the fact that
$\bL^{> 0} = 0$, it suffices to prove that 
$E^{n+i+j, -i}_2(A) = 0$ for every $j \ge 0$ and $i \ge 1$.  In positive
characteristic, we can use \ref{rmk.MGLnoconv} since $A$ is regular.
Notice that \eqref{eqn:SS-MGL-0}-\eqref{rmk.MGLnoconv}
are strongly convergent for $A$ by \cite[8.9-8.10]{Hoyois}.

On the one hand, we have isomorphisms
\begin{align*}
E^{n+i+j, -i}_2(A) & =
H^{n+2i+j}(A, \Z(n+i))\otimes _{\mathbb Z} \mathbb Z [\tfrac{1}{p}] \\
& \cong 
\CH^{n+i}(A, 2n+2i-n-2i-j) \otimes _{\mathbb Z} \mathbb Z [\tfrac{1}{p}]
= \CH^{n+i}(A, n-j) \otimes _{\mathbb Z} \mathbb Z [\tfrac{1}{p}].
\end{align*}

On the other hand, letting $F$ denote the fraction field of $A$,
the Gersten resolution for the higher Chow groups
(see \cite[Thm.~10.1]{Bloch}) shows that the restriction map
$\CH^{n+i}(A, n-j) \to \CH^{n+i}(F, n-j)$ is injective.
But the term $\CH^{n+i}(F, n-j)$ is zero whenever $j \ge 0, i \ge 1$
for dimensional reasons. We conclude that $E^{n+i+j, -i}_2(A) = 0$.
The last assertion of the theorem now follows from 
the isomorphism $\CH^n(A, n) \cong K^M_n(A)$ by
\cite[Thm.~1.1]{Kerz}.
\end{proof}

\subsection{Connective $K$-theory}\label{sec:Conn-K}
Let $k$ be a field of exponential characteristic $p$ and let
$X \in \Sch_k$. Recall that the {\sl connective $K$-theory} spectrum
$\KGL^0_X$ is defined to be the motivic $T$-spectrum $f_0\KGL_X$
in $\stablehomotopy_X$ (see ~\eqref{eqn:Slice-triangle}).  Strictly speaking,
$\KGL^0_X$ should be called effective $K$-theory, nevertheless we will
follow the terminology of Dai-Levine \cite{DL}.

In particular, there is a canonical map $u_X: \KGL^0_X \to \KGL_X$
which is universal for morphisms from objects of 
$\stablehomotopy _{X}^{\fnteff}$ to $\KGL_X$. 
For any $Y \in \Sm_X$, we let $CKH^{p,q}(Y) =
\Hom_{\stablehomotopy_X}(\Sigma^{\infty}_TY_+, 
\Sigma^{p,q} \KGL^0_X)$.
Using an analogue of ~\ref{thm:Slice-SS-sing} for $\KGL^0_X$, one can prove
the existence of the cycle class map for the higher Chow groups as
follows.

\begin{thm}\label{thm:Conn-KGL}
Let $k$ be a field of exponential characteristic $p$ and let
$X \in \Sch_k$ have dimension $d$. Then the map
$\KGL^0_X [\tfrac{1}{p}]\to s_0\KGL_X [\tfrac{1}{p}] \cong H\Z [\tfrac{1}{p}]$ induces for 
every integer $i \ge 0$, an isomorphism
\begin{equation}\label{eqn:Conn-KGL-0}
CKH^{2d+i, d+i}(X)\otimes _{\mathbb Z} \mathbb Z [\tfrac{1}{p}]
 \xrightarrow{\cong} H^{2d+i}(X, \Z(d+i)) \otimes _{\mathbb Z} \mathbb Z [\tfrac{1}{p}].
\end{equation}
In particular, the canonical map $\KGL^0_X \to \KGL_X$ 
induces a natural cycle class map
\begin{equation}\label{eqn:Conn-cycle}
cyc_i: H^{2d+i}(X, \Z(d+i))\otimes _{\mathbb Z}\mathbb Z [\tfrac{1}{p}] \to KH_i(X)
\otimes _{\mathbb Z}\mathbb Z [\tfrac{1}{p}].
\end{equation}
\end{thm}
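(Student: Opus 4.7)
The plan is to reduce the statement to the vanishing result of \thmref{thm:MGL-MC-iso} via the slice filtration applied to $\KGL^0 = f_0 \KGL$. The slice triangle $f_1 \KGL^0 \to \KGL^0 \to s_0 \KGL^0$, together with the identifications $f_1 \KGL^0 = f_1 \KGL$ (by idempotency of $f_q$ on effective spectra) and $s_0 \KGL^0 \cong H\Z$, becomes, after inverting $p$ and applying $\Hom_{\stablehomotopy_X}(\Sigma^\infty_T X_+, \Sigma^{*,*}(-))$, a long exact sequence
\[
\cdots \to (f_1 \KGL)^{p,q}(X) \otimes \Z[\tfrac{1}{p}] \to CKH^{p,q}(X) \otimes \Z[\tfrac{1}{p}] \to H^p(X, \Z(q)) \otimes \Z[\tfrac{1}{p}] \to (f_1 \KGL)^{p+1,q}(X) \otimes \Z[\tfrac{1}{p}] \to \cdots
\]
whose middle arrow is precisely the map in the theorem. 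Hence the claim reduces to showing $(f_1 \KGL)^{2d+i, d+i}(X) \otimes \Z[\tfrac{1}{p}] = 0$ and $(f_1 \KGL)^{2d+i+1, d+i}(X) \otimes \Z[\tfrac{1}{p}] = 0$.

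To prove these vanishings, I would mimic the constructions of \S\ref{sec:slicess} and \S\ref{sec:KH-theory} to produce the slice spectral sequence for $f_1 \KGL$. Its slices satisfy $s_q(f_1 \KGL) = s_q\KGL \cong \Sigma_T^q H\Z$ for $q \geq 1$ and vanish for $q \leq 0$; after the reindexing of \S\ref{sec:slicess}, the $E_2$-page reads $E_2^{p,q} = H^{p-q}(X, \Z(n-q)) \otimes \Z[\tfrac{1}{p}]$ for $q \leq -1$ and is zero for $q \geq 0$. Setting $n = d+i$ and $p+q = 2d+i$, the only potentially nonzero contributions occur at $q = -r$ with $r \geq 1$, where the $E_2$-term equals $H^{2d+i+2r}(X, \Z(d+i+r)) \otimes \Z[\tfrac{1}{p}]$. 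With $a = d+i+r$ and $b = i$, this has the form $H^{2a-b}(X,\Z(a)) \otimes \Z[\tfrac{1}{p}]$ with $a > d+b$, so \thmref{thm:MGL-MC-iso} forces it to vanish. The identical computation (now with $b = i-1$) kills every $E_2$-contribution to $(f_1 \KGL)^{2d+i+1, d+i}(X) \otimes \Z[\tfrac{1}{p}]$. The cycle class map is then obtained by composing the resulting isomorphism with the natural map $CKH^{*,*}(X) \to \KGL^{*,*}(X)$ and invoking the Bott-periodicity identification $\KGL^{2d+i, d+i}(X) \cong KH_i(X)$.

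The main technical obstacle is the strong convergence of the slice spectral sequence for $f_1 \KGL \otimes \Z[\tfrac{1}{p}]$ in the singular, positive-characteristic setting. Since $\KGL$ is bounded with respect to the slice filtration (Landweber exactness combined with Hoyois' criterion, as invoked in the proof of \thmref{thm:KH-SS}) and $f_q(f_1\KGL) = f_q\KGL$ for $q \geq 1$, boundedness transfers to $f_1 \KGL$, and \propref{prop:bound-0} gives strong convergence whenever $k$ admits resolution of singularities. In positive characteristic, one must verify that $f_1 \KGL \otimes \Z[\tfrac{1}{p}]$ and its slices inherit the trace structures present on $\KGL \otimes \Z[\tfrac{1}{p}]$ and $H\Z \otimes \Z[\tfrac{1}{p}]$, so that \propref{prop:bound-charp} and \corref{cor:boundsli} apply; this ultimately amounts to the compatibility of the slice-truncation functors $f_q$ with finite flat pullback, exactly as exploited in the proof of \thmref{thm:KH-SS}.
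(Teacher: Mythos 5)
Your proposal is correct and rests on the same key input as the paper, namely the vanishing $H^{2a-b}(X,\Z(a))\otimes\Z[\tfrac1p]=0$ for $a>d+b$ from \thmref{thm:MGL-MC-iso}. Where you differ is only in packaging: the paper constructs the slice spectral sequence for $\KGL^0_X$ directly (analogously to \eqref{eqn:SS-MGL-0}, giving \eqref{eqn:Conn-KGL-1}) and identifies its edge map, while you pass through the cofiber sequence $f_1\KGL^0\to\KGL^0\to s_0\KGL^0\cong H\Z$ and kill $(f_1\KGL)^{2d+i,d+i}(X)[\tfrac1p]$ and $(f_1\KGL)^{2d+i+1,d+i}(X)[\tfrac1p]$ by a separate slice spectral sequence for $f_1\KGL$. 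The two organizations are equivalent (your $f_1\KGL$-terms are exactly the off-edge terms of the paper's spectral sequence), and your convergence discussion --- boundedness of $f_1\KGL$ inherited from that of $\KGL$, plus the trace-structure arguments from \propref{prop:bound-charp} and \corref{cor:boundsli} needed in positive characteristic, with traces on $f_1\KGL[\tfrac1p]$ supplied by Kelly's Proposition 4.3.7 --- matches what the paper invokes for $\KGL^0$. The identity $f_1\KGL^0=f_1\KGL$ and $s_r(f_1\KGL)=s_r\KGL$ for $r\ge 1$, zero for $r\le 0$, are used correctly.
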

\begin{proof}
First we assume that $k$ admits resolution of singularities.
It follows from the definition that $\KGL^0_X$ is a connective
$T$-spectrum and we have $\mathbf L v^*(\KGL^0_k) \xrightarrow{\cong}
\KGL^0_X$ by \cite[Thm.~3.7]{Pelaez-3}. 
One also knows that $s_r\KGL^0_k \cong \Sigma^r_T H\Z$ for
$r \ge 0$ \cite[Thm.~6.4.2]{Lev} and zero otherwise.  
The proof of ~\ref{thm:Slice-SS-sing} can now be repeated 
verbatim to conclude that for each $n \in \Z$, there is a strongly convergent 
spectral sequence
\begin{equation}\label{eqn:Conn-KGL-1}
E^{a,b}_2 = H^{a-b}(X, \Z(n-b)) \otimes_{\Z} \Z_{b\le0}
\Rightarrow CKH^{a+b, n}(X),
\end{equation}
where $\Z_{b\le0} = \Z$ if $b \le  0$ and is zero otherwise.
Furthermore, this spectral sequence degenerates with rational coefficients.

One now repeats the proof of ~\ref{thm:MGL-MC-iso} to conclude that the
edge map $CKH^{2d+i, d+i}(X) \to H^{2d+i}(X, \Z(d+i))$ is an isomorphism for
every $i \ge 0$.
Finally, we compose the inverse of this isomorphism with canonical map
$CKH^{2d+i, d+i}(X) \to KH_{i}(X)$ to get the desired cycle class map.

If the characteristic of $k$ is positive, then 
$s_r(\KGL ^0 _k)\cong \Sigma^r_T H\mathbb Z$
for every $r\geq 0$ and zero otherwise \cite[Thm.~6.4.2]{Lev}.  So $s_r(\KGL ^0 _k[\tfrac{1}{p}])$
has a weak structure of traces \cite[5.2.4]{Kelly}. By
\ref{lem.compcoef2}, we deduce that
$s_r(\KGL ^0 _k[\tfrac{1}{p}])\cong \Sigma^r_T H\mathbb Z [\tfrac{1}{p}]$ for
every $r\geq 0$ and zero otherwise.
Thus, we can apply
\cite[4.2.29]{Kelly} to conclude that $\mathbf L v^{\ast} (\KGL ^0 _k [\tfrac{1}{p}])
\cong \KGL ^0 _X [\tfrac{1}{p}]$.  Then the argument of ~\ref{thm:KH-SS}
applies, and we conclude that for each $n \in \Z$, 
there is a strongly convergent spectral sequence
\begin{equation}\label{eqn:Conn-KGL-p}
E^{a,b}_2 = H^{a-b}(X, \Z(n-b)) \otimes_{\Z} \Z [\tfrac{1}{p}]_{b\le0}
\Rightarrow CKH^{a+b, n}(X) \otimes_{\Z} \Z [\tfrac{1}{p}],
\end{equation}
By ~\ref{thm:MGL-MC-iso}, $H^{2a-b}(X, \Z(a))\otimes _{\mathbb Z}\mathbb 
Z [\frac{1}{p}] = 0$ whenever $a > d+b$.  Thus, combining the
spectral sequence ~\eqref{eqn:Conn-KGL-p} and the fact that $\bL^{> 0} = 0$, 
we deduce the isomorphism of ~\eqref{eqn:Conn-KGL-0} with 
$\mathbb Z [\frac{1}{p}]$-coefficients:
\[  CKH^{2d+i, d+i}(X)\otimes _{\mathbb Z} \mathbb Z [\tfrac{1}{p}]
 \xrightarrow{\cong} H^{2d+i}(X, \Z(d+i)) \otimes _{\mathbb Z} \mathbb Z [\tfrac{1}{p}].
\]
\end{proof}

An argument identical to the proof of ~\ref{thm:MGL-Milnor}
shows that for any regular semi-local ring $A$ 
which is essentially of finite type over an infinite field $k$ and any 
integer $n \ge 0$,
there is a natural isomorphism (notice that in positive characteristic,
the spectral sequence is also strongly convergent integrally since $A$ is regular):
\begin{equation}\label{eqn:Conn-Milnor}
CKH^{n,n}(A) \xrightarrow{\cong} 
K^M_n(A).
\end{equation}
 
Moreover, the canonical map $CKH^{n,n}(A) \to 
K_n(A)$ respects products
\cite[Thm.~3.6.9]{Pelaez-1}, hence it
coincides with the 
known map $K^M_n(A) \to 
K_n(A)$.
This shows that the Milnor $K$-theory is represented by the connective
$K$-theory and one gets a lifting of the relation between the Milnor
and Quillen $K$-theory of smooth semi-local schemes to the level of 
$\stablehomotopy$.  In particular, it is possible to recover Milnor $K$-theory
and its map into Quillen $K$-theory from the $T$-spectrum $KGL$ (which 
represents Quillen $K$-theory in $\stablehomotopy$ for smooth
$k$-schemes) by
passing to its $(-1)$-effective cover $f_0 KGL_k\rightarrow KGL_k$.

As another consequence of the slice spectral sequence, one gets the
following comparison result between the connective and non-connective
versions of the homotopy $K$-theory.
The homological analogue of this results was shown in
\cite[Cor.~5.5]{DL}.

\begin{thm}\label{thm:Conn-Non-conn}
Let $k$ be a field of exponential characteristic $p$ and let
$X \in \Sch_k$ have dimension $d$. Then the canonical map
$CKH^{2n, n}(X)\otimes _{\mathbb Z}\mathbb Z [\tfrac{1}{p}] \to 
KH_0(X) \otimes _{\mathbb Z}\mathbb Z [\tfrac{1}{p}]$ is an isomorphism 
for every integer $n \le 0$.
\end{thm}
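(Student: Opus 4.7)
The plan is to compare two slice-type spectral sequences via the canonical map $\KGL^0_X\to\KGL_X$ and then pass to abutments. On the connective side, the strongly convergent spectral sequence~\eqref{eqn:Conn-KGL-p} computes $CKH^{2n,n}(X)\otimes_\Z\Z[\tfrac{1}{p}]$. Mimicking the exact-couple construction of \S~\ref{sec:slicess}, this time for $\KGL_X$ in place of $\KGL^0_X$, yields an analogous spectral sequence
\[
E_2^{a,b}(KH) = H^{a-b}(X,\Z(n-b))\otimes_\Z\Z[\tfrac{1}{p}] \Rightarrow KH^{a+b,n}(X)\otimes_\Z\Z[\tfrac{1}{p}],
\]
now with $b$ unrestricted, since $s_r\KGL_X\cong\Sigma^r_T H\Z$ for every $r\in\Z$. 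Its strong convergence follows from the verification of the hypotheses of \ref{prop:bound-charp} for $\KGL[\tfrac{1}{p}]$ already carried out in the proof of \ref{thm:KH-SS}.

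The map $\KGL^0_X\to\KGL_X$ induces a map of slice towers which is an isomorphism on each slice of non-negative index and the zero map (with source $0$) on slices of negative index, and hence a morphism of the two spectral sequences. Restrict attention to the bidegrees $(a,b)$ with $a+b=2n$ and $n\le 0$. For $b\le 0$ the map is the identity on $E_2^{a,b}$. For $b>0$ the source $E_2^{a,b}(CKH)$ vanishes by the $\Z_{b\le 0}$-restriction in~\eqref{eqn:Conn-KGL-p}; on the target side, the weight $n-b$ is strictly negative, so $H^{a-b}(X,\Z(n-b))=0$ by the convention following \defref{defn:MC-sing}, making $E_2^{a,b}(KH)=0$ as well. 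Hence the map of $E_2$ pages is an isomorphism in every bidegree relevant to the abutment at $a+b=2n$.

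Combining \thmref{thm:MGL-MC-iso} with the negative-weight vanishing shows that, for each fixed $a+b=2n$, only finitely many $E_2^{a,b}$ are non-zero in either spectral sequence. Strong convergence of both spectral sequences therefore promotes the $E_2$-isomorphism to an isomorphism on abutments, giving $CKH^{2n,n}(X)\otimes_\Z\Z[\tfrac{1}{p}] \xrightarrow{\cong} KH^{2n,n}(X)\otimes_\Z\Z[\tfrac{1}{p}]$. Bott periodicity $\Sigma^{2,1}\KGL_X\cong\KGL_X$ (cf.~\cite[Prop.~3.8]{Cisinski}) identifies the right-hand side with $KH_0(X)\otimes_\Z\Z[\tfrac{1}{p}]$, completing the argument.

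The main obstacle is essentially organizational: one has to construct the $KH^{*,n}$ spectral sequence for arbitrary $n$ by mimicking \S~\ref{sec:slicess}, and verify that the exact couples for $\KGL^0_X$ and $\KGL_X$ are compatible under the canonical map. Once this bookkeeping is in place, the vanishing argument reduces to a one-line check on the weight of the motivic cohomology.
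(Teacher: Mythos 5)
Your proposal is correct and follows essentially the same route as the paper: compare the connective and non-connective slice spectral sequences via the map of slice towers, and observe that the two $E_2$-pages agree because the motivic cohomology appearing in the mismatched bidegrees (those with $b>0$) vanishes for weight reasons ($n-b<0$ when $n\le 0$). The paper phrases this in terms of checking that the sources of the relevant incoming differentials vanish and works with the weight-$0$ spectral sequence~\eqref{eqn:KH-S-0} already constructed, whereas you set up the parallel weight-$n$ spectral sequence for $\KGL_X$ explicitly and observe that both full $E_2$-pages vanish on $\{b>0\}$; these are cosmetic variations on the same argument.
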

\begin{proof}
If $k$ admits resolution of singularities, we
observe that the slice spectral sequence is functorial for morphisms
of motivic $T$-spectra. Since $H^{2q}(X, \Z(q)) = 0$ for $q < 0$, 
a comparison of the spectral sequences 
~\eqref{eqn:KH-S-0} and ~\eqref{eqn:Conn-KGL-1} shows that it is enough to 
prove that for every $r \ge 2$ and $q \le 0$, either
$q+r-1 \le 0$ or $H^{-q-r - (q+r-1)}(X, \Z(1-r-q)) =
H^{1-2r-2q}(X, \Z(1-r-q)) = 0$.
But this is true because $H^{1-2r-2q}(X, \Z(s)) = 0$ if $s < 0$.

In positive characteristic, we use the same argument as above for the spectral sequences
\eqref{eqn:KH-S-p} and \eqref{eqn:Conn-KGL-p} to deduce that
\[  CKH^{2n, n}(X)\otimes _{\mathbb Z}\mathbb Z _{(\ell)} \to 
KH_0(X) \otimes _{\mathbb Z}\mathbb Z _{(\ell)}
\]
is an isomorphism for every prime $\ell \neq p$.  Then, the result follows from
\ref{lem.compcoef1}\eqref{b.coef1}.
\end{proof}

Yet another consequence of the above spectral sequences
is the following direct 
verification of Weibel's vanishing conjecture for negative
$KH$-theory and negative $CKH$-theory of singular schemes.
For $KH$-theory, there are other proofs of this conjecture by
Haesemeyer \cite[Thm.~7.1]{Haes} in characteristic zero and
by Kelly \cite[Thm.~3.5]{Kelly-2} and Kerz-Strunk \cite{KSt}
in positive characteristic using 
different methods.  We refer the reader to \cite{Cisinski},
\cite{CHW1}, \cite{GH-nK}, \cite{Kerz-2},
\cite{Krishna-nK}  and  
\cite{Weibel2} for more results associated to Weibel's conjecture.
The vanishing result below for $CKH$-theory is new in any characteristic.

\begin{thm}\label{thm:Vanishing-conj}
Let $k$ be a field of exponential characteristic $p$ and let
$X \in \Sch_k$ have dimension $d$. Then $CKH^{m,n}(X)\otimes _{\mathbb Z}
\mathbb Z [\tfrac{1}{p}] = KH_{2n-m}(X) \otimes _{\mathbb Z}
\mathbb Z [\tfrac{1}{p}] = 0$ whenever $2n-m < -d$ and $KH_{-d}(X)
\otimes _{\mathbb Z} \mathbb Z [\tfrac{1}{p}] \cong H^d_{cdh}(X, \Z)
\otimes _{\mathbb Z} \mathbb Z [\tfrac{1}{p}]$.
\end{thm}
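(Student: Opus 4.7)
The plan is to extract both claims from the strongly convergent slice spectral sequences of Theorem~\ref{thm:KH-SS} for $KH$-theory and of \eqref{eqn:Conn-KGL-p} for $CKH$-theory, combined with the motivic cohomology vanishing of Theorem~\ref{thm:MGL-MC-iso}. Rewritten in terms of the twist, that vanishing asserts that $H^m(X,\Z(n)) \otimes \Z[\tfrac{1}{p}] = 0$ whenever $m > d + n$; together with the trivial vanishing of $H^m(X,\Z(n))$ for $n < 0$, this will kill every term of interest.

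For the vanishing assertion, set $i = 2n - m < -d$. On the diagonal $a + b = -i$ of the $KH$-spectral sequence, substituting $b = -i - a$ gives $E_2^{a,b} = H^{2a + i}(X,\Z(a + i)) \otimes \Z[\tfrac{1}{p}]$. When $a + i < 0$ the twist is negative and the term vanishes; when $a + i \ge 0$, the hypothesis $-i > d$ forces $a \ge -i > d$, and Theorem~\ref{thm:MGL-MC-iso} applies (the inequality $2a + i > d + (a + i)$ reduces to $a > d$). Strong convergence then yields $KH_i(X) \otimes \Z[\tfrac{1}{p}] = 0$. The same dichotomy handles $CKH^{m,n}(X) \otimes \Z[\tfrac{1}{p}]$ via \eqref{eqn:Conn-KGL-p}: here the constraint $b \le 0$ restricts us to $a \ge m$, and on this range the term $E_2^{a,b} = H^{2a - m}(X, \Z(a + n - m)) \otimes \Z[\tfrac{1}{p}]$ either has negative weight (when $a < m - n$) or is killed by Theorem~\ref{thm:MGL-MC-iso} (when $a \ge m - n$), the crucial point being that $m > 2n + d$ forces $a > d + n$ in the latter range regardless of the sign of $n$.

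For the final identification, applying the same analysis with $i = -d$ on the diagonal $a + b = d$ leaves a unique surviving term $E_2^{d,0} = H^d(X,\Z(0)) \otimes \Z[\tfrac{1}{p}]$: for $a < d$ the weight $a - d$ is negative, and for $a > d$ Theorem~\ref{thm:MGL-MC-iso} eliminates the term. A short check shows that every entry on the neighboring diagonals from which a differential $d_r$ could enter or leave $(d,0)$ vanishes for the same reasons (incoming terms $E_r^{d-r, r-1}$ have negative twist $1 - r$ for $r \ge 2$; outgoing terms $E_r^{d+r, 1-r}$ are killed by Theorem~\ref{thm:MGL-MC-iso}), so $E_\infty^{d,0} = E_2^{d,0}$. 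To finish, the presheaf $z_{equi}(\A^0_k, 0)(U) = z_{equi}(\Spec k, 0)(U)$ is freely generated by the irreducible components of $U$, hence the complex $\underline{C}_\ast z_{equi}(\A^0_k, 0)$ is quasi-isomorphic to the constant presheaf $\Z$, and Definition~\ref{defn:MC-sing} then identifies $H^d(X,\Z(0))$ with $H^d_{cdh}(X,\Z)$. The main obstacle I anticipate is purely bookkeeping: the case split on the $CKH$ side when $n < 0$, and ruling out any higher differential that might perturb the corner $(d,0)$ in the limit.
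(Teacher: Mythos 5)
The proposal is correct and follows essentially the same approach as the paper: in both arguments, the vanishing of the $E_2$-terms (from negative weight together with Theorem~\ref{thm:MGL-MC-iso}) is fed into the strongly convergent slice spectral sequences for $KH$ and $CKH$. You are slightly more explicit than the paper in verifying that the differentials into and out of the corner $E_r^{d,0}$ are zero and in identifying $H^d(X,\Z(0))$ with $H^d_{cdh}(X,\Z)$, but this is a matter of presentation rather than a different route.
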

\begin{proof}
When $k$ admits resolution of singularities, 
using the spectral sequences ~\eqref{eqn:KH-S-0} and
~\eqref{eqn:Conn-KGL-0}, it suffices to show
that $H^{p-q}(X, \Z(n-q)) = 0$ whenever $2n-p-q+d < 0$.

If $n-q < 0$, then we already know that this motivic cohomology group is zero.
So we can assume $n-q \ge 0$.
We set $a = n-q, b = 2n-p-q$ so that $2a-b = 2n-2q-2n+p+q = p-q$.
Since $2n-p-q+d < 0$ and $n-q \ge 0$ by our assumption, we get 
\[
b+d-a = 2n-p-q+d-n+q = n-p+d = (2n-p-q+d)-(n-q) < 0.
\]
 
The theorem now follows because we have shown in the proof of 
~\ref{thm:MGL-MC-iso} that 
\linebreak
$H^{p-q}(X, \Z(n-q)) = H^{2a-b}(X, \Z(a)) =0$ as $a > b+d$.
This argument also shows that
$KH_{-d}(X) \cong H^d(X, \Z(0)) \cong H ^d_{cdh}(X, \Z)$.

In positive characteristic, we use the same argument with the spectral sequences
~\eqref{eqn:KH-S-p} and ~\eqref{eqn:Conn-KGL-p} to deduce that
$CKH^{m,n}(X)\otimes _{\mathbb Z}
\mathbb Z [\frac{1}{p}] = KH_{2n-m}(X) \otimes _{\mathbb Z}
\mathbb Z [\frac{1}{p}] = 0$ whenever $2n-m < -d$ and $KH_{-d}(X)
\otimes _{\mathbb Z} \mathbb Z [\frac{1}{p}] \cong H^d_{cdh}(X, \Z)
\otimes _{\mathbb Z} \mathbb Z [\frac{1}{p}]$.
\end{proof}

Weibel's conjecture on the vanishing of
certain negative $K$-theory was proven (after inverting the characteristic)
by Kelly \cite{Kelly-2}. Using our spectral sequence (which uses the
methods of \cite{Kelly}), 
we can obtain the following result (which follows as well 
from \cite{Kelly-2} via the cdh-descent spectral sequence). 
The characteristic zero version of this computation 
was proven in \cite[Thm.~02(1)]{CHW}, and  for arbitrary noetherian schemes,
we refer the reader to \cite[Cor. D]{Kerz-2}.

\begin{cor}\label{cor:KB-neg-vanish}
Let $k$ be a field of exponential characteristic $p$ and let
$X \in \Sch_k$ have dimension $d$. Then
\[ K^B_{-d}(X) \otimes _{\mathbb Z} \mathbb Z [\tfrac{1}{p}] \cong 
H^d_{cdh}(X, \Z) \otimes _{\mathbb Z} \mathbb Z [\tfrac{1}{p}].
\]
\end{cor}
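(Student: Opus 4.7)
The plan is to feed the strongly convergent spectral sequence \eqref{eqn:K-SS-0} of \ref{cor:Alg-K-SS},
$$E^{a,b}_2 = H^{a-b}(X, \Z(-b)) \otimes_{\Z} \Z[\tfrac{1}{p}] \Rightarrow K^B_{-a-b}(X) \otimes_{\Z} \Z[\tfrac{1}{p}],$$
into the same dimension-vanishing analysis that \ref{thm:Vanishing-conj} carried out for $KH$-theory. The $E_2$-page here coincides with that of the $KH$ spectral sequence \eqref{eqn:KH-S-p}, so the same terms should survive on the anti-diagonal $a + b = d$ that computes $K^B_{-d}(X) \otimes_{\Z} \Z[\tfrac{1}{p}]$.

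The crucial input I would invoke is the vanishing of motivic cohomology with $\Z[\tfrac{1}{p}]$-coefficients: $H^m(X, \Z(n)) \otimes_{\Z} \Z[\tfrac{1}{p}] = 0$ for $n < 0$ by definition, and for $m - n > d$ by (the proof of) \ref{thm:MGL-MC-iso}. Applied to $E^{a,b}_2 = H^{a-b}(X, \Z(-b)) \otimes_{\Z} \Z[\tfrac{1}{p}]$, these translate to $b \leq 0$ and $a \leq d$; intersecting with the condition $a + b = d$ forces $(a, b) = (d, 0)$, leaving the single surviving entry
$$E^{d, 0}_2 \; = \; H^d(X, \Z(0)) \otimes_{\Z} \Z[\tfrac{1}{p}] \; \cong \; H^d_{cdh}(X, \Z) \otimes_{\Z} \Z[\tfrac{1}{p}].$$
A short check then shows that no differential of \eqref{eqn:K-SS-0} touches $E^{d, 0}_r$ for any $r \geq 2$: the outgoing target $E^{d+r, -r+1}_r$ vanishes by the dimension bound (its first coordinate $d + r$ exceeds $d$), and the incoming source $E^{d-r, r-1}_r$ vanishes because its twist $-(r-1)$ is strictly negative.

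Strong convergence of \eqref{eqn:K-SS-0} then collapses the induced filtration on $K^B_{-d}(X) \otimes_{\Z} \Z[\tfrac{1}{p}]$ to this unique surviving term and delivers the isomorphism claimed. I do not anticipate a serious obstacle at this stage: the existence and strong convergence of \eqref{eqn:K-SS-0}, together with the Kelly-type inversion of $p$ in positive characteristic and the higher-dimensional motivic cohomology vanishing, have already been absorbed into \ref{cor:Alg-K-SS} and \ref{thm:MGL-MC-iso}. The remaining task is essentially a bookkeeping sweep of the $E_2$-page along the single anti-diagonal $a+b=d$.
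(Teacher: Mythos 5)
Your argument is correct and is precisely the proof the paper leaves to the reader: applying the strongly convergent spectral sequence \eqref{eqn:K-SS-0} of Corollary~\ref{cor:Alg-K-SS} along the anti-diagonal $a+b=d$, the vanishing of $H^{a-b}(X,\Z(-b))\otimes_\Z\Z[\tfrac{1}{p}]$ for $b>0$ (negative twist) and for $a>d$ (the dimension bound extracted from Theorem~\ref{thm:MGL-MC-iso}) collapses the abutment filtration to the single term $E^{d,0}_2 = H^d_{cdh}(X,\Z)\otimes_\Z\Z[\tfrac{1}{p}]$, with your differential check confirming no nontrivial $d_r$ touches this spot. The paper states Corollary~\ref{cor:KB-neg-vanish} without an explicit proof, and your $E_2$-page bookkeeping supplies exactly the intended argument.
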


\section{The Chern classes on $KH$-theory}\label{sec:Chern-classes}
In order to obtain more applications of the slice spectral sequence for 
$KH$-theory and the cycle class map (see ~\ref{thm:Conn-KGL}), we need to 
have a theory of Chern classes on the $KH$-theory of singular schemes.

Gillet \cite{Gillet} showed that any cohomology theory 
satisfying the projective bundle formula and some other
standard admissibility axioms admits a theory of Chern classes from
algebraic $K$-theory of schemes over a field. These Chern
classes are very powerful tools for understanding algebraic
$K$-theory groups in terms of various cohomology theories such as
motivic cohomology and Hodge theory. 
The Chern classes in Deligne cohomology are used to
define various regulator maps on $K$-theory and they also give rise to
the construction of intermediate Jacobians of smooth projective varieties
over $\C$.

If $k$ is a perfect field of exponential characteristic $p \ge 1$,
then Kelly \cite[Th.~5.5.10]{Kelly} has shown that the motivic cohomology
functor $X \mapsto \{H^i(X, \Z(j))[\tfrac{1}{p}]\}_{i,j \in \Z}$
satisfies the projective bundle formula in $\Sch_k$. 
This implies in particular by
Gillet's theory that there are functorial Chern class maps
\begin{equation}\label{eqn:Ch-K-mot}
c_{i,j}: K_j(X) \to H^{2i-j}(X, \Z(i))[\tfrac{1}{p}].
\end{equation}

In this section, we show that in characteristic zero,
Gillet's technique can be used to
construct the above
Chern classes on the homotopy invariant $K$-theory of singular schemes. 
Applications of these Chern classes to the understanding of 
the motivic cohomology and $KH$-theory of singular schemes 
will be given in the following two sections.

Let $k$ be a field of characteristic zero and let $\Sch_{{\Zar}/k}$ denote the 
category of separated schemes of finite type over $k$ equipped with the  Zariski 
topology. Let $\Sm_{{\Zar}/k}$ denote the full subcategory of smooth schemes 
over $k$ equipped with the  Zariski topology. 
For any $X \in \Sch_k$, let $X_{\Zar}$ denote small Zariski site of $X$.
A presheaf of spectra on $\Sch_k$ or $\Sm_k$ will mean a presheaf of
$S^1$-spectra.

Let $Pre(\Sch_{{\Zar}/k})$ be the category of presheaves of simplicial sets on 
$\Sch_{{\Zar}/k}$ equipped with the injective
Zariski local model structure, i.e., the weak 
equivalences are the maps that induce a weak equivalence of simplicial sets
at every Zariski stalk and the cofibrations are given by monomorphisms.
This model structure restricts to a similar model structure on the 
category $Pre(X_{\Zar})$ of presheaves of simplicial sets on $X_{\Zar}$ for every
$X \in \Sch_k$. 
We will write $\sH^{\rm big}_{\rm Zar}(k)$, $\sH^{\rm sml}_{\Zar}(X)$ for the 
homotopy categories
of $Pre(\Sch_{{\Zar}/k})$ and $Pre(X_{\Zar})$, respectively.

\subsection{Chern classes from $KH$-theory to motivic 
cohomology}\label{sec:Gill}
For any $X \in \Sch_k$, let
$\Omega BQP(X)$ denote the simplicial set obtained by taking the
loop space of the nerve of the category $QP(X)$ obtained by applying
Quillen's $Q$-construction to the exact category of locally free
sheaves on $X_{\Zar}$. Let $\sK$ denote the presheaf of simplicial sets on
$\Sch_{{\Zar}/k}$ given by $X \mapsto \Omega BQP(X)$. 
One knows that $\sK$ is a presheaf of infinite loop spaces so that there
is a presheaf of spectra $\wt{\sK}$ on $\Sch_k$ such that
$\sK = (\wt{\sK})_0$. Let $\wt{\sK}^B$ denote the Thomason-Trobaugh
presheaf of spectra on $\Sch_k$ such that $\wt{\sK}^B(X) = K^B(X)$
for every $X \in \Sch_k$. There is a natural map of presheaves of
spectra $\wt{\sK} \to \wt{\sK}^B$ which induces isomorphism between the
non-negative homotopy group presheaves.

Recall from \cite[Thm.~2.34]{Jardine-1} that the category of presheaves of
spectra on $\Sch_{{\Zar}/k}$ has a closed model structure where the
weak equivalences are given by the stalk-wise stable equivalence of
spectra and a map $f:E \to F$ is a cofibration if $f_0$ is a 
monomorphism and $E_{n+1} \amalg_{S^1 \wedge E_n} S^1 \wedge F_n \to F_{n+1}$
is a monomorphism for each $n \ge 0$. Let $\sH^s_{\Zar}(k)$ denote
the associated homotopy category.
There is a functor $\Sigma _s ^{\infty}: \sH^{\rm big}_{\rm Zar}(k) \to 
\sH^s_{\Zar}(k)$ which has a right adjoint.
We can consider the above model structure and the corresponding 
homotopy categories with respect to the Nisnevich and $cdh$-sites as well.

Let $\wt{\sK}_{cdh} \to \wt{\sK}^B_{cdh}$ denote the map between the
functorial  fibrant replacements in the above model structure on presheaves of
spectra on $\Sch_k$ with respect to the $cdh$-topology.  
Let $KH$ denote the presheaf of spectra on $\Sch_k$ such that
$KH(X)$ is Weibel's homotopy invariant $K$-theory of $X$
(see \cite{Weibel}).

The following is a direct consequence of the main result of \cite{Haes}.

\begin{lem}\label{lem:Elem}
Let $k$ be a field of characteristic zero.
For every $X \in \Sch_k$ and integer $p \in \Z$, there is a natural isomorphism
$KH_p(X) \xrightarrow{\cong} \H^{-p}_{cdh}(X, \sK_{cdh})$.
\end{lem}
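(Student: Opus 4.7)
The plan is to identify $\sK_{cdh}$ with the homotopy-invariant $K$-theory presheaf $KH$ in the stable cdh-homotopy category, and then read off the hypercohomology in terms of the homotopy groups of $KH(X)$. The key input is the main result of \cite{Haes}, which asserts cdh-descent for $KH$ in characteristic zero, together with the coincidence of the connective, nonconnective, and homotopy-invariant $K$-theories on smooth $k$-schemes.

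First I would show that each map in the zigzag
\[
\wt{\sK} \longrightarrow \wt{\sK}^B \longrightarrow KH
\]
of presheaves of spectra on $\Sch_k$ is a cdh-local equivalence. On a smooth $k$-scheme $Y$, the scheme $Y$ is regular, so the Thomason-Trobaugh spectrum satisfies $K^B_i(Y) = 0$ for $i < 0$; since the first arrow is already a $\pi_{\ge 0}$-isomorphism by construction, it becomes a stable equivalence at $Y$. Homotopy invariance of $K$-theory on regular noetherian schemes yields $K^B(Y) \simeq KH(Y)$, so the second map is a weak equivalence at $Y$ as well. Since $k$ is of characteristic zero, resolution of singularities implies that $\Sm_k$ is cdh-dense in $\Sch_k$, and a map of presheaves of spectra that is a pointwise stable equivalence on $\Sm_k$ is therefore a cdh-local equivalence.

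Next I would invoke \cite{Haes}, which asserts that $KH$ satisfies Mayer-Vietoris for cdh-distinguished squares in characteristic zero. In the Jardine model structure \cite[Thm.~2.34]{Jardine-1} on presheaves of spectra, this is equivalent to $KH$ being cdh-locally fibrant (modulo objectwise fibrant replacement), so the functorial cdh-fibrant replacement $\wt{\sK}_{cdh}$ is stably equivalent to $KH$ as presheaves of spectra. Evaluating at $X$ and taking $\pi_p$ gives a natural isomorphism $\pi_p(\wt{\sK}_{cdh}(X)) \cong KH_p(X)$, which is exactly the asserted isomorphism $\H^{-p}_{cdh}(X, \sK_{cdh}) \cong KH_p(X)$ once one interprets the cdh-hypercohomology as the homotopy groups of global sections of the cdh-fibrant replacement of the associated spectrum.

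The principal obstacle is the bookkeeping across model structures: one must check that the symbol $\H^{-p}_{cdh}(X, \sK_{cdh})$ in the statement, built from the simplicial presheaf $\sK$, agrees with $\pi_p$ of the spectrum-level cdh-fibrant replacement of $\wt{\sK}$, and that Haesemeyer's Mayer-Vietoris formulation indeed corresponds to cdh-fibrancy in Jardine's model structure (via the cd-structure characterization of fibrancy due to Voevodsky). Both verifications are formal once the framework is properly set up; after that, the identification $\wt{\sK}_{cdh} \simeq KH$ immediately yields the lemma.
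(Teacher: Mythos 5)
Your argument is correct in substance, but it takes a different route from the paper. Both proofs hinge on Haesemeyer's theorem and on the coincidence of $\sK$, $\sK^B$, and $KH$ on smooth schemes, but you organize the middle differently. The paper's proof establishes $\pi_p(\wt{\sK}_{cdh}(X)) \cong \pi_p(\wt{\sK}^B_{cdh}(X))$ by an explicit induction on $\dim X$ using a blow-up square with smooth $X'$ and cdh-descent for both fibrant replacements, then invokes \cite[Thm.~6.4]{Haes} to identify $\wt{\sK}^B_{cdh}$ with $KH$. You instead observe that the whole zigzag $\wt{\sK} \to \wt{\sK}^B \to KH$ is a pointwise stable equivalence on $\Sm_k$ (connectivity of $\wt{\sK}$, vanishing of $K^B_{<0}$ in the regular case, homotopy invariance of $K$-theory on regular schemes) and appeal to the cdh-density of $\Sm_k$ to promote this to a cdh-local equivalence, so the fibrant replacements on the two ends become equivalent directly. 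This is slicker, but the step ``a pointwise stable equivalence on $\Sm_k$ is a cdh-local equivalence on $\Sch_k$'' is not quite automatic: it packs the same resolution-of-singularities input into a density/generating-subsite statement (essentially the claim that $\pi: (\Sch_k)_{cdh} \to (\Sm_k)_{Nis}$ controls cdh-stalks), which is exactly what the paper's induction unwinds by hand. As written, your argument would benefit from citing or sketching why that density claim implies detection of cdh-local equivalences on the smooth subcategory; otherwise the two proofs have the same logical content. You also rightly flag, but defer, the identification of $\H^{-p}_{cdh}(X,\sK_{cdh})$ with $\pi_p$ of the spectrum-level cdh-fibrant replacement of $\wt{\sK}$, which the paper handles explicitly in its opening display.
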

\begin{proof}
We have a natural isomorphism 
\begin{equation}\label{eqn:Elem-0}
\begin{array}{lll}
\pi_p(\wt{\sK}_{cdh}(X)) & = & 
\Hom_{\sH^s_{cdh}(k)}(\Sigma _s^{\infty}(S^p_s \wedge X), \wt{\sK}) \\
& \cong &  \Hom_{\sH_{cdh}(k)}(S^p_s \wedge X, \sK) \\
& \cong &  \H^{-p}_{cdh}(X, \sK_{cdh}).
\end{array} 
\end{equation}

It is well known that the natural maps $K_p(X) \to
\pi_p(\wt{\sK}_{cdh}(X)) \to \pi_p(\wt{\sK}^B_{cdh}(X))$ are isomorphisms
for all $p \in \Z$ when $X$ is smooth over $k$.
In general, let $X \in \Sch_k$. We can find a Cartesian square
\begin{equation}\label{eqn:Elem-1} 
  \begin{array}{c}
\xymatrix@C1pc{
Z' \ar[r] \ar[d] & X' \ar[d]^{f} \\
Z \ar[r] & X,} 
  \end{array}
\end{equation}
where $X' \in \Sm_k$ and $f$ is a proper birational morphism 
which is an isomorphism
outside the closed immersion $Z \inj X$. An induction on dimension of $X$
and $cdh$-descent for $\wt{\sK}_{cdh}$ as well as $\wt{\sK}^B_{cdh}$ now show
that the map $\pi_p(\wt{\sK}_{cdh}(X)) \to \pi_p(\wt{\sK}^B_{cdh}(X))$ is an 
isomorphism for all $p \in \Z$. 
Composing the inverse of this isomorphism with the map in ~\eqref{eqn:Elem-0}, 
we get a natural isomorphism $\pi_p(\wt{\sK}^B_{cdh}(X))
\xrightarrow{\cong} \H^{-p}_{cdh}(X, \sK_{cdh})$.

On the other hand, it follows from \cite[Thm.~6.4]{Haes}
that the natural map $KH(X) \to \wt{\sK}^B_{cdh}(X)$ is a homotopy equivalence. 
We conclude that there is a natural isomorphism
$\nu_X: KH_p(X) \xrightarrow{\cong} \H^{-p}_{cdh}(X, \sK_{cdh})$
for every $X \in \Sch_k$ and $p \in \Z$. 
\end{proof}

Let $\sB \sG \sL$ denote the simplicial presheaf on $\Sch_k$
given by $\sB \sG \sL(X) = \colim_n BGL_n(\sO(X))$.
It is known (see \cite[Prop.~2.15]{Gillet}) 
that there is a natural section-wise weak equivalence
$\sK|_X \xrightarrow{\cong} \Z \times \Z_{\infty} {\sB \sG \sL}|_X$
in $Pre(\Sch_{{\Zar}/k})$ (see \S~\ref{sec:Gill}),
where $\Z_{\infty}(-)$ is the $\Z$-completion functor of Bousfield-Kan.

To simplify the notation, for any integer $q \in \Z$, we will write $\Gamma(q)$ 
for the presheaf on $\Sch_{{\Zar}/k}$ given by (see ~\S~\ref{sec:MCS}):
\[
\Gamma(q)(U) = \left\{ \begin{array}{ll}
\underline{C}_{\ast}z_{equi}(\A^q_k,0)(U)[-2q] & \mbox{if $q \ge 0$} \\
0 & \mbox{if $q < 0$}.
\end{array}
\right. 
\]

It is known that the restriction of $\Gamma(q)$ 
on $\Sm_{{\Zar}/k}$ is a sheaf (see, for instance, \cite[Def.~16.1]{MVW}).
We let $\Gamma(q)[2q] \to \sK(\Gamma(q), 2q)$ denote a functorial fibrant
replacement of $\Gamma(q)[2q]$ with respect to the 
injective Zariski local model
structure.

It follows from \cite[Ex.~3.1]{AS} that $\sK(\Gamma(q), 2q)$ is
a cohomology theory on $\Sm_{{\Zar}/k}$ satisfying all conditions of
\cite[Defs.~1.1-1.2]{Gillet}. We conclude from 
Gillet's construction (see \cite[\S 2, p. 225]{Gillet}) that for any
$X \in \Sm_{{\Zar}/k}$, there is a morphism of simplicial presheaves 
$C_q:{\sB \sG \sL}|_X \to \sK(\Gamma(q), 2q)|_X$ in $\sH^{\rm sml}_{\Zar}(X)$ 
which is natural in $X$.
Composing this with $\sK|_X \xrightarrow{\cong} \Z \times \Z_{\infty} 
{\sB \sG \sL}|_X$ and using the isomorphism
$\Z_{\infty}\sK(\Gamma(q), 2q) \cong \sK(\Gamma(q), 2q)$, we obtain a map
\[
C_q: \sK|_X \xrightarrow{\cong} \Z \times \Z_{\infty} {\sB \sG \sL}|_X \to
\Z \times \sK(\Gamma(q), 2q)|_X \to \sK(\Gamma(q), 2q)|_X
\]
in $\sH^{\rm sml}_{\Zar}(X)$, where the last arrow is the projection.

Since $\sK(\Gamma(q), 2q)$ is fibrant
in $Pre(\Sch_{{\Zar}/k})$, it follows from 
\cite[Cor. 5.26]{Jardine-lh} that the restriction$\sK(\Gamma(q), 2q)|_X$ 
is fibrant in $Pre(X_{\Zar})$.
Since $\sK|_X$ is cofibrant (in our local injective model structure), 
Gillet's construction yields
a map of simplicial presheaves (see \cite[p.~225]{Gillet})
$C_q: \sK|_X \to \sK(\Gamma(q), 2q)|_X$ in $Pre(X_{\Zar})$.
In particular, a map $\sK(X) \to \sK(\Gamma(q), 2q)(X)$.
Furthermore, the naturality of the construction gives for any
morphism $f:Y \to X$ in $\Sm_k$, a  diagram that commutes
up to homotopy
(see, for instance, \cite[5.6.1]{AS}) 
\begin{equation}\label{eqn:Comp-1} 
  \begin{array}{c}
\xymatrix@C1pc{
\sK(X) \ar[r]^<<<{C_q} \ar[d]_{f^*} & \sK(\Gamma(q), 2q)(X) \ar[d]^{f^*} \\
\sK(Y) \ar[r]^<<<{C_q} & \sK(\Gamma(q), 2q)(Y).}
  \end{array}
\end{equation} 
Equivalently, there is a morphism of simplicial presheaves
$C_q: \sK \to \sK(\Gamma(q), 2q)$ in $\sH^{\rm big}_{\rm Zar}(k)$
and hence a morphism in $(Sm_k)_{Nis}$ (see \S~\ref{sec:Notn}).
Pulling back $C_q$ via the morphism of sites
$\pi: (\Sch_k)_{cdh} \to (\Sm_k)_{Nis}$
\cite[p. 111]{Jardine-lh}, and considering the cohomologies
of the associated $cdh$-sheaves, we obtain for any 
$X \in \Sch_k$, a closed subscheme $Z \subseteq X$ and $p, q \ge 0$,
the Chern class maps
\begin{equation}\label{eqn:Comp-2}
\begin{array}{lll}
c^Z_{X, p,q}: \H^{-p}_{Z, cdh}(X, \sK_{cdh}) & := &
\H^{-p}_{Z, cdh}(X, \mathbf L \pi^*(\sK)) \\
& \to & \H^{-p}_{Z, cdh}(X, \mathbf L \pi^*(\sK(\Gamma(q), 2q))) \\
& = & \H^{-p}_{Z, cdh}(X, C_*z_{equi} (\A^q_k,0)_{cdh}) \\
& := & H^{2q-p}_Z(X, \Z(q)).
\end{array}
\end{equation}

It follows from ~\ref{lem:Elem} that
$\H^{-p}_{Z, cdh}(X, \sK_{cdh}) = KH^Z_p(X)$, where
the $KH^Z(X)$ is the homotopy fiber of the map
$KH(X) \to KH(X \setminus Z)$.
Let $(X,Z)$ denote the pair consisting of a scheme $X \in \Sch_k$ and a
closed subscheme $Z \subseteq X$. A map of pairs $f: (Y, W) \to (X,Z)$
is a morphism $f:Y \to X$ such that $f^{-1}(Z) \subseteq W$. 
We have then shown the following.

\begin{thm}\label{thm:Chern-main}
Let $k$ be a field of characteristic zero. Then for any pair
$(X,Z)$ in $\Sch_k$ and for any $p \ge 0, q \in \Z$, there
are Chern class homomorphisms 
\[
c^Z_{X, p,q}: KH^Z_p(X) \to H^{2q-p}_Z(X, \Z(q))
\]
such that the composition of $c^X_{X,0,0}$ with $K_0(X) \to KH_0(X)$ is
the rank map. For any map of pairs
$f: (Y, W) \to (X,Z)$, there is a commutative diagram
\begin{equation}\label{eqn:Comp-2a} 
  \begin{array}{c}
\xymatrix@C2pc{
KH^Z_p(X) \ar[r]^<<<<<{c^Z_{X, p,q}} \ar[d]_{f^*} & 
H^{2q-p}_Z(X, \Z(q)) \ar[d]^{f^*}
\\
KH^W_p(Y) \ar[r]^<<<<<{c^W_{Y, p,q}} & H^{2q-p}_W(Y, \Z(q)).}
  \end{array}
\end{equation}
\end{thm}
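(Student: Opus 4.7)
The construction of the map $c^Z_{X,p,q}$ is essentially given in the discussion preceding the theorem, so the plan is to extract it and to verify the two additional stated properties. Concretely, I would take the Chern class morphism $C_q: \sK \to \sK(\Gamma(q),2q)$ in $\sH^{\rm big}_{\rm Zar}(k)$ arising from Gillet's universal construction applied to the cohomology theory $\sK(\Gamma(q),2q)$ on $\Sm_{\Zar/k}$, pull it back along $\pi: (\Sch_k)_{cdh}\to (\Sm_k)_{Nis}$ to obtain a morphism of $cdh$-sheaves, and then pass to hypercohomology with support in $Z \inj X$ as in \eqref{eqn:Comp-2}. The identification of the source with $KH^Z_p(X)$ comes from \lemref{lem:Elem} applied to the localization fiber sequence $KH^Z(X) \to KH(X) \to KH(X\setminus Z)$.

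For the rank statement, I would invoke the normalization built into Gillet's construction: the component $C_0$ is defined so that on $\pi_0$ of the $K$-theory presheaf it recovers the rank map to $\Z$. Tracing through the chain $K_0(X) \to KH_0(X) \cong \mathbb H^0_{cdh}(X,\sK_{cdh}) \to \mathbb H^0_{cdh}(X,\Z) = H^0(X,\Z(0))$ then yields the claim. For functoriality, the key observation is that the square in \eqref{eqn:Comp-1} is homotopy commutative for every morphism $f:Y\to X$ in $\Sm_k$, so that $C_q$ is a genuine morphism in the homotopy category $\sH^{\rm big}_{\rm Zar}(k)$ rather than merely a pointwise family. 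Applying $\mathbf L \pi^*$ preserves this naturality, and the induced square on hypercohomology with supports associated with a map of pairs $(Y,W)\to (X,Z)$ then produces \eqref{eqn:Comp-2a}.

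The main technical point to handle carefully is the compatibility of the construction with supports: to obtain a well-defined map $c^Z_{X,p,q}: KH^Z_p(X) \to H^{2q-p}_Z(X,\Z(q))$ one needs the Chern class construction to commute with restriction along the open immersion $X\setminus Z \inj X$, so that the map descends to the homotopy fibers computing the relative hypercohomology. This follows from the naturality invoked above applied to this open immersion, combined with the fact that $\mathbf L \pi^*$ is a left Quillen functor on a stable model category and thus preserves homotopy fiber sequences of presheaves of spectra. Once this compatibility is in place, all three assertions of the theorem follow immediately from the properties of Gillet's construction on $\Sm_k$ and the identifications provided by \lemref{lem:Elem} and \defref{defn:MC-sing}.
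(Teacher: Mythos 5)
Your proposal follows the same route as the paper: the theorem is simply a summary of the construction in \S\ref{sec:Gill}, where Gillet's universal Chern classes $C_q\colon \sK \to \sK(\Gamma(q),2q)$ are built on $\Sm_k$, pulled back along $\pi$, and evaluated on $cdh$-hypercohomology with supports, with \lemref{lem:Elem} identifying the source with $KH^Z_p(X)$. Your additional remarks on the rank normalization and on naturality with respect to the open immersion $X\setminus Z\inj X$ (so the maps descend to the relative terms) correctly fill in points the paper leaves implicit.
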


\subsection{Chern classes from $KH$-theory to Deligne 
cohomology}\label{sec:Chern-Deligne}
Let $\sC_{\Zar}$ denote the category of schemes which are separated and of finite
type over $\C$ with the Zariski topology. We denote by $\sC_{\Nis}$ the same
category but with the Nisnevich topology.
Let $\sC_{\rm an}$ denote the 
category of complex analytic spaces with the analytic topology.
There is a morphism of sites $\epsilon: \sC_{\rm an} \to \sC_{\Zar}$.
For any $q \in \Z$, let $\Gamma(q)$ denote the complex 
of sheaves on $\sC_{\Zar}$ defined as follows:
\begin{equation}\label{eqn:Deligne-1}
\Gamma(q) = \left\{ \begin{array}{ll}
\Gamma_{\sD}(q) & \mbox{if $q \ge 0$} \\
\mathbf R \epsilon_* ((2\pi \sqrt{-1}) \Z) & \mbox{if $q < 0$},
\end{array}
\right. 
\end{equation}
where $\Gamma_{\sD}(q)$ is the Deligne-Beilinson complex on $\sC_{\Zar}$
in the sense of \cite{EV}. Then $\Gamma(q)$ is a cohomology theory on
$\Sm_{\C}$ satisfying Gillet's conditions for a theory of Chern classes
(see, for instance,  \cite[Ex.~3.4]{AS}).
Applying the argument of ~\ref{thm:Chern-main} in verbatim, we obtain
the Chern class homomorphisms
\begin{equation}\label{eqn:Deligne-2}
c^Z_{X, p,q}: KH^Z_p(X) \to \H^{2q-p}_{Z, cdh}(X, (\Gamma_{\sD}(q))_{cdh})
\end{equation}
for a pair of schemes $(X, Z)$ in $\Sch_{\C}$ which is natural
in $(X,Z)$.

Let us now fix a scheme $X \in \Sch_{\C}$.
Recall from \cite[\S~6.2.5-6.2.8]{DeligneHT} that a smooth proper
hypercovering of $X$ is a smooth simplicial scheme
$X_{\bullet}$ with a map of simplicial schemes $p_X: X_{\bullet} \to X$   
such each map $X_i \to X$ is proper and $p_X$ satisfies the 
universal cohomological descent in the sense of \cite{DeligneHT}.
The resolution of singularities implies that such a hypercovering exists.
The Deligne cohomology of $X$ is defined to be \cite[5.1.11]{DeligneHT}
\begin{equation}\label{eqn:Deligne-2a}
H^p_{\sD}(X, \Z(q)):= \H^p_{\Zar}(X, {\mathbf R} p_{X *} \Gamma_{\sD}(q)) =
\H^p_{\Zar}(X_{\bullet}, \Gamma_{\sD}(q)).
\end{equation}

Gillet's theory of Chern classes gives rise to the Chern class homomorphisms
\begin{equation}\label{eqn:K-Chern}
c^{Q}_{X,p,q}: K_p(X) \to H^{2q-p}_{\sD}(X, \Z(q))
\end{equation}
for any $X \in \Sch_{\C}$ which is contravariant functorial, where
$K_i(X) = \pi_i(\Omega BQP(X))$ is the Quillen $K$-theory
(see, for instance, \cite[\S~2.4]{BPW}).
Our objective is to show that these Chern classes actually factor through
the natural map $K_*(X) \to KH_*(X)$.

The construction of the Chern classes from $KH$-theory to the Deligne
cohomology (see ~\ref{thm:Deligne-Chern-Main} below) will be achieved by
the $cdh$-sheafification of Gillet's Chern classes at the
level of presheaves of simplicial sets, followed by considering the
induced maps on the hypercohomologies. 
Therefore, in order to factor the classical Chern classes 
$c^Q_{X,p,q}$ on Quillen $K$-theory through $KH$-theory,
we only need to identify the target of 
the Chern class maps in ~\eqref{eqn:Deligne-2} with the Deligne
cohomology.

To do this, we let for any $X \in \Sch_{\C}$,
$H^*_{\an}(X, \sF)$ denote the cohomology of the analytic space $X_{\an}$ 
with coefficients in the sheaf $\sF$ on $\sC_{\an}$.
Let $\Z \to Sing^*$ denote a fibrant replacement of the sheaf
$\Z$ on $\sC_{\an}$ so that ${\mathbf R}\epsilon_*(\Z) \xrightarrow{\cong}
\epsilon_*(Sing^*)$. Set $\Z(q) = (2\pi \sqrt{-1})^q\epsilon_*(Sing^*) 
\cong {\mathbf R}\epsilon_*(\Z)$.

\begin{lem}\label{lem:Sing-cdh}
For any $X \in \Sm_{\C}$, the map $H^p_{\an}(X, \Z) \to 
\H^p_{cdh}(X, \Z(q)_{cdh})$ is an isomorphism.
\end{lem}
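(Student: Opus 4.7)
The plan is to chain two isomorphisms. First, since $(2\pi\sqrt{-1})^q$ is a unit, the complex $\Z(q)$ is quasi-isomorphic to $\mathbf{R}\epsilon_*\Z$ as a complex of Zariski sheaves on $\sC_{\Zar}$, so the Leray spectral sequence for $\epsilon\colon X_{\an}\to X_{\Zar}$ yields a natural identification
\[
\H^p_{\Zar}(X,\Z(q)) \;\cong\; \H^p_{\Zar}(X,\mathbf{R}\epsilon_*\Z) \;\cong\; H^p_{\an}(X,\Z)
\]
for every $X\in\Sm_\C$. The real task is therefore to compare the Zariski hypercohomology of $\Z(q)$ with the $cdh$-hypercohomology of its $cdh$-sheafification $\Z(q)_{cdh}$ on smooth schemes.

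My strategy is to show that the presheaf of complexes $U\mapsto \mathbf{R}\Gamma(U_{\an},\Z)$ on $\Sch_\C$ satisfies $cdh$-descent. By Voevodsky's criterion this reduces to two checks. Nisnevich descent holds because an \'etale morphism $Y\to U$ becomes a local isomorphism after passing to analytic spaces, so any Nisnevich distinguished square in $\Sch_\C$ induces a homotopy cartesian square of analytic cohomologies. Descent for an abstract blow-up square with $f\colon X'\to X$ proper birational and $Z\hookrightarrow X$ a closed subscheme such that $f$ is an isomorphism over $X\setminus Z$ amounts to the exactness of the Mayer--Vietoris sequence
\[
\cdots\to H^{p-1}_{\an}(f^{-1}(Z),\Z)\to H^p_{\an}(X,\Z)\to H^p_{\an}(X',\Z)\oplus H^p_{\an}(Z,\Z)\to H^p_{\an}(f^{-1}(Z),\Z)\to\cdots,
\]
which follows from Deligne's cohomological proper descent \cite[\S5.3]{DeligneHT} applied to the $0$-coskeleton of the cover $X'\amalg Z\to X$, combined with resolution of singularities to refine to smooth proper hypercovers.

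Granted these two descent properties, the descent spectral sequence identifies $\H^p_{cdh}(X,\Z(q)_{cdh})$ with $\H^p_{\Zar}(X,\Z(q))$ for every smooth $X$, and the first paragraph then finishes the proof. The main obstacle is the verification of proper $cdh$-descent for analytic cohomology; the Nisnevich descent and the Leray identification are formal. Concretely I would spell out the descent by running the hypercohomology spectral sequence for a smooth proper hypercover (provided by resolution of singularities) of a $cdh$-refinement of $X$, and comparing it term by term with the Zariski hypercohomology, where the smoothness of the terms ensures that the canonical fiber of $\Z(q)_{\Zar}\to\Z(q)_{cdh}$ is acyclic at each level of the hypercover.
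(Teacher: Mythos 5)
Your opening reduction via the Leray spectral sequence to the comparison $\H^p_{\Zar}(X,\Z(q)) \to \H^p_{cdh}(X,\Z(q)_{cdh})$ matches the paper exactly, but from there the routes diverge. The paper observes that each cohomology sheaf of $\Z(q)$ is the Nisnevich sheaf associated with the homotopy-invariant presheaf with transfers $U\mapsto H^i_{\an}(U,\Z)$, and then invokes the Suslin--Voevodsky comparison theorems (\cite[Cor.~1.1.1, 5.12.3, Thm.~5.13]{SV}) to pass from Zariski to Nisnevich to $cdh$ hypercohomology, all for smooth $X$. You instead propose to establish $cdh$-descent of the analytic-cohomology presheaf directly, checking Nisnevich descent (an \'etale map becomes a local homeomorphism of analytic spaces) and abstract blow-up descent (via Deligne's cohomological proper descent for $\mathrm{cosk}_0(X'\amalg Z\to X)$, refined to a smooth hypercover by resolution of singularities). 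Your approach is valid and, if carried out carefully, is actually a bit stronger: since any scheme admits a $cdh$-cover by smooth schemes, the $cdh$-sheafification of $\pi^*\Z(q)$ agrees with that of $\mathbf{R}\Gamma((-)_{\an},\Z)$, so you would conclude the isomorphism for all $X\in\Sch_\C$, not only smooth $X$; it also avoids the transfer machinery entirely, at the cost of a somewhat lengthier hands-on verification of proper descent. The one step you should make explicit is the passage from ``the presheaf $\mathbf{R}\Gamma((-)_{\an},\Z)$ satisfies $cdh$-descent'' to ``its sections compute $\H^p_{cdh}(X,\Z(q)_{cdh})$'': this requires both the identification of the two $cdh$-sheafifications just mentioned and the observation that a $cdh$-locally fibrant presheaf of complexes computes its own $cdh$-hypercohomology. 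Note also that under the hood both approaches lean on resolution of singularities in characteristic zero -- the SV comparison theorems the paper cites are themselves proved using it, just as your direct argument does.
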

\begin{proof}
Since $H^p_{\an}(X, \Z) \cong \H^p_{\Zar}(X, \Z(q))$, it suffices to show that
the map $\H^p_{\Zar}(X, \Z(q)) \to \H^p_{cdh}(X, \Z(q)_{cdh})$ is an
isomorphism.

Let $\sC_{\rm loc}$ denote the category of schemes which are separated and of 
finite type over $\C$.  We will consider $\sC_{\rm loc}$ as a Grothendieck site
with coverings given by maps $Y' \to Y$ where the associated map of the analytic
spaces is a local isomorphism of the corresponding topological spaces 
\cite[Expos\'e XI p. 9]{SGAiv}.
Since a Nisnevich cover of schemes is a local isomorphism of the
associated analytic spaces, 
there is a commutative diagram of  morphisms of sites:

\begin{equation}\label{eqn:Sing-cdh-0}
  \begin{array}{c}
\xymatrix@C1pc{
\sC_{\rm loc} \ar[r]^{\delta} \ar[d]_{\nu} & \sC_{\an} \ar[d]^{\epsilon} \\
\sC_{\Nis} \ar[r]^{\tau} & \sC_{\Zar}.}
  \end{array}
\end{equation}

Since every local isomorphism of analytic spaces is refined by open
coverings, it is well known that the map
$\H^p_{\an}(X, \sF^*) \to H^p_{\rm loc}(X, \sF^*)$ is an isomorphism
for any complex of sheaves on $\sC_{\an}$ (see, for instance,
\cite[Prop.~3.3, Thm.~3.12]{Milne}).

We set $(\Z(q))_{Nis} = \tau^*(\Z(q)) = \nu_* \circ \delta^*(Sing^*)$.
We observe that for every $i \in \Z$, the cohomology sheaf
$\sH^i$ associated to the complex $\Z(q)$ is isomorphic to 
the Zariski (or Nisnevich)
sheaf on $\Sch_{\C}$ associated to the presheaf $U \mapsto H^i_{\an}(U, \Z)$.
But this latter presheaf on $\Sm_{\C}$ is homotopy invariant with
transfers. It follows from \cite[Cor.~1.1.1]{SV} that the 
map $\H^p_{\Zar}(X, \Z(q)) \to \H^p_{Nis}(X, (\Z(q))_{Nis})$ is an isomorphism.
We are thus reduced to showing that the map
$\H^p_{Nis}(X, (\Z(q))_{Nis}) \to \H^p_{cdh}(X, (\Z(q))_{cdh})$ is an isomorphism
for $X \in \Sm_{\C}$.

But this follows again from \cite[Cor.~1.1.1, 5.12.3,
Thm.~5.13]{SV} because each $\sH^i \cong
{\mathbf R}^i\nu_*(\Z)$ is a Nisnevich sheaf on $\Sm_{\C}$ associated to the
homotopy invariant presheaf with transfers $U \mapsto H^i_{\an}(U, \Z)$.
The proof is complete.
\end{proof}

For any $X \in \Sch_{\C}$, there are natural maps
\begin{equation}\label{eqn:Deligne-cdh}
H^p_{\sD}(X, \Z(q)) \cong \H^p_{\Zar}(X_{\bullet}, \Gamma_{\sD}(q)) \to
\H^p_{Nis}(X_{\bullet}, (\Gamma_{\sD}(q))_{Nis}) \to 
\H^p_{cdh}(X_{\bullet}, (\Gamma_{\sD}(q))_{cdh}).
\end{equation}

\begin{lem}\label{lem:Deligne-4}
For a projective scheme $X$ over $\C$, the map
$H^p_{\sD}(X, \Z(q)) \to
\H^p_{cdh}(X_{\bullet}, (\Gamma_{\sD}(q))_{cdh})$
is an isomorphism.
\end{lem}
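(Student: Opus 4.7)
The plan is to reduce the claim to a levelwise statement on the smooth proper hypercover $p_X: X_\bullet \to X$, where each $X_n$ is smooth and proper, and there to treat the integral lattice and the (filtered) algebraic de Rham parts of the Deligne-Beilinson complex separately.

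First I would compute both sides via the standard cohomological descent spectral sequence
\[
E_1^{a,b} = \H^b_{?}(X_a, \Gamma_{\sD}(q)) \Longrightarrow \H^{a+b}_{?}(X_\bullet, \Gamma_{\sD}(q)),
\]
for $? \in \{\Zar, cdh\}$, which converges because $X_\bullet$ is a simplicial scheme of smooth proper $\C$-schemes. Since this spectral sequence is natural in the topology $?$, the desired isomorphism reduces to showing that for each level $X_n$, the comparison map $\H^p_{\Zar}(X_n, \Gamma_{\sD}(q)) \to \H^p_{cdh}(X_n, \Gamma_{\sD}(q)_{cdh})$ is an isomorphism.

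To handle this levelwise claim, I would use the defining distinguished triangle of the Deligne-Beilinson complex
\[
\Gamma_{\sD}(q) \to \Z(q) \oplus F^q \Omega^{\bullet} \to \Omega^{\bullet} \xrightarrow{+1}
\]
(for $q \ge 0$; the case $q < 0$ is exactly Lemma~\ref{lem:Sing-cdh}), which is preserved by cdh-sheafification. A five-lemma argument then reduces the problem to the comparisons for $\Z(q)$, $\Omega^{\bullet}$, and $F^q \Omega^{\bullet}$ individually. For $\Z(q)$ this is Lemma~\ref{lem:Sing-cdh} applied to each $X_n$. For the (filtered) algebraic de Rham complexes on a smooth proper $Y/\C$, GAGA together with the Poincar\'e lemma identifies $\H^{*}_{\Zar}(Y, \Omega^{\bullet})$ and $\H^{*}_{\Zar}(Y, F^q \Omega^{\bullet})$ with Betti cohomology and its Hodge-filtered piece; the cohomology sheaves of these complexes on $\Sm_{\C}$ are then Nisnevich sheafifications of homotopy-invariant presheaves with transfers, so the Nisnevich-to-cdh comparison follows from \cite[Cor.~1.1.1, Thm.~5.13]{SV} applied exactly as in the proof of Lemma~\ref{lem:Sing-cdh}, while the Zariski-to-Nisnevich comparison is automatic since each $\Omega^i$ is coherent on a smooth scheme.

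The main obstacle will be the verification that the cohomology sheaves of $F^q \Omega^{\bullet}$ carry the structure of homotopy-invariant presheaves with transfers on $\Sm_{\C}$; this is precisely where the projectivity of $X$ is essential, since it guarantees that the hypercover $X_\bullet$ can be chosen with all $X_n$ smooth and proper, so that GAGA applies levelwise and the algebraic de Rham complex computes Betti cohomology with its analytic Hodge filtration. One may alternatively bypass this step by invoking the cdh-descent property for filtered algebraic de Rham cohomology on smooth $\C$-schemes directly, after which the five-lemma reduction immediately produces the isomorphism of abutments.
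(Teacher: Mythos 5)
Your overall architecture coincides with the paper's: use the simplicial spectral sequence to reduce to the smooth projective levels $X_n$, then a distinguished triangle decomposing $\Gamma_{\sD}(q)$ and the five-lemma to split the comparison into a $\Z(q)$-part and a coherent de~Rham part. The paper's specific triangle is $\Omega^{<q}_{/\C}[-1] \to \Gamma_{\sD}(q) \to \Z(q) \to \Omega^{<q}_{/\C}$ (extracted from the analytic Deligne complex via GAGA); yours is the mapping-fiber triangle $\Gamma_{\sD}(q) \to \Z(q)\oplus F^q\Omega^\bullet \to \Omega^\bullet$. That is a cosmetic difference. The substantive difference lies in how each handles the coherent terms, and there your primary route has a genuine gap.

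The claim that the cohomology sheaves of $F^q\Omega^\bullet = \Omega^{\geq q}$ are Nisnevich sheafifications of homotopy-invariant presheaves with transfers is false. Already
\[
\H^0\bigl(\A^1_\C,\ \Omega^{\geq 1}\bigr) \ =\ \ker\bigl(d:\Omega^1(\A^1)\to 0\bigr) \ =\ \C[t]\,dt \ \neq\ 0 \ =\ \H^0\bigl(\Spec\C,\ \Omega^{\geq 1}\bigr),
\]
so the hypercohomology presheaf $U\mapsto \H^\ast(U,\Omega^{\geq q})$ is not $\A^1$-invariant on $\Sm_\C$. The identification of $F^q\Omega^\bullet$ with the Hodge filtration on Betti cohomology holds for smooth \emph{proper} schemes, but the sheaf-theoretic comparison you want is a local statement tested on small opens and Nisnevich/cdh stalks, which are never proper; properness of each $X_n$ does not help here. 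Hence the cited machinery \cite[Cor.~1.1.1, Thm.~5.13]{SV}, which is how your Lemma~\ref{lem:Sing-cdh} handles the $\Z(q)$-part, does not apply to $F^q\Omega^\bullet$, and your five-lemma argument is missing one of its three legs. (Your treatment of the full $\Omega^\bullet$ via Grothendieck's algebraic de Rham theorem is fine for what it's worth, since there the cohomology presheaf \emph{is} Betti cohomology.)

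The correct input for the coherent terms is not homotopy invariance but cdh-descent for K\"ahler differentials on smooth $\C$-schemes, i.e.\ \cite[Cor.~2.5]{CHW}: for $Y$ smooth, $H^n_{\Zar}(Y,\Omega^j) \cong \H^n_{cdh}(Y,(\Omega^j)_{cdh})$, which then passes to any bounded complex of $\Omega^j$'s such as $\Omega^{<q}$ or $\Omega^{\geq q}$. This is exactly what the paper uses, and it is what your final sentence gestures at with ``alternatively'' without a reference. As written, the primary argument does not go through; if you swap in the CHW input for the coherent legs of the triangle, your version becomes a correct minor variant of the paper's proof.
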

\begin{proof}
Our assumption implies that each component $X_p$ of the simplicial scheme
$X_{\bullet}$ is smooth and projective.
Given a complex of sheaves $\sF^{*}_{\bullet}$ (in Zariski or $cdh$-topology),
there is a spectral sequence
\[
E^{p,q}_1 = \H^q_{{\Zar}/{cdh}}(X_p, (\sF^{*}_p)_{{\Zar}/{cdh}}) \Rightarrow
\H^{p+q}_{{\Zar}/{cdh}}(X_{\bullet}, (\sF^{*}_{\bullet})_{{\Zar}/{cdh}}), 
\]
(see,  for instance, \cite[Appendix]{AS}).
Using this spectral sequence and ~\eqref{eqn:Deligne-cdh}, 
it suffices to show that the map
$H^p_{\Zar}(X, \Gamma_{\sD}(q)) \to \H^p_{cdh}(X, (\Gamma_{\sD}(q))_{cdh})$
is an isomorphism for any smooth projective scheme $X$ over $\C$.
For $q \le 0$, this follows from ~\ref{lem:Sing-cdh}. So we assume $q > 0$.

Since $X$ is smooth and projective, the analytic Deligne complex 
$\Z(q)_{\sD}$ is the complex of analytic sheaves
$\Z(q) \to \sO_{X_{\an}} \to \Omega^1_{X_{\an}} \to \cdots \to 
\Omega^{q-1}_{X_{\an}}$. 
In particular, there is a distinguished triangle
\[
{\mathbf R}\epsilon_*(\Omega^{<q}_{X_{\an}}[-1]) \to \Gamma_{\sD}(q) \to \Z(q) \to
{\mathbf R}\epsilon_*(\Omega^{<q}_{X_{\an}}) \]
in the derived category of sheaves on $X_{\Zar}$.

Since $X$ is projective, it follows from GAGA that the natural map
$\Omega^{<q}_{X/{\C}} \to {\mathbf R}\epsilon_*(\Omega^{<q}_{X_{\an}})$
is an isomorphism in the derived category of sheaves on $X_{\Zar}$. 
In particular, we get a distinguished triangle in the derived category of 
sheaves on $X_{\Zar}$: 
\begin{equation}\label{eqn:Deligne-4-0}
\Omega^{<q}_{X/{\C}}[-1] \to  \Gamma_{\sD}(q) \to \Z(q) \to
\Omega^{<q}_{X/{\C}}. 
\end{equation}
We thus have a commutative diagram of exact sequences:
\begin{align*}
\xymatrix@C.8pc{
\H^{p-1}_{\Zar}(X, \Z(q)) \ar[r] \ar[d] & 
\H^{p-1}_{\Zar}(X, \Omega^{<q}_{X/{\C}}) \ar[r] \ar[d] & 
H^p_{\Zar}(X, \Gamma_{\sD}(q)) \ar[r] \ar[d] & \\
\H^{p-1}_{cdh}(X, (\Z(q))_{cdh}) \ar[r] & 
\H^{p-1}_{cdh}(X, (\Omega^{<q}_{X/{\C}})_{cdh}) \ar[r]  & 
H^p_{cdh}(X, (\Gamma_{\sD}(q))_{cdh}) \ar[r]  &} \\
\xymatrix@C.8pc{ 
\ar[r] & \H^{p}_{\Zar}(X, \Z(q)) \ar[r] \ar[d] &
\H^{p}_{\Zar}(X, \Omega^{<q}_{X/{\C}}) \ar[d] \\
\ar[r] & \H^{p}_{cdh}(X, (\Z(q))_{cdh}) \ar[r]  &
\H^{p}_{cdh}(X, (\Omega^{<q}_{X/{\C}})_{cdh}).}
\end{align*}

It follows from ~\ref{lem:Sing-cdh} that the first and the
fourth vertical arrows from the left are isomorphisms. 
The second and the fifth vertical arrows are isomorphisms by 
\cite[Cor.~2.5]{CHW}.
We conclude that the middle vertical arrow is also an
isomorphism and this completes the proof. 
\end{proof}

As a combination of \ref{lem:Elem}, \eqref{eqn:K-Chern} and ~\ref{lem:Deligne-4}, we
obtain a theory of Chern classes from $KH$-theory to Deligne cohomology
as follows.

\begin{thm}\label{thm:Deligne-Chern-Main}
For every projective scheme $X$ over $\C$, there are Chern class homomorphisms
\[
c_{X, p,q}: KH_p(X) \to H^{2q-p}_{\sD}(X, \Z(q)) 
\]
such that for any morphism of projective $\C$-schemes $f: Y \to X$, 
one has $f^* \circ c_{X, p,q} = c_{Y, p,q} \circ f^*$.
\end{thm}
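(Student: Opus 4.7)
The plan is to produce $c_{X,p,q}$ as a three-step composition: Gillet's Chern classes on the cdh-site (already at our disposal), cdh-hypercover descent from $X$ to a smooth proper hypercover $X_\bullet$, and Lemma~\ref{lem:Deligne-4} to identify the resulting target with Deligne cohomology.

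First I would specialize the construction of~\eqref{eqn:Deligne-2} to the case $Z = X$ (equivalently, run the proof of Theorem~\ref{thm:Chern-main} with Gillet's Deligne--Beilinson cohomology theory in place of the motivic one). This produces a natural homomorphism
$$\tilde c_{X,p,q}\colon KH_p(X) \longrightarrow \H^{2q-p}_{cdh}(X, (\Gamma_{\sD}(q))_{cdh}),$$
where I have used Lemma~\ref{lem:Elem} to identify $\H^{-p}_{cdh}(X, \sK_{cdh})$ with $KH_p(X)$. At this stage $X$ is only required to be of finite type over $\C$.

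Second, I would fix a smooth proper hypercover $p_X\colon X_\bullet \to X$, which exists by resolution of singularities. Since each face map in $X_\bullet$ is a proper surjection from a smooth scheme, and every such map is a cdh-cover, $X_\bullet \to X$ is a cdh-hypercover. Standard descent then yields isomorphisms $\H^{*}_{cdh}(X, \mathcal{F}) \xrightarrow{\cong} \H^{*}_{cdh}(X_\bullet, \mathcal{F})$ for any complex of cdh-sheaves $\mathcal{F}$. Applied to $\mathcal{F} = (\Gamma_{\sD}(q))_{cdh}$, this rewrites the target of $\tilde c_{X,p,q}$ as $\H^{2q-p}_{cdh}(X_\bullet, (\Gamma_{\sD}(q))_{cdh})$; Lemma~\ref{lem:Deligne-4} (which uses the projectivity of $X$) identifies the latter with $H^{2q-p}_{\sD}(X, \mathbb{Z}(q))$, and the composite is the desired $c_{X,p,q}$.

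For functoriality along $f\colon Y \to X$, I would lift $f$ to a morphism $f_\bullet\colon Y_\bullet \to X_\bullet$ of smooth proper hypercovers (possible after a refinement, with effect on cohomology independent of the choice). Each of the three steps is natural in the pair $(X, X_\bullet)$: Step~1 by the naturality built into Gillet's formalism (compare~\eqref{eqn:Comp-2a}), Step~2 by functoriality of cdh-hypercohomology on simplicial schemes, and Step~3 by the naturality entering~\eqref{eqn:Deligne-cdh} and Lemma~\ref{lem:Deligne-4}. These assemble into the required commutativity $f^* \circ c_{X,p,q} = c_{Y,p,q} \circ f^*$. The main obstacle, once Lemma~\ref{lem:Deligne-4} is in hand, is purely the simplicial bookkeeping: checking that smooth proper hypercovers of projective $\C$-schemes really are cdh-hypercovers, and that the Chern class of the single scheme $X$ matches the one obtained simplicially from $X_\bullet$ (and similarly for the lift $f_\bullet$); no further analytic or motivic input is required beyond Lemmas~\ref{lem:Elem}, \ref{lem:Sing-cdh}, \ref{lem:Deligne-4} and the Gillet construction recalled above.
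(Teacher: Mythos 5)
Your Steps 1 and 3 track the paper closely, but Step 2 contains a genuine gap. You assert that ``each face map in $X_\bullet$ is a proper surjection from a smooth scheme, and every such map is a cdh-cover,'' and from this you conclude that $\H^*_{cdh}(X, \cF) \to \H^*_{cdh}(X_\bullet, \cF)$ is an isomorphism for an arbitrary complex of cdh-sheaves $\cF$. The italicized claim is false. The cdh-topology is generated by Nisnevich covers and the families $\{Y' \to Y,\ Z \to Y\}$ coming from abstract blow-up squares, where the \emph{closed subscheme} $Z$ must be part of the covering family; a proper surjection $Y' \to Y$ from a smooth scheme is not by itself a cdh-cover. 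For instance, $\A^1_{\C} \xrightarrow{\ t \mapsto t^2\ } \A^1_{\C}$ is finite surjective from a smooth scheme onto a smooth scheme, yet it is not a cdh-cover: on a regular target the nontrivial cdh-covers reduce to Nisnevich covers, and $t \mapsto t^2$, being ramified at the origin, is not Nisnevich. In particular it is not automatic that the augmentations $X_n \to (\mathrm{cosk}_{n-1}\,\mathrm{sk}_{n-1}X_\bullet)_n$ in a smooth proper hypercover are cdh-covers, so $X_\bullet \to X$ need not be a cdh-hypercover, and the ``standard descent'' isomorphism you invoke is unjustified in the generality you need it.

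The paper's proof is structured precisely to avoid this issue. It does \emph{not} claim that the pullback $\H^p_{cdh}(X,(\Gamma_{\sD}(q))_{cdh}) \to \H^p_{cdh}(X_\bullet,(\Gamma_{\sD}(q))_{cdh})$ is an isomorphism for abstract reasons; instead, it defines $\alpha_X$ as the composite of this pullback with the inverse of the isomorphism of \ref{lem:Deligne-4}, records that $\alpha_X$ is an isomorphism when $X$ is smooth (this is a byproduct of the proof of \ref{lem:Deligne-4}), and then proves $\alpha_X$ is an isomorphism for all projective $X$ by a separate argument: both sides satisfy \emph{cdh-descent in the Mayer--Vietoris sense} (long exact sequences for abstract blow-up squares), so one can induct on $\dim X$ using resolution of singularities exactly as in the proof of \ref{lem:Elem}. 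To repair your proof you would either need to supply the inductive cdh-descent argument at that point, or invoke (with reference) a theorem asserting that cdh-cohomology admits proper hyperdescent under resolution of singularities; neither is present in your proposal as written, and the second is in any case considerably stronger than what the paper uses.
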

\begin{proof}
We only need to show that there is a natural isomorphism
$\alpha_X: \H^{p}_{cdh}(X, (\Gamma_{\sD}(q))_{cdh}) \xrightarrow{\cong}
H^{p}_{\sD}(X, \Z(q))$.

Given a morphism of projective $\C$-schemes $f: Y \to X$, there exists a
commutative diagram
\[
\xymatrix@C1pc{
Y_{\bullet} \ar[r]^{f_{\bullet}} \ar[d]_{p_Y} & 
X_{\bullet} \ar[d]^{p_X} \\
Y \ar[r]^{f} & X,}
\]
where the vertical arrows are the simplicial hypercoverings maps.
In particular, there is a commutative diagram
\[
\xymatrix@C.6pc{
\H^p_{\Zar}(X, \Gamma_{\sD}(q)) \ar[rr] \ar[dd] \ar[dr] & & 
\H^p_{\Zar}(Y, \Gamma_{\sD}(q)) \ar[dd] \ar[dr] & \\
&  \H^{p}_{cdh}(X, (\Gamma_{\sD}(q))_{cdh}) \ar[rr] \ar[dd] & & 
\H^{p}_{cdh}(Y, (\Gamma_{\sD}(q))_{cdh}) \ar[dd] \\
H^{p}_{\sD}(X, \Z(q)) \ar[rr] \ar[dr] & &  
H^{p}_{\sD}(Y, \Z(q)) \ar[dr] & \\
&  \H^{p}_{cdh}(X_{\bullet}, (\Gamma_{\sD}(q))_{cdh}) \ar[rr] & &
 \H^{p}_{cdh}(Y_{\bullet}, (\Gamma_{\sD}(q))_{cdh}).}
\]

Using ~\ref{lem:Deligne-4}, we get a map
$\alpha_X: \H^{p}_{cdh}(X, (\Gamma_{\sD}(q))_{cdh}) \to H^{p}_{\sD}(X, \Z(q))$
such that $f^* \circ \alpha_X = \alpha_Y \circ f^*$ for any $f: Y \to X$
as above. Moreover, we have shown in the proof of ~\ref{lem:Deligne-4}
that this map is an isomorphism if $X \in \Sm_{\C}$. 
Since the source as well as the target of $\alpha_X$ 
satisfy $cdh$-descent by ~\ref{lem:Deligne-4}
(see \cite[Lem.~12.1]{SV}), we conclude as in the proof of
~\ref{lem:Elem} that $\alpha_X$ is an isomorphism for every
projective $\C$-scheme $X$.
\end{proof}

\section{Applications II: Intermediate Jacobian and 
Abel-Jacobi map for singular schemes}\label{sec:IJ-AJ}
Recall that a very important object in the study of the geometric
part of motivic cohomology of smooth projective varieties is an intermediate 
Jacobian. The intermediate Jacobians were defined by Griffiths and they
receive the Abel-Jacobi maps from certain subgroups of the geometric
part $H^{2*}(X, \Z(*))$ of the motivic cohomology groups.

A special case of these intermediate
Jacobians is the Albanese variety of a smooth projective variety.
The most celebrated result about the Albanese variety in the context of
algebraic cycles is that the Abel-Jacobi map from the group of 0-cycles
of degree zero to the Albanese variety is an isomorphism on the torsion
subgroups. This theorem of Roitman tells us that the torsion part of the
Chow group of 0-cycles on a smooth projective variety over $\C$ can be
identified with the torsion subgroup of an abelian variety.

Roitman's torsion theorem
has had enormous applications in the theory of algebraic cycles
and algebraic $K$-theory. For example, it was predicted as part of the
conjectures of Bloch and Beilinson that the Chow group of 0-cycles on
smooth affine varieties of dimension at least two should be torsion-free.
This is now a consequence of Roitman's torsion theorem.
We hope to use the Roitman's torsion theorem of this paper to answer 
the analogous question about the motivic cohomology
$H^{2d}(X, \Z(d))$ of a $d$-dimensional singular affine variety
in a future project.

It was predicted as part of the relation between algebraic $K$-theory
and motivic cohomology that the Chow group of 0-cycles should be
(integrally) 
a subgroup of the Grothendieck group. This is also now a consequence of
Roitman's theorem. We shall prove the analogue of this for singular schemes
in the next section. Recall that the Riemann-Roch theorem says that 
this inclusion of the Chow group inside the Grothendieck group is always true
rationally.
For applications concerning the relation between 
Chow groups and {\'e}tale cohomology, see \cite{Bloch-2}.

In this section, we shall apply the theory of Chern classes
from $KH$-theory to Deligne cohomology from \S~\ref{sec:Chern-classes}
to construct the intermediate
Jacobian and Abel-Jacobi map from the geometric part of the
motivic cohomology of any singular projective variety over $\C$.
In the next section, we shall use the Abel-Jacobi map to prove a 
Roitman torsion theorem for singular schemes. 
As another application of our Chern classes and the Roitman torsion theorem,
we shall show that the cycle map from the
geometric part of motivic cohomology to the $KH$ groups, constructed in 
~\ref{thm:Conn-KGL}, is injective for a large class of schemes.

\subsection{The Abel-Jacobi map}\label{sec:AJ}
In the rest of this section, we shall consider all schemes over $\C$
and mostly deal with projective schemes. Let $X$ be a projective scheme
over $\C$ of dimension $d$. Let $X_{\rm sing}$ and $X_{\rm reg}$ denote
the singular (with the reduced induced subscheme structure) and the smooth loci
of $X$, respectively. Let $r$ denote the number of $d$-dimensional 
irreducible components of $X$. We shall fix a resolution of singularities 
$f: \wt{X} \to X$ and let $E = f^{-1}(X_{\rm sing})$ throughout this section.
The following is an immediate consequence of the $cdh$-descent for 
Deligne cohomology.

\begin{lem}\label{lem:vanish-Deligne}
For any integer $q \ge d+1$, one has $H^{q+d+i}_{\sD}(X, \Z(q)) = 0$
for $i \ge 1$.
\end{lem}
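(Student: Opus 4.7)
The plan is to argue by induction on $d = \dim X$, reducing to the smooth projective case via the $cdh$-descent property of Deligne cohomology on projective schemes that is implicit in the proof of \ref{lem:Deligne-4}.

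I would first handle the smooth projective case. For $X\in\Sm_\C$ projective of dimension $d$, I would invoke the distinguished triangle from \eqref{eqn:Deligne-4-0},
\[
\Omega^{<q}_{X/\C}[-1]\;\to\;\Gamma_\sD(q)\;\to\;\Z(q)\;\to\;\Omega^{<q}_{X/\C},
\]
giving the long exact sequence
\[
H^{p-1}_{\Zar}(X,\Omega^{<q}_{X/\C})\;\to\;H^p_\sD(X,\Z(q))\;\to\;H^p(X^{\an},\Z(q))\;\to\;H^p_{\Zar}(X,\Omega^{<q}_{X/\C}).
\]
The singular cohomology $H^p(X^{\an},\Z(q))$ vanishes for $p>2d$, and the Hodge spectral sequence $E_1^{a,b}=H^b_{\Zar}(X,\Omega^a_{X/\C})\Rightarrow H^{a+b}_{\Zar}(X,\Omega^{<q}_{X/\C})$ (with $0\le a\le q-1$), together with the vanishing $H^b(X,\Omega^a)=0$ for $b>d$, forces $H^n_{\Zar}(X,\Omega^{<q}_{X/\C})=0$ for $n\ge d+q$. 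Setting $p=q+d+i$ with $q\ge d+1$ and $i\ge 1$, one checks $p-1\ge d+q$ (giving the left vanishing) and $q+i>d$, i.e. $p>2d$ (giving the right vanishing), so $H^p_\sD(X,\Z(q))=0$.

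For the general projective case I would assume $X$ is reduced (as $cdh$-hypercohomology of $(\Gamma_\sD(q))_{cdh}$ is insensitive to nilpotents) and induct on $d$. The base $d=0$ is immediate since $X$ is a finite set of points and $H^p_\sD(\mathrm{pt},\Z(q))=0$ for $p\ge 1$ when $q\ge 1$. For the inductive step, fix a resolution $f:\wt X\to X$ with exceptional locus $E=f^{-1}(X_{\sing})$; both $X_{\sing}$ and $E$ are projective of dimension at most $d-1$. The abstract blow-up ($cdh$-distinguished) square
\[
\xymatrix@C1pc{E\ar[r]\ar[d]&\wt X\ar[d]^{f}\\ X_{\sing}\ar[r]&X}
\]
yields, via the identification $H^p_\sD(-,\Z(q))\cong\H^p_{cdh}(-,(\Gamma_\sD(q))_{cdh})$ on projective schemes (from \ref{lem:Deligne-4} and \ref{thm:Deligne-Chern-Main}) combined with $cdh$-descent for $(\Gamma_\sD(q))_{cdh}$, a Mayer--Vietoris long exact sequence
\[
\cdots\to H^{p-1}_\sD(E,\Z(q))\to H^p_\sD(X,\Z(q))\to H^p_\sD(\wt X,\Z(q))\oplus H^p_\sD(X_{\sing},\Z(q))\to H^p_\sD(E,\Z(q))\to\cdots
\]
The smooth case handles $\wt X$, and the inductive hypothesis handles $X_{\sing}$ and $E$: for $Y\in\{X_{\sing},E\}$ of dimension $d_Y\le d-1$, the hypothesis $q\ge d+1>d_Y+1$ is preserved, and $p=q+d+i\ge q+d_Y+(i+1)$ lies in the vanishing range for $Y$. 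This forces $H^{q+d+i}_\sD(X,\Z(q))=0$.

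The only substantive point in the argument is the validity of Mayer--Vietoris for Deligne cohomology on possibly singular projective schemes, but this is not really an obstacle: it is built in to the identification with $cdh$-hypercohomology used in \ref{lem:Deligne-4} and \ref{thm:Deligne-Chern-Main}. Everything else is bookkeeping on degrees.
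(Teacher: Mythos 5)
Your proof is correct and follows essentially the same route as the paper: the smooth case is handled via the distinguished triangle \eqref{eqn:Deligne-4-0}, and the general case reduces to it by induction on dimension using the $cdh$-descent Mayer--Vietoris sequence from \ref{lem:Deligne-4}. The only difference is that you spell out the smooth-case degree bookkeeping (Hodge/de Rham truncation bound $H^n(X,\Omega^{<q})=0$ for $n\ge q+d$, singular cohomology vanishing above $2d$) which the paper leaves implicit.
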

\begin{proof}
If $X$ is smooth, it follows immediately from ~\eqref{eqn:Deligne-4-0}.
In general, the $cdh$-descent for Deligne cohomology 
(see ~\ref{lem:Deligne-4} or \cite[Variant~3.2]{BPW}) implies that
there is an exact sequence
\[
H^{q+d+i-1}_{\sD}(E, \Z(q)) \to H^{q+d+i}_{\sD}(X, \Z(q)) \to
H^{q+d+i}_{\sD}(\wt{X}, \Z(q)) \oplus H^{q+d+i}_{\sD}(X_{\rm sing}, \Z(q)).
\]
We conclude the proof by using this exact sequence and induction on
$\dim(X)$.
\end{proof}
  
It follows from the definition of the
Deligne cohomology that there is a natural map of complexes
${\Gamma_{\sD}(q)}|_X \to {\Z(q)}|_X$ (see ~\eqref{eqn:Deligne-4-0}) and in 
particular, there is a natural map
$H^p_{\sD}(X, \Z(q)) \xrightarrow{\kappa_X} H^p_{\an}(X, \Z(q))$.
For any integer $0 \le q \le d$, the {\sl intermediate Jacobian}
$J^q(X)$ is defined so that we have an exact sequence
\[
0 \to J^q(X) \to H^{2q}_{\sD}(X, \Z(q)) \xrightarrow{\kappa_X} 
H^{2q}_{\an}(X, \Z(q)).
\]

It follows from ~\ref{thm:Deligne-Chern-Main} that there is 
a commutative diagram
\begin{equation}\label{eqn:sing-iso-top}
  \begin{array}{c}
\xymatrix@C1pc{
KH_0(X) \ar[r]^-{c_{X,d,0}} \ar[d]_{f^*} &
H^{2d}_{\sD}(X, \Z(d)) \ar[r]^-{\kappa_X} \ar[d]^{f^*} & 
H^{2d}_{\an}(X, \Z(d)) \ar[d]^{f^*} \\
KH_0(\wt{X}) \ar[r]^-{c_{X,d,0}} & 
H^{2d}_{\sD}(\wt{X}, \Z(d)) \ar[r]^-{\kappa_{\wt{X}}} & 
H^{2d}_{\an}(\wt{X}, \Z(d)).}
  \end{array}
\end{equation}

It follows from ~\eqref{eqn:Deligne-4-0} that $\kappa_{\wt{X}}$ is surjective.
The $cdh$-descent for the Deligne cohomology and ~\ref{lem:vanish-Deligne}
together imply that the middle vertical arrow in ~\eqref{eqn:sing-iso-top}
is surjective.
The cdh-excision property of  singular cohomology 
(see \cite[8.3.10]{DeligneHT}) yields an exact sequence
\[
H^{2d-1}_{\an}(E, \Z(d)) \to H^{2d}_{\an}(X, \Z(d)) \to
H^{2d}_{\an}(\wt{X}, \Z(d)) \oplus H^{2d}_{\an}(X_{\rm sing}, \Z(d)) \to
H^{2d+1}_{\an}(E, \Z(d)).
\]
Since $X_{\rm sing}$ and $E$ are projective schemes of dimension at most
$d-1$, it follows that the right vertical arrow in ~\eqref{eqn:sing-iso-top}
is an isomorphism. We conclude that there is a short exact sequence
\begin{equation}\label{eqn:IJ-0}
0 \to J^d(X) \to H^{2d}_{\sD}(X, \Z(d)) \xrightarrow{\kappa_X} 
H^{2d}_{\an}(X, \Z(d)) \to 0.
\end{equation}

A similar Mayer-Vietoris property of the motivic cohomology yields an
exact sequence
\[
H^{2d-1}(E, \Z(d)) \to H^{2d}(X, \Z(d)) \to
H^{2d}(\wt{X}, \Z(d)) \oplus H^{2d}(X_{\rm sing}, \Z(d)) \to
H^{2d+1}(E, \Z(d)).
\]
It follows from ~\ref{thm:MGL-MC-iso} that
$H^{2d}(X_{\rm sing}, \Z(d)) = H^{2d+1}(E, \Z(d)) = 0$.
In particular, there exists a short exact sequence
\begin{equation}\label{eqn:IJ-1}
0 \to \frac{H^{2d-1}(E, \Z(d))}{H^{2d-1}(\wt{X}, \Z(d)) +
H^{2d-1}(X_{\rm sing}, \Z(d))} \to
H^{2d}(X, \Z(d)) \to  H^{2d}(\wt{X}, \Z(d)) \to 0.
\end{equation}

Since the map $H^{2d}(\wt{X}, \Z(d)) \cong \CH^d(\wt{X}) \to 
H^{2d}_{\an}(\wt{X}, \Z(d))$ is the degree map which is surjective,
we conclude that the `degree' map
$H^{2d}(X, \Z(d)) \to H^{2d}_{\an}(X, \Z(d))$ is also surjective.
We let $A^d(X)$ denote the kernel of this degree map.

It follows from ~\ref{thm:Deligne-Chern-Main} that there
is a Chern class map (take $p =0$) $c_{X,q}: KH_0(X) \to H^{2q}_{\sD}(X, \Z(q))$.
Theorem~\ref{thm:Conn-KGL} says that the spectral sequence
~\eqref{eqn:KH-S-0} induces a cycle class map
$cyc_{X, 0}: H^{2d}(X, \Z(d)) \to KH_0(X)$. Composing the two maps,
we get a cycle class map from motivic to Deligne cohomology 
\begin{equation}\label{eqn:IJ-2}
\wt{c}^d_X: H^{2d}(X, \Z(d)) \to H^{2q}_{\sD}(X, \Z(q))
\end{equation}
and a commutative diagram of short exact sequences:
\begin{equation}\label{eqn:IJ-3}
  \begin{array}{c}
\xymatrix@C.8pc{
0 \ar[r] & A^d(X) \ar[d]_{{\rm AJ}^d_X} \ar[r] & H^{2d}(X, \Z(d)) \ar[r] 
\ar[d]^{\wt{c}^d_X} &
H^{2d}_{\an}(X, \Z(d)) \ar@{=}[d] \ar[r] & 0 \\
0 \ar[r] & J^d(X) \ar[r] & H^{2q}_{\sD}(X, \Z(q)) \ar[r] & 
H^{2d}_{\an}(X, \Z(d)) \ar[r] & 0.}
  \end{array}
\end{equation}

It is known that $J^d(X)$ is a semi-abelian variety whose abelian
variety quotient is the classical Albanese variety of $\wt{X}$
(see \cite[Thm.~1.1]{BS} or \cite{BAS}). The induced map
${\rm AJ}^d_X: A^d(X) \to J^d(X)$ will be called the
{\sl Abel-Jacobi} map for the singular scheme $X$.
We shall prove our main result about this Abel-Jacobi map in the next
section. Here, we recall the following description of $J^d(X)$ in terms
of 1-motives. Recall from \cite[\S~12.12]{BKahn} that every projective
scheme $X$ of dimension $d$ over $\C$ has an 1-motive ${\Alb}^{+}(X)$
associated to it. This is called the cohomological Albanese $1$-motive
of $X$. This is a generalization of the Albanese variety of smooth projective
schemes.

\begin{thm}$($\cite[Cor.~3.3.2]{BAS}$)$\label{thm:Alb-J}
For a projective scheme $X$ of dimension $d$ over $\C$, there is a 
canonical isomorphism $J^d(X) \cong {\rm Alb}^{+}(X)$.
\end{thm}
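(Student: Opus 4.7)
The plan is to identify the semi-abelian variety $J^d(X)$, defined in \eqref{eqn:IJ-0}, with the Hodge realization of the cohomological Albanese $1$-motive $\mathrm{Alb}^+(X)$, and then invoke Deligne's theorem that a $1$-motive over $\C$ is determined up to canonical isomorphism by its mixed Hodge realization.

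First, I would use the standard description of Deligne--Beilinson cohomology together with the definition of $J^d(X)$ as the kernel of $\kappa_X$ to rewrite
\[
J^d(X)\cong \frac{H^{2d-1}(X,\C)}{F^{d} H^{2d-1}(X,\C)+\mathrm{im}(H^{2d-1}(X,\Z(d)))}.
\]
Since $X$ is proper of dimension $d$, Deligne's theory endows $H^{2d-1}(X,\Z)$ with a mixed Hodge structure of weights in $[0,2d-1]$; because each pure subquotient arises from cohomology of a smooth projective variety of dimension at most $d$, the Hodge pieces $H^{p,q}$ vanish for $p>d$ or $q>d$, and only the graded weights $2d-1$ and $2d-2$ can contribute to the quotient by $F^d$.

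Next, from the cdh Mayer--Vietoris sequence applied to the resolution $f\colon \wt X\to X$ with exceptional divisor $E=f^{-1}(X_{\rm sing})$, I would identify the weight-$(2d-1)$ graded piece of $H^{2d-1}(X)$ with the image of $H^{2d-1}(\wt X)$, whose Griffiths quotient realizes the classical Albanese $\mathrm{Alb}(\wt X)$; and the weight-$(2d-2)$ graded piece with the Tate part of $\ker(H^{2d-1}(X)\to H^{2d-1}(\wt X))$, arising from the cokernel of $H^{2d-2}(\wt X)\oplus H^{2d-2}(X_{\rm sing})\to H^{2d-2}(E)$, which contributes an algebraic torus $T$. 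Together these realize $J^d(X)$ as an extension
\[
0\to T\to J^d(X)\to \mathrm{Alb}(\wt X)\to 0
\]
whose class is encoded by the connecting morphism of the cdh Mayer--Vietoris sequence.

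Finally, I would match this with the explicit construction of $\mathrm{Alb}^+(X)$ in \cite{BKahn}, which is a $1$-motive built from precisely the data $\wt X$, $X_{\rm sing}$, $E$, and whose Hodge realization is a semi-abelian variety with the same abelian quotient and toric kernel as computed above. The main obstacle will be verifying that the extension class coming from the cdh connecting map coincides with the one defining $\mathrm{Alb}^+(X)$; this requires comparing two distinct boundary morphisms and using weak factorization of birational maps to show independence from the chosen resolution. Once this compatibility is established, the resulting natural map $J^d(X)\to \mathrm{Alb}^+(X)$ is a morphism of semi-abelian varieties which is an isomorphism on complex points, and hence an isomorphism of algebraic groups.
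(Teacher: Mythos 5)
The statement you are asked to prove is, in the paper, a pure citation: the authors attribute it to \cite[Cor.~3.3.2]{BAS} and supply no argument of their own. There is therefore no internal proof against which to compare your sketch; what you have produced is a plausible reconstruction of the argument in the cited source (semi-abelian parts of $1$-motives, their Hodge realizations, and Deligne's equivalence of categories over $\C$), which is indeed the circle of ideas Barbieri-Viale and Srinivas use.

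A few points on the sketch itself. Your reduction of $J^d(X)$ to the Carlson-type expression $H^{2d-1}(X,\C)\big/\bigl(F^{d}H^{2d-1}(X,\C)+\operatorname{im}H^{2d-1}(X,\Z(d))\bigr)$ is correct once one has the long exact sequence relating simplicial Deligne cohomology to the Hodge filtration and Betti cohomology, and this does not depend on the choice of hypercovering. However, the justification you give for why only weights $2d-1$ and $2d-2$ contribute is not quite enough: the bound $H^{p,q}=0$ for $p>d$ or $q>d$ on each pure graded piece does \emph{not} by itself force $\mathrm{Gr}^W_w H^{2d-1}(X,\C)\subseteq F^d$ for $w<2d-2$ (one can have $p+q=w$ with both $p\le d$ and $q\le d$ and $p<d$). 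What actually kills the lower weights is Deligne's estimate that for a \emph{proper} variety $X$ of dimension $d$ and $k>d$, the weights on $H^k(X)$ lie in $[2(k-d),k]$; for $k=2d-1$ this gives exactly $\{2d-2,2d-1\}$. You should replace the Hodge-number argument by this weight bound. Your identification of the torus via the cokernel of $H^{2d-2}(\wt X)\oplus H^{2d-2}(X_{\rm sing})\to H^{2d-2}(E)$ is the right picture, and you have correctly put your finger on the genuine content, namely matching the resulting extension class with the one in the definition of $\mathrm{Alb}^{+}(X)$. Invoking weak factorization for resolution-independence is heavier than necessary: the $1$-motive $\mathrm{Alb}^{+}(X)$ in \cite{BKahn} is defined intrinsically (and shown there to be independent of auxiliary choices), so the task is really to verify compatibility of the two boundary maps for a single chosen hypercovering, not to re-establish independence of the resolution.
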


\subsection{Levine-Weibel Chow group and motivic cohomology}
\label{sec:LW-mot}
In order to prove our main theorem of this section, we
need to compare the motivic cohomology of singular schemes
with another `motivic cohomology', called the (cohomological) 
Chow-group of 0-cycles, introduced by Levine and Weibel \cite{LW}.
We shall assume throughout our discussion that $X$ is a reduced projective
scheme of dimension $d$ over $\C$. However, we remark that the
following definition of the Chow group of 0-cycles makes sense over any
ground field. 
Let $\sZ_0(X)$ denote the free abelian group on the closed points of 
$X_{\rm reg}$.

\begin{defn}\label{defn:0-cycle-S-1}
Let $C$ be a pure dimension one reduced scheme in $\Sch_{\C}$. 
We shall say that a pair $(C, Z)$ is \emph{a good curve
relative to $X$} if there exists a finite morphism $\nu\colon C \to X$
and a  closed proper subscheme $Z \subsetneq C$ such that the following hold.
\begin{enumerate}
\item
No component of $C$ is contained in $Z$.
\item
$\nu^{-1}(X_{\rm sing}) \cup C_{\rm sing}\subseteq Z$.
\item
$\nu$ is local complete intersection morphism  at every 
point $x \in C$ such that $\nu(x) \in X_{\rm sing}$. 
\end{enumerate}
\end{defn}

Let $(C, Z)$ be a good curve relative to $X$ and let 
$\{\eta_1, \cdots , \eta_r\}$ be the set of generic points of $C$. 
Let $\cO_{C,Z}$ denote the semilocal ring of $C$ at 
$S = Z \cup \{\eta_1, \cdots , \eta_r\}$.
Let $\C(C)$ denote the ring of total
quotients of $C$ and write $\cO_{C,Z}^\times$ for the group of units in 
$\cO_{C,Z}$. Notice that $\cO_{C,Z}$ coincides with $k(C)$ 
if $|Z| = \emptyset$. 
As $C$ is Cohen-Macaulay, $\cO_{C,Z}^\times$  is the subgroup of $k(C)^\times$ 
consisting of those $f$ which are regular and invertible in the local rings 
$\cO_{C,x}$ for every $x\in Z$.

Given any $f \in \sO^{\times}_{C, Z} \inj \C(C)^{\times}$, we denote by  
${\rm div}(f)$
 the divisor of zeros and poles of $f$ on $C$, that is defined as follows. If 
$C_1,\ldots, C_r$ are the irreducible components of $C$, we set 
${\rm div}(f)$ to be the $0$-cycle $\sum_{i=1}^r {\rm div}(f_i)$, where 
$(f_1, \cdots , f_r)  = \theta_{(C,Z)}(f)$ and ${\rm div}(f_i)$ is the usual 
divisor of a rational function on an integral curve in the sense of
\cite{Fulton}. Let $\sZ_0(C, Z)$ denote the free abelian group on the
closed points of $C \setminus Z$.
As $f$ is an invertible 
regular function on $C$ along $Z$, ${\rm div}(f)\in \sZ_0(C,Z)$.

By definition, given any good curve $(C,Z)$ relative to $X$, we have a 
pushforward map $\sZ_0(C,Z)\xrightarrow{\nu_{*}} \sZ_0(X)$.
We shall write $\sR_0(C, Z, X)$ for the subgroup
of $\sZ_0(X)$ generated by the set 
$\{\nu_*({\rm div}(f))| f \in \sO^{\times}_{C, Z}\}$. 
Let $\sR^{BK}_0(X)$ denote the subgroup of $\sZ_0(X)$ generated by 
the image of the map $\sZ_0(C, Z, X) \to \sZ_0(X)$, where
$\sZ_0(C, Z, X)$ runs through all good curves.
We let $\CH^{BK}_0(X) = \frac{\sZ_0(X)}{\sR^{BK}_0(X)}$.

If we let $\sR^{LW}_0(X)$ denote the subgroup of $\sZ_0(X)$ generated
by the divisors of rational functions on good curves as above, where
we further assume that the map $\nu: C \to X$ is a closed immersion,
then the resulting quotient group ${\sZ_0(X)}/{\sR^{LW}_0(X)}$ is
denoted by $\CH^{LW}_0(X)$. There is a canonical surjection
$\CH^{LW}_0(X) \surj \CH^{BK}_0(X)$. However, we can say more about this 
map in the present context. This comparison will be an essential ingredient in
the proof of ~\ref{thm:Roitman-sing}.

\begin{thm}\label{thm:BK-com}
For a projective scheme $X$ over $\C$, the map 
$\CH^{LW}_0(X) \surj \CH^{BK}_0(X)$ is an isomorphism.
\end{thm}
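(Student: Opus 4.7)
The quotient map $\CH^{LW}_0(X) \surj \CH^{BK}_0(X)$ is tautologically surjective, so what must be shown is that every BK-relation lies in $\sR^{LW}_0(X)$. Given a good BK-curve $\nu \colon C \to X$ with $Z \subsetneq C$ and $f \in \sO^\times_{C,Z}$, the plan is to rewrite $\nu_*(\divf(f))$ as the divisor of a rational function on an honest closed subcurve of $X$ that is good in the LW sense.

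The first step is a reduction to $C$ integral, by decomposing into irreducible components and using additivity of $\divf$. Next, set $C' := \nu(C) \subset X$ with its reduced structure and factor $\nu = \iota \circ \mu$, where $\mu \colon C \to C'$ is finite surjective and $\iota \colon C' \inj X$ is a closed immersion. Since $\Char(\C) = 0$, the map $\mu$ is generically \'etale, so the norm $f' := N_{k(C)/k(C')}(f)$ is defined, and the standard push--pull identity for Cartier divisors on curves yields $\mu_*(\divf(f)) = \divf(f')$, hence $\nu_*(\divf(f)) = \iota_*(\divf(f'))$.

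I would then enlarge $Z$ by adjoining $\mu(Z)$, the (finite) branch locus of $\mu$, the singular locus $C'_{\sing}$, and $C' \cap X_{\sing}$, obtaining $Z' \subset C'$ with $f' \in \sO^\times_{C',Z'}$; by construction $(C',Z',\iota,f')$ satisfies all LW-goodness conditions except possibly the l.c.i.\ requirement for $\iota$ at the points of $C' \cap X_{\sing}$.

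Main obstacle: the l.c.i.\ property is stable under composition but not under factorization, so the fact that $\nu$ is l.c.i.\ at points of $\nu^{-1}(X_{\sing})$ does not formally imply that $\iota$ is l.c.i.\ at $C'\cap X_{\sing}$. To handle this I would invoke a moving lemma for good curves on a projective $\C$-scheme: using that the Hilbert scheme of $X$ is projective and $\C$ is algebraically closed of characteristic zero, apply Bertini-type transversality/genericity to deform $(C,Z,\nu,f)$ within the family of good BK-curves to one whose image $C'$ is l.c.i.\ along $X_{\sing}$, while changing the resulting $0$-cycle only by elements of $\sR^{LW}_0(X)$. An equivalent route is to embed the graph $\Gamma_\nu \inj X \times \P^N$ as a closed subscheme (via a projective embedding of $C$) and successively project through generic centres in $\P^N$, preserving the l.c.i.\ condition at each step and tracking the function $f$ via the projection formula until $N=0$. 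Making either route precise---particularly when $X_{\sing}$ is not isolated---is the technical heart of the argument, and is where the projectivity of $X$ over $\C$ plays an essential role.
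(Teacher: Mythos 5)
Your proof takes an entirely different route from the paper, and the step you yourself flag as "the technical heart of the argument" is a genuine gap, not a technicality.

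The paper does not attempt a moving lemma at all. Its argument is indirect: by Lemma~3.13 of \cite{BK-1} and Corollary~2.7 of \cite{Levine}, the composite $\CH^{LW}_0(X)\surj\CH^{BK}_0(X)\to K_0(X)$ has $(d-1)!$-torsion kernel, so $\Ker\bigl(\CH^{LW}_0(X)\to\CH^{BK}_0(X)\bigr)$ is torsion and hence lands in the degree-$0$ part. On the other hand, by Proposition~9.7 of \cite{BK-1} the Biswas--Srinivas Abel--Jacobi map $\CH^{LW}_0(X)_{\deg 0}\to J^d(X)$ factors through $\CH^{BK}_0(X)_{\deg 0}$, and by the Roitman theorem of \cite{BS} that map is an isomorphism on torsion; therefore the same kernel is torsion-free. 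Torsion plus torsion-free forces it to be zero. No curve is ever moved, no Bertini theorem is invoked, and above all no hypothesis of regularity in codimension one is needed.

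The gap in your proposal is exactly at the point you identify: converting a good BK-curve into a good LW-curve by moving it. A Bertini/generic-projection argument of the type you sketch (for instance, the one invoked in \cite[Lem.~2.1]{BS} or \cite[Lem.~1.4]{Levine}) requires that the ambient projective scheme be regular in codimension one (or normal), precisely to be able to place the image curve in general position relative to $X_{\rm sing}$ and keep the closed immersion a regular embedding where it meets the singular locus. The statement you are proving makes no such hypothesis on $X$; it must cover, for example, non-normal surfaces and schemes singular along a divisor. In that generality the image $\nu(C)$ may be forced to pass through points of $X_{\rm sing}$ with embedding dimension too large for any small deformation of the curve to be a local complete intersection there, and it is not known (and likely false in the naive form) that one can always trade a BK-relation for a finite sum of LW-relations by such a move. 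This is why the paper abandons the direct geometric comparison and instead sandwiches the kernel between a torsion statement coming from $K_0$ and a torsion-freeness statement coming from Roitman.
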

\begin{proof}
By \cite[Lem.~3.13]{BK-1}, there are cycle class maps
$\CH^{LW}_0(X) \surj \CH^{BK}_0(X) \to K_0(X)$ and one knows from 
\cite[Cor.~2.7]{Levine} that the kernel of the composite map is
$(d-1)!$-torsion. In follows that ${\rm Ker}(\CH^{LW}_0(X) \to \CH^{BK}_0(X))$
is torsion. In particular, it lies in $\CH^{LW}_0(X)_{{\rm deg} \ 0}$.

On the other hand, it follows from \cite[Prop.~9.7]{BK-1} that
the Abel-Jacobi map $\CH^{LW}_0(X)_{{\rm deg} \ 0} \to J^d(X)$ 
(see \cite[Th.~1.1]{BS}) factors through 
$\CH^{LW}_0(X)_{{\rm deg} \ 0} \surj \CH^{BK}_0(X)_{{\rm deg} \ 0} \to J^d(X)$.
Moreover, it follows from \cite[Th.~1.1]{BS} that the
composite map is isomorphism on the torsion subgroups.
In particular, ${\rm Ker}(\CH^{LW}_0(X)_{{\rm deg} \ 0} \surj 
\CH^{BK}_0(X)_{{\rm deg} \ 0})$ is torsion-free. It must therefore be zero. 
\end{proof}

In the rest of this text, we shall identify the above two Chow groups
for projective schemes over $\C$ and write them as $\CH^d(X)$.
There is a degree map $deg_X: \CH^d(X) \to  H^{2d}_{\an}(X, \Z(d))
\cong \Z^{r}$. Let $\CH^d(X)_{{\rm deg} \ 0}$
denote the kernel of this degree map.
In order to obtain applications of the above Abel-Jacobi map,
we connect $\CH^d(X)$ with the motivic cohomology as follows.

\begin{lem}\label{lem:Comp-Chow}
There is a canonical map $\gamma_X: \CH^d(X) \to H^{2d}(X, \Z(d))$ 
which restricts to a map $\gamma_X: \CH^d(X)_{{\rm deg} \ 0} \to A^d(X)$.
\end{lem}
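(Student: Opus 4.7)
The plan is to construct $\gamma_X$ at the level of cycles and verify it descends to rational equivalence. First I would define $\gamma_X$ on the generators $\sZ_0(X) = \sZ_0(X_{\rm reg})$. Given a closed point $x \in X_{\rm reg}$, the closed immersion $i_x: \{x\} \hookrightarrow X$ is a regular closed immersion of codimension $d$ (since $x$ is a smooth point), so it admits a Gysin pushforward on $cdh$-motivic cohomology: $(i_x)_{*}: H^0(\Spec \C, \Z(0)) = \Z \to H^{2d}(X, \Z(d))$, and I set $\gamma_X([x]) := (i_x)_{*}(1)$. Concretely, this class can be realized by choosing a smooth Zariski open neighborhood $U \subseteq X$ of $x$, forming the fundamental class $[x] \in \CH^d(U) \cong H^{2d}(U, \Z(d))$ (by Bloch--Levine), and lifting via the Mayer-Vietoris/$cdh$-descent for the Zariski cover $\{U, X \setminus \{x\}\}$: since the image of $[x]$ in $H^{2d}(X \setminus \{x\}, \Z(d))$ vanishes, we obtain a well-defined class in $H^{2d}(X, \Z(d))$.

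Next, I would show that the linear extension $\gamma_X: \sZ_0(X) \to H^{2d}(X, \Z(d))$ vanishes on $\sR^{BK}_0(X)$. Take a good curve $(C, Z)$ with $\nu: C \to X$ and $f \in \sO^{\times}_{C,Z}$. By the good-curve hypothesis, $\nu$ is a local complete intersection morphism at every point of $\nu^{-1}(X_{\rm sing})$ and $C_{\rm sing} \cup \nu^{-1}(X_{\rm sing}) \subseteq Z$. This provides a well-defined proper pushforward $\nu_{*}$ on cycles avoiding $Z$, and hence on the motivic complex. The function $f$ determines a cycle on $C \times \A^1$ (the graph of the morphism $C \setminus (Z \cup f^{-1}(\infty)) \to \A^1$) whose closure, after pushforward by $\nu \times \id$, gives an $\A^1$-homotopy in $\underline{C}_{*}z_{equi}(\A^d,0)_{cdh}(X)$ connecting the zero- and pole-parts of $\nu_{*}(\divf(f))$. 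The $\A^1$-homotopy invariance built into the Suslin-type complex $\underline{C}_{*}$, combined with $cdh$-descent, then yields $\gamma_X(\nu_{*}\divf(f)) = 0$.

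For the final assertion, I would verify that the composition of $\gamma_X$ with the realization map $H^{2d}(X, \Z(d)) \to H^{2d}_{\an}(X, \Z(d))$ is the usual degree map: by naturality of Gysin, $\gamma_X([x])$ realizes to the topological fundamental class $[x]^{\mathrm{top}}$, whose image under $H^{2d}_{\an}(X, \Z(d)) \cong \Z^{r}$ is $1$ in the component containing $x$. Summing over a cycle $\alpha = \sum n_i [x_i]$, the composition gives $\deg_X(\alpha)$. By the definition of $A^d(X)$ as the kernel of this degree map, $\gamma_X$ restricts to $\CH^d(X)_{\deg 0} \to A^d(X)$.

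The main obstacle will be verifying the rational-equivalence relation in the second step. The subtlety is that good curves need not be smooth (they are only required to be Cohen-Macaulay with lci conditions near $\nu^{-1}(X_{\rm sing})$), so one cannot directly import the standard $\A^1$-homotopy argument for smooth curves. One must carefully use the lci hypothesis to justify the pushforward $\nu_{*}$ at the level of the equidimensional-cycle complex on $X \times \A^1$, and then invoke the $cdh$-sheafified $\A^1$-homotopy invariance. A reduction to the normalization of $C$ (or to the components of $C \setminus Z$), combined with the explicit description of $\underline{C}_{*}z_{equi}(\A^d,0)_{cdh}$, should make this manageable.
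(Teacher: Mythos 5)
Your step (1) — defining $\gamma_X$ on a smooth closed point via the Gysin/local-cohomology class and purity on the smooth locus — is essentially the same as the paper's definition, and your handling of the degree map in the final paragraph is also fine. The genuine problem is step (2), where you try to kill the rational-equivalence relations by pushing forward an explicit $\A^1$-homotopy from a good curve.

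Here is where the argument breaks. The target $H^{2d}(X,\Z(d))$ is defined as a $cdh$-hypercohomology group $\H^0_{cdh}\bigl(X, \underline{C}_{*}z_{equi}(\A^d,0)_{cdh}\bigr)$, not as the naive sections of a cycle complex over the singular scheme $X$. The class $\gamma_X([x])$ you constructed is a class in a fibrant replacement, not literally a $0$-cycle on $X$, and there is no obvious group of "$\underline{C}_{*}z_{equi}(\A^d,0)_{cdh}(X)$" in which you can exhibit a chain homotopy by writing down a cycle on $X\times\A^1$ and evaluating a boundary map. For smooth $X$ this identification with Bloch's cycle complex is a deep theorem; for singular $X$ it fails, which is exactly the reason the paper sets up the $cdh$-machinery in the first place. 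Your proposed $\A^1$-homotopy lives, at best, on the smooth open locus or on a resolution, and would need a further argument to descend to $X$. You flag this as "the main obstacle" and hope that the lci hypothesis plus a reduction to the normalization will fix it, but that is precisely the missing content, and I do not see how to make it go through along the route you describe without essentially redoing the paper's argument in disguise.

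The paper avoids this entirely by a two-step reduction that you should compare against. (a) It first treats the case where $X$ is itself a reduced curve $C$, not via a homotopy of cycles but via $K$-theory: the $KH$-slice spectral sequence together with the vanishing of Theorem 5.1 gives a short exact sequence $0\to H^2(C,\Z(1))\to KH_0(C)\to H^0(C,\Z(0))\to 0$, which is compared to $0\to\Pic(C)\to K_0(C)\to H^0_{\an}(C,\Z)\to0$; the cycle map $\sZ_0(C)\to K_0(C)$ factors through $\Pic(C)\cong\CH^1(C)$ by a theorem of Levine--Weibel (cited as \cite[Lem.~3.11]{BK-1}), and hence through $H^2(C,\Z(1))$ after passing to $KH_0$. (b) For $d\ge2$, the good-curve hypotheses are used only to guarantee that $\nu: C\to X$ is lci, so that one can invoke the Gysin pushforward $\nu_*: H^2(C,\Z(1))\to H^{2d}(X,\Z(d))$ on $cdh$-motivic cohomology (via the $KH$-representation and Navarro's Riemann--Roch), and the curve case then transports along $\nu_*$. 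The $K$-theoretic detour in (a) is doing real work: it is the mechanism that replaces the "$\A^1$-homotopy on $X$" you are looking for, and it is what makes the argument robust in the singular setting.
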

\begin{proof}
We let $U$ denote the smooth locus of $X$ and let $x \in U$ be a closed point. 
The excision for the local cohomology
with support in a closed subscheme tells us that the map 
$\H^{0}_{\{x\}, cdh}(X, {C_* z_{equi}(\A^d_{\C}, 0)}_{cdh}) \to
\H^{0}_{\{x\}, cdh}(U, {C_* z_{equi}(\A^d_{\C}, 0)}_{cdh})$ is an isomorphism.
On the other hand, the purity theorem for the motivic cohomology of 
smooth schemes and the isomorphism between the motivic cohomology and
higher Chow groups \cite{Voev-1} imply that the map 
$\H^{0}_{\{x\}, cdh}(U, {C_* z_{equi}(\A^d_{\C}, 0)}_{cdh})
\to \H^{0}_{cdh}(U, {C_* z_{equi}(\A^d_{\C}, 0)}_{cdh})$
is same as the map of the Chow groups $\Z \cong \CH_0(\{x\}) \to
\CH_0(U)$. 
In particular, we obtain a map
\[
\gamma_x: \Z \to \H^{0}_{\{x\}, cdh}(X, {C_* z_{equi}(\A^d_{\C}, 0)}_{cdh}) \to
\H^{0}_{cdh}(X, {C_* z_{equi}(\A^d_{\C}, 0)}_{cdh}) = 
H^{2d}(X, \Z(d)). 
\]

We let $\gamma_X([x])$ be the image of
$1 \in \Z$ under this map. This yields a homomorphism
$\gamma_X: \sZ_0(X) \to H^{2d}(X, \Z(d))$.
We now show that this map kills $\sR_0(X)$.

We first assume that $X$ is a reduced curve. In this case, an easy
application of the spectral sequence of ~\ref{thm:KH-SS} and the 
vanishing result of ~\ref{thm:MGL-MC-iso} shows that there is a
short exact sequence 
\begin{equation}\label{eqn:Comp-Chow-0}
0 \to H^2(X, \Z(1)) \to KH_0(X) \to H^0(X, \Z(0)) \to 0.
\end{equation}

Using the isomorphism $H^0(X, \Z(0)) \xrightarrow{\cong} 
H^{0}_{\an}(X, \Z)$ and the natural map $K_*(X) \to KH_*(X)$,
we have a commutative diagram of the short exact sequences
\begin{equation}\label{eqn:Comp-Chow-1}
  \begin{array}{c}
\xymatrix@C.8pc{
0 \ar[r] & \Pic(X) \ar[r] & K_0(X) \ar[r] \ar[d] & H^{0}_{\an}(X, \Z) \ar[d] 
\ar[r] & 0 \\
0 \ar[r] & H^2(X, \Z(1)) \ar[r] & KH_0(X) \ar[r] & H^{0}_{\an}(X, \Z)  
\ar[r] & 0.}
  \end{array}
\end{equation}

It follows from \cite[Lem.~3.11]{BK-1} that the map
$\sZ_0(X) \to K_0(X)$ given by $cyc_X([x]) = [\sO_{\{x\}}] \in K_0(X)$
defines an isomorphism $\CH^1(X) \xrightarrow{\cong} \Pic(X)$.
Note that $x \in U$ and hence the class $[\sO_{\{x\}}]$ in $K_0(X)$ makes sense. 
We conclude from this isomorphism and ~\eqref{eqn:Comp-Chow-1} that
the composite map $\sZ_0(X) \to K_0(X) \to KH_0(X)$ has image
in $H^2(X, \Z(1))$ and it factors through $\CH^1(X)$.

We now assume $d \ge 2$ and let $\nu:(C,Z) \to X$ be a good curve and
let $f \in \sO^{\times}_{C,Z}$. We need to show that
$\gamma_X(\nu_*(\divf(f))) = 0$.
By \cite[Lem.~3.4]{BK-1}, we can assume 
that $\nu$ is an lci morphism. 
In particular, there is a functorial
push-forward map $\nu_*: H^2(C, \Z(1)) \to H^{2d}(X, \Z(d))$
by ~\ref{thm:cdhdescent-2} and \cite[Def.~2.32, Thm.~2.33]{Navarro}.
We thus have a commutative diagram:
\begin{equation}\label{eqn:Comp-Chow-2}
  \begin{array}{c}
\xymatrix@C.8pc{
\sZ_0(C,Z) \ar@/^1pc/[rr]^{\gamma_C} \ar[r]_-{\cong} \ar[d]_{\nu_*} & 
\oplus_{x \notin Z} H^0(\{x\}, \Z(0)) \ar[r] \ar[d]^{\nu_*} & 
H^2(C, \Z(1)) \ar[d]^{\nu_*} \\
\sZ_0(X) \ar[r]^-{\cong} \ar@/_1pc/[rr]_{\gamma_X} & 
\oplus_{x \notin X_{\rm sing}} H^0(\{x\}, \Z(0))
\ar[r] & H^{2d}(X, \Z(d)).}
  \end{array}
\end{equation}

The two horizontal arrows on the right are the push-forward maps
on the motivic cohomology since the inclusion $\{x\} \inj X$
is an lci morphism for every $x \notin X_{\rm sing}$.
We have shown that $\gamma_C(\divf(f)) = 0$ and hence
$\gamma_X(\nu_*(\divf(f))) = \nu_*(\gamma_C((\divf(f)))) = 0$.
Furthermore, the composite $\sZ_0(X) \to H^{2d}(X, \Z(d)) \to
H^{2d}(\wt{X}, \Z(d)) \to H^{2d}_{\an}(\wt{X}, \Z(d)) \cong \Z^r$ is
the degree map. This shows that $\gamma_X(\sZ_0(X)_{{\rm deg} \ 0})
\subseteq A^d(X)$.
\end{proof}

\section{Applications III: Roitman torsion and cycle class 
map}\label{sec:Roit}
Let $X$ be a projective scheme of dimension $d$ over $\C$.
Using the map $\gamma_X: \CH^d(X) \to H^{2d}(X, \Z(d))$ and the Abel-Jacobi
map ${\rm AJ}^d_X$ of ~\eqref{eqn:IJ-3}, we shall now prove our main
result on the Abel-Jacobi map and Roitman torsion for singular schemes. 
We shall use the following lemma in the proof.

\begin{lem}\label{lem:K-KH}
Let $X$ be a reduced projective scheme of dimension $d$ over $\C$.
There is a cycle class map $cyc^Q_{X,0}: \CH^d(X) \to K_0(X)$
and a commutative diagram
\begin{equation}\label{eqn:K-KH-0}
  \begin{array}{c}
\xymatrix@C1pc{
\CH^d(X) \ar[r]^{cyc^Q_{X,0}} \ar[d]_{\gamma_X} & K_0(X) \ar[d] \\
H^{2d}(X, \Z(d)) \ar[r]^{cyc_{X,0}} & KH_0(X).}
  \end{array}
\end{equation}
\end{lem}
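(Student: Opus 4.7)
The map $cyc^Q_{X,0}$ is the classical Levine cycle class map, defined by sending a closed point $x \in X_{\rm reg}$ to $[\sO_{\{x\}}] \in K_0(X)$ and extending linearly. Its well-definedness on $\CH^d(X) = \CH^{LW}_0(X) = \CH^{BK}_0(X)$ (identifications from \ref{thm:BK-com}) is implicit in the proof of \ref{thm:BK-com}, going back to \cite[Cor.~2.7]{Levine} via \cite[Lem.~3.13]{BK-1}. To verify commutativity of \eqref{eqn:K-KH-0}, by linearity it suffices to show that for each closed point $x \in X_{\rm reg}$ the class $cyc_{X,0}(\gamma_X([x])) \in KH_0(X)$ coincides with the image of $[\sO_{\{x\}}]$. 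The strategy is to lift both classes to a common ``relative'' theory supported at $\{x\}$ and transport the equality back via forget-support maps.

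Concretely, the plan is to promote the construction of \ref{thm:Conn-KGL} to a supported cycle class map
\begin{equation*}
cyc^{\{x\}}_{X,0} \colon H^{2d}_{\{x\}}(X, \Z(d)) \to KH^{\{x\}}_0(X),
\end{equation*}
obtained by applying the same slice spectral sequence argument to the cofiber $\Sigma^\infty_T X_+/\Sigma^\infty_T (X \setminus \{x\})_+$ in place of $\Sigma^\infty_T X_+$. The forget-support maps will intertwine $cyc^{\{x\}}_{X,0}$ and $cyc_{X,0}$ by functoriality of the slice filtration and of the unit $KGL^0_X \to KGL_X$. By the construction of $\gamma_X$ in \ref{lem:Comp-Chow}, the class $\gamma_X([x])$ is the image of a canonical generator $1 \in H^{2d}_{\{x\}}(X, \Z(d)) \cong \Z$, and by classical d\'evissage for the lci closed immersion $i_x \colon \{x\} \inj X$ the class $[\sO_{\{x\}}] \in K_0(X)$ is the image of a canonical generator $1 \in K_0^{\{x\}}(X) \cong \Z$, which maps to a canonical generator $1 \in KH^{\{x\}}_0(X) \cong \Z$. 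Thus everything reduces to verifying that $cyc^{\{x\}}_{X,0}$ sends $1$ to $1$.

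By excision for both theories, I may replace $X$ by any affine Zariski neighborhood $U$ of $x$ inside $X_{\rm reg}$. On such a smooth $U$ the motivic cohomology coincides with Bloch's higher Chow groups by \cite{Voev-1}, $KH$ with Quillen $K$-theory, and Levine's computation of the slice filtration on $KGL_k$ (\cite[Thm.~6.4.2]{Lev}) identifies the supported edge map with the classical cycle class isomorphism $\CH^d_{\{x\}}(U) \cong K_0^{\{x\}}(U) \cong \Z$, matching the canonical generators. The main obstacle is the construction and naturality of $cyc^{\{x\}}_{X,0}$ itself: one must check that the slice spectral sequence of \ref{thm:Conn-KGL}, the convergence arguments of \S\ref{sec:SS-Conv}, and the identification with motivic cohomology supplied by \ref{thm:cdhdescent-2} all extend verbatim to the relative motive $\Sigma^\infty_T X_+/\Sigma^\infty_T (X \setminus \{x\})_+$, and that the comparison with the classical supported cycle class map on smooth $U$ is indeed the identity on generators. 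These extensions are formal consequences of the triangulated structure of $\stablehomotopy_X$ and the functoriality of the slice filtration with respect to cofiber sequences, but they constitute the bulk of the technical verification required.
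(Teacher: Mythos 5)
Your proposal follows the same strategy as the paper's proof: reduce to the generators $[x]$ with $x$ a closed point of the smooth locus, and compare the two composites by passing through the theories with supports at $\{x\}$, exploiting the tautological square with $K_0^{\{x\}}(X) = KH_0^{\{x\}}(X) \cong \Z$ and the forget-supports maps. The paper's proof, after reducing to that square, declares the needed compatibilities ``clear from the functorial properties of the map of presheaves $K(-) \to KH(-)$''; you make explicit what is being waved at, namely a supported version $cyc^{\{x\}}_{X,0}$ of the cycle class map, its commutation with forget-supports, and a local comparison on a smooth Zariski neighborhood of $x$ identifying it with the classical supported cycle class isomorphism on generators. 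This is indeed what the argument requires, and your identification of the points in \S\ref{sec:SS-Conv} and \ref{thm:cdhdescent-2} that have to be checked for the relative motive $\Sigma^\infty_T X_+/\Sigma^\infty_T (X\setminus\{x\})_+$ is accurate; the one caveat is that you stop short of carrying out that verification and merely assert it should be a formal consequence of the triangulated structure and the functoriality of the slice filtration, which is a fuller acknowledgment of the remaining work than the paper gives but is not itself a completed proof.
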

\begin{proof}
Every closed point $x \in U$ defines the natural map
$\Z = K_0(\{x\}) = K^{\{x\}}_0(X) \to K_0(X)$ and hence 
a class $[\sO_{\{x\}}] \in K_0(X)$.
This defines a map $cyc^Q_{X,0}: \sZ_0(X) \to K_0(X)$ and it 
factors through $ \CH^d(X)$ by \cite[Prop.~2.1]{LW}.  
Since $\CH^d(X)$ is generated by the closed points in $U$, 
it suffices to show that for every closed point $x \in U$, the diagram
\begin{equation}\label{eqn:K-KH-0-1}
  \begin{array}{c}
\xymatrix@C1pc{
K^{\{x\}}_0(X) \ar[r] \ar@{=}[d] & K_0(X) \ar[d] \\
KH^{\{x\}}_0(X) \ar[r] & KH_0(X)}
  \end{array}
\end{equation}
commutes. But this is clear from the functorial properties of the map
of presheaves $K(-) \to KH(-)$ on $\Sch_{\C}$.
\end{proof}

We can now prove:

\begin{thm}\label{thm:Roitman-sing}
Let $X$ be a projective scheme over $\C$ of dimension $d$. 
Assume that either $d \le 2$ or $X$ is regular in codimension one.
Then, there is 
a semi-abelian variety $J^d(X)$ and an Abel-Jacobi map 
${\rm AJ}^d_X: A^d(X) \to J^d(X)$ which is surjective and whose restriction to the
torsion subgroups
${\rm AJ}^d_X: A^d(X)_{\rm tors} \to J^d(X)_{\rm tors}$ is an isomorphism.
\end{thm}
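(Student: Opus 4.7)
The plan is to compare our Abel--Jacobi map $\mathrm{AJ}^d_X$ with the Biswas--Srinivas Abel--Jacobi map $\rho_X \colon \CH^d(X)_{\deg 0} \to J^d(X)$ on the Levine--Weibel Chow group (identified with $\CH^{BK}_0$ via \thmref{thm:BK-com}). Under the hypothesis $d \le 2$ or $X$ regular in codimension one, \cite[Thm.~1.1]{BS} asserts that $\rho_X$ is surjective and restricts to an isomorphism on torsion subgroups. The strategy is to establish the factorization $\rho_X = \mathrm{AJ}^d_X \circ \gamma_X$ through the comparison map $\gamma_X \colon \CH^d(X)_{\deg 0} \to A^d(X)$ of \lemref{lem:Comp-Chow}, and then extract both assertions of the theorem from the Biswas--Srinivas result.

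First I would verify this factorization after post-composing with $J^d(X) \hookrightarrow H^{2d}_{\sD}(X, \Z(d))$. The composite $\rho_X$ is the Deligne cycle class used in \cite{BAS} to construct the Albanese $1$-motive $\Alb^+(X) \cong J^d(X)$ of \thmref{thm:Alb-J}. The composite $\mathrm{AJ}^d_X \circ \gamma_X$ equals $\widetilde c^d_X \circ \gamma_X = c_{X,d,0} \circ cyc_{X,0} \circ \gamma_X$ by \eqref{eqn:IJ-2} and \eqref{eqn:IJ-3}. Lemma \lemref{lem:K-KH} rewrites this as $c_{X,d,0}$ applied to the image of $cyc^Q_{X,0}(\alpha) \in K_0(X)$ under the canonical map $K_0(X) \to KH_0(X)$, and the naturality of Gillet's Chern classes established in \thmref{thm:Deligne-Chern-Main} identifies it with the classical Deligne Chern class $K_0(X) \to H^{2d}_{\sD}(X, \Z(d))$ precomposed with $cyc^Q_{X,0}$; this is precisely the map defining $\rho_X$.

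Given the factorization, surjectivity of $\mathrm{AJ}^d_X$ is immediate from surjectivity of $\rho_X$. Surjectivity on torsion is equally easy: any $j \in J^d(X)_{\mathrm{tors}}$ equals $\rho_X(\beta) = \mathrm{AJ}^d_X(\gamma_X(\beta))$ for a torsion $\beta \in \CH^d(X)_{\deg 0, \mathrm{tors}}$ by the Biswas--Srinivas torsion isomorphism, and $\gamma_X(\beta) \in A^d(X)$ is automatically torsion. For injectivity on torsion, the key intermediate claim is that $\gamma_X$ restricts to a surjection $\CH^d(X)_{\deg 0, \mathrm{tors}} \twoheadrightarrow A^d(X)_{\mathrm{tors}}$; given this, any $\alpha \in \ker(\mathrm{AJ}^d_X) \cap A^d(X)_{\mathrm{tors}}$ lifts to a torsion $\beta \in \CH^d(X)_{\deg 0}$ with $\rho_X(\beta) = 0$, which forces $\beta = 0$ by \cite[Thm.~1.1]{BS} and hence $\alpha = 0$.

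The main obstacle is thus the surjectivity of $\gamma_X$ on torsion. I would attack it by comparing the $cdh$-Mayer--Vietoris sequence
\[
H^{2d-1}(E, \Z(d)) \to H^{2d}(X, \Z(d)) \to H^{2d}(\wt X, \Z(d)) \to 0
\]
coming from \eqref{eqn:IJ-1} (together with the vanishings $H^{2d}(X_\sing, \Z(d)) = H^{2d+1}(E, \Z(d)) = 0$ of \thmref{thm:MGL-MC-iso}), with the analogous localization sequence for $\CH^d(-)$ from \cite{BK-1}. In the smooth case $\gamma_{\wt X}$ is an isomorphism by \cite{Voev-1}, so a diagram chase reduces the torsion surjectivity to matching boundary contributions supported on $E$ and $X_\sing$; the hypothesis that $X$ is smooth in codimension one (or $d \le 2$) restricts these contributions to curve or surface data, where the comparison with the Levine--Weibel theory is classical.
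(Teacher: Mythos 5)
Your overall strategy — compare $\mathrm{AJ}^d_X$ with the Biswas–Srinivas Abel–Jacobi map via the factorization through $\gamma_X$, and import surjectivity and torsion properties from \cite{BS} — matches the paper's. The surjectivity claims (both of $\mathrm{AJ}^d_X$ and of its restriction to torsion) are handled essentially the same way as in the paper, via the commutative triangle factoring $\rho_X$ through $\gamma_X$ and $\mathrm{AJ}^d_X$. The divergence occurs at injectivity on torsion, and there your argument has a genuine gap.

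You reduce injectivity on torsion to the claim that $\gamma_X$ restricts to a surjection $\CH^d(X)_{\deg 0,\,\mathrm{tors}} \twoheadrightarrow A^d(X)_{\mathrm{tors}}$. You correctly flag this as the ``main obstacle'' and sketch an attack via the $cdh$-Mayer--Vietoris sequence for motivic cohomology together with ``the analogous localization sequence for $\CH^d(-)$ from \cite{BK-1}.'' But the Levine--Weibel Chow group does not admit a $cdh$-descent or blow-up long exact sequence in any generality comparable to the motivic cohomology sequence of \eqref{eqn:IJ-1}; there is no such off-the-shelf result to chase. Your final sentence, that the boundary contributions are ``classical,'' is not a proof, and in fact establishing that $\gamma_X$ is surjective on torsion (or even surjective) would, via the exact sequence \eqref{eqn:IJ-1}, pin down the elusive group $H^{2d-1}(E,\Z(d))$ modulo the images of $\wt X$ and $X_{\mathrm{sing}}$ --- this is not known in general. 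The paper sidesteps this obstacle entirely: it invokes \lemref{thm:Alb-J} and \cite[Cor.~13.7.5]{BKahn} to obtain an \emph{abstract} isomorphism $A^d(X)_{\mathrm{tors}} \cong J^d(X)_{\mathrm{tors}}$, so that for each $n$ the $n$-torsion subgroups are finite of equal order; a surjection between finite abelian groups of the same order is automatically an isomorphism, and the surjection on $n$-torsion is exactly what your step 4 (and the paper's diagram \eqref{eqn:Roitman-Main-0}) already gives. That counting argument is the key idea you are missing, and it is what makes the proof close without the technical torsion surjectivity of $\gamma_X$.

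One further caveat in the factorization step: you identify the composite $c_{X,d,0}\circ cyc_{X,0}\circ\gamma_X$ with ``the map defining $\rho_X$'' of Biswas--Srinivas. The paper is more careful here (see \remref{remk:higher-dim}): the map through $K_0$-Chern classes and the Levine--Weibel map $\mathrm{AJ}^{LW}_X$ agree only up to multiplication by $(d-1)!$, which is why in the case $d\le 2$ the paper invokes \cite[Main Theorem]{BPW} for the former, while in the codimension-one-regular case it routes through $\wt X$ and uses $\mathrm{AJ}^{LW}_X$ and \cite[Thm.~1.1]{BS} instead. Conflating the two maps is harmless when $d\le 2$ but not in higher dimensions, so the two cases of the hypothesis genuinely require two different comparison maps.
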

\begin{proof}
We can assume that $X$ is reduced. We first consider the case when $X$ has 
dimension at most two but has arbitrary singularity.
In this case, we only need to prove that ${\rm AJ}^d_X$ is surjective and its 
restriction to the torsion subgroups is an isomorphism.

The map ${\rm AJ}^d_X$ is induced by the Chern class map
$c_{X,d,0}: KH_0(X) \to H^{2d}_{\sD}(X, \Z(d))$ and the composite
map $K_0(X) \to KH_0(X) \to H^{2d}_{\sD}(X, \Z(d))$ is the Gillet's
Chern class map $C^Q_{X,d,0}$ of ~\eqref{eqn:K-Chern}.
Composing these maps with the cycle class maps and using ~\ref{lem:K-KH},
we get a commutative diagram
\begin{equation}\label{eqn:Roitman-Main-0}
  \begin{array}{c}
\xymatrix@C1pc{
\CH^d(X)_{{\rm deg} \ 0} \ar[r]^>>>{\gamma_X} \ar[dr]_{{\rm AJ}^{d,Q}_X} & 
A^d(X) \ar[d]^{{\rm AJ}^d_X} \\
& J^d(X).}
  \end{array}
\end{equation}


The map ${\rm AJ}^{d,Q}_X$ is surjective and is an isomorphism on the
torsion subgroups by \cite[Main Theorem]{BPW}. 
It follows that ${\rm AJ}^d_X$ is also surjective.
To prove that it is an isomorphism on the torsion subgroups, we
apply ~\ref{thm:Alb-J} and \cite[Cor.~13.7.5]{BKahn}. 
It follows from these results that there is indeed an isomorphism  
$\phi^d_X: {J^d(X)}_{\rm tor} \xrightarrow{\cong} {A^d(X)}_{\rm tor}$.
Since $J^d(X)$ is a semi-abelian variety, we know that for any given
integer $n \ge 1$, the $n$-torsion subgroup ${_nJ^d(X)}$ is finite.
It follows that ${_nA^d(X)}$ and ${_nJ^d(X)}$ are finite abelian groups of
same order.
We conclude that the Abel-Jacobi map ${\rm AJ}^d_X: A^d(X) \to J^d(X)$
induces the map ${\rm AJ}^d_X: {_nA^d(X)} \to {_nJ^d(X)}$ between
finite abelian groups which have same order.
Therefore, this map will be an isomorphism if and only if it is a surjection.
But this is true by ~\eqref{eqn:Roitman-Main-0} because we have seen above
that the composite map ${\rm AJ}^{d,Q}_X$
is isomorphism between the $n$-torsion subgroups.
Since $n \ge 1$ is arbitrary in this argument, we conclude the proof of the
theorem.

We now consider the case when
$X$ has arbitrary dimension but it is regular in codimension
one. Let $f: \wt{X} \to X$ be a resolution of singularities of $X$.
It is then known that $J^d(X) \cong J^d(\wt{X}) =
{\rm Alb}(\wt{X})$
(see \cite[Rem.~2, pp.~505]{Mallick}). We have a commutative diagram
\begin{equation}\label{eqn:Normal-Roit**}
\xymatrix@C1pc{
\CH^d(X)_{{\rm deg} \ 0} \ar[rr]^-{\gamma_X} 
\ar[drr]_{{\rm AJ}^{LW}_X} &  
\ar[r] & A^d(X) \ar@{->>}[r]^-{f^*} \ar@{..>}[d]^{{\rm AJ}^d_X} 
& A^d(\wt{X}) \ar@{->>}[d]^{{\rm AJ}^d_{\wt{X}}} \\
& & J^d(X) \ar[r]^-{\cong} & J^d(\wt{X}).}
\end{equation}

Since the lower horizontal arrow in this diagram is an isomorphism, it
uniquely defines the Abel-Jacobi map ${\rm AJ}^d_X$.
The map $f^*\circ \gamma_X$ is known to be 
surjective by the moving lemma for 0-cycles on smooth schemes.
In particular, $f^*$ is surjective. The map ${\rm AJ}^d_{\wt{X}}$ is
also known to be surjective. It follows that ${\rm AJ}^d_X$ is surjective.

To prove that this is an isomorphism on the torsion subgroups,
we can argue exactly as in the first case of the theorem.
This reduces us to showing that ${\rm AJ}^d_X$ is surjective
on the $n$-torsion subgroups for every given integer $n \ge 1$.
But this follows because ${\rm AJ}^{LW}_X$ (and also ${\rm AJ}^d_{\wt{X}}$)
is isomorphism
on the $n$-torsion subgroups by \cite[Th.~1.1]{BS}. This finishes the proof
of the theorem. 
\end{proof}

\begin{remk}\label{remk:higher-dim}
For arbitrary $d \ge 1$, the map ${\rm AJ}^{d,Q}_X$ in 
~\eqref{eqn:Roitman-Main-0} is known to be an isomorphism only up to 
multiplication by $(d-1)!$. This prevents us from extending 
~\ref{thm:Roitman-sing}  to 
higher dimensions if $X$ has singularities in codimension one.
We also warn the reader that unlike ${\rm AJ}^{d,Q}_X$
in ~\eqref{eqn:Roitman-Main-0}, the map ${\rm AJ}^{LW}_X$ in
~\eqref{eqn:Normal-Roit**} is not
defined via the Chern class map on $K_0(X)$. These maps coincide only up to
multiplication by $(d-1)!$.
\end{remk}

\subsection{Injectivity of the cycle class map}\label{sec:ICCM}
Like the case of smooth schemes, the Roitman torsion theorem for 
singular schemes has many potential applications. Here, we
use this to prove our next main result of this section.
It was shown by Levine in \cite[Thm. 3.2]{Levine}
that for a smooth projective scheme $X$ of dimension $d$ over $\C$, 
the cycle class map $H^{2d}(X, \Z(d)) \to K_0(X)$ (see 
~\eqref{eqn:Conn-cycle}) is injective.
We generalize this to singular schemes as follows.

\begin{thm}\label{thm:Cycle-class-inj}
Let $X$ be a projective scheme of dimension $d$ over $\C$.
Assume that either $d \le 2$ or $X$ is regular in codimension one.
Then the cycle class map $cyc_0: H^{2d}(X, \Z(d)) \to KH_0(X)$ is injective.
\end{thm}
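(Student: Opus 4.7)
The plan is to reduce the injectivity of $cyc_0$ to the torsion case, and then to invoke the Roitman torsion theorem (\thmref{thm:Roitman-sing}).

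\textbf{Step 1: Rational injectivity.} I would first show that $cyc_0 \otimes \Q$ is injective, so that $\Ker(cyc_0)$ is contained in the torsion subgroup of $H^{2d}(X, \Z(d))$. For this I would use the slice spectral sequence of \thmref{thm:KH-SS},
\[
E^{a,b}_2 = H^{a-b}(X, \Z(-b)) \Rightarrow KH_{-a-b}(X),
\]
with differentials $d_r: E^{a,b}_r \to E^{a+r, b-r+1}_r$, which degenerates rationally. For the abutment $KH_0(X)$ one has $a + b = 0$, yielding $E^{a,-a}_2 = H^{2a}(X, \Z(a))$ for $a \ge 0$; by \thmref{thm:MGL-MC-iso} these vanish for $a > d$. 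Rational degeneration then forces $F^{d+1} KH_0(X)_\Q = 0$ and identifies the smallest nonzero filtration step $F^d KH_0(X)_\Q$ with $E^{d,-d}_\infty \otimes \Q \cong H^{2d}(X, \Q(d))$. Now the Bott-periodicity identification $\Sigma_T^d \KGL^0 \cong f_d \KGL$ (which follows from $\Sigma_T \KGL \cong \KGL$ together with the standard relation $f_{n+1}\Sigma_T \cong \Sigma_T f_n$) allows one to rewrite the cycle class map $cyc_0$ of \thmref{thm:Conn-KGL} as the map $[X_+, f_d \KGL_X] \to [X_+, \KGL_X]$ induced by $f_d \KGL \to \KGL$, whose image is by definition $F^d KH_0(X)$. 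Rationally this map becomes an isomorphism onto $F^d KH_0(X)_\Q \subset KH_0(X)_\Q$, and in particular $cyc_0 \otimes \Q$ is injective.

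\textbf{Step 2: Killing the torsion.} Let $\alpha \in \Ker(cyc_0)$; by Step 1, $\alpha$ is torsion. Composing with the Chern class map $c_{X,d,0}: KH_0(X) \to H^{2d}_{\sD}(X, \Z(d))$ from \thmref{thm:Deligne-Chern-Main}, the composite $\tilde c^d_X$ of \eqref{eqn:IJ-2} satisfies $\tilde c^d_X(\alpha) = 0$. By the commutative diagram \eqref{eqn:IJ-3}, $\alpha$ then lies in $A^d(X)$ and ${\rm AJ}^d_X(\alpha) = 0$. Since $\alpha$ is torsion and the Abel--Jacobi map is an isomorphism on torsion subgroups by \thmref{thm:Roitman-sing}, we conclude $\alpha = 0$.

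\textbf{Main obstacle.} The heart of the argument is the identification in Step 1 of $cyc_0$ with the edge map of the slice spectral sequence into the bottom filtration piece $F^d KH_0(X)$; this is where the construction of $cyc_0$ via $\KGL^0 \to \KGL$ must be matched with the filtration $F^\bullet KH_0(X)$ coming from the slice tower $f_\bullet \KGL \to \KGL$, via the Bott-periodicity isomorphism $\Sigma_T^d \KGL^0 \cong f_d \KGL$. Once this compatibility is in place, the top-weight vanishing of motivic cohomology (\thmref{thm:MGL-MC-iso}) and rational degeneration of the slice spectral sequence give rational injectivity, after which the integral statement is an immediate consequence of the Roitman torsion theorem \thmref{thm:Roitman-sing}.
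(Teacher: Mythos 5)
Your Step 1 is correct and matches the paper's argument: $cyc_0$ is assembled from the slice tower of $\KGL$, both relevant slice spectral sequences degenerate rationally, and hence $\Ker(cyc_0)$ is torsion. Your Bott-periodicity unwinding via $\Sigma^d_T f_0\KGL \cong f_d\KGL$ is a correct elaboration of what the paper leaves implicit. Your Step 2 is also exactly the paper's argument for $d\le 2$: use (8.3) and Theorem~\ref{thm:Roitman-sing} to see that $\wt{c}^d_X$ is an isomorphism on torsion, hence $\Ker(cyc_0)=0$.

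The gap is in Step 2 for the case $d\ge 3$, $X$ regular in codimension one. The map ${\rm AJ}^d_X$ you read off from (8.3) is, by construction, the restriction of $\wt{c}^d_X = c_{X,0,d}\circ cyc_0$, i.e. the Chern-class-induced Abel--Jacobi map. But in the proof of Theorem~\ref{thm:Roitman-sing} for the regular-in-codimension-one case, the map shown to restrict to an isomorphism on torsion is a different one: it is the geometric map built from the resolution diagram (8.7) via the Biswas--Srinivas map ${\rm AJ}^{LW}_X$. Remark~\ref{remk:higher-dim} warns explicitly that ${\rm AJ}^{LW}_X$ and the Chern-class map ${\rm AJ}^{d,Q}_X$ coincide only up to multiplication by $(d-1)!$, so for $d\ge 3$ these two Abel--Jacobi maps on $A^d(X)$ are genuinely different. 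Consequently Theorem~\ref{thm:Roitman-sing} does not give you that the restriction of $\wt{c}^d_X$ is injective on torsion, and your conclusion that $\alpha=0$ does not follow. The paper's own proof in this case bypasses the singular Abel--Jacobi map entirely: it pulls back along a resolution $f\colon \wt{X}\to X$, uses the fact (established in the proof of Theorem~\ref{thm:Roitman-sing}) that $f^*$ is an isomorphism on the torsion of $H^{2d}(-,\Z(d))$, and then applies Levine's theorem that $cyc_{\wt{X},0}\colon H^{2d}(\wt{X},\Z(d))\to K_0(\wt{X})$ is injective for smooth projective $\wt{X}$. To make your route work you would have to prove Roitman for the Chern-class-induced Abel--Jacobi map, which the paper expressly declines to do for $d\ge 3$.
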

\begin{proof}
We note that $cyc_0:H^{2d}(X, \Z(d)) \to KH_0(X)$ is induced 
by the spectral sequences ~\eqref{eqn:KH-S-0} and ~\eqref{eqn:Conn-cycle},
both of which degenerate with rational coefficients.
In particular, ${\rm Ker}(cyc_0)$ is a torsion group.

On the other hand, if $\dim(X) \le 2$, ~\eqref{eqn:IJ-3} and 
~\ref{thm:Roitman-sing} tell us that the composite map
$\wt{c}^d_X: H^{2d}(X, \Z(d)) \xrightarrow{cyc_0} KH_0(X) \xrightarrow{c_{X,0,d}}
H^{2d}_{\sD}(X, \Z(d))$  is isomorphism on the torsion subgroups.
We must therefore have ${\rm Ker}(cyc_0) = 0$.

If $X$ is regular in codimension one, we let $\wt{X} \to X$ be a 
resolution of singularities and consider the commutative diagram
\[
\xymatrix@C1pc{
H^{2d}(X, \Z(d)) \ar[r]^-{cyc_X,0} \ar[d]_{f^*} & KH_0(X) \ar[d]^{f^*} \\
H^{2d}(\wt{X}, \Z(d)) \ar[r]^-{cyc_{\wt{X}, 0}} & K_0(\wt{X}).}
\]
We have shown in the proof of ~\ref{thm:Roitman-sing} that the
left vertical arrow is isomorphism on the torsion subgroups.
The bottom horizontal arrow is injective by \cite[Th.~3.2]{Levine}.
It follows that $cyc_{X,0}$ is injective on the torsion
subgroup.  We must therefore have ${\rm Ker}(cyc_{X,0}) = 0$.
This finishes the proof.
\end{proof}

\noindent\emph{Acknowledgements.} 
The authors would like to thank Shane Kelly for having read an earlier version
of this paper and pointing out some missing arguments. The authors would like
to thank the referee for very carefully reading the paper and suggesting
many improvements.

\end{document}